\numberwithin{equation}{section}
\DeclareMathOperator{\spfl}{sf}
\begin{document}

\newtheorem{prop}{Proposition}[section]
\newtheorem{lem}[prop]{Lemma}
\newtheorem{rem}[prop]{Remark}
\newtheorem{theorem}[prop]{Theorem}

\theoremstyle{definition}
\newtheorem{definition}[prop]{Definition}
\newtheorem{cor}[prop]{Corollary}
\newtheorem{que}[prop]{Question}


\newenvironment{com}%
{\vspace{0cm}\  \noindent \%\color{blue}\% }{\%\color{black}\%\vspace{0cm}}
\newenvironment{comch}%
{\vspace{0cm}\  \noindent \%\color{red}\% }{\%\color{black}\%\vspace{0cm}}

\def\no{\noindent}
\newcommand{\foot}[1]{\footnote{\begin{normalsize}#1\end{normalsize}}}
\newcommand{\spc}{\mathbf A}
\newcommand{\di}{{\rm d}}
\newcommand{\der}{\mathfrak d}


\def\dim{\mathop{\rm dim}}
\def\ov{\overline}
\def\Re{\mathop{\rm Re}}
\def\Im{\mathop{\rm Im}}
\def\I{\mathop{\rm I}}
\def\Id{\mathop{\rm Id}}
\def\grad{\mathop{\rm z}\nolimits}
\def\vol{\mathop{\rm vol}}
\def\SU{\mathop{\rm SU}}
\def\SO{\mathop{\rm SO}}
\def\Aut{\mathop{\rm Aut}}
\def\End{\mathop{\rm End}}
\def\GL{\mathop{\rm GL}}
\def\Cinf{\mathop{\mathcal C^{\infty}}}
\def\Ker{\mathop{\rm Ker}}
\def\Coker{\mathop{\rm Coker}}
\def\dom{\mathop{\rm Dom}}
\def\Hom{\mathop{\rm Hom}}
\def\Ch{\mathop{\rm Ch}}
\def\sign{\mathop{\rm sign}\nolimits}
\def\SF{\mathop{\rm SF}}
\def\loc{\mathop{\rm loc}}
\def\AS{\mathop{\rm AS}}
\def\spec{\mathop{\rm spec}}
\def\Ric{\mathop{\rm Ric}}
\def\ch{\mathop{\rm ch}\nolimits}
\def\cs{\mathop{\rm cs}\nolimits}
\def\Ch{\mathop{\rm Ch}}

\def\ev{\mathop{\rm ev}\nolimits}
\def\id{\mathop{\rm id}}
\def\ra{\partial}
\def\ten{\otimes}
\def\fc{{\mathfrak c}}

\def\Cli{\mathbb{C}l(1)}

\def\Fi{\Phi}
\def\phi{\varphi}
\def\de{\delta}
\def\e{\eta}
\def\ve{\varepsilon}
\def\ep{\epsilon}
\def\ro{\rho}
\def\a{\alpha}
\def\o{\omega}
\def\olott{{\rm w}}
\def\O{\Omega}
\def\b{\beta}
\def\la{\lambda}
\def\th{\theta}
\def\s{\sigma}
\def\t{\tau}
\def\g{\gamma}
\def\D{\Delta}
\def\G{\Gamma}

\def\Q{\mathbin{\mathbb Q}}
\def\R{\mathbin{\mathbb R}}
\def\Z{\mathbin{\mathbb Z}}
\def\bn{\mathbin{\mathbb N}}
\def\Rn{\R^{n}}
\def\C{\mathbb{C}}
\def\Cm{\mathbb{C}^{m}}
\def\Cn{\mathbb{C}^{n}}

\newcommand\Di{D\kern-6.5pt/\kern+1pt}
\newcommand\cDi{\mathcal{D}\kern-6.5pt/\kern+1pt}

\def\w{{\mathchoice{\,{\scriptstyle\wedge}\,}{{\scriptstyle\wedge}}
{{\scriptscriptstyle\wedge}}{{\scriptscriptstyle\wedge}}}}
\def\cA{{\mathcal A}}
\def\Ai{{\mathcal A}_{\infty}}
\def\cO{{\mathcal O}}\def\cT{{\mathcal T}}\def\cU{{\mathcal U}}
\def\cD{{\mathcal D}}\def\cF{{\mathcal F}}\def\cP{{\cal P}}\def\cL{{\mathcal L}}
\def\cB{{\mathcal B}}
\def\cC{{\mathcal C}}
\def\cH{\mathcal H}
\def\cI{{\mathcal I}}
\def\cK{{\mathcal K}}
\def\cN{\mathcal N}
\def\cM{\mathcal M}

\newcommand{\Ok}{\hat \Omega_k}
\newcommand{\Oi}{\hat \Omega_*}


\newcommand{\n}[1]{\left\| #1\right\|}


\def\Mt{\widetilde{M}}
\def\Et{\tilde{E}}
\def\Vt{\tilde{V}}
\def\Xt{\tilde{X}}
\def\Cd{\mathbb{C}^{d}}

\def\tr{\mathop{\rm tr}\nolimits}\def\tralg{\tr{}^{\text{alg}}}       
\def\TR{\mathop{\rm TR}}\def\trace{\mathop{\rm trace}}
\def\STR{\mathop{\rm STR}}
\def\trG{\mathop{\rm tr_\Gamma}}
\def\TRG{\mathop{\rm TR_\Gamma}}
\def\Tr{\mathop{\rm Tr}\nolimits}
\def\Str{\mathop{\rm Str}}
\def\Cl{\mathop{\rm Cl}}
\def\Op{\mathop{\rm Op}}
\def\supp{\mathop{\rm supp}}
\def\scal{\mathop{\rm scal}}
\def\ind{\mathop{\rm ind}\nolimits}
\def\Ind{\mathop{\mathcal I\rm nd}\,}
\def\Diff{\mathop{\rm Diff}}
\def\T{\mathcal{T}}

\def\cgs{C^{*}(\Gamma,\sigma)}
\def\bcgs{C^{*}(\Gamma,\bar{\sigma})}
\def\cgsr{C^{*}_{r}(\Gamma,\sigma)}
\def\Et{\widetilde{E}}
\def\Wt{\widetilde{W}}
\def\Xt{\tilde{X}}
\def\N{\mathcal{V}}
\def\Nbs{\N^{-c}}
\def\Ns{\N^c}

\def\etast{\eta_{\tau_s}}
\def\rhoct{\rho^{\mathbf c}_{\tau_s}}
\def\rhost{\rho^{\s_s}_{\tau_s}}
\newcommand{\question}[1]{\color{blue}\vspace{5 mm}\par \noindent
\marginpar{\color{blue}\textsc{Q }} \framebox{\begin{minipage}[c]{0.95
\textwidth} \raggedright \tt #1 \end{minipage}}\vspace{5 mm}\color{black}
\par}
\newcommand{\Komm}[1]{\color{blue}\vspace{5 mm}\par \noindent 
\marginpar{\color{blue} Comment} \framebox{\begin{minipage}[c]{0.95
\textwidth} \raggedright \tt #1 \end{minipage}}\vspace{5 mm}\color{black}\par}


\setlength{\marginparwidth}{1.2in}
\let\oldmarginpar\marginpar
\renewcommand\marginpar[1]{\-\oldmarginpar[\raggedleft\footnotesize #1]%
{\raggedright\footnotesize #1}}


\newenvironment{Sara}
{ \marginpar{\color{blue} Sara $\top$} 
\color{blue} }{
\marginpar{\color{blue}\hspace{.7cm} $\bot$}
\color{black}}

 \title{Two-cocycle twists and Atiyah--Patodi--Singer index theory}
 \author{Sara Azzali}
\address{Sara Azzali\\   Institut f\"ur Mathematik\\
Universit\"at Potsdam\\\newline Am Neuen Palais, 10 \\14465 Potsdam
              \\ Germany}
\email{azzali@uni-potsdam.de}
\thanks{}

\author{Charlotte Wahl}
\address{Charlotte Wahl\\ Leibniz-Archiv\\Waterloostr. 8\\ 30169 Hannover\\ Germany}
\email{wahlcharlotte@googlemail.com}
\thanks{}

\begin{abstract}
We construct $\eta$- and $\rho$-invariants for Dirac operators, on the universal covering of a closed manifold, that are invariant under the projective action associated to a $2$-cocycle of the fundamental group. 
 We prove an Atiyah--Patodi--Singer index theorem in this setting, as well as its higher generalization. Applications concern the classification of positive scalar curvature metrics on closed spin manifolds. We also investigate the properties of these twisted invariants for the signature operator and the relation to the higher invariants.
\end{abstract}

 \maketitle
 \tableofcontents
\section{Introduction}

The purpose of this paper is to construct and investigate $\eta$- and $\rho$-invariants for Dirac operators, on the universal covering of a closed manifold, that are invariant under the projective action associated to a $2$-cocycle of the fundamental group, and prove an Atiyah--Patodi--Singer index theorem in this setting, which we quickly recall. 

Let  $\Mt$ be the universal covering of a closed manifold $M$ with $\pi_1(M)=\G$, and $\Et\to \Mt$ be the lift of a vector bundle $E\to M$. 
A \emph{projective action} $T$ of $\G$ on $L^2(\Mt, \Et)$ is by definition a map $\G \to \cU(L^2(\Mt,\Et)),~ \g\mapsto T_\g$ satisfying  $$T_\g\circ T_{\g'}=\s(\g,\g')T_{\g\g'}$$ where  $\s(\g,\g')\in  U(1)$ and is actually a group 2-cocycle. $T$ is therefore called a \emph{$(\G, \s)$-action}.

The bounded operators on $L^2(\Mt, \Et)$ that commute with $T$ form a von Neumann algebra $B(L^2(\Mt, \Et))^T$, and can be expressed as a von Neumann algebraic tensor product
$$
B(L^2(\Mt, \Et))^{T}\cong \mathcal N(\G,\s)\otimes B(L^2(\mathcal F, \Et_{|\mathcal F}))
$$
where $\mathcal F$ is a fundamental domain, and $ \mathcal N(\G,\s)$ is the \emph{twisted von Neumann algebra of $\G$}, \emph{i.\,e.} defined through the twisted convolution product.

There is a semifinite trace on this algebra generalizing Atiyah's $L^2$-trace, which enables one to do twisted $L^2$-index theory.

The study of projectively invariant operators was initiated by Gromov, who considered a hermitian line bundle with unitary connection $(E,\nabla)$ on the universal covering of a closed manifold such that the fundamental group does not act on it but its curvature is invariant. He generalized Atiyah's $L^2$-index theorem to this setting \cite[\S 2.3.B]{Gr}.
 
Projectively invariant elliptic operators appear in the mathematical description of magnetic fields, in particular in some models of the fractional quantum Hall effect (see for example \cite{mm1, mm} and references therein).
In this context, the spectral theory of such operators is often referred to as \emph{noncommutative Bloch theory}, alluding to the fact that the classical Bloch theory studies operators which are invariant under the action of an abelian group, while here the group is arbitrary \cite{Gu}.

From the geometric point of view, these operators give interesting invariants analogous to those studied in $L^2$-index theory, or more generally higher index theory. For instance, twisted $L^2$-signatures and $\hat A$-genera are linear combinations of their higher analogues \cite{Ma1,Ma2}. 

Exploiting this and following an idea of Gromov \cite[\S 9$\frac{1}{7}$]{Gr1}, Mathai obtained a twisted analogue of the Mishchenko--Kasparov approach to the Novikov conjecture: in \cite{Ma2} he proves the conjecture for all \emph{low degree cohomology classes}, \emph{i.\,e.} those in the subring of $H^*(B\G,\Q)$ generated by cohomology classes of degree less or equal than $2$. 
The same technique yields a vanishing result for the higher $\hat A$-genus associated with a low degree cohomology class on a closed spin manifold admitting a metric of scalar curvature \cite{Ma1}.

The  strategy is the following: Starting (for simplicity) with $\mathbf{c}=[c] \in H^2(\G,\Q)\cong H^2(B\G,\Q)$, one considers twists $\s^{s}=e^{isc}$ parametrized by $s \in \Q$. The projectively invariant operators one deals with can be interpreted as Dirac operators twisted by a Mishchenko type bundle whose fiber is the twisted group $C^*$-algebra $C^*(\G,\s^s)$. The curvature of the bundle depends linearly on $s$. In particular, the twisted spin Dirac operator of a manifold with positive scalar curvature is invertible for $|s|$ small enough. Its index is a polynomial in $s$ whose coefficients are higher $\hat A$-genera. This leads to the vanishing result mentioned above.

Marcolli and Mathai studied higher twisted index theory on covering spaces of good orbifolds,  generalizing the higher index theorem of Connes and Moscovici \cite{mm}. This is used in \cite{mm1} to compute the range of the trace on the $K$-theory of certain twisted group algebras. As a further application, in \cite{mm} certain ranges of higher cyclic traces are computed, which are relevant in the model of the quantum Hall effect.

\medskip 

So far only primary twisted invariants have been considered. Our goal is to define and study secondary invariants, in particular twisted $\eta$- and $\rho$-invariants. (Note that the sense of ``twisted'' here is different from the one in \cite{bm1,bm2,bm3}, also dealing with secondary invariants, and in general from papers concerned with twisted $K$-theory.)

To do so, we sharpen some of the tools developed by Mathai, with particular attention to the dependence on the choices involved, and to the extension to manifolds with boundary.

First we prove a general von Neumann algebraic Atiyah--Patodi--Singer index theorem of which the twisted and the classical $L^2$-Atiyah--Patodi--Singer index theorems are special cases (\S \ref{neumann_Cstar}). Here we use a different strategy than Ramachandran \cite{r}, namely we derive our theorem from the $C^*$-algebraic Atiyah--Patodi--Singer index theorem \cite{ps1}.

Using this formalism we can associate well-defined twisted $\eta$-invariants to any family of positive traces $\tau_s$ on $C^*(\G,\s^s)$ (\S \ref{etarho}). However, we need to assume that the traces are invariant under automorphisms induced by characters on $\G$, i.\,e. automorphisms of the form $\delta_\g \mapsto \chi(\g)\delta_\g$. 

Our main results concern spin manifolds with positive scalar curvature. Here twisted $\rho$-invariants $\rhoct$ associated to the spin Dirac operator can be defined for any invariant delocalized trace, for $s$ small enough. Therefore we consider  $\rhoct$ as a germ at $s=0$. 

We define a new equivalence relation of metrics of positive scalar curvature on a fixed closed spin manifold $M$ with a map $f\colon M \to B\G$ classifying the universal covering (\S \ref{bordism_inv}). This relation, called strong $\G$-bordism, is stronger than $\G$-bordism but weaker than concordance. In high dimensions it is equivalent to $\G$-bordism. We show, by applying the $C^*$-algebraic index theorem, that $\rhoct$ is invariant under strong $\G$-bordism.

Following ideas of Botvinnik--Gilkey and Leichtnam--Piazza \cite{BG,lpetapos} and using a product formula for $\rhoct$, we prove that manifolds whose fundamental groups have torsion and a particular product structure have infinitely many $\G$-bordism classes of positive scalar curvature metrics, if they admit at least one (\S \ref{app}). As in \cite{lpetapos} our results apply to even and odd dimensions; on the other hand, our techniques are complementary, in the following sense:
the techniques of \cite{lpetapos} work for Gromov hyperbolic groups (by using the most general version of the higher Atiyah--Patodi--Singer index theorem proved in \cite{lptwist}), while we do not need any growth condition, but we have an additional algebraic assumption, namely we ask that the abelization is finite.

Furthermore we show that $\rhoct$ is trivial for torsion free groups fulfilling the maximal Baum--Connes conjecture by combining methods of Piazza--Schick \cite{ps1} and Hanke--Schick \cite{HaS}. 

These results show a strong analogy between our invariants and the higher invariants introduced by Lott \cite{lohigheta}; indeed we show that under some assumptions on $\Gamma$, the twisted eta invariant can be expressed by higher eta invariants in the same way as it holds for the index class (\S \ref{sec10}).
For general groups this remains an open question.

We also establish some elementary facts about twisted spectral invariants of the signature operator  (\S \ref{sign}). In particular we show that the spectral flow of a path of twisted signature operators with varying metric may be nonzero, in contrast to the $L^2$-case \cite{aw}. Thus, the standard method for showing that $\rho$-invariants are metric independent fails here. Furthermore, a straightforward definition of twisted signatures for manifolds with boundary would not have the required invariance properties. 

Last but not least, we consider the higher situation (\S \ref{high_twist_APS}), where $\tau_s$ is not a trace but a densely defined cyclic cocycle. From the general noncommutative Atiyah--Patodi--index theorem proven in \cite{waAPS}, we derive a higher twisted Atiyah--Patodi--Singer theorem generalizing the higher Atiyah--Patodi--Singer index theorem of Leichtnam and Piazza on the one hand \cite{lphighAPS} and the higher twisted Atiyah--Singer index theorem by Marcolli and Mathai \cite{mm} on the other hand. Applications in the higher case are analogous to those of the twisted $\rho$-invariants and are not given in detail. In the appendix we study the extendability of cyclic cocycles to holomorphically closed subalgebras in the twisted case.

There remain many open questions: the relation of the twisted $\eta$-invariant to higher $\eta$-invariants for general groups or, for solvmanifolds, to the $\eta$-invariants studied by Marcolli in \cite{marc}; furthermore the properties of $\rhoct$ and other twisted invariants for the signature operator. It would also be interesting to have a twisted version of torsion invariants. 

\medskip 

\paragraph*{\textbf{Acknowledgements}}
The problem of defining $\eta$- and $\rho$-invariants for $2$-cocycle twists arose in the thesis of the first named author, and was the starting point of our research. We thank Paolo Piazza for drawing our attention to the subject and for helpful conversations. We also thank Thomas Schick for interesting discussions, in particular for comments clarifying the relationship between $\G$-bordism and strong $\G$-bordism, and the \emph{\'Equipe Alg\`ebres d'Operateurs} of Paris Diderot for their kind hospitality.

\section{Multipliers, twisted group algebras}
\label{multipliers}

We start by reviewing the constructions of twisted group algebras.

Let $\G$ be a discrete group. A \emph{(unitary)
projective representation} of $\G$ on a Hilbert space $H$ is a map $T\colon\G
\rightarrow \cU(H)$, $\g\mapsto T_\g$ such that there is $\s \colon\Gamma\times
\Gamma\rightarrow U(1)$ with
\begin{equation*}
\left\{\label{proj.repr}
\begin{array}{ll}
    T_e=1& \hbox{} \\
    T_\g\circ T_\mu=\s(\g,\mu)T_{\g\cdot \mu}   & \hbox{} \\
\end{array}%
\right. \end{equation*}
The map $\s$ is called a \emph{multiplier} of $\G$, it automatically satisfies
$$
\left\{%
\begin{array}{ll}
    \sigma(e,\gamma)=\sigma(\gamma,e)=1,& \hbox{} \\
    \sigma(\g_{1}\g_{2},\g_{3})\sigma(\g_{1},\g_{2})=\sigma(\g_{1},\g_{2}\g_{3})\sigma(\g_{2},\g_{3}) & \hbox{} \\
\end{array}
\right. $$ 
and is actually a group $2$-cocycle of $\G$ with coefficients in $U(1)$. Hence it defines an element $[\s]\in H^2(\G,U(1))$. 

\medskip

Recall that two cocycles $\s, \s'$ are cohomologous if there is $z\colon \G \to U(1)$ such that
$$\s'(\g,\mu)=\s(\g,\mu)(\partial z)(\g,\mu)$$
with $$\partial z(\g,\mu):=z(\g)z(\mu)z(\g\mu)^{-1} \ .$$
Automatically, $z(e)=1$.

Observe that
$\bar{\s}(\g,\mu):=\overline{\sigma(\g,\mu)}$ is still a
multiplier, and $\s\bar{\s}=1$.

\medskip
The projective representation $T$ is also called a \emph{$(\Gamma,\s)$-representation}. 
Two projective representations $T$ and
$T'$ are \emph{projectively equivalent} if there exists
$z \colon\G\rightarrow U(1)$ with $z(e)=1$ and an unitary isomorphism $f \colon\mathcal H_{1}\rightarrow
\mathcal H_{2}$ such that $T_\g=z(\g)f^{-1}T'_\g f$; two projective representations $T$
and $T'$ are \emph{linearly equivalent} if $z\equiv 1$. If
$T$ and $T'$ are projectively equivalent, then 
\begin{eqnarray*}
T_{\g}\circ T_{\mu}&=&z(\g)z(\mu)f^{-1}T'_{\g}\circ T'_\mu f=z(\g)z(\mu)\sigma'(\g,\mu)f^{-1}T'_{\g \mu}f\\
&=&z(\g)z(\mu)z(\g \mu)^{-1} \sigma'(\g,\mu)T_{\g \mu} \ .
\end{eqnarray*}
Thus the corresponding multipliers $\sigma, \sigma'$ are cohomologous via $\s=\s'\partial z$.

\medskip
On the vector space $\C\Gamma$ one can consider the twisted convolution
product
$$
(f\ast g)(\gamma):=\sum_{\g_{1}\g_{2}=\g}
f(\g_{1})g(\g_{2})\sigma(\g_{1},\g_{2})
$$
The operation $*$ is associative since $\sigma$ is a
$2$-cocycle.   Let $\delta_\g \in \C\Gamma$ be given as usual by 
$\de_{\g}(\mu)=1$, if $\mu =\g$, and $\de_{\g}(\mu)=  0$ if $ \mu\neq \g$. 
The twisted involution is defined by $(\delta_{\g})^*=\bar\s(\g,\g^{-1})\delta_{\g^{-1}}$.

We denote this algebra by $\C(\G,\s)$, and
call it as usual the \emph{twisted group algebra}. We denote the
convolution in $\mathbb{C}(\Gamma,\bar{\sigma})$ by the symbol
$\bar{*}$.

The involutive algebra $\mathbb{C}(\Gamma,\sigma)$ has unit $\de_
{e}$.

In general it holds that $\de_{\g}*\de_{\mu}=\sigma(\g,\mu)\de_{\g\mu}$.

\begin{rem} As a vector space,
$\mathbb{C}(\Gamma,\sigma)$ is generated by the functions
$\{\de_{\g}\;,\;\g \in \Gamma\}$ and it is dense in the Banach
$*$-algebra $L^{1}(\Gamma, \sigma)=\{f\colon \G\rightarrow \mathbb{C} \:
|\sum_{\G}|f(\g)| <\infty\}$ with twisted convolution defined
above.
\end{rem}
\begin{lem}
\label{extLemma}
Every $(\G,\s)$-representation on
a Hilbert space
$$
T\colon (\G,\s)\rightarrow \mathcal{U}(H)
$$
extends to a classical representation $\tilde{T}$ of the involutive algebra
$\C(\Gamma, \sigma)$.
\end{lem}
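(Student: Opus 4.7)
The plan is to define the extension by the natural formula $\tilde T(f):=\sum_{\g\in\G}f(\g)\,T_\g$ for $f\in \C(\G,\s)$, which is a finite sum since elements of $\C(\G,\s)$ are finitely supported, and to verify that this is a $*$-algebra homomorphism. It clearly suffices to check multiplicativity and compatibility with the involution on the generators $\de_\g$, since $\tilde T$ is $\C$-linear by construction.

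\smallskip

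For multiplicativity, I would compute
\[
\tilde T(\de_\g * \de_\mu) \;=\; \tilde T(\s(\g,\mu)\,\de_{\g\mu}) \;=\; \s(\g,\mu)\,T_{\g\mu} \;=\; T_\g \circ T_\mu \;=\; \tilde T(\de_\g)\,\tilde T(\de_\mu),
\]
using the defining relation of the projective representation. For general $f,g\in\C(\G,\s)$ multiplicativity then follows by bilinearity from the definition of the twisted convolution product.

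\smallskip

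For the $*$-compatibility, I would first observe that since $T_\g$ is unitary, $T_\g^*=T_\g^{-1}$. From $T_\g\circ T_{\g^{-1}}=\s(\g,\g^{-1})T_e=\s(\g,\g^{-1})\,\Id$ and $|\s(\g,\g^{-1})|=1$, one obtains
\[
T_\g^* \;=\; T_\g^{-1} \;=\; \overline{\s(\g,\g^{-1})}\,T_{\g^{-1}} \;=\; \bar\s(\g,\g^{-1})\,T_{\g^{-1}}.
\]
Comparing with the twisted involution $\de_\g^{*}=\bar\s(\g,\g^{-1})\de_{\g^{-1}}$, this gives
\[
\tilde T(\de_\g^*) \;=\; \bar\s(\g,\g^{-1})\,T_{\g^{-1}} \;=\; T_\g^{*} \;=\; \tilde T(\de_\g)^{*},
\]
and conjugate-linear extension yields $\tilde T(f^*)=\tilde T(f)^*$ for all $f$.

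\smallskip

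There is no real obstacle here: the statement is essentially a direct translation between the projective multiplicativity of $T$ and the definitions of the twisted product and involution on $\C(\G,\s)$. The only mildly subtle point is the involution identity, which requires knowing how to express $T_\g^{-1}$ in terms of $T_{\g^{-1}}$, and this is precisely where the factor $\bar\s(\g,\g^{-1})$ appearing in the definition of $\de_\g^*$ becomes natural.
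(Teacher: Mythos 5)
Your proposal is correct and follows essentially the same approach as the paper: define $\tilde T(f)=\sum_\g f(\g)T_\g$, then verify multiplicativity (which the paper does by a direct computation on general $f,g$, whereas you check it on the generators $\de_\g$ and appeal to bilinearity — a cosmetic simplification) and $*$-compatibility exactly as the paper does. No gaps; the argument is complete.
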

\begin{proof}
For $f \in \C(\Gamma, \sigma)$ define $\tilde{T}(f)=\sum_{\G}f(\g)T_\g\:\: \in \cB(H) $. The map 
$\tilde{T}\colon \C(\Gamma,
\sigma)\rightarrow\cB(H)$ is an involutive algebra homomorphism. In fact,
\begin{equation*}
\tilde{T}(f* g)=\sum_{\G}(f*
g)(\g)T_\g
=\sum_{\g\in\G}\left(\sum_{\mu\in\G}
f(\mu)g(\mu^{-1}\g)\s(\mu,\mu^{-1}\g)\right) T_\g
\end{equation*}
On the other hand, for $v\in H$
\begin{multline*}
\tilde{T}(f)\tilde{T}(g)v=
\tilde{T}(f)\left(\sum_{\g\in\G}g(\g)T_\g v \right)
=\sum_{\mu\in\G}\left(\sum_{\g\in\G}g(\g)T_\g v
\right)f(\mu)T_\mu=\\
=\sum_{\mu\in\G}\sum_{\g\in\G} f(\mu) g(\g)T_\mu T_\g v 
=\sum_{\mu\in\G}\sum_{\g\in\G}\s(\mu,\g)f(\mu)g(\g)T_{\mu\g} v
=\\
=\sum_{\mu\in\G}\sum_{\tau\in\G}\s(\mu,\mu^{-1}\tau)f(\mu)g(\mu^{-1}\tau)T_\tau v.
\end{multline*}
Hence $\tilde{T}(f*
g)(v)=\tilde{T}(f)\tilde{T}(g)v$. Moreover, the map $\tilde T $ is compatible with the involution:  $\tilde T ((\delta_\g)^*)=\bar \s(\g, \g^{-1})T_{\g^{-1}}$, and on the other hand $T_\g^*=T_{\g^{-1}}=\s(\g,\g^{-1})^{-1}T_{\g^{-1}}= \bar \s(\g, \g^{-1})T_{\g^{-1}}$. 
\end{proof}

\subsection{Maximal and reduced twisted $C^{*}$-algebras} 
\begin{definition}
On $\C(\Gamma, \sigma) $ define the seminorm
$$\n{f}':= \sup\{\n{\pi(f)}\: | ~ \pi \colon\C(\Gamma,
\sigma)\rightarrow \cB(H_{\pi}) \: \text{representation}\} \ ,$$ which
is actually a norm. The completion $$ \overline{\C(\Gamma,
\sigma)}^{\n{\;\:}'}=:\cgs
$$is called the \emph{maximal twisted $C^{*}$-algebra}.
\end{definition}

\begin{prop}\label{esten.repr}
Every projective representation $T\colon (\G,\s)\rightarrow
\mathcal{U}(H)$ extends to a representation of the maximal
twisted $C^{*}$-algebra
$$
\tilde{T} \colon C^{*}(\G,\s)\rightarrow \cB(H)
$$
\end{prop}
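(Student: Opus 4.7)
The plan is to combine Lemma \ref{extLemma} with the defining property of the maximal norm $\|\cdot\|'$, and then extend by continuity.

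First I would invoke Lemma \ref{extLemma} to obtain from the projective representation $T$ an involutive algebra homomorphism $\tilde T\colon \C(\G,\s)\to\cB(H)$, sending $f$ to $\sum_\g f(\g) T_\g$. This gives a representation of $\C(\G,\s)$ in the algebraic sense, and in particular $\tilde T$ is one of the representations $\pi$ appearing in the supremum that defines $\|\cdot\|'$ on $\C(\G,\s)$.

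Second, by the very definition of the maximal seminorm, for every $f\in\C(\G,\s)$ we have
\[
\|\tilde T(f)\|_{\cB(H)}\ \le\ \sup_{\pi}\|\pi(f)\|\ =\ \|f\|'.
\]
Hence $\tilde T$ is contractive from $(\C(\G,\s),\|\cdot\|')$ into $\cB(H)$. Since $\C(\G,\s)$ is dense in $\cgs=\overline{\C(\G,\s)}^{\|\cdot\|'}$ and $\cB(H)$ is complete, the bounded linear map $\tilde T$ has a unique continuous extension
\[
\tilde T\colon C^*(\G,\s)\longrightarrow\cB(H).
\]
The extension remains a $*$-homomorphism because multiplication and involution on $C^*(\G,\s)$ are continuous and the identities $\tilde T(f*g)=\tilde T(f)\tilde T(g)$ and $\tilde T(f^*)=\tilde T(f)^*$ already hold on the dense subalgebra $\C(\G,\s)$ by Lemma \ref{extLemma}.

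There is essentially no obstacle here: the whole content of the statement is packaged into the universal property built into the definition of $\|\cdot\|'$. The only point that deserves a brief mention is that, strictly speaking, $\tilde T$ must first be checked to be a \emph{bona fide} Hilbert space representation (so that it qualifies for inclusion in the supremum), which is exactly what Lemma \ref{extLemma} provides; after that, continuity and the extension to $C^*(\G,\s)$ are automatic.
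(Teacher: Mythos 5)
Your proposal is correct and follows essentially the same route as the paper: invoke Lemma \ref{extLemma} to get a $*$-representation of $\C(\G,\s)$, then use the universal property built into $\|\cdot\|'$ to extend. The only cosmetic difference is that the paper phrases the second step via ``$\cgs$ is the enveloping $C^*$-algebra of $L^1(\G,\s)$,'' while you unwind this to the concrete inequality $\|\tilde T(f)\|\le\|f\|'$ followed by extension by density --- the same argument, spelled out more explicitly.
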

\begin{proof}
By Lemma \ref{extLemma}, $T$ induces a representation $\tilde T$ of $\C(\G,\s)$. Moreover
$\cgs$ is the enveloping $C^{*}$-algebra of the Banach $*$-algebra
$L^{1}(\Gamma, \sigma)$, hence $\tilde T$ extends to $\cgs$ by
construction.
\end{proof}

Now we consider the dependence of $\cgs$ on $\s$.

Let $\sigma', \sigma$ be cohomologous, \emph{i.e.} $\s'=\s\partial z$. Note that $z\colon \G \to U(1)$ is uniquely determined up to multiplication by a group homomorphism $\G \to U(1)$. If $\s'=\s$, then $z$ is a group homomorphism.

We can define an isomorphism $$b_z \colon C^*(\G,\s')\rightarrow\cgs, ~ 
\delta_\g\mapsto z(\g)\delta_\g \ .$$
It is compatible with the product since 
$$b_z(\delta_{\g}* \delta_{\mu})=b_z(\s'(\g,\mu)\delta_{\g \mu})=\s'(\g,\mu)z(\g \mu)\delta_{\g \mu}$$
and, on the other hand,
$$
b_z(\delta_{\g})* b_z(\delta_{\mu})=z(\g)z(\mu)\s(\g,\mu)\delta_{\g \mu}=\s(\g,\mu)\partial z(\g,\mu) z(\g,\mu)\delta_{\g \mu} \ .
$$

This leads to the following definition:

\begin{definition}
\label{projiso}
Let $\s, \s'$ be multipliers on $\G$.
\begin{enumerate}
\item
Let $z\colon \G \to U(1)$ be such that $b_z\colon C^*(\G,\s')\rightarrow\cgs,~\delta_\g\mapsto z(\g)\delta_\g$ is an isomorphism. We call $b_z$ a \emph{projective isomorphism}.

\item
Let $\chi\colon \G \to U(1)$ be a group homomorphism. We call the induced automorphism  $b_{\chi}:\cgs \to \cgs, ~ \delta_g \mapsto \chi(g)\delta_g$ a \emph{projective automorphism}.
\end{enumerate}
\end{definition}

The above calculation implies that $z, \s, \s'$ as in (1) automatically fulfill $\s'(\g,\mu)=\s(\g,\mu)\partial z(\g,\mu)$.

Thus, the multipliers $\s, \s'$ are cohomologous if and only if there is a projective isomorphism between $C^*(\G,\s')$ and $\cgs$. It is uniquely determined up to composition by a projective automorphism of $\cgs$.

\begin{rem}
Note that (contrary to what is claimed in the proof of \cite[Lemma 1.4]{marange}) a projective automorphism does not necessarily induce the identity in $K$-theory: 

Consider $\G=\Z/2\Z$ and the nontrivial group homomorphism $\chi\colon \G \to U(1)$ with $\chi(\bar 1)=-1$. The complex representation ring $R(\G)$ is generated by the trivial representation and the representation induced by $\chi$. Composition with $b_{\chi}$ interchanges the associated representations of the group algebra. Hence it induces a nontrivial automorphism on $K_0(C^*\G)\cong R(\G)$. 

This also leads to an example for $K_1$: Consider the group $\G=\Z \times \Z/2\Z$. Since $$K_1(C^*\G) \cong K_1(C(S^1) \ten C^*(\Z/2\Z)) \cong K_1(C^*(\Z/2\Z)) \oplus K_0(C^*(\Z/2\Z)) \ ,$$ the above projective automorphism induces a nontrivial automorphism on $K_1(C^*\G)$.
\end{rem}

\subsubsection{Reduced twisted $C^*$-algebra} Consider the
projective representation $T \colon(\G,\s) \rightarrow \cB (l^{2}(\G))$ given
by
$$
(T_{\g}f)\,(x)=f(\g^{-1}x)\s(\g,\g^{-1}x)=(\de_{\g}*
f)(x) \ .
$$
It fulfills  $T_{\g}T_{\mu}=\s(\g,\mu)T_{\g\mu}$. Thus $T$ extends
to the left regular representation $\widetilde{T} \colon\C(\Gamma, \sigma)\rightarrow \cB (l^{2}(\G))$, which is given for $\phi \in \C(\Gamma,
\sigma)$ by 
$$\widetilde{T}_{\phi}(f)(x)=\sum_{\g\in\G}\phi(\g)(T_{\g}f)(x)
=
$$
$$
=\sum_{\g\in\G} \phi(\g) f(\g^{-1}x)\s(\g,\g^{-1}x)
=(\phi* f)(x).
$$
$\widetilde{T}$ is injective since the algebra contains $\de_{e}$.

The
reduced twisted $C^{*}$-algebra is now defined as the completion of
the image of $\C(\G,\s)$ in $\cB (l^{2}(\G))$, \emph{i.\,e.}
$$
\cgsr= \overline{\widetilde T(\C(\G,\s))}^{\n{\;\:}_{\cB}}\ .
$$

\section{Mathai's constructions for manifolds with boundary}
\label{mathai}

From now on we assume that $\G$ is finitely generated. Let $c\in H^{2}(B\G,\Q)$ be a fixed cohomology class.

Recall that Mathai's constructions in
\cite{Ma1} and \cite[\S\S 1-2]{Ma2} produce for any closed connected manifold $M$ with fundamental group $\G$
\begin{enumerate}
  \item  a \emph{geometrically defined multiplier $\s$}, and, more generally, a family $\s^s,~s \in \R,$ with $\s^0=1$ and $\s^1=\s$;
    \item a $(\G,\bar\s)$-action on every $\G$-invariant bundle $\Et\rightarrow \Mt$ on the universal covering $\Mt$ of $M$;
  \item a canonical $C^*(\G,\s)$-vector bundle $\mathcal V^\s\rightarrow M$
   whose isomorphism class only depends on $c$;
  \item by passing to the cocycle $\s^s$, a connection on $\mathcal V^{\s^s}$ with small curvature for $s$ small. 
\end{enumerate}

In the following we adapt these constructions to manifolds with boundary. We give a detailed exposition since for our purposes it is more critical than in Mathai's situation to understand precisely how $\s^s$ and the bundle with connection $\mathcal V^{\s^s}$ depend on the choices made on the way.

\medskip

Let $p \colon\widetilde{M}\rightarrow M$ be the
universal covering of a compact connected Riemannian manifold $M$ (possibly with boundary).  We assume that the restriction $p|_{\ra \widetilde M}\colon \ra \widetilde M \to \ra M$ is also the universal covering. Let $f \colon M\rightarrow B\G$ be a classifying map realizing
$$
\xymatrix{
\widetilde{M} \ar[r]^{\tilde{f}} \ar[d]^p & E\G\ar[d]^{\pi} \\
M\ar[r]_f  & B\G}
$$
If $\partial M\neq\emptyset$, we may assume that $f$ does not depend on the normal variable in a neighborhood of $\partial M$. 

\begin{lem}\label{eta}
There is $\eta \in \O^1(\widetilde{M},\R)$ such that $d\eta$ is $\G$-invariant and such that $\o \in \O^2(M,\R)$ given by $d\eta=p^*\o$ fulfills $-[\frac{\omega}{2\pi}]=f^*c$ in $H^2(M,\R)$. If $M$ is a manifold with boundary, then $\eta$ can be chosen such that the restriction to a collar neighborhood of the boundary is the pull back of some $\eta_1\in \O^1(\partial \Mt)$.
\end{lem}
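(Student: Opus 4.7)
The plan is first to produce a concrete de Rham representative $\omega$ of $-2\pi f^{*}c$ on $M$, pull it back to $\Mt$, observe that it becomes exact there, and then correct any global primitive near $\ra\Mt$ so as to achieve the collar form. Both claims boil down to the following single cohomological fact: since $f\circ p=\pi\circ\tilde f$ and $E\G$ is contractible, $p^{*}f^{*}c=\tilde f^{*}\pi^{*}c=0$ in $H^{2}(\Mt,\R)$. Thus for any closed $\omega\in\O^{2}(M,\R)$ representing $-2\pi f^{*}c$, the pullback $p^{*}\omega$ is closed, $\G$-invariant, and exact; picking any $\eta'\in\O^{1}(\Mt,\R)$ with $d\eta'=p^{*}\omega$ already settles the case $\ra M=\emptyset$.

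For the boundary case I would first refine the choice of $\omega$. Because $f$ is independent of the normal variable in a collar of $\ra M$, one can pick $\omega_{\partial}\in\O^{2}(\ra M,\R)$ representing $-2\pi(f|_{\ra M})^{*}c$ and extend it to a closed $\omega\in\O^{2}(M,\R)$ with $[\omega]=-2\pi f^{*}c$ and with $\omega=\pi_{\ra M}^{*}\omega_{\partial}$ on a collar $\ra M\times[0,\varepsilon)$, where $\pi_{\ra M}$ is the collar projection. Lift this collar to $U\cong\ra\Mt\times[0,\varepsilon)\subset\Mt$ with projection $\pi_{0}\colon U\to\ra\Mt$. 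Pick any $\eta'\in\O^{1}(\Mt,\R)$ with $d\eta'=p^{*}\omega$ and set $\eta_{1}:=\iota^{*}\eta'\in\O^{1}(\ra\Mt)$, where $\iota\colon\ra\Mt\hookrightarrow\Mt$ is the inclusion; then $d\eta_{1}=(p|_{\ra\Mt})^{*}\omega_{\partial}$. On $U$ the $1$-form $\alpha:=\eta'-\pi_{0}^{*}\eta_{1}$ is closed and satisfies $\iota^{*}\alpha=\eta_{1}-\eta_{1}=0$. Since $U$ deformation retracts onto $\ra\Mt$, the map $\iota^{*}\colon H^{1}(U,\R)\to H^{1}(\ra\Mt,\R)$ is an isomorphism, so $\alpha=df$ for some $f\in C^{\infty}(U,\R)$. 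Finally, choose a cut-off $\rho\in C^{\infty}(\Mt)$ supported in $U$ and equal to $1$ on a smaller collar $U'$, and set $\eta:=\eta'-d(\rho f)\in\O^{1}(\Mt,\R)$. Then $d\eta=p^{*}\omega$ globally (so $d\eta$ is $\G$-invariant and descends to $\omega$), while on $U'$ one has $\eta=\eta'-df=\pi_{0}^{*}\eta_{1}$, giving the required collar structure.

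The key subtlety, and the only non-routine step, is the construction of $\eta_{1}$ in the collar argument: an arbitrary primitive of $p^{*}\omega$ need not look like a pullback from $\ra\Mt$ near the boundary, so one has to choose $\eta_{1}$ on $\ra\Mt$ in a way that is coherent with $\eta'$ (namely as its pullback along $\iota$). This makes $\eta'-\pi_{0}^{*}\eta_{1}$ cohomologically trivial on the collar via the retraction $U\simeq\ra\Mt$, which is exactly what allows the repair by a globally exact term $d(\rho f)$ without spoiling $d\eta=p^{*}\omega$.
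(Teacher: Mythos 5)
Your proof is correct, but it takes a genuinely different route from the paper's. The paper first lifts $Nc$ to an integral class, realizes it as the first Chern class of a hermitian line bundle $\mathcal L\to B\G$, pulls back to $\mathcal L'=f^*\mathcal L$ with a unitary connection $\nabla$ of curvature $i\omega$, and then trivializes $p^*\mathcal L'$ (which is trivial because $\pi^*\mathcal L\to E\G$ is trivial); in the trivialization $\nabla$ becomes $d+i\eta$, whence $d\eta=p^*\omega$, and dividing by $N$ finishes. For the boundary case the paper simply takes the connection and the trivialization to be of product type near $\ra M$. You instead argue purely in de Rham cohomology: the identity $p^*f^*c=\tilde f^*\pi^*c=0$ (contractibility of $E\G$) makes $p^*\omega$ exact for any real representative $\omega$ of $-2\pi f^*c$, and in the boundary case you first arrange $\omega$ to be of collar type and then correct an arbitrary primitive $\eta'$ by a globally exact term $d(\rho f)$ so that it becomes $\pi_0^*\eta_1$ near $\ra\Mt$. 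Both arguments rest on the same fact (triviality of $c$ on $E\G$); the paper's line-bundle packaging buys the interpretation of $i\omega$ as a curvature form, which the authors explicitly note is used throughout the paper and is the source of the factor $-2\pi$, while your route is more elementary, does not need $Nc\in H^2(B\G,\Z)$, and spells out the boundary correction that the paper leaves to the reader. One small point worth filling in is your assertion that $\omega$ can be chosen so that $[\omega]=-2\pi f^*c$ and $\omega=\pi_{\ra M}^*\omega_\ra$ on a collar: this follows by the same cutoff device you use for $\eta$, replacing a given representative $\omega_0$ by $\omega_0+d(\rho\beta)$ where $\pi_{\ra M}^*\iota^*\omega_0-\omega_0=d\beta$ on the collar (exactness using the retraction of the collar onto $\ra M$).
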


\begin{proof}
We can argue as in \cite[p. 3]{HaS}. For $N \in \mathbb N$ large enough, $Nc$ lifts to a class in $H^2(B\G,\Z)$. Thus, there is a line bundle $\mathcal L\rightarrow B\G$ whose first Chern class is $Nc$. Endow the bundle $\mathcal{L}':=f^*\mathcal{L}$ with a hermitian metric and a unitary connection $\nabla$. Let $i\o$ be its curvature form. It holds that $c_1(\mathcal{L}')=[\frac{-\omega}{2\pi}]=f^*(Nc)$ in $H^2(M,\R)$. Since $E\G$ is contractible, the line bundle $\pi^*\mathcal{L}\rightarrow E\G$ is trivial and thus the bundle $p^*\mathcal{L}'$ is trivial as well. After fixing a trivialization, the lift of $\nabla$ to $\widetilde M$ is given by $d+i\eta$, for some $\eta\in \O^1(\widetilde{M},\R)$. Its curvature equals $id\eta=ip^*\o$. Thus the forms $\frac 1N \eta$ and $\frac 1N \o$ fulfill the claim.

If $\partial M\neq \emptyset$, we choose a connection on $\mathcal{L}'$ and a trivialization of $p^*\mathcal{L}'$ which are of product type near the boundary. Then the form $\eta$ has the desired property.
\end{proof}

Of course, the factor $-2\pi$ in the statement of the Lemma is not relevant. It comes from the interpretation of $i\o$ as the curvature of a line bundle, which will be important throughout the paper. 
\medskip

For consistency with \cite{Ma1,Ma2} we consider the left action of $\G$ on $\widetilde M$ by setting, as usual, $\g x:=x \g^{-1}$. In the following, the notation refers to the left action if it is not clear otherwise.

\medskip

Let the situation be as in the previous Lemma. We set $\tilde{\o}=p^{*}\o$. Then for all $\g\in \G$
$$
0=\g^{*}\tilde{\o}-\tilde{\o}=d(\g^{*}\e-\e) \ .$$ 
Since
$\widetilde{M}$ is simply connected, the closed $1$-form $\g^{*}\e-\e$
is exact. Thus we may choose $\psi_{\g}\colon\widetilde{M}\rightarrow \R$ such
that $d\psi_{\g}=\g^{*}\e-\e$. We assume that $\psi_{e}=0$.

Hence for fixed $\g, \mu \in \G$
\begin{equation*}
   \psi_{\mu}(x)+\psi_{\g}(\mu
   x)-\psi_{\g\mu}(x)=\text{constant}.
\end{equation*}

We choose $x_0 \in \widetilde M$ and define $\s\colon \G\times \G\to U(1)$ by
\begin{equation}\label{c-sigma} 
\s(\g,\mu):=\exp\bigl(i (\psi_{\mu}(x_0)+\psi_{\g}(\mu
   x_0)-\psi_{\g\mu}(x_0))\bigr) \ .
\end{equation}
From the previous equation it follows that $\s$ is a multiplier independent of $x_0$. 

Note that in \cite{Ma1,Ma2} the functions $\psi_{\g}$ were normalized by $\psi_{\g}(x_0)=0$. Then
\begin{equation*}
  \sigma(\g,\mu)=\exp(i (\psi_{\g}(\mu x_0)) \ .
\end{equation*}
In the following, we do not assume this normalization since the additional flexibility simplifies some calculations. 

\begin{lem}
\textrm{\cite[\S\S 1.3-1.4]{Ma2}} Different choices of $\psi_{\g}, \e$ and $\o$ in the above construction give
cohomologous multipliers in \eqref{c-sigma}. 
\label{lemma-scelte}
\end{lem}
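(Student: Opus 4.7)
The plan is to decompose the statement into three independent cases — varying only $\psi_\gamma$, only $\eta$, and only $\omega$ — and then chain them together. The three key tools are: $\widetilde M$ is simply connected (so every closed $1$-form is exact and every locally constant function is an actual constant); the covering satisfies $p \circ \gamma = p$ for all $\gamma \in \G$ (so pullbacks from $M$ are automatically $\G$-invariant); and two representatives of the class $-2\pi f^*c \in H^2(M,\R)$ differ by an exact form on $M$.

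First, I would fix $\omega$ and $\eta$ and let $\psi'_\gamma$ be another family of primitives of $\gamma^*\eta - \eta$ with $\psi'_e = 0$. Then $\psi'_\gamma - \psi_\gamma$ is closed on $\widetilde M$, hence equal to a constant $a(\gamma) \in \R$ with $a(e)=0$. Substituting into \eqref{c-sigma} should yield
\begin{equation*}
\sigma'(\gamma,\mu) = \sigma(\gamma,\mu)\exp\bigl(i(a(\mu)+a(\gamma)-a(\gamma\mu))\bigr) = \sigma(\gamma,\mu)\,(\partial z)(\gamma,\mu),
\end{equation*}
with $z(\gamma):=e^{ia(\gamma)}$, so the multipliers are cohomologous.

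Second, I keep $\omega$ fixed but replace $\eta$ by $\eta'$. Then $d(\eta'-\eta)=0$, so $\eta' = \eta + d\phi$ for some $\phi \in C^\infty(\widetilde M, \R)$. The explicit choice $\psi'_\gamma(x) := \psi_\gamma(x) + \phi(\gamma x) - \phi(x)$ satisfies $d\psi'_\gamma = \gamma^*\eta' - \eta'$ and $\psi'_e = 0$; the $\phi$-contributions in \eqref{c-sigma} telescope pairwise, leaving the multiplier literally unchanged. Any other admissible $\psi'_\gamma$ for $\eta'$ then reduces to the first case and still gives a cohomologous multiplier.

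Third, replace $\omega$ by $\omega' = \omega + d\alpha$ with $\alpha \in \Omega^1(M,\R)$, and set $\eta' := \eta + p^*\alpha$. Since $p^*\alpha$ is $\G$-invariant, $\gamma^*\eta' - \eta' = \gamma^*\eta - \eta = d\psi_\gamma$, so the same $\psi_\gamma$ remains admissible for $(\omega', \eta')$ and the multiplier is unchanged; combining with the second case handles an arbitrary $\eta'$ compatible with $\omega'$. Chaining the three reductions gives the lemma. I do not expect any serious obstacle: the verifications are routine algebraic manipulations, and the only point requiring care is the $\G$-equivariance at each step, which is automatic from the $\G$-invariance of $p^*\alpha$ in the third case and from the symmetric shape of $\psi'_\gamma$ in the second.
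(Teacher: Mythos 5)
Your proof is correct and follows essentially the same three-step decomposition as the paper's: first varying $\psi_\gamma$ alone to get a coboundary $\partial z$ with $z(\gamma)=e^{ia(\gamma)}$; then varying $\eta$ with the specific choice $\psi'_\gamma = \psi_\gamma + \gamma^*\phi - \phi$ so the multiplier is unchanged; then varying $\omega$ by an exact form $d\alpha$ via $\eta' = \eta + p^*\alpha$, for which the same $\psi_\gamma$ works. The telescoping computation in your second step and the use of $\G$-invariance of $p^*\alpha$ in the third are exactly the paper's arguments.
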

\begin{proof}
We proceed in three steps.
\begin{enumerate}
\item  First we consider $\eta$ fixed. Thus let $\psi_\g'$ be as above with $d\psi_{\g}'=\g^{*}\e-\e$. Note that $a_{\g}=\psi_{\g}'-\psi_{\g}$ is constant. It holds that 
\begin{align*}
\lefteqn{\exp(i (\psi'_{\mu}(x_0)+\psi'_{\g}(\mu
   x_0)-\psi'_{\g\mu}(x_0)))}\\
   &=\exp(i(a_{\mu}+a_{\g}-a_{\g\mu}))\exp(i (\psi_{\mu}(x_0)+\psi_{\g}(\mu
   x_0)-\psi_{\g\mu}(x_0)))\ .
\end{align*}
The equation shows that the two induced multipliers are cohomologous.

    \item Now we only assume that $\omega$ is fixed. Let $\e$ and $\e'$ be such that
    $d\e=d\e'=\tilde{\o}$. Then there is $h\colon \Mt\rightarrow \C$ such
    that  $dh= \e'-\e$. We may set $\psi_{\g}'=\psi_{\g}+\g^{*}h-h$ since by (1) the cohomology class does not depend on the choice of $\psi_{\g}'$. It follows that
    \begin{eqnarray*}
\lefteqn{\s'(\g,\mu)= \exp\bigl( i (\psi'_{\mu}(x_0)+\psi'_{\g}(\mu
   x_0)-\psi'_{\g\mu}(x_0))\bigr)}\\
   &=& \exp\bigl( i(\psi_{\mu}(x_0)+\mu^{*}h(x_0)-h(x_0)+\psi_{\g}(\mu
   x_0)+\g^{*}h(\mu x_0) \\
   && \qquad -h(\mu x_0)-\psi_{\g\mu}(x_0)-(\g\mu)^*h(x_0)+h(x_0))\bigr)   =\s(\g,\mu) \ .
\end{eqnarray*}

    \item Let $\o'$ be such that
    $[\o]=[\o']\in H^2(M,\R)$.
    Then $\o'-\o=d\phi$ for some $\phi\in
    \O^{1}(M,\R)$.
    Let $\e $ be such that $d\e=\tilde{\o}$. By (2) the cohomology class of $\s'$ does not depend on the choice of $\e'$
    with $d\e'=\tilde{\o}'$. So we can choose
    $\e'=\e+p^{*}\phi$. Then $\g^{*}\e'-\e'=\g^{*}\e-\e$, because the contribution from the $\G$-invariant term $p^{*}d\phi$ disappears. By (1) we are also free in our choice of $\psi_\g'$. Since
    $d\psi_{\g}'=d\psi_{\g}$, we may choose $\psi_{\g}'=\psi_{\g}$ and get $\s'=\s$.
\end{enumerate}
\end{proof}

Thus, starting from a fixed $c\in H^{2}(B\G,\Q)$ the above construcion
gives a well defined $[\s]\in H^{2}(\G,U(1))$, geometrically described via a universal $\G$-covering.

\subsection{Projective 
action on a $\G$-invariant bundle on a universal covering}

Let $q\colon \widetilde{E}\rightarrow \widetilde{M}$ be a $\G$-invariant vector
bundle on the universal $\G$-covering $\widetilde{M}$ (\emph{i.\,e.} $q$ is the pull-back
of a bundle $E\rightarrow M$). As usual, the group $\G$ acts from the right on $\widetilde E$, but in general, if not clear from the notation, we will consider the associated left action $\g u :=u\g^{-1}$.

Let $\s(\g,\mu)=\exp(i (\psi_{\mu}(x_0)+\psi_{\g}(\mu
   x_0)-\psi_{\g\mu}(x_0)))$ be constructed as above and consider
$A\colon \G\rightarrow \Aut(\widetilde{E})$ given by
$$
A_{\g}(u):=\g e^{-i \psi_{\g}(q(u))}u$$
Then
\begin{multline*}
(A_{\mu}\circ A_{\g})u= \mu\left(e^{-i
\psi_{\mu}(q(\g u))} \g e^{-i \psi_{\g}(q(u))} u
\right)=\\
=\mu\g e^{-i\left[\psi_{\mu}(\g\cdot
q(u))+\psi_{\g}(q(u)) \right]}u=\\
= \bar\s(\mu,\g) \mu\g e^{-i \psi_{\mu\g}(q(u))}u= \: \bar\s(\mu,\g)A_{\mu\g}(u).
\end{multline*}
So we have seen that $A$ is a $(\G,\bar\s)$-action
on the total space $\widetilde{E}$. (It will become clear later why we consider $(\G,\bar\s)$-actions and not $(\G,\s)$-actions here.)
\medskip

On sections $\Cinf(\widetilde{E})$ of $q$ the induced projective action is given by
$$(A_{\g}\t) (x)=
\g e^{-i\psi_{\g}(x\g)}\t(x\g) \ ,
$$
or, equivalently, $A_{\g}\t=(\g^{-1})^*(e^{-i\psi_{\g}}\t)$.

\begin{lem}\label{scelte-repr} 
Different choices of $\psi_\g, \e$ and $\o$ induce projectively equivalent actions.
More precisely: If $A_\g, A'_\g$ are two actions constructed as above but with different choices, then there are a map $z\colon \Gamma \to U(1)$ and a smooth function $f\colon \widetilde M \to U(1)$ such that for all $u \in \widetilde E$
$$A'_\g(u)=\bar z(\g) f(\g \cdot q(u))^{-1}A_{\g} \bigl( f(q(u))u \bigr) \ .$$

If $A_{\g}$ is a $(\Gamma,\bar\s)$-action, then $A'_\g$ is a $(\Gamma, \bar \s')$-action with $\s'= \s \partial z$.
\end{lem}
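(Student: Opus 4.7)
The plan is to imitate the three-step decomposition used in the proof of Lemma \ref{lemma-scelte}. Since the projective action on $\widetilde E$ has the simple expression $A_\g(u) = \g e^{-i\psi_\g(q(u))} u$ and depends on $\eta$ and $\o$ only through the family $\{\psi_\g\}$, it is enough to track how $\psi_\g$ transforms in each of the three elementary modifications, and then read off the functions $z$ and $f$ that realise the equivalence.

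In step (1), $\eta$ is fixed and $\psi'_\g = \psi_\g + a_\g$ for real constants $a_\g$; substituting immediately gives $A'_\g(u) = e^{-ia_\g} A_\g(u)$, so one takes $z(\g) = e^{i a_\g}$ and $f \equiv 1$. In step (2), $\o$ is fixed and $\eta' = \eta + dh$ with $h\colon \widetilde M\to \R$; following part (2) of Lemma \ref{lemma-scelte} one sets $\psi'_\g = \psi_\g + \g^*h - h$ and chooses $z\equiv 1$, $f := e^{ih}$. A direct calculation, using that $q(\alpha u) = q(u)$ and that $\g$ commutes with multiplication by $U(1)$-scalars on the fibers of $\widetilde E$, collapses the formula to $A'_\g(u) = e^{-i(h(\g q(u))-h(q(u)))} A_\g(u) = f(\g q(u))^{-1} A_\g(f(q(u))u)$. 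Step (3), changing $\o$ within its cohomology class by $d\phi$, is trivial with the choice $\eta' = \eta + p^*\phi$ and $\psi'_\g = \psi_\g$: then $A'_\g = A_\g$ and one can take $f \equiv 1$, $z \equiv 1$. The general case follows by composing the three equivalences; the composed $z$ and $f$ inherit the asserted form.

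For the second assertion I plan to compute $A'_\mu A'_\g(u)$ by plugging in the formula just derived. The key observation is that $A_\g$ is $U(1)$-scalar-linear and that $q(A_\g w) = \g q(w)$, so that the $f(\g q(u))$ and $f(\g q(u))^{-1}$ factors appearing in the intermediate step cancel. Applying the $(\G,\bar\s)$-relation $A_\mu A_\g = \bar\s(\mu,\g) A_{\mu\g}$ then yields
\begin{equation*}
A'_\mu A'_\g(u) = \bar z(\mu)\bar z(\g)\bar\s(\mu,\g)\, f(\mu\g q(u))^{-1} A_{\mu\g}(f(q(u))u),
\end{equation*}
which must equal $\bar\s'(\mu,\g) A'_{\mu\g}(u) = \bar\s'(\mu,\g)\bar z(\mu\g) f(\mu\g q(u))^{-1} A_{\mu\g}(f(q(u))u)$. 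Comparing the two expressions gives $\bar\s'(\mu,\g) = \bar z(\mu)\bar z(\g)\bar z(\mu\g)^{-1}\bar\s(\mu,\g)$, i.\,e. $\s' = \s\,\partial z$.

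I do not expect a substantive obstacle: the argument is essentially bookkeeping, and the only thing to watch is that the $U(1)$-valued factors $f$ and $z$ can be moved freely through $q$, through the $\G$-action, and in and out of $A_\g$. These compatibilities are exactly what make the $f$-factors telescope in the cocycle computation and what reduce the cocycle defect to $\partial z$.
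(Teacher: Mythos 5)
Your three-step decomposition (constant shift of $\psi_\g$, change of $\eta$ with $\omega$ fixed, change of $\omega$ within its class with $\psi_\g$ unchanged) is exactly the paper's proof, with the same choices of $z$ and $f$ in each case. For the final assertion $\s'=\s\,\partial z$ you spell out the telescoping cocycle computation in the bundle setting, whereas the paper just refers back to the general discussion of projectively equivalent representations in \S\ref{multipliers}; the underlying computation is the same.
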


\begin{proof} The method of proof is analogous to the one of Lemma \ref{lemma-scelte}. Again, we proceed in three steps. 

\begin{enumerate}
\item  With $\eta'=\eta$ and $\psi'_{\g}-\psi_{\g}
=a_{\g}\in \R$ it holds that
$$A_{\g}'(u)=\g e^{-i \psi_{\g}'(q(u))}u=e^{-ia_\g}\g e^{-i \psi_{\g}(q(u))}u=e^{-ia_\g}A_{\g}(u) \ .$$

\item If $d\e=d\e'$, then $\e'-\e=dh$. We choose 
$\psi_{\g}'=\psi_{\g}+\g^{*}h-h$ and set $f:=e^{i h}$. Then
\begin{align*}
A_{\g}'(u)&=\g e^{-i \psi_{\g}'(q(u))}u= \g
e^{-i \psi_{\g}(q(u)) }e^{-i\g^{*}h(q(u)) }e^{i h(q(u))}u \\
&=e^{-i h(\g \cdot q(u))}A_{\g}(e^{i h(q(u))}u)=f(\g \cdot q(u))^{-1}A_{\g}f(q(u))u \ .
\end{align*}
For general $\psi'_{\g}$ the assertion follows from (1).

\item Part (3) of the proof of Lemma \ref{lemma-scelte} shows that for two different choices of $\omega$ one may choose the same functions $\psi_{\g}$. Since $A_{\g}$ only depends through these functions on $\omega$, the claim follows here with $f=1$, $z=1$.
\end{enumerate}
The last assertion follows from the general properties of projective representations given in \S \ref{multipliers}.
\end{proof}

In particular we get the following: If $A_{\g}'$ and $A_{\g}$ are projective actions associated to  the same multiplier $\sigma$, but coming from different choices, then the function $z\colon \G \to U(1)$ is a group homomorphism.

\subsection{$\cgs$-bundle on $M$ with $\pi_1(M)=\G$} \textrm{\cite[\S 2]{Ma2}} 
\label{bcgs-bundle}

Consider on $\Mt$ the trivial line bundle $L=\Mt\times \C\rightarrow \Mt$ and the trivial $C^*(\G,\s)$-bundle   $L\otimes
\cgs\rightarrow \Mt$. Define on the total space $L\otimes
\cgs$ the following action, denoted  by $A\otimes T$,
\begin{equation}
\label{AtensT}
(A\otimes T)_{\g}(u\otimes v)=A_{\g}(u)\otimes T_{\g}v
\end{equation}
where $T_{\g}v=\de_{\g}*\,v$. $A\otimes T$
turns out to be a classical (left) action of the group $\G$, because it
is defined as the tensor product of the projective
$(\G,\bar\s)$-action $A_{\g}$ and the projective
$(\G,\s)$-action $T$ (recall that $\s\bar{\s}=1$). Moreover $q((A\otimes
T)_{\g}w)=\g\cdot q(w)$, thus there exist a quotient bundle
on $M$, with fibre $\cgs$, which is denoted by
$$\N^{\sigma}:=\frac{L\otimes \cgs}{A\otimes T} \ .$$

The bundle $\N^{\sigma}\to M$ plays the role of the Mishchenko--Fomenko bundle
$\N=\Mt\times_{\G}C^*\G$ in the untwisted case.

The standard $\cgs$-valued scalar product on $L \ten \cgs$  induces a $\cgs$-valued scalar product on $\N^{\sigma}$.

\begin{lem}\label{choices}
Assume that $\N^\s$ and $\N^{\s'}$ arise from different choices of $\psi_\g, \eta$ and $\omega$ in the constructions above. Then there is a projective isomorphism $b_z \colon C^{*}(\G,\s') \to \cgs$ as in Def. \ref{projiso} such that the bundles $\N^{\s}$ and $\N^{\s'}\ten_{b_z} \cgs$ are unitarily isomorphic as $\cgs$-vector bundles.
\end{lem}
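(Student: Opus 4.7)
The plan is to derive this from the projective equivalence of $(\G,\bar\sigma)$-actions furnished by Lemma \ref{scelte-repr}. Apply that lemma to the trivial line bundle $L=\Mt\times\C$ used in the definition of $\N^\sigma$: the two sets of choices yield $(\G,\bar\sigma)$- and $(\G,\bar\sigma')$-actions $A$, $A'$ on $L$ together with a map $z\colon\G\to U(1)$ and a smooth $f\colon\Mt\to U(1)$ with $\sigma'=\sigma\,\partial z$ and
$$A'_\g(u)=\bar z(\g)\,f(\g\cdot q(u))^{-1}\,A_\g(f(q(u))\,u).$$
This same $z$ provides the candidate $b_z\colon C^*(\G,\sigma')\to\cgs$, $\delta_\g\mapsto z(\g)\delta_\g$, which is a projective isomorphism in the sense of Definition \ref{projiso}(1). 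A short calculation using the cocycle identity $\sigma'(\g,\g^{-1})=\sigma(\g,\g^{-1})\,z(\g)\,z(\g^{-1})$ shows that $b_z$ is in fact a $*$-isomorphism, so that $\N^{\sigma'}\ten_{b_z}\cgs$ is well defined.

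Next, I would identify $\N^{\sigma'}\ten_{b_z}\cgs$ concretely as the quotient of the trivial bundle $L\ten\cgs$ on $\Mt$ under the action $(x,w)\mapsto(\g x,\,e^{-i\psi'_\g(x)}\,z(\g)\,\delta_\g*w)$, the extra factor $z(\g)$ recording the identity $b_z\circ T'_\g\circ b_z^{-1}=z(\g)\,T_\g$ that follows from the multiplicativity $b_z(\delta_\g\mathbin{*'}w')=z(\g)\,\delta_\g*b_z(w')$. Then I would introduce the bundle map
$$\Phi\colon L\ten C^*(\G,\sigma')\ten_{b_z}\cgs\longrightarrow L\ten\cgs,\qquad(x,w)\longmapsto(x,\,f(x)\,w).$$
Equivariance of $\Phi$ with respect to the two $\G$-actions reduces fiberwise to the single scalar identity $f(\g x)\,e^{-i\psi'_\g(x)}\,z(\g)=e^{-i\psi_\g(x)}\,f(x)$, and this is precisely the pointwise content of the equivalence formula from Lemma \ref{scelte-repr} applied to $u\in L_x$. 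Consequently $\Phi$ descends to a $\cgs$-linear bundle map $\N^{\sigma'}\ten_{b_z}\cgs\to\N^\sigma$ on $M$; because $|f|\equiv 1$ on $\Mt$ and $b_z$ is a $*$-isomorphism, the $\cgs$-valued scalar products are preserved and $\Phi$ is unitary.

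The only real delicacy I anticipate is the bookkeeping between the factor $\bar z(\g)$ appearing on the right of the formula in Lemma \ref{scelte-repr} and the factor $z(\g)$ that emerges when $T'_\g$ is transported through $b_z$: these two must cancel in the equivariance identity, which they do precisely because $\sigma'=\sigma\,\partial z$ is the relation built into the definition of $b_z$. Everything else is a routine verification, and the argument exhibits the expected uniqueness: the resulting $b_z$ is determined only up to composition with a projective automorphism $b_\chi$ of $\cgs$ coming from a group homomorphism $\chi\colon\G\to U(1)$, in accordance with Definition \ref{projiso}(2).
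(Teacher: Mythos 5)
Your proof is correct and follows essentially the same route as the paper's: both invoke Lemma \ref{scelte-repr} to obtain $z$ and $f$, and both take the intertwiner to be multiplication by $f$ on the $L$-factor, with the group algebra factor transported through $b_z$. The small checks you add (that $b_z$ respects the involutions, and that $|f|\equiv 1$ gives unitarity) are details the paper's proof leaves implicit, and your reduction of equivariance to the fiberwise scalar identity $f(\g x)\,e^{-i\psi'_\g(x)}\,z(\g)=e^{-i\psi_\g(x)}\,f(x)$ is a compact reformulation of the paper's two-sided computation with $\Sigma$.
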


\begin{proof}
Let $A_\g, A'_{\g}$ be two projective actions associated to different choices in the constructions above. By Lemma \ref{scelte-repr} there are $z\colon \Gamma \to U(1)$ with $z(e)=1$ and a smooth function $f\colon \widetilde M \to U(1)$ such that $A_\g'=\bar z(\g) f^{-1} A_\g f$ and $\s'=\s\partial z$. We may identify $\N^{\s'}\ten_{b_z} \cgs$ with the quotient of $L\otimes  C^*(\G,\s')\ten_{b_z} \cgs$ under the action $A \ten T' \ten 1$, where $1$ denotes the trivial action of $\G$ on $\cgs$. 

The isomorphism between the two bundles is constructed this way: consider the map
\begin{eqnarray*}
\Sigma\colon L\otimes  C^*(\G,\s')\ten_{b_z} \cgs \rightarrow L\otimes \cgs \\
\Sigma (u\otimes \delta_\g \ten \delta_\mu)=f u\otimes z(\g)\delta_\g*\delta_\mu \ .
\end{eqnarray*}
It is well defined since the last term equals
$\Sigma (u \ten \delta_e \ten z(\g)\delta_\g*\delta_\mu)$.

It satisfies $\Sigma (A'\otimes T' \ten 1)_{\g}=(A\otimes T)_\g \Sigma$, in fact
\begin{multline*}\begin{aligned}
\Sigma (A'\otimes T' \ten 1)_{\g_1})(u\otimes \delta_{\g_2} \ten \delta_{\g_3})=&
\Sigma (A'_\g u\otimes \s'(\g_1,\g_2)\delta_{\g_1 \g_2} \ten \delta_{\g_3})\\
=& fA'_\g u\otimes \s'(\g_1,\g_2)z(\g_1 \g_2)\delta_{\g_1 \g_2}*\delta_{\g_3}
\end{aligned}
\end{multline*}
and 
\begin{multline*}
(A_{\g_1}\otimes  T_{\g_1})(\Sigma(u\otimes \delta_{\g_2} \ten \delta_{\g_3}))=(A_{\g_1}\otimes  T_{\g_1})(f u\otimes z(\g_2)\delta_{\g_2}*\delta_{\g_3})
\\
=A_{\g_1} f u\otimes z(\g_2)\s(\g_1,\g_2)\delta_{\g_1 \g_2}*\delta_{\g_3}=z(\g_1)f A'_{\g_1} u\otimes z(\g_2)\s(\g_1,\g_2)\delta_{\g_1 \g_2}*\delta_{\g_3}
\\
=f A'_{\g_1} u\otimes z(\g_1 \g_2)\partial z (\g_1,\g_2)\s(\g_1,\g_2)\delta_{\g_1 \g_2}*\delta_{\g_3}
=fA'_{\g_1} u\otimes z(\g_1 \g_2)\s'(\g_1,\g_2)\delta_{\g_1 \g_2}*\delta_{\g_3}.
\end{multline*}
Thus, the map $\Sigma$ descends to the quotients. One easily checks that it induces an isomorphism.
\end{proof}

In particular, for fixed $\sigma$, the bundle $\N^{\sigma}$ is canonically defined up to tensoring by $\ten_{b_{\chi}} \cgs$ for group homomorphisms $\chi \colon \Gamma \to U(1)$.

\subsection{Connection on $\N^\s$} \textrm{\cite[\S 3]{Ma2}}
Choose $\psi_\g$, $\eta$, $\o$ as above. On the induced bundle $\N^\s$ a connection is defined as follows. Consider first on the
trivial bundle $L\otimes \cgs$ the connection $\nabla$ given by
\begin{equation}\label{connessione}
    \nabla \t =d\t+i\e\t \ .
\end{equation}
Note that if $\partial M\neq\emptyset$, then $\nabla$ is of product type near the boundary by Lemma \ref{eta}.
 
\begin{lem}
The connection $\nabla$ commutes with the action $A\otimes T$; hence it defines a connection $\nabla^{\s}$ on the quotient bundle $\N^\s$. 
\end{lem}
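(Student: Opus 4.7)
Since $L \otimes C^*(\Gamma,\sigma)$ is trivial, I will identify its smooth sections with smooth $C^*(\Gamma,\sigma)$-valued functions $\tau\colon \widetilde M\to C^*(\Gamma,\sigma)$. With this identification, the formula \eqref{AtensT} together with the expression for $A_\gamma$ on sections given just before Lemma \ref{scelte-repr} yields
$$
((A\otimes T)_\gamma \tau)(x) \;=\; e^{-i\psi_\gamma(x\gamma)}\, \delta_\gamma * \tau(x\gamma) \ ,
$$
and the connection acts by $\nabla\tau = d\tau + i\eta\,\tau$, where $d$ differentiates the $C^*(\Gamma,\sigma)$-valued function coordinate-wise (so it commutes with left multiplication by the constant element $\delta_\gamma$) and $\eta$ acts by scalar multiplication in the fiber. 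The plan is to compute $\nabla((A\otimes T)_\gamma\tau)$ and $(A\otimes T)_\gamma(\nabla\tau)$ separately and verify equality.

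The key input is the defining relation $d\psi_\gamma = \gamma^*\eta - \eta$. Writing $R_\gamma$ for the map $x\mapsto x\gamma$ (so that $R_\gamma = \gamma^{-1}\cdot$ in the left-action convention $\gamma x = x\gamma^{-1}$), this relation becomes $R_\gamma^*\eta = \eta - d(\psi_\gamma\circ R_\gamma)$ after a short manipulation. Applying $\nabla = d + i\eta$ to $((A\otimes T)_\gamma\tau)(x)$ and using Leibniz together with the fact that $\delta_\gamma$ is constant in $x$ gives
$$
\nabla((A\otimes T)_\gamma\tau)(x) \;=\; e^{-i\psi_\gamma(x\gamma)}\,\delta_\gamma *\bigl[(R_\gamma^* d\tau)(x) + i(\eta(x) - d(\psi_\gamma\circ R_\gamma)(x))\,\tau(x\gamma)\bigr] \ .
$$
Substituting $\eta - d(\psi_\gamma\circ R_\gamma) = R_\gamma^*\eta$ collapses the bracket to $R_\gamma^*(d\tau + i\eta\tau) = R_\gamma^*(\nabla\tau)$, which is precisely $(A\otimes T)_\gamma(\nabla\tau)(x)$.

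The only real point where one has to be careful is keeping the left/right-action conventions straight when translating the $\Gamma$-equivariance of $\eta$ (phrased via $\gamma^*$ in Lemma \ref{eta}) into the pull-back by $R_\gamma$ that appears naturally when differentiating the composition $\psi_\gamma\circ R_\gamma$ and the function $\tau\circ R_\gamma$. Once this is unravelled the proof is a short direct computation. Since this establishes the identity $\nabla\circ (A\otimes T)_\gamma = (A\otimes T)_\gamma\circ \nabla$ for all $\gamma\in\Gamma$, the connection $\nabla$ descends to a well-defined connection $\nabla^\sigma$ on the quotient bundle $\mathcal V^\sigma = (L\otimes C^*(\Gamma,\sigma))/(A\otimes T)$, and by Lemma \ref{eta} it is of product type near $\partial M$ whenever $\eta$ was chosen of product type.
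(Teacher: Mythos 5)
Your proof is correct and follows essentially the same route as the paper: compute $\nabla\circ(A\otimes T)_\gamma$ and $(A\otimes T)_\gamma\circ\nabla$ directly on sections, and invoke the defining relation $d\psi_\gamma = \gamma^*\eta - \eta$ to collapse the difference. Your $R_\gamma^*$ is exactly the paper's $(\gamma^{-1})^*$, so the bookkeeping about left/right conventions you flag is precisely the one the paper handles with the $(\gamma^{-1})^*$ notation.
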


\begin{proof}

The connection $\nabla$ commutes with the action $T$, which is ``vertical"
along the fibre. Moreover, for $\tau \in \Cinf(L)$,
\begin{multline*}
\nabla A_{\g}\tau =\nabla\left(e^{-i
(\g^{-1})^{*}\psi_{\g}}(\g^{-1})^{*}\tau\right)=
\\
=e^{-i (\g^{-1})^{*}\psi_{\g}} \nabla \left((\g^{-1})^{*}\tau\right)
+ e^{-i (\g^{-1})^{*}\psi_{\g}} \left( -i(\g^{-1})^{*}(d\psi_{\g})
\right)((\g^{-1})^{*}\tau)=
\\
=e^{-i (\g^{-1})^{*}\psi_{\g}} \left[(\g^{-1})^{*} d\tau \;+ \;i \e
(\g^{-1})^{*} \tau\;-\; i (\g^{-1})^{*}(\g^{*}\e -\e
)((\g^{-1})^{*}\tau) \right]=
\\
=e^{-i (\g^{-1})^{*}\psi_{\g}}\left[(\g^{-1})^{*} (d\tau)\;+\;
\left(i\:(\g^{-1})^{*}\e \right)(\g^{-1})^{*} \tau\right]\,.
\end{multline*}
On the other side
\begin{multline*}
A_{\g}(\nabla \tau)= A_{\g}(d\tau+i\e \tau)\\=e^{-i(\g^{-1})^{*}\psi_{\g}}  (\g^{-1})^{*}(d\tau)+ ie^{-i
(\g^{-1})^{*}\psi_{\g}}(\g^{-1})^{*}\e\left((\g^{-1})^{*} \tau\right)
\end{multline*}
 hence $A_{\g}(\nabla \tau)=\nabla A_{\g}\tau$. Therefore $\nabla$ commutes with $A\otimes
T$.
\end{proof}

The curvature of $\nabla^{\s}$ is
$i\o \in i\O^{2}(M,\R)$. In fact, for $\tau\in \Cinf(L)$ and $v\in \cgs $  
\begin{multline*}\nabla^2 (\tau\otimes v)= \nabla (d\tau+ i\tau\eta )\otimes v\\
=\bigl(i d(\tau \eta)
+i\eta \wedge d\tau-\tau(\eta\wedge \eta)\bigr) \ten v\\
=\bigr(i d\tau\wedge \eta
+i\tau d\eta +i\eta \wedge d\tau\bigr) \otimes v=i\tau \tilde{\o}\otimes v\ .\end{multline*}

\begin{lem}
\label{choices-conn}
Let $\N^{\s},\N^{\s'}$ be two bundles arising from choices $\psi_\g$, $\eta$, $\o$  and $\psi_\g'$, $\eta'$, $\o'$, respectively, in the constructions above.

For any  $\phi\in \Omega^1(M,\R)$ with $d\phi=\omega'-\omega$, there is $z\colon \G \to U(1)$   and an unitary isomorphism $U \colon \N^{\s'}\ten_{b_z} \cgs \to \N^{\s}$ such that $$U\nabla^{\s'}U^{-1}=\nabla^{\s}+i \phi\ .$$  
\end{lem}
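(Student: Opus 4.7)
My plan is to reduce the statement, via an intermediate ``hybrid'' choice of geometric data, to the case in which $\omega$ does not change; there the required gauge transformation appears naturally as conjugation by a $U(1)$-valued function on $\Mt$.

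First I would introduce the intermediate triple $(\omega', \eta'', \psi''_\g)$ with $\eta'' := \eta + p^*\phi$ and $\psi''_\g := \psi_\g$. Then $d\eta'' = p^*\omega + p^*d\phi = p^*\omega'$, and $\g^*\eta'' - \eta'' = \g^*\eta - \eta = d\psi_\g$, so this is a legitimate set of choices realising $\omega'$. By formula \eqref{c-sigma} the associated multiplier is still $\s$, and the action on $L \ten \cgs$ is still $A_\g$; consequently the quotient bundle is \emph{literally} $\N^\s$. The induced connection, however, is now $d + i\eta'' = (d + i\eta) + ip^*\phi$, which descends to $\nabla^{\s,\mathrm{new}} := \nabla^\s + i\phi$, since $p^*\phi$ is the pull-back of the $\G$-invariant form $\phi$ from $M$.

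Next I would compare the triples $(\omega', \eta'', \psi''_\g)$ and $(\omega', \eta', \psi'_\g)$, which now share $\omega'$. The $1$-form $\eta'-\eta''$ is closed on the simply connected manifold $\Mt$, hence exact: write $\eta' - \eta'' = dh$ with $h\colon \Mt \to \R$. Arguing as in steps (1)--(2) of the proof of Lemma \ref{scelte-repr}, there are constants $a_\g \in \R$ such that, setting $f := e^{ih}$ and $z(\g) := e^{ia_\g}$, one has
\[A'_\g(u) = \bar z(\g)\, f(\g \cdot q(u))^{-1} A''_\g\bigl(f(q(u))u\bigr), \qquad \s' = \s\,\partial z.\]
Copying verbatim the formula from the proof of Lemma \ref{choices}, the map
\[\Sigma\colon L \ten C^*(\G,\s') \ten_{b_z} \cgs \to L \ten \cgs, \qquad \Sigma(u \ten \delta_\g \ten \delta_\mu) = fu \ten z(\g)\,\delta_\g * \delta_\mu,\]
intertwines the two $\G$-actions and hence descends to a unitary $U\colon \N^{\s'} \ten_{b_z} \cgs \to \N^\s$.

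The only genuinely new point is compatibility with the connections. Using $f^{-1}\,df = i\,dh = i(\eta' - \eta'')$, a direct computation
\[\nabla''\bigl(fu \ten z(\g)\delta_\g * \delta_\mu\bigr) = f\bigl(du + i(\eta'' + dh)u\bigr) \ten z(\g)\delta_\g*\delta_\mu = \Sigma\bigl((du + i\eta' u) \ten \delta_\g \ten \delta_\mu\bigr)\]
yields $\nabla'' \Sigma = \Sigma \nabla'$, where $\nabla' = d + i\eta'$ on $L \ten C^*(\G,\s')\ten_{b_z}\cgs$ and $\nabla'' = d + i\eta''$ on $L \ten \cgs$. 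Passing to the quotient gives $U \nabla^{\s'} U^{-1} = \nabla^{\s,\mathrm{new}} = \nabla^\s + i\phi$, which is the desired identity. I do not expect any serious obstacle beyond the bookkeeping of the three triples of data; the algebraic and geometric content is already encoded in Lemmas \ref{lemma-scelte}, \ref{scelte-repr}, and \ref{choices}, and the new ingredient is just the identity $f^{-1}df = i\,dh$ that converts the unitary gauge transformation of the bundle into the affine change $+i\phi$ of the connection.
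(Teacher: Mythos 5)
Your proof is correct and takes essentially the same route as the paper: both rest on the unitary $\Sigma$ from Lemma \ref{choices}, the gauge factorization from Lemma \ref{scelte-repr}, and the identity $f^{-1}df = i\,dh$ converting the gauge transformation into the affine shift $+i\phi$. The only difference is organizational — you factor explicitly through the intermediate triple $(\omega',\eta+p^*\phi,\psi_\g)$, whereas the paper first computes $\Sigma\nabla'\Sigma^{-1}$ in general and then verifies the three special cases of Lemma \ref{scelte-repr}; the algebraic content is identical.
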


\begin{proof}
We write $\nabla', \nabla$ for the connections on $L\otimes  C^*(\G,\s')$ and $L\otimes  \cgs$ inducing $\nabla^{\s'}, \nabla^{\s}$, respectively.

By Lemma \ref{scelte-repr} there are $z\colon \G \to U(1)$ and $f\colon\widetilde M \to U(1)$  such that $A_\g'=\bar z(\g)f^{-1} A_\g f$. Recall from the proof of Lemma \ref{choices} that then  
\begin{eqnarray*}
\Sigma\colon L\otimes  C^*(\G,\s')\ten_{b_z} \cgs \rightarrow L\otimes \cgs \\
\Sigma (u\otimes \delta_\g \ten \delta_\mu)=f u\otimes z(\g)\delta_\g*\delta_\mu
\end{eqnarray*}
is an unitary isomorphism.

For $\tau \ten \delta_\g \in \Cinf(L) \ten \cgs$ we have that
\begin{eqnarray*}
\Sigma \nabla' \Sigma^{-1}(\tau \ten \delta_\g)&=&\Sigma \nabla' z(\g)^{-1}(f^{-1}\tau \ten \delta_\g \ten \delta_e)\\
&=&z(\g)^{-1}\Sigma \bigl((\tau df^{-1}+ f^{-1}d\tau + i f^{-1}\tau \eta') \ten \delta_\g \ten \delta_e\bigr)\\
&=&(\tau fdf^{-1}+ d\tau + i\tau \eta') \ten \delta_\g \ .
\end{eqnarray*}

Now we consider the three cases from the proof of Lemma \ref{scelte-repr}. 
\begin{enumerate}
\item If $\eta=\eta'$, $\omega=\omega'$, then by case (1) of Lemma \ref{scelte-repr} we may choose $f=1$. Thus in this case the assertion holds.

\item If $\eta'=\eta+dh$ and the functions $\psi_\g'$ are related to $\psi_\g$ as in case (2) of Lemma \ref{scelte-repr}, then we may set $f=e^{ih}$ and $z=1$. Thus the last line equals
$$(-i\tau dh+ d\tau + i\tau \eta+i\tau dh) \ten \delta_\g =\nabla \tau \ten \delta_\g \ .$$ 

\item In these cases we had $\o=\o'$. Now if $\o'=\o+d\phi$ for $\phi\in \Omega^1(M,\R)$, we set $\eta'=\eta+p^*\phi$ and $\psi'_{\g}=\psi_{\g}$. Then $A'_{\g}=A_{\g}$; thus we may set $f=1$, $z=1$. Now the last line equals
$$(d\tau + i\tau \eta + i\tau p^*\phi) \ten \delta_\g=(\nabla \tau  +i\tau p^*\phi)\ten \delta_{\g} \ .$$
Thus, in this case $\Sigma\nabla'\Sigma^{-1}=\nabla + ip^*\phi$. 
\end{enumerate}
\end{proof}

\begin{rem}
As usual, we can twist a Dirac operator $D$ associated to a Dirac bundle $E \to M$ with the bundle $\N^{\s}$. The resulting Dirac operator $D_{\mathcal V^{\s}}$ acts on the sections of $E\otimes \mathcal V^{\s}$.

For an even-dimensional closed manifold $M$, the index $\ind (D^+_{\mathcal V^{\s}}) \in K_0(C^*(\G,\s))$ depends on the choices of $\psi, \eta, \o$ only through the action of projective automorphisms on $K_0(C^*(\G,\s))$.

\end{rem}

\subsection{Parametrizations}
\label{para-constr}

We also need parametrized generalizations of the constructions above.

Let $(c_1, c_2 \dots c_k) \in H^2(B\G,\Q)^k$ and for $s=(s_1,s_2 \dots s_k) \in \Q^k$ define $c_s:=s_1 c_1 + s_2c_2 + \dots s_k c_k \in H^2(B\G,\Q)$. 

For each $i=1, \dots k$ we can construct $\o^i \in \O^2(M,\R)$ and $\eta_i \in \Omega^1(\widetilde M,\R)$ from $c_i$ as in Lemma \ref{eta} and put $\eta_s:=s_1\eta_1 + s_2 \eta_2 + \dots s_k\eta_k$ and  $\omega_s=d\eta_s=s_1\o_1+s_2\o_2+ \dots s_k\o_k$.

Similarly we choose $\psi_\g^i,~i=1,\dots, k$ and set $\psi^s_{\g}=s_1\psi_{\g}^1 + s_2\psi_{\g}^2 + \dots +s_k\psi_\g^k$. 

Using $\eta_s, \omega_s, \psi_\g^s$, we get parametrized versions of the above constructions, in particular a family of multipliers $\sigma^s$.

We adapt some lemmata from above to this situation. The proofs are easy generalizations of the previous ones and are left to the reader.

The following generalizes Lemma \ref{scelte-repr}.

\begin{lem}\label{scelte-repr-s}
If $A(s)_\g, A'(s)_\g$ are two parametrized actions constructed in such a way, then there are maps $a_1,a_2, \dots a_k\colon \Gamma \to \R$ and smooth functions $h_1,h_2, \dots h_k \colon \widetilde M \to \R$ such that with 
$$z_s=\exp(-i(s_1a_1+s_2a_2+\dots s_ka_k)),~ f_s=\exp(i(s_1h_1+s_2h_2+\dots s_kh_k))$$
we have 
$$A'(s)_\g=\bar z_s(\g) f_s^{-1} A(s)_{\g} f_s \ .$$
Furthermore, if $A_{\g}$ is a $(\Gamma,\bar\s_s)$-action and $A'_\g$ is a $(\Gamma, \bar \s_s')$-action, then $\s_s'=\s_s \partial z_s$.
\end{lem}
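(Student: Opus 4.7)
The plan is to reduce the statement to Lemma \ref{scelte-repr} componentwise and combine the corrections linearly in $s$. The key observation is that all the data $\eta_s$, $\omega_s$, $\psi_\gamma^s$ are linear in $s_1, \dots, s_k$ by construction, so the discrepancies coming from the $k$ different components add up without interference.

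First I would apply the three-step argument of Lemma \ref{scelte-repr} to each index $i = 1, \dots, k$ separately: for fixed $i$, the two choices $(\omega^i, \eta_i, \psi_\gamma^i)$ and $((\omega^i)', \eta_i', (\psi_\gamma^i)')$ can be connected by adjusting $\omega^i$ within its cohomology class (Case 3 of that proof), changing the primitive by an exact form $dh_i$ (Case 2), and shifting $\psi_\gamma^i$ by a $\gamma$-dependent real constant $a_{i,\gamma}$ (Case 1). Using the flexibility in steps (1) and (3) of that proof, I would arrange that
\[
(\psi_\gamma^i)' - \psi_\gamma^i = a_{i,\gamma} + \gamma^* h_i - h_i
\]
with $a_{i,\gamma} \in \R$ and $h_i \in C^\infty(\widetilde M, \R)$; Case 3 can be set up so that $\psi_\gamma^i$ is unchanged and contributes nothing further to the discrepancy.

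Next, by linearity $\psi_\gamma^s = \sum_i s_i \psi_\gamma^i$, and summing over $i$ yields
\[
(\psi_\gamma^s)' - \psi_\gamma^s = \sum_i s_i a_{i,\gamma} + \gamma^*\Bigl(\sum_i s_i h_i\Bigr) - \sum_i s_i h_i.
\]
Substituting this into $A'(s)_\gamma u = \gamma\, e^{-i(\psi_\gamma^s)'(q(u))} u$ and factoring the exponential exactly as in Cases 1 and 2 of the proof of Lemma \ref{scelte-repr} produces the identity $A'(s)_\gamma = \bar{z}_s(\gamma)\, f_s^{-1} A(s)_\gamma f_s$ with $z_s$ and $f_s$ of the stated form, where the sign conventions are absorbed into the definitions of the $a_i$.

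For the multiplier identity $\sigma'_s = \sigma_s \partial z_s$, I would substitute the same expression for $(\psi_\gamma^s)' - \psi_\gamma^s$ into the defining formula $\sigma^s(\gamma,\mu) = \exp\bigl(i(\psi_\mu^s(x_0) + \psi_\gamma^s(\mu x_0) - \psi_{\gamma\mu}^s(x_0))\bigr)$. The contributions of each $h_i$ telescope to zero by the standard cocycle identity for the coboundary of a function on $\widetilde M$, leaving $\sigma'_s/\sigma_s$ equal to $\partial z_s$. No step should present a genuine obstacle: the argument is a parametrized bookkeeping of Lemma \ref{scelte-repr}, and the linearity of all constructions in $s$ carries the three-case analysis through automatically.
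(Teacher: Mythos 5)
Your proposal is correct and is exactly the ``easy generalization'' of Lemma \ref{scelte-repr} that the paper explicitly leaves to the reader: decompose the change in choices componentwise, use linearity of $\psi_\gamma^s$, $\eta_s$, $\omega_s$ in $s$ to sum the per-component corrections, and conclude as in the unparametrized case. The bookkeeping of the sign of $a_i$ (so that it matches the stated $z_s=\exp(-i(s_1a_1+\dots+s_ka_k))$) is the only place requiring care, and you note it correctly.
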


Note that $\s_s'=\s_s$ for all $s$ in a neighbourhood of $\,0 \in \Q^k$ if and only if the maps $a_1, a_2, \dots a_k\colon \G \to \R$ are group homomorphisms. This motivates the following parametrized version of Def. \ref{projiso}.

\begin{definition}
\label{para_projiso}
Let $\s_s, \s_s'$ be parametrized multipliers on $\G$ constructed as above.

Let $a_1,a_2, \dots a_k\colon \Gamma \to \R$ be such that 
$$z_s\colon \G \to U(1),~ z_s=\exp(i(s_1a_1+s_2a_2+\dots s_ka_k))$$ fulfills $\s_s'=\s_s \partial z_s$ near $0$.

Then we call the induced family $$b_{z_s}\colon C^*(\G,\s'_s)\rightarrow C^*(\G,\s_s), ~\delta_\g\mapsto z_s(\g)\delta_\g$$ a \emph{parametrized projective isomorphism}.

If $\s'_s=\s_s$ near $0$, then we call the induced family $b_{z_s}$ a \emph{parametrized projective automorphism}.
\end{definition}

The following is the parametrized version of Lemma \ref{choices-conn}.

\begin{lem}
\label{choices-conn-s}
Let $\N^{\s_s},\N^{\s_s'}$ be two bundles arising from different choices in the constructions above.

For any  $\phi_1, \phi_2, \dots \phi_k \in \Omega^1(M,\R)$ with $d\phi_i=\omega_i'-\omega_i$ there are $z_s\colon \G \to U(1)$  and $f_s\colon \widetilde M \to U(1)$ as in the previous lemma such that the  unitary isomorphism $$U_s \colon \N^{\s_s'}\ten_{b_{z_s}} \cgs \to \N^{\s_s},~ [u\otimes \delta_\g \ten \delta_\mu] \mapsto [f_s u\otimes z_s(\g)\delta_\g*\delta_\mu]$$ fulfills  $$U_s\nabla^{\s_s'}U_s^{-1}=\nabla^{\s_s}+ i \phi_s\ .$$
Here $\phi_s=s_1\phi_1+s_2\phi_2 + \dots s_k\phi_k$.  
\end{lem}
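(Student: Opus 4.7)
The plan is to imitate the proof of Lemma \ref{choices-conn}, exploiting the fact that all parametrized objects depend linearly on $s=(s_1,\dots,s_k)$, so that the three-step reduction used there carries over term by term.

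First, I would invoke Lemma \ref{scelte-repr-s} to extract the maps $a_1,\dots,a_k\colon \G\to\R$ and functions $h_1,\dots,h_k\colon \widetilde M \to \R$ relating the parametrized actions $A(s)_\g$ and $A'(s)_\g$, which determine $z_s=\exp(-i\sum s_i a_i)$ and $f_s=\exp(i\sum s_i h_i)$. Then $\Sigma_s \colon L\otimes C^*(\G,\s_s')\otimes_{b_{z_s}}\cgs \to L\otimes \cgs$ given by $\Sigma_s(u\otimes \delta_\g \otimes \delta_\mu)=f_s u\otimes z_s(\g)\delta_\g*\delta_\mu$ is checked to intertwine the actions $A'(s)\otimes T'\otimes 1$ and $A(s)\otimes T$ exactly as in the proof of Lemma \ref{choices} (the calculation there is $s$-independent formally, since it only uses $A'_\g=\bar z_s(\g)f_s^{-1}A_\g f_s$ and $\s_s'=\s_s\partial z_s$). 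Hence $\Sigma_s$ descends to the desired unitary isomorphism $U_s$ between $\N^{\s_s'}\otimes_{b_{z_s}}\cgs$ and $\N^{\s_s}$.

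Next, I would compute $U_s\nabla^{\s_s'}U_s^{-1}$ by lifting to the trivial bundles and applying $\Sigma_s$ to the lifted connection $\nabla'$ on $L\otimes C^*(\G,\s_s')$ given by $\nabla'\tau = d\tau + i\eta_s'\tau$. The same calculation as in Lemma \ref{choices-conn} gives
\begin{equation*}
\Sigma_s \nabla' \Sigma_s^{-1}(\tau\otimes\delta_\g) = (\tau f_s df_s^{-1} + d\tau + i\tau \eta_s')\otimes \delta_\g.
\end{equation*}
Using $f_s df_s^{-1} = -i\sum s_i\, dh_i$, this becomes $(d\tau + i\tau\eta_s' - i\tau\sum s_i dh_i)\otimes \delta_\g$.

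Now I would normalize the choice of $\eta_i'$, exploiting the freedom ensured by Lemma \ref{scelte-repr-s} (and the parametrized analogue of step (3) in the proof of Lemma \ref{scelte-repr}): for each $i$, write $\eta_i' = \eta_i + dh_i + p^*\phi_i$, where $dh_i$ accounts for the change of primitive in step (2) of Lemma \ref{scelte-repr} and $p^*\phi_i$ accounts for the change of $\omega_i$ in step (3). Summing linearly in $s$ gives $\eta_s' = \eta_s + \sum s_i dh_i + p^*\phi_s$, and substituting back the $\sum s_i dh_i$ terms cancel, leaving
\begin{equation*}
\Sigma_s\nabla'\Sigma_s^{-1}(\tau\otimes\delta_\g) = (\nabla\tau + i\tau p^*\phi_s)\otimes\delta_\g = (\nabla + ip^*\phi_s)(\tau\otimes\delta_\g).
\end{equation*}
Descending to the quotient yields $U_s\nabla^{\s_s'}U_s^{-1} = \nabla^{\s_s} + i\phi_s$, as claimed.

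The main obstacle is bookkeeping rather than conceptual: one must be careful that Lemma \ref{scelte-repr-s} indeed allows the simultaneous choice of the $h_i$ and $a_i$ as separate linear pieces (so that their linear combination recovers the full relationship between $A(s)$ and $A'(s)$), and similarly for the $\phi_i$. This works because each step of the original argument is linear in the data $(\eta,\omega,\psi_\g)$, so the decomposition $s\mapsto \sum s_i(\cdot)_i$ is preserved throughout. No new analytic input is needed beyond what was already developed in the unparametrized case.
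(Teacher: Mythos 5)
Your proposal is correct and follows the route the paper intends: the paper explicitly declines to spell out a proof of Lemma~\ref{choices-conn-s}, remarking only that the parametrized lemmata ``are easy generalizations of the previous ones and are left to the reader,'' so the argument is exactly the three-step reduction from Lemma~\ref{choices-conn}, carried through linearly in $s$. You compress the three sequential reductions of Lemma~\ref{choices-conn} into a single normalization $\eta_i' = \eta_i + dh_i + p^*\phi_i$ (valid since $d(\eta_i' - \eta_i - p^*\phi_i)=0$ on the simply connected cover) and one computation, which is a clean and correct way to package the bookkeeping; the key consistency point you flag at the end — that the $h_i$ so obtained are the same $h_i$ producing $f_s$ in Lemma~\ref{scelte-repr-s}, because the $\G$-invariant term $p^*\phi_i$ does not affect the primitives $\psi_\g^i$ — is indeed the only thing one needs to check, and it holds for the reason you give.
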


One reason for considering the parametrized version is that it allows us to make the curvature as small as necessary in a controlled way.

\begin{lem}\label{s-picc} 
For any $\ep >0$ there exists a neighborhood $U \subset \Q^k$ of $0$ such that for all $s \in U$ the connection $\nabla^{\s_s}$ has curvature smaller than $\ep$.
\end{lem}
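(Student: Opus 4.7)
The plan is to read off the curvature of $\nabla^{\s_s}$ from the parametrized version of the construction and then use compactness of $M$.

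First I would recall from the computation preceding Lemma \ref{choices-conn} that the curvature of $\nabla^{\s}$ on $\mathcal V^{\s}$ was identified with $i\omega \in i\Omega^2(M,\R)$, where $\omega$ is the $\G$-invariant form on $M$ satisfying $d\eta = p^*\omega$. In the parametrized setting of \S\ref{para-constr}, we chose $\eta_s = s_1\eta_1 + \dots + s_k\eta_k$ and correspondingly obtained $\omega_s = d\eta_s = s_1\omega_1 + \dots + s_k\omega_k$. Exactly the same local computation (which is linear in $\eta$) then shows that the curvature of $\nabla^{\s_s}$ equals $i\omega_s$; this is the key structural input.

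Next, since $M$ is compact, each $\omega_i \in \Omega^2(M,\R)$ has finite sup-norm with respect to the chosen Riemannian metric. Set $C := \max_{1 \le i \le k} \|\omega_i\|_\infty < \infty$. By linearity,
\[
\|\omega_s\|_\infty \le \sum_{i=1}^k |s_i|\, \|\omega_i\|_\infty \le C \sum_{i=1}^k |s_i|.
\]
Given $\ep > 0$, define the neighborhood
\[
U := \Bigl\{ s \in \Q^k \;\Bigl|\; \sum_{i=1}^k |s_i| < \ep/(C+1) \Bigr\}.
\]
Then for all $s \in U$, the curvature $i\omega_s$ of $\nabla^{\s_s}$ satisfies $\|\omega_s\|_\infty < \ep$, which is the claim.

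There is no real obstacle here: the argument is essentially the elementary observation that a linear map $\Q^k \to \Omega^2(M,\R)$ is continuous at the origin when the target carries the sup-norm induced by compactness of $M$. The only substantive point is to have verified that the curvature is indeed linear in $s$, which is immediate from the parametrized construction of $\eta_s$ in \S\ref{para-constr}.
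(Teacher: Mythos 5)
Your proof is correct and is exactly the argument the paper leaves implicit: the paper has already computed (in \S 3.3) that the curvature of $\nabla^{\s}$ is the scalar form $i\omega$, and the parametrized construction in \S 3.4 makes $\omega_s = s_1\omega_1 + \dots + s_k\omega_k$ linear in $s$; combined with compactness of $M$ this gives the estimate immediately, which is why the paper states the lemma without proof.
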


\subsection{The index of twisted Dirac operators}
\label{twistdir}

The parametrized version allows one to construct a well-defined $K$-theoretic index. 

Let $M$ be an even-dimensional closed manifold and $D$ a Dirac operator on $M$. Consider the twisted Dirac operator $D_{\mathcal V^{\s_s}}$. We can interpret $K_0(C^*(\G,\s^s))$ as a bundle of groups over a neighbourhood $U$ of $0 \in \Q^k$. Note that parametrized projective automorphisms act trivially on the fibers of this bundle since they are connected to the identity through a path of projective automorphisms. Namely, for $t\in [0,1]$, we can consider the path $z_s^t=\exp(it(s_1a_1+s_2a_2+\dots s_ka_k))$ in Definition \ref{para_projiso}.

We define $\ind(D^+_{\mathcal V^{\s_s}})$ as a section of this bundle and call this the index of $D_{\mathcal V^{\s_s}}$. It is independent of the choices. We will be interested only in the germ at $0$, for which we use the same notation.

Clearly, by evaluating the index at some $s_0$ one obtains an index in the group $K_0(C^*(\G,\s^{s_0}))$. However, its definition does not depend on $\s^{s_0}$ alone but depends also on the family $\s^s$ (which, since it is analytical in $s$, is determined by the germ at $s_0$).



\section{Von Neumann versus $C^*$-algebraic Atiyah--Patodi--Singer index theorem}
\label{neumann_Cstar}

Let $\cA$ be a unital $C^*$-algebra endowed with a positive finite (not necessarily faithful) trace $\tau$ and let $\cA_{\tau}$ be the associated Hilbert space. We denote by $\tralg\colon M_n(\cA) \to \cA/\overline{[\cA,\cA]}$ the standard trace. Thus we get a trace $\tau \tralg\colon M_n(\cA) \to \C$. The induced trace for integral operators on a closed manifold with smooth integral kernel is denoted by $\Tr_{\tau}$.

In this section we derive a general von Neumann algebraic Atiyah--Patodi--Singer index theorem from the $C^*$-algebraic one, for which we refer to \cite{ps1}. A special case is Ramachandran's $L^2$-Atiyah--Patodi--Singer index theorem for coverings \cite{r}. The relation between von Neumann algebraic and $C^*$-algebraic approaches to $L^2$-index theory for coverings is well-understood for manifolds without boundary, see for example \cite[Appendix E]{ps1} \cite[\S 7]{wj}. Many arguments carry over from there so that we do not give full details.

Let $M$ be an even-dimensional oriented Riemannian manifold with boundary $\ra M$. For a collar neighbourhood $U$ of the boundary we fix an isometry $U \cong (-\ep,0] \times \ra M$ and denote the projection onto the first variable by $x$ and the projection onto $\ra M$ by $p_{\ra}$. By assuming that the isometry is orientation preserving we get an induced orientation on $\ra M$.

Let $E$ be a $\Z/2$-graded Dirac bundle and let $\cF$ be an $\cA$-vector bundle on $M$ endowed with a $\cA$-valued scalar product and a unitary connection $\nabla^{\cF}$. We assume all structures to be of product type on $U$. We denote the grading
operator on $E$ by $\grad$.

On $U$ we can identify $E$ with $p_{\ra}^*E_{\ra M} \ten (\C^+ \oplus \C^-)$, where $E_{\ra M}:=E^+|_{\ra M}$ and $\C^{\pm}$ denotes the vector space $\C$ with a grading consisting only of positive and negative elements, respectively. There is an induced Dirac bundle structure on $E_{\ra M}$. 

The induced Dirac operators $D$ acting on $\Cinf(M,E \ten\cF)$ and $D_{\ra}$ acting on $\Cinf(\ra M,E_{\ra M} \ten \cF)$ are related on $U$ via the equation
$$D=c(dx)(\ra_x -\grad D_{\ra}) \ .$$
(Here we identified $E^+$ and $E^-$ on $U$ using $ic(dx)$.)

We assume that there is a smoothing operator $A$ on $L^2(\ra M,E_{\ra M} \ten \cF)$ such that $D_{\ra}+A$ has a bounded inverse. (This is a restriction, however by the results in \cite{lpdir} it holds if the fibers of $\cF$ are isomorphic to $\cA$ and furthermore it can always be enforced by stabilizing with a trivial bundle.)

We write $P(A):=1_{\{x\ge 0\}}(D_{\ra}+A)$
and define $D(A)^+$ as the closed operator for which
$$\{f\in \Cinf(M,E^+\ten \cF)~|~P(A) (f|_{\ra M})=0\}$$ is a core. The operator $D(A)^-$ is defined as the adjoint. As usual, we get an odd operator $D(A)=\left(\begin{array}{cc} 0 & D(A)^- \\ D(A)^+ & 0 \end{array}\right) \ .$ 

There is a well-defined index $\ind(D(A)^+)\in K_0(\cA)$. If $A=0$, then we write $\ind(D^+)$. For simplicity we will usually write $\ind(D(A))$ for $\ind(D(A)^+)$.

By the $C^*$-algebraic Atiyah--Patodi--Singer index theorem 
$$\tau \tralg (\ind D(A))=\tau (\int \hat A(M)\ch(E/S)\ch(\cF)) -\eta_{\tau}(D_{\ra}+A)\ ,$$
where 
\begin{equation}
\label{eta-tauC*}
\eta_{\tau}(D_{\ra}+A):=\frac{1}{2\sqrt{\pi}}\int_0^\infty \Tr_{\tau} \left((D_{\ra}+A)e^{-t (D_{\ra}+A)^2}\right)\frac{dt}{\sqrt t}\;\;\in \C \ ,
\end{equation}
and $$\ch(\cF)=\tralg e^{-(\nabla^{\cF})^2} \ .$$

Note that in some references (for example in \cite{ps1}) the $\eta$-invariant  does not have the factor $2$ in the denominator.

When we consider metrics of positive scalar curvature we will use the fact that this definition does not require the trace $\tau$ to be positive.
\medskip

Now we consider the von Neumann algebraic situation. Here it is essential that $\tau$ is positive. 

We interpret  (while keeping the notation) $D(A)$ and $D_{\ra}$ as a closed selfadjoint operators on the Hilbert spaces $H:=L^2(M,E \ten \cF)\ten_{\cA}\cA_{\tau}$ and $H_{\ra}:=L^2(\ra M,E_{\ra M} \ten \cF)\ten_{\cA}\cA_{\tau}$, respectively. We define the von Neumann algebras $\cN$ and $\cN_{\ra}$ as the weak closures of the images of the $C^*$-algebras of bounded operators $\cB(L^2(M,E \ten \cF))$ and $\cB(L^2(\ra M,E_{\ra M} \ten \cF))$ in $\cB(H)$ and $\cB(H_{\ra})$, respectively. Note that the $C^*$-algebras act on the first factor of $H$ and $H_{\ra}$, respectively. 

These von Neumann algebras are endowed with a semifinite trace which we denoted by $\Tr_{\tau}$ since it extends the trace for integral operators introduced above. This is best seen by embedding $\cF$ isometrically into a trivial bundle $M \times \cA^n$. The von Neumann algebra $\cN$ embeds into the von Neumann algebraic tensor product $\cB(L^2(M,E))\ten M_n(\cB(\cA_{\tau}))$, which is endowed with a semifinite trace induced from the semifinite traces on each factor. (If the trace $\tau$ is faithful, then $\cB(\cA_{\tau})=\cA''$.)

We denote by $\cK(L^2(M,E \ten\cF))$ the ideal of compact operators acting on the Hilbert $\cA$-module $L^2(M,E \ten\cF)$ and by $\cK(\cN)$ the closed ideal generated by positive operators of finite trace in $\cN$.

There are induced homomorphisms 
\begin{align*}
\cB(L^2(M,E \ten\cF)) &\to \cN\\
\cK(L^2(M,E \ten\cF)) &\to \cK(\cN) \ .
\end{align*} 
Furthermore there is an induced map mapping regular operators on $L^2(M,E \ten\cF)$ to operators affiliated to $\cN$. Note that the image of a Fredholm operator is a Breuer--Fredholm operator.

\medskip
Thus, the operators $D(A)$ and $D_{\ra}$ are affiliated to $\cN$ and $\cN_{\ra}$, respectively.
Furthermore the operator $D(A)$ is Breuer--Fredholm, since it is Fredholm as an operator on the Hilbert $\cA$-module $L^2(M,E\ten \cF)$.
 
\medskip
 
Now we consider the unperturbed operator $D$ for which we want to prove a von Neumann algebraic Atiyah--Patodi--Singer index theorem. 
 
\medskip
Note that the space $\Cinf(M,\cA_{\tau}^n)$ is well-defined. By embedding $E \ten\cF$ isometrically into a trivial $\cA$-vector bundle $M \times \cA^n$ one may define the space $\Cinf(M,E\ten \cF\ten_{\cA}\cA_{\tau})$ and similar spaces used in the following.
 
\medskip  
Let $P_{\ge}=1_{\{x\ge 0\}}(D_{\ra})$. First we check that $P_{\ge}$ acts continuously on $\Cinf(\ra M, E_{\ra M} \ten \cF\ten_{\cA}\cA_{\tau})$. Note that since $D_{\ra}$ is a selfadjoint operator on the Sobolev spaces $H^k(\ra M,E_{\ra M} \ten \cF)\ten_{\cA}\cA^n_{\tau}$, the operator $P_{\ge}$ acts continuously on them. Now we consider the projective limit $k \to \infty$. By \cite[Theorem 48.3]{tr} there is a canonical bounded map from the Hilbert space tensor product $H^k(\ra M) \ten \cA_{\tau}^n$ to the $\ve$-tensor product $H^k(\ra M) \hat\ten_{\ve} \cA_{\tau}^n$. On the other hand, by the universal property of the projective tensor product, there is a canonical continuous map $H^{\infty}(\ra M) \hat \ten_{\pi} \cA_{\tau}^n \to H^k(\ra M) \ten \cA_{\tau}^n$. Since $H^{\infty}(\ra M)\cong \Cinf(\ra M)$ is nuclear, the composition of these maps induces an isomorphism in the projective limit. Here we used that the $\ve$-tensor commutes with taking projective limits \cite[p. 282]{ko}. It follows that 
$$H^{\infty}(\ra M) \ten \cA_{\tau}^n \cong H^{\infty}(\ra M) \hat\ten_{\ve} \cA_{\tau}^n \cong \Cinf(\ra M)\hat\ten_{\ve} \cA_{\tau}^n\cong \Cinf(\ra M,\cA_{\tau}^n) \ .$$ 
Thus also $H^{\infty}(\ra M,E_{\ra M} \ten \cF)\ten_{\cA}\cA_{\tau}\cong \Cinf(\ra M,E_{\ra M}\ten \cF\ten_{\cA}\cA_{\tau})$.

\medskip
In particular, this implies that every $f \in \Cinf(\ra M,E_{\ra M}\ten \cF\ten_{\cA}\cA_{\tau})$ can be written as $P_{\ge} f + (1-P_{\ge})f$ with $P_{\ge}f, (1-P_{\ge})f \in  \Cinf(\ra M,E_{\ra M}\ten \cF\ten_{\cA}\cA_{\tau})$.

\medskip
The Atiyah--Patodi--Singer boundary conditions for the operator $D$ are defined by requiring that 
$$\{f\in \Cinf(M,E^+\ten \cF\ten_{\cA}\cA_{\tau}) ~|~P_{\ge}(f|_{\ra M})=0\}$$ 
is a core of $D^+$
and 
$$\{f\in \Cinf(M,E^-\ten \cF\ten_{\cA}\cA_{\tau})~|~(1-P_{\ge})(f|_{\ra M})=0\}$$ 
is a core of $D^-$.

\begin{prop}
\label{selfad}
The operator $D$ with these boundary conditions is selfadjoint and affiliated to $\cN$.
\end{prop}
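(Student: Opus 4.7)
The plan is to reduce to the $C^*$-algebraic Atiyah--Patodi--Singer framework of \cite{ps1} and then transfer to the von Neumann setting via the standard construction $\mathcal{E} \mapsto \mathcal{E}\ten_{\cA} \cA_{\tau}$. Concretely, I would fix a smoothing perturbation $A$ as in the hypothesis so that $D_{\ra}+A$ is invertible on the Hilbert $\cA$-module. The $C^*$-algebraic APS theory (in the form used in \cite{ps1}) gives that $D(A)$ is a regular selfadjoint operator on $L^2(M,E\ten\cF)$; passing to $\ten_{\cA}\cA_{\tau}$ turns it into a selfadjoint operator on $H$ affiliated to $\cN$, because both the Dirac operator and the boundary projection $P(A)$ commute with the right $\cA$-action on the module, hence with the commutant of $\cN$ after the transfer.

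To pass from $D(A)$ to $D$ itself, I would first observe that $P_{\ge}=1_{[0,\infty)}(D_{\ra})$ lies in $\cN_{\ra}$ by Borel functional calculus applied to the selfadjoint operator $D_{\ra}$ affiliated to $\cN_{\ra}$, so the APS boundary condition for $D$ genuinely makes sense on $H$. Next, because $A$ is smoothing and $D_{\ra}+A$ is invertible, a resolvent/contour argument separating the spectra of $D_{\ra}$ and $D_{\ra}+A$ near $0$ shows that $P_{\ge}-P(A)$ is represented by a smoothing kernel on $\ra M$ with pointwise values in $\cK(\cF)$, hence belongs to the finite-trace ideal of $\cN_{\ra}$. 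This control is what lets us exchange boundary projections without losing affiliation or selfadjointness.

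Symmetry of $D$ on the declared core is a routine Green's formula computation on the collar: the boundary term reduces to $\langle P_{\ge}(f|_{\ra M}),(1-P_{\ge})(g|_{\ra M})\rangle$ plus its conjugate, both of which vanish by the choice of complementary boundary conditions on $E^+$ and $E^-$. For essential selfadjointness, the standard parametrix construction for $D(A)$ can be adapted to $D$ with the $P_{\ge}$-condition in $H$: the interior parametrix is unchanged, and the boundary model is modified by the element $P_{\ge}-P(A)\in\cN_{\ra}$, which by the previous step lies in a trace ideal. This yields a parametrix that is an inverse modulo $\cK(\cN)$, and together with symmetry it forces the closure to be selfadjoint. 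Affiliation to $\cN$ is automatic since every ingredient commutes with the right $\cA$-action and hence with the commutant of $\cN$.

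The main obstacle is precisely the comparison in the second paragraph: controlling $P_{\ge}-P(A)$ in the semifinite trace ideal of $\cN_{\ra}$ and then combining this estimate with the $C^*$-algebraic parametrix to produce a genuine $\cN$-parametrix on $H$. The subtlety is that $P_{\ge}$ does not in general multiply $\cK(L^2(\ra M,E_{\ra M}\ten\cF))$ in the $\cA$-module sense — that was exactly the reason a perturbation $A$ had to be introduced in \cite{ps1} — so the statement lives genuinely at the von Neumann level and cannot simply be pulled back from the Hilbert module picture.
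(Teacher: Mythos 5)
Your approach differs genuinely from the paper's, but it has gaps that are not easy to close. The paper never compares the operator $D$ with the APS-perturbed operator $D(A)$; instead it introduces a bounded selfadjoint interior perturbation $D' := D - \chi c(dx)\grad (1_{[0,1/2]}(D_{\ra})-1_{[-1/2,0)}(D_{\ra}))$ that has \emph{exactly the same} boundary conditions as $D$, since $1_{\{x\ge 0\}}(D'_{\ra})=P_{\ge}$. This sidesteps the entire issue of exchanging boundary projections: once $D'-i\lambda$ is shown to have a bounded inverse in $\cN$ by a direct parametrix construction patching a half-cylinder model, a full-cylinder model, and the double of $M$, selfadjointness and affiliation of $D'$ follow immediately, and $D$ inherits them because $D-D'$ is a bounded symmetric element of $\cN$.

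Your route, by contrast, tries to deform $D(A)$ to $D$ by replacing the boundary projection $P(A)$ with $P_{\ge}$, and this is where the difficulties concentrate. First, the claim that $P_{\ge}-P(A)$ is a \emph{smoothing} operator by a contour argument "separating the spectra near $0$" is not right: when $0$ lies in the spectrum of $D_{\ra}$ (the generic case, and precisely the reason $A$ had to be introduced), there is no separating contour, and the difference of the two spectral projections has a discontinuity that smoothing kernels cannot see. What is true — and is what one actually uses in this circle of ideas — is the weaker statement that $P_{\ge}-P(A)$ lies in the Breuer-compact ideal $\cK(\cN_{\ra})$, because $D_{\ra}$ is elliptic on a closed manifold and hence has $\theta$-summable resolvent, so the spectral projection onto a neighbourhood of $0$ has finite $\Tr_{\tau}$. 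That argument is purely von Neumann algebraic and is quite different from a contour/resolvent estimate. Second, and more seriously, even with $P_{\ge}-P(A)\in\cK(\cN_{\ra})$ in hand, "exchanging the boundary projection" is not a routine modification of a parametrix: you must verify that the modified boundary model actually maps into the $P_{\ge}$-domain of $D$, and you must do so at the level of the unbounded operator, not merely modulo $\cK(\cN)$. Third, a parametrix modulo $\cK(\cN)$ for a symmetric operator yields Breuer--Fredholmness, not selfadjointness; the half-line Dirac operator with no boundary condition is symmetric, has such a parametrix, and is not essentially selfadjoint. What is actually needed — and what the paper's argument delivers directly — is a two-sided bounded inverse of $D\pm i\lambda$ in $\cN$, which is the basic criterion for selfadjointness and affiliation. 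Your sketch does not produce this, and "symmetry plus a parametrix modulo compacts" does not imply it. If you want to keep your strategy, you would at minimum need to (i) replace the smoothing-kernel claim by the correct trace-ideal statement and prove it, and (ii) upgrade the boundary-projection swap to an actual construction of a bounded resolvent, for example by running the relative index/deformation argument on the half-cylinder. At that point you are essentially redoing the paper's patching construction, so the detour through $D(A)$ does not save work.
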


\begin{proof}
The proof is similar to the proof of \cite[Prop. 2.10]{aw}, where more details can be found.

In the following $\cN$ denotes various von Neumann algebras (for example defined using a cylinder instead of $M$) whose definitions are analogous to the one above and should be clear from the context.

Let $\chi\colon (-\ep,0] \to [0,1]$ be a smooth function with $\chi(x)=1$ for $x \in (-\frac 38\ep,0]$ and $\chi(x)=0$ for $x \in (-\ep,-\frac 58 \ep)$. In an obvious way $\chi$ defines a function on $M$ supported on $U$.

We consider the operator 
$$D':=D - \chi c(dx)\grad (1_{[0,\frac 12]}(D_{\ra})-1_{[-\frac 12,0)}(D_{\ra}))$$ 
and endow it with the same boundary conditions as $D$. Note that the operator 
$$D'_{\ra}:=D_{\ra} + 1_{[0,\frac 12]}(D_{\ra})-1_{[-\frac 12,0)}(D_{\ra})$$ is invertible and that the Atiyah--Patodi--Singer boundary conditions for $D'$ are the same as those for $D$ since $1_{\{x \ge 0\}}(D'_{\ra})=1_{\{x \ge 0\}}(D_{\ra})$.

We show that for $\lambda \in \R^+$ large the operator $D- i\lambda$ is invertible with inverse in $\cN$. Analogous arguments work for $D+ i\lambda$. This will imply the assertion.

Let $U_0=(-\frac 12\ep,0]\times \ra M,~ U_1=(-\frac 34 \ep,-\frac 14 \ep) \times \ra M,~ U_2=M\setminus \ov{U_0}$ and let $(\phi_j)_{j=0,1,2}$ be a partition of unity subordinate to the covering $(U_0,U_1,U_2)$ of $M$. We assume that $\phi_j$ only depends on the normal variable $x$. 

We consider $U_0$ as a subset of the half-cylinder $Z_-=(-\infty,0] \times \ra M$. The operator $D^{Z_-}:=c(dx)(\ra_x - \grad D'_{\ra})$ acting on sections over $Z_-$ with Atiyah--Patodi--Singer boundary conditions at $0$ is invertible with symmetric inverse in $\cN$ (now defined with respect to the half-cylinder instead of $M$). In particular it is selfadjoint and affiliated to $\cN$. Define $Q_0(\lambda):=(D^{Z_-}-i\lambda)^{-1} \in \cN$. 

Similarly, we consider $U_1$ as a subset of the cylinder $Z=\R \times \ra M$ and define the operator $D^{Z}:=c(dx)(\ra_x - \grad D'_{\ra})$ on $Z$. It has a symmetric inverse and thus is selfadjoint and affiliated to $\cN$. It follows that $Q_1(\lambda):=(D^Z-i\lambda)^{-1} \in \cN$.

In order to construct $Q_2(\lambda)$ we embed $M$ into its double and extend all structures to the double in the standard way. We denote the induced Dirac operator on the double by $D^{db}$ and set $Q_2(\lambda):=(D^{db}-i\lambda)^{-1} \in \cN$. 

Define $$Q(\lambda)=\sum_{j=0}^2\phi_jQ_j(\lambda)\phi_j$$
and set
$$K(\lambda) :=(D' -i\lambda)Q(\lambda)-1=\sum_{j=0}^{2}c(d\phi_j)Q(\lambda)_j\phi_j \in \cN \ .$$
Since $\|Q_j(\lambda)\| \le \lambda^{-1}$, for $\lambda$ large enough $\|K(\lambda)\| \le \frac 12$ and the operator $1+K(\lambda)$ is invertible. Then $Q(\lambda)(1+K(\lambda))^{-1} \in \cN$ is a right inverse of $D'-i\lambda$. One gets a left inverse in an analogous way. It follows that $(D'-i\lambda)^{-1}=Q(\lambda)(1+K(\lambda))^{-1}\in \cN$. Thus $D'$ is selfadjoint and affiliated to $\cN$. Since $D$ is a pertubation of $D'$ by a symmetric element in $\cN$, it is also selfadjoint and affiliated to $\cN$. 
\end{proof}

\begin{prop}
The operator $D$ with Atiyah--Patodi--Singer boundary conditions is Breuer--Fredholm.
\end{prop}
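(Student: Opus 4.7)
The plan is to transfer Breuer--Fredholmness from the perturbed operator $D(A)$, for which it is already known through the $C^*$-algebraic theory, to the unperturbed operator $D$ via a compact change of boundary projection.

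First, I would invoke the $C^*$-algebraic Atiyah--Patodi--Singer setup recalled before Proposition \ref{selfad}: by construction $D(A)$ is Fredholm as an $\cA$-linear operator on the Hilbert $\cA$-module $L^2(M, E \ten \cF)$, and this is exactly what makes $\ind D(A)^+ \in K_0(\cA)$ well defined. Under the canonical map $\cK(L^2(M, E \ten \cF)) \to \cK(\cN)$ introduced earlier, the image of $D(A)$ in $\cN$ is then Breuer--Fredholm with respect to $\Tr_\tau$; this is the assertion ``a Fredholm operator maps to a Breuer--Fredholm operator'' stated above.

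Next, I would compare the two boundary projections $P(A) = 1_{\{x \ge 0\}}(D_\ra + A)$ and $P_\ge = 1_{\{x \ge 0\}}(D_\ra)$ in $\cN_\ra$. Since $A$ is smoothing and $D_\ra + A$ is invertible, $A$ itself is $\cA$-compact, so its image in $\cN_\ra$ lies in $\cK(\cN_\ra)$. A resolvent contour-integral expression for $P(A) - P_\ge$ (handled around the possible spectral accumulation at $0$ using that $\ker D_\ra$ is finitely generated projective over $\cA$, because $D_\ra + A$ is $\cA$-Fredholm and $A$ is $\cA$-compact) then shows $P(A) - P_\ge \in \cK(\cN_\ra)$.

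Finally, I would apply the standard boundary perturbation principle: if two APS projections differ by a Breuer--compact element of $\cN_\ra$, then the two closed extensions of $D$ on $M$ differ by a Breuer--compact operator, and Breuer--Fredholmness transfers. Concretely, one can modify the parametrix $Q(\lambda)$ from the proof of Proposition \ref{selfad} by replacing $P(A)$ by $P_\ge$ in its cylindrical piece; the extra error term has the form $(P(A) - P_\ge) \cdot (\text{smoothing})$, which lies in $\cK(\cN)$. The hard step is this last one: one must check that the identifications between $\cA$-compactness on the Hilbert module $L^2(M, E \ten \cF)$ and Breuer--compactness in $\cN$ (and the $\ve$- and projective-tensor product arguments used in Proposition \ref{selfad} to make sense of $P_\ge$ on the boundary) are compatible with the spectral calculus defining $P(A)$ and $P_\ge$, so that the classical elliptic boundary perturbation argument carries over to the semifinite von Neumann setting. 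This is routine but technical, and is the point at which the authors appeal to the analogous $L^2$-covering treatments of \cite[Appendix E]{ps1} and \cite[\S 7]{wj}.
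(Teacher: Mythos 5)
Your overall plan---transfer Breuer--Fredholmness from $D(A)$ to $D$ by controlling the change of boundary projection---is a genuinely different route from the paper's, and it contains a real gap. The paper never compares $D$ with $D(A)$; instead it compares $D$ with the modified operator $D'$ introduced in the proof of Proposition \ref{selfad}, which is built so that $1_{\{x\ge 0\}}(D'_{\ra})=P_{\ge}$, i.e.\ $D'$ has \emph{the same} Atiyah--Patodi--Singer domain as $D$, and $D-D'$ is a \emph{bounded} (not compact) term supported near the boundary. Then one shows $(D'-i\lambda)^{-1}=Q(\lambda)(1+K(\lambda))^{-1}\in \cK(\cN)$ piece by piece (the half-cylinder piece $\phi_0 Q_0 \phi_0$ by the APS boundary conditions, the other two by \cite[\S 2]{aw}), and since $D-D'+i\lambda$ is bounded this resolvent serves as a one-sided (and then two-sided) parametrix for $D$. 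The whole point of the device $D'$ is to \emph{avoid} ever having to compare $P(A)$ with $P_\ge$.

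The concrete gap in your argument is the step $P(A)-P_\ge\in\cK(\cN_{\ra})$, and in particular the parenthetical claim that $\ker D_\ra$ is finitely generated projective over $\cA$. That is not correct. From $D_\ra + A$ invertible and $A$ $\cA$-compact you may conclude that $D_\ra$ is Mishchenko--Fomenko Fredholm, but a Fredholm operator on a Hilbert $\cA$-module need not have closed range, and its kernel need not be a finitely generated projective $\cA$-module --- this is exactly the obstruction that forces the use of perturbations (or spectral sections) in $C^*$-algebraic APS theory in the first place. Since the spectrum of $D_\ra$ can accumulate at $0$ in an essential way, there is no small circle around $0$ to run a resolvent contour integral, and the asserted compactness of $P(A)-P_\ge$ does not follow from the reasoning given. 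Relatedly, your description of "replacing $P(A)$ by $P_\ge$ in the cylindrical piece of $Q(\lambda)$" misreads the paper's construction: the parametrix in Proposition \ref{selfad} is already built from $D'_\ra$, whose nonnegative spectral projection is $P_\ge$, so there is nothing to replace. If you want to pursue your route you would need an independent argument (e.g.\ via finite $\tau$-trace of $1_{[-\ve,\ve]}(D_\ra)$, plus control of the contribution near $0$) for $P(A)-P_\ge\in\cK(\cN_\ra)$, and then a careful treatment of how changing the APS domain (not merely the operator) preserves Breuer--Fredholmness; neither is "routine," and the paper's choice of $D'$ is designed precisely to bypass both.
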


\begin{proof}
First we show that the inverse $(D'-i\lambda)^{-1}=Q(\lambda)(1+K(\lambda))^{-1}$ constructed in the previous proof is an element of $\cK(\cN)$. This is well known for the operator  $\phi_0Q_0(\lambda)\phi_0$. For the operators $\phi_1Q_1(\lambda)\phi_1$ and $\phi_2Q_2(\lambda)\phi_2$ it follows, for example, from the results in \cite[\S 2]{aw}. Thus $Q(\lambda) \in \cK(\cN)$ and therefore also $(D'-i\lambda)^{-1}$.

Since $(D-D'+i\lambda)$ is bounded,
$$D(D'-i\lambda)^{-1}-1=(D-D'+i\lambda)(D'-i\lambda)^{-1} \in \cK(\cN) \ .$$
It follows that $(D'-i\lambda)^{-1}$ is a left parametrix of $D$.

In order to show that it is a right parametrix one checks similarly that
$$(D+i)(D'-i\lambda)^{-1}D(D+i)^{-1}-1 \in \cK(\cN) \ .$$
\end{proof}

We denote by $\ind_{\tau}$ the index of Breuer--Fredholm operators affiliated to $\cN$. 
For the theory of Fredholm operators affiliated to semi finite von Neumann algebras, we refer the reader to \cite{CPRS2}.
Note that $\ind_{\tau}(D(A)^+)=\tau \tralg \ind(D(A)^+)$. As above, we often write $\ind_{\tau}(D(A))$ for this expression.

\medskip

Now we use the $C^*$-algebraic Atiyah--Patodi--Singer index theorem for $D(A)$ and a von Neumann algebraic relative index theorem to derive the von Neumann algebraic index theorem for $D$.

\medskip

In the following proposition we define $D^I:=\ra_x - D_{\ra}$ as an operator on $L^2([0,1]\times \ra M,\cF)\ten_{\cA} \cA_{\tau}$. The boundary conditions for $D^I$ are such that the space
$$\{f \in \Cinf([0,1]\times \ra M,\cF\ten_{\cA} \cA_{\tau})~|~ (1-P(A))f(0)=0,P_{\ge}f(1)=0\}$$
is a core.

Here $f(j)=f|_{\{j\} \times \ra M}$.

\begin{prop}
It holds that $$\ind_{\tau}(D^+)=\ind_{\tau}(D(A)^+) + \ind_{\tau}(D^I) \ .$$
\end{prop}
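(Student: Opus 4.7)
The plan is to prove this by a standard gluing/splitting argument, realizing $D^+$ as the result of gluing $D(A)^+$ on $M$ to the cylinder operator $D^I$ along the boundary projection $P(A)$.

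First I would extend $M$ by attaching a collar: set $M' = M \cup_{\partial M}([0,1]\times \partial M)$ and extend $E$, $\cF$, the metric and connections to $M'$ using the product structure. The extended Dirac operator $D'$ on $M'$ with APS boundary conditions $P_{\ge}$ at the new boundary $\{1\}\times \partial M$ is, via the obvious diffeomorphism that absorbs the cylinder into the original collar, unitarily equivalent to $D$. Hence $\ind_{\tau}(D'^+) = \ind_{\tau}(D^+)$. All the von Neumann algebraic machinery (selfadjointness, affiliation, Breuer--Fredholmness) established in Propositions above carries over verbatim to $D'$.

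Next I would cut $M'$ along $\{0\}\times \partial M$ and impose matching spectral boundary conditions on the two pieces. On the $M$-side impose the projection $P(A)$; this yields exactly $D(A)^+$. On the cylinder $[0,1]\times \partial M$ impose $1-P(A)$ at the $\{0\}$ end and $P_{\ge}$ at the $\{1\}$ end; this is by definition $D^I$. Since $P(A)$ is a finite-rank perturbation (in the Hilbert $\cA$-module sense) of $P_{\ge}$ and the structures are of product type near the cut, the parametrix construction from the proof of the previous proposition applies: one builds local inverses $Q_j(\lambda)$ on the two pieces using the imposed boundary conditions, glues them with a partition of unity subordinate to a covering by $M$ and a neighborhood of the cylinder, and checks that the remainder lies in $\cK(\cN)$.

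The key identity to establish is the splitting formula
\begin{equation*}
\ind_{\tau}(D'^+) = \ind_{\tau}(D(A)^+) + \ind_{\tau}(D^I),
\end{equation*}
which is a von Neumann algebraic relative index theorem. I would deduce it by the standard linear-deformation / Bunke-type argument: consider the one-parameter family of boundary conditions interpolating between the original APS projection $P_{\ge}$ (no cut) and the cut configuration $(P(A),\,1-P(A))$ through projections of the form $P_t$ on the interior cross-section, note that the resulting family of Breuer--Fredholm operators has constant index (because the difference of any two such projections lies in $\cK(\cN_{\partial})$, since $A$ is smoothing), and identify the limits. Alternatively, a direct Mayer--Vietoris computation using the matching condition $P(A) \oplus (1-P(A)) = \mathrm{id}$ expresses $\ker D'^+$ and $\ker D'^-$ as pairs of sections on the two pieces whose boundary values in the $P(A)$- and $(1-P(A))$-subspaces agree, giving the additivity of traces of kernel projections in $\cN$.

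The main obstacle is justifying this splitting in the semifinite setting, because one cannot simply apply the $C^*$-algebraic APS theorem to $D'$ itself (the boundary operator $D_{\partial}$ need not be $C^*$-invertible). The point of the argument is that once the cut is made, both pieces \emph{are} Breuer--Fredholm with $C^*$-algebraically Fredholm representatives on the $M$-side, and the cylinder contribution $D^I$ captures the obstruction. The technical work is to verify that the parametrix pieces combine to a parametrix for $D'$ whose remainder lies in $\cK(\cN)$, after which the additivity of $\ind_{\tau}$ under such splittings is a standard consequence of the trace property of $\Tr_{\tau}$ applied to the difference $(D'-i\lambda)^{-1} - (D(A)-i\lambda)^{-1} \oplus (D^I-i\lambda)^{-1}$ modulo $\cK(\cN)$.
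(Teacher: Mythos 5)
Your proposal is correct and follows essentially the same route as the paper: the paper's own proof is a two-line reference to Leichtnam--Piazza's $C^*$-algebraic gluing result (itself based on Bunke's relative-index strategy) and its von Neumann adaptation in [AW, Lemma 2.5], which is precisely the attach-a-cylinder, cut-along-the-interface, glue-parametrices argument you describe. Two small imprecisions worth noting but not affecting the substance: $D'$ on $M'$ is not literally unitarily equivalent to $D$ (the metrics differ under the absorbing diffeomorphism) — one should instead say the indices agree because the operators are connected by a norm-resolvent continuous path of Breuer--Fredholm operators, or because kernel/cokernel elements extend uniquely across the cylinder; and the alternative "linear deformation from $P_{\ge}$ (no cut) to $(P(A),1-P(A))$" is not well-posed as stated, since the uncut operator carries no boundary condition at the interface, so you should rely on the parametrix-gluing argument rather than the deformation alternative.
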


\begin{proof}
In the $C^*$-algebraic context similar results were proven in \cite[\S 4]{lpdir} based on the  strategy of the proof of the $K$-theoretic relative index theorem \cite{bu}. 
This strategy was adapted to the von Neumann algebraic context in the proof of \cite[Lemma 2.5]{aw}. We leave the details to the reader.
\end{proof}

We define the $\eta$-invariant
\begin{equation}\label{etavN}
\eta_{\tau}(D_{\ra})=\frac{1}{2\sqrt{\pi}}\int_0^\infty \Tr_{\tau} \left(D_{\ra}e^{-t D_{\ra}^2}\right)\frac{dt}{\sqrt t} \ .
\end{equation}
The integral converges at $t=0$ since $D_{\ra}$ is a Dirac operator. For $t \to \infty$ a proof of the convergence can be found in \cite[\S 8]{cpJLO}, which generalizes the classical \cite{CG}. Here we use the fact that $D_{\ra}$ is $\theta$-summable. Clearly, the definition is compatible with the one in \eqref{eta-tauC*}.

Now we can derive the Atiyah--Patodi--Singer index theorem. Let $P_{\Ker D_{\ra}} \in \cN_{\ra}$ be the projection onto the kernel of $D_{\ra}$. 

\begin{prop}
With $n=\dim M$ it holds that
$$\ind_{\tau}(D^+)=(2\pi i)^{n/2}\tau (\int \hat A(M)\ch(E/S)\ch(\cF)) -\eta_{\tau}(D_{\ra})-\frac 12 \tau(P_{\Ker D_{\ra}}) \ .$$
\end{prop}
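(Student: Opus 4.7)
The plan is to derive the theorem by combining the preceding two propositions with a cylindrical spectral flow computation that converts $\ind_\tau(D^I)$ into the variation of $\eta_\tau$.

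First I would apply the index splitting just established,
\begin{equation*}
\ind_{\tau}(D^+) = \ind_{\tau}(D(A)^+) + \ind_{\tau}(D^I),
\end{equation*}
and then invoke the $C^*$-algebraic Atiyah--Patodi--Singer index theorem \cite{ps1} for $D(A)$: using $\ind_{\tau}(D(A)^+)=\tau\tralg(\ind(D(A)^+))$ this gives
\begin{equation*}
\ind_{\tau}(D(A)^+) = (2\pi i)^{n/2}\,\tau\!\left(\int_M \hat A(M)\ch(E/S)\ch(\cF)\right) - \eta_{\tau}(D_{\ra}+A),
\end{equation*}
the factor $(2\pi i)^{n/2}$ being a matter of cohomological normalization of the characteristic forms.

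It then remains to prove the key identity
\begin{equation*}
\ind_{\tau}(D^I) = \eta_{\tau}(D_{\ra}+A) - \eta_{\tau}(D_{\ra}) - \tfrac{1}{2}\tau(P_{\Ker D_{\ra}}),
\end{equation*}
after which the three formulas combine to yield the claim. To establish it I would interpret $\ind_{\tau}(D^I)$ as the semifinite spectral flow $\spfl_\tau(D_\ra, D_\ra+A)$ along the linear path $D^s_\ra := D_\ra + sA$, $s\in[0,1]$. Since $A$ is smoothing, each $D^s_\ra$ differs from $D_\ra$ by a $\Tr_\tau$-trace class perturbation and the spectral flow is well-defined in the sense of \cite{CPRS2}. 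A standard cylinder suspension argument then identifies $\ind_\tau(D^I)$ with this spectral flow, while differentiating the heat-kernel expression \eqref{etavN} along the path produces the variation formula
\begin{equation*}
\spfl_\tau(D_\ra, D_\ra+A) = \eta_\tau(D_\ra+A) - \eta_\tau(D_\ra) + \tfrac{1}{2}\tau(P_{\Ker(D_\ra+A)}) - \tfrac{1}{2}\tau(P_{\Ker D_\ra});
\end{equation*}
the first kernel term vanishes by invertibility of $D_\ra+A$, and the normalization $\frac{1}{2\sqrt\pi}$ in \eqref{etavN} absorbs the usual factor of $\frac{1}{2}$ between spectral flow and $\eta$.

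The main obstacle will be executing the cylinder computation rigorously in the semifinite von Neumann setting, since the boundary conditions at the two ends of $[0,1]\times\ra M$ involve the \emph{distinct} spectral projections $P(A)$ and $P_\ge$; one must verify that the exponential-decay ansatz, the counting of boundary modes, and the identification with spectral flow all go through with $\tau$-traces in place of dimensions, using crucially that $P(A) - P_\ge$ has finite $\tau$-trace thanks to the smoothing nature of $A$. Once this is in place, the variation formula for $\eta_\tau$ is a routine adaptation of the classical argument using that, for each $s$, the operator $D^s_\ra$ is $\theta$-summable with heat traces controlled uniformly in $s$.
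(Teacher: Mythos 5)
Your proposal follows essentially the same route as the paper: index splitting, the $C^*$-algebraic Atiyah--Patodi--Singer theorem for $D(A)$, and the identification of $\ind_\tau(D^I)$ with a semifinite spectral flow that is then converted into the $\eta$-variation formula. The paper closes what you call the "main obstacle" by citing \cite[Theorem 2.11]{aw} (which identifies $\ind_\tau(D^I)$ with $-\spfl((D_\ra+\chi(t)A)_{t\in[0,1]})$, using that $\ind_\tau(D^I)=\ind_\tau(D^I-\chi A)$) and then \cite[Corollary 8.11]{cpJLO} for the spectral-flow/$\eta$-variation identity, so the cylinder computation you flag as the hard step is indeed the content of those two references rather than something to redo from scratch.
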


\begin{proof}
Let $\chi\colon [0,1] \to [0,1]$ be a smooth function with $\chi(x)=j$ for $x$ near $j=0,1$.

By the previous proposition and the $C^*$-algebraic Atiyah--Patodi--Singer index theorem
\begin{align*}
\lefteqn{\ind_{\tau}(D^+)}\\ 
&=(2 \pi i)^{n/2}\t(\int \hat A(M)\ch(E/S)\ch(\cF)) -\eta_{\tau}(D_{\ra}+A)+ \ind_{\tau}(D^I)\\
&=(2\pi i)^{n/2}\t(\int \hat A(M)\ch(E/S)\ch(\cF)) - \eta_{\tau}(D_{\ra}+A)-\spfl((D_{\ra}+\chi(t)A)_{t\in [0,1]})\ .
\end{align*}
Here the last equation follows from \cite[Theorem 2.11]{aw} by using that $\ind_{\tau}(D^I)=\ind_{\tau}(D^I -\chi A)$.
By taking the limit $\ep \to 0$ in \cite[Corollary 8.11]{cpJLO} we get that
$$\spfl((D_{\ra}+\chi(t)A)_{t\in [0,1]})=\eta_{\tau}(D_{\ra})-\eta_{\tau}(D_{\ra}+A)+\frac 12\tau(P_{\Ker D_{\ra}}) \ .$$
\end{proof}

\section{Twisted $\eta$- and $\rho$-invariants}
\label{etarho}








Let now $\G$ be a discrete group, and ${\mathbf c}=(c_1, c_2 \dots c_k) \in H^2(B\G, \Q)^k$ be cohomology classes of degree $2$. 
We consider a closed connected odd-dimensional manifold $M$ with a map $f\colon M \to B\G$ classifying the universal covering, and  a Dirac operator $D$ associated to a Dirac bundle $E\to M$. As in \S \ref{twistdir}, we denote by
  $D_{\mathcal V^{\s^s}}$ the induced twisted Dirac operator acting on the sections of $E\otimes \mathcal V^{\s^s}$.
  
Let $\t_s$ be a family of positive (finite) traces on $C^*(\G, \s^s)$ which are defined for $s\in \Q^k$ near $0$ and are invariant under parametrized projective automorphisms.

\begin{definition}
\label{deftwistedeta}
Using \eqref{etavN} we define the family of von Neumann twisted $\eta$-invariants by
\begin{equation*}
\etast(D_{\mathcal V^{ \s^s}})=\frac{1}{2\sqrt{\pi}}\int_0^\infty \Tr_{\tau_s} \left(D_{\mathcal V^{ \s^s}}e^{-t (D{}_{\mathcal V^{\s^s}})^2}\right)\frac{dt}{\sqrt t}\;\;\in \C \ .
\end{equation*}

Assume that $D_{\mathcal V^{ \s^s}}$ is invertible. Then the definition works also if $\tau_s$ is a family of delocalized traces. Recall that a trace $\tau_s$ on $C^*(\G,\s^s)$ is called delocalized if $\tau_s(\delta_e)=0$. 

If $\tau_s$ is delocalized, we call the associated twisted $\eta$-invariant a \emph{twisted $\rho$-invariant} and write $\rho_{\tau_s}(D_{\N^{\s^s}})$.  

If $D_{\mathcal V^{ \s^s}}$ is not invertible, twisted $\rho$-invariants can be defined by considering the difference between two positive normalized traces.

Both, the $\eta$- and the $\rho$-invariant, are defined for manifolds which are not necessarily connected but whose components have the same fundamental groups: in fact, one can apply the constructions to each component and add up the so gained contributions.
\end{definition}

In general, the twisted $\eta$- and $\rho$-invariants depend on the choices made in \S \ref{para-constr}. We shall now analyze the particular case of the spin Dirac operator in presence of positive scalar curvature.

\subsection{Case of the spin Dirac operator
 for metrics of positive scalar curvature}

In the following we assume that $(M,g)$ is a closed connected spin manifold with positive scalar curvature $\scal (g)>0$. We denote by $\Di$ the spin Dirac operator. The twisted spin Dirac operator $\Di_{\mathcal V^{\s^s}}$ satisfies the following property, which is a direct consequence of the Lichnerowicz formula.

\begin{lem} \textrm{\cite[p. 19]{Ma1}} 
\label{lem-invert} 
There exists $\ep_0>0$ such that for all $s\in \Q^k$ with $|s| <\ep_0$ the operator $\Di_{\mathcal V^{\s^s}}$ is invertible.
\end{lem}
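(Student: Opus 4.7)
The plan is to invoke the twisted Lichnerowicz formula for the Dirac operator $\Di_{\N^{\s^s}}$, which reads
$$\Di_{\N^{\s^s}}^2 = \bigl(\nabla^{E \ten \N^{\s^s}}\bigr)^*\nabla^{E \ten \N^{\s^s}} + \frac{\scal(g)}{4} + \fc(R^{\s^s}),$$
where $\fc(R^{\s^s})$ denotes the Clifford contraction of the curvature of $\nabla^{\s^s}$, viewed as a pointwise endomorphism of $E \ten \N^{\s^s}$ with values in $C^*(\G,\s^s)$. Here the first term on the right is a nonnegative selfadjoint operator on the Hilbert $C^*(\G,\s^s)$-module $L^2(M, E \ten \N^{\s^s})$.

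The first step is to estimate the zero-order term $\fc(R^{\s^s})$. Its operator norm on sections is bounded by $C \|R^{\s^s}\|_{\infty}$, with $C$ a constant depending only on $\dim M$. The second step is to apply Lemma \ref{s-picc}, which makes $\|R^{\s^s}\|_{\infty}$ arbitrarily small for $s$ close to $0 \in \Q^k$. Since $M$ is closed and $\scal(g) > 0$ pointwise, there is a uniform lower bound $\scal(g) \geq \scal_0 > 0$. Choosing $\ep_0 > 0$ so that $C\|R^{\s^s}\|_{\infty} < \scal_0/8$ for $|s|<\ep_0$, the sum $\tfrac{\scal(g)}{4} + \fc(R^{\s^s})$ is bounded below by $\scal_0/8$ as a selfadjoint endomorphism. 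Hence $\Di_{\N^{\s^s}}^2 \geq \scal_0/8$, which forces $\Di_{\N^{\s^s}}^2$ (and therefore $\Di_{\N^{\s^s}}$ itself) to be invertible on the relevant Hilbert $C^*$-module and on $L^2$-sections.

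The step I expect to be the most delicate is verifying that the Lichnerowicz formula in this twisted setting yields a genuinely $C^*(\G,\s^s)$-linear pointwise endomorphism whose fiberwise operator norm really is controlled by the sup-norm of the curvature form of $\nabla^{\s^s}$. This requires writing things out in a local trivialization of $\N^{\s^s}$ and checking that the resulting bound is independent of the choices made in \S \ref{para-constr}; but since $\nabla^{\s^s}$ was constructed on a trivial $\cgs$-bundle upstairs, the local computation reduces to the classical one tensored with the identity of $\cgs$, and the estimate follows without further difficulty.
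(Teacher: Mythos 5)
Your proof is correct and follows exactly the route the paper intends: the paper does not give a proof itself, but cites Mathai and remarks immediately before the statement that the lemma is a direct consequence of the Lichnerowicz formula, which is what you flesh out via the pointwise bound on $\fc(R^{\s^s})$ together with Lemma \ref{s-picc}. One small notational note: since the lemma concerns the spin Dirac operator $\Di$, the Dirac bundle $E$ in your Lichnerowicz identity should be the spinor bundle $S$ (for a general Dirac bundle there would be an extra curvature term from $E$ itself), but your argument is otherwise the standard one.
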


For each $s\in B_{\ep_0}=\{s\in \Q^k,  |s| < \ep_0 \}$, let now $\tau_s$ be a (not necessarily positive) trace on $C^*(\G,\s^s)$ which is invariant under parametrized projective automorphisms.

Note that by the previous lemma the $C^*$-algebraic definition of the $\eta$-invariant \eqref{eta-tauC*} applied to $\Di_{\mathcal V^{\s^s}}$ is well-defined. 

In order to avoid the choice of $\ep_0$ (which depends on the geometric data), we define the twisted $\eta$-invariant as a germ.

\begin{definition}
\label{etadef}

Consider the direct system $\{\eta^U\}_{U\subseteq B_{\ep_0}}$ of (arbitrary set theoretic) maps $\eta^U\colon U\to \C$ defined by $\eta^U(s)=\etast(\Di_{\mathcal V^{ \s^s}})$, where $U\subseteq B_{\ep_0}$ varies in the directed set of neighborhoods of $s=0$.

 We define the \emph{twisted $\eta$-invariant} of the operator $\Di_{\mathcal V^{\s^s}}$ as the direct limit of $\eta^U$, \emph{i.e.} the germ at $s=0$ of $\etast(\Di_{\mathcal V^{ \s^s}})$. 
 Thus
$$\etast(\Di_{\mathcal V^{ \s^s}}) \in \varinjlim_{0 \in U \subset B_{\ep_0}}\G(U \to \C) \ .$$
Here $\G(U \to \C)$ is the set of arbitrary set theoretic maps from $U$ to $\C$.

For a delocalized family of traces $\tau_s$ we write $\rhoct(M,g,f)$ for the associated $\rho$-invariant, which we also considered as a germ.

\end{definition}

\subsubsection{Dependence on the choices}
\label{depchoic}
We assume that $M$ is connected. By Lemma \ref{choices-conn-s} the $\eta$-invariant $\etast(\Di_{\mathcal V_{ \s^s}})$ depends on the choice of $(\o_1,\o_2 \dots, \o_k)$ but not on the choices of $(\eta_1,\eta_2 \dots \eta_k)$ and $(\psi^1_\g,\psi^2_{\g} \dots \psi^k_{\g})$. We compute the variation for two choices of $\o_j, \o_j',~j=1,\dots k$. Let $\Di_{\mathcal V^{\s^s}}$, $\Di_{\mathcal V^{\s^{s'}}}$ be the two corresponding twisted spin Dirac operators. For each $j$ choose $\phi_j \in \O^1(M,\R)$ with $\o_j'-\o_j=d\phi_j$ and define $\phi_s=s_1 \phi_1 +s_2 \phi_2 + \dots s_k \phi_k$. 

Let $\chi\colon [0,1] \to [0,1]$ be a smooth cut-off function with $\chi(t)=0$ for $t\in [0, \frac{1}{4}]$, and  $\chi(t)=1$ for $t\in [\frac{3}{4}, 1]$. 

\begin{prop}
We have the following variation formula.
$$
\etast(\Di_{\mathcal V^{\s^{s'}}})-\etast(\Di_{\mathcal V^{ \s^s}})=(2\pi i)^{\frac{\dim M+1}{2}}\int_0^1 \hat A(M)\tau_s \exp(-i(\omega_s+\chi d\phi_s+\chi dt\wedge \phi_s)) \ .
$$ 
In particular, the twisted $\rho$-invariants are independent of the choice of the $\o_j$.
\end{prop}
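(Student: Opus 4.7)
My plan is to apply the von Neumann Atiyah--Patodi--Singer index theorem of \S\ref{neumann_Cstar} to the cylinder $Z=[0,1]\times M$ with the product Riemannian metric (constant in $t$ near $t=0$ and $t=1$). On $Z$ I build a $C^*(\G,\s^s)$-vector bundle $\mathcal V_Z$ whose restrictions to $\{0\}\times M$ and $\{1\}\times M$ are canonically identified with $\mathcal V^{\s^s}$ and $\mathcal V^{\s^{s'}}$ (the latter via the isomorphism $U_s$ of Lemma \ref{choices-conn-s}). Concretely, I pull $\mathcal V^{\s^s}$ back to $Z$ and modify its connection by adding $i\chi(t)p^*\phi_s$; the resulting connection is of product type near both ends and its curvature equals $i(\omega_s+\chi(t)d\phi_s+\chi'(t)dt\wedge\phi_s)$, matching the formula in the statement. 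The associated twisted spin Dirac operator $\Di_{\mathcal V_Z}$ is then of product type at the boundary, restricting to $\pm\Di_{\mathcal V^{\s^s}}$ at $t=0$ and to $\pm\Di_{\mathcal V^{\s^{s'}}}$ at $t=1$; by Lemma \ref{lem-invert} both boundary operators are invertible for $|s|<\ep_0$.

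Applying the index theorem for the family $\tau_s$ (no kernel correction term arises because the boundary operator is invertible), I obtain
\[
\ind_{\tau_s}(\Di_{\mathcal V_Z}^+)=(2\pi i)^{\frac{\dim M+1}{2}}\tau_s\!\left(\int_Z \hat A(Z)\ch(\mathcal V_Z)\right)-\bigl(\etast(\Di_{\mathcal V^{\s^{s'}}})-\etast(\Di_{\mathcal V^{\s^s}})\bigr)\,.
\]
The crucial step is to show that the left-hand side vanishes for $s$ small. Since the metric is product, $\scal(Z)=\scal(M)>0$, and the Lichnerowicz formula yields $\Di_{\mathcal V_Z}^2=\nabla^*\nabla+\tfrac{\scal(M)}{4}+\mathcal R$, where $\mathcal R$ is the Clifford action of the curvature of $\mathcal V_Z$. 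A cylinder-version of Lemma \ref{s-picc} lets me shrink the neighborhood of $0\in\Q^k$ so that $\|\mathcal R\|_\infty<\scal(M)/4$; then $\Di_{\mathcal V_Z}$ is invertible on $Z$ and the index vanishes. Rearranging, and using that on the product $\hat A(Z)=p^*\hat A(M)$ and $\int_Z=\int_0^1\!\!\int_M$, produces the stated variation formula.

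For the $\rho$-invariant statement, observe that the Mishchenko connection on $\mathcal V^{\s^s}$, and hence on $\mathcal V_Z$, has curvature of the form $i\Omega\cdot\de_e$ where $\Omega$ is a scalar $2$-form on $Z$ and $\de_e\in C^*(\G,\s^s)$ is the unit. Consequently $\ch(\mathcal V_Z)$ is a scalar differential form times $\tau_s(\de_e)$. When $\tau_s$ is delocalized, $\tau_s(\de_e)=0$, so the right-hand side of the variation formula vanishes and $\rhoct(M,g,f)$ is independent of the choice of the $\omega_j$.

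The main obstacle is ensuring uniform control of the curvature of $\mathcal V_Z$ on the whole cylinder together with simultaneous invertibility of the boundary operators. Both reductions go through a straightforward parametrized variant of Lemma \ref{s-picc}; the point is that $\chi$ (and hence $\chi'$) is fixed independently of $s$ while $\phi_s$ scales linearly in $s$, so the entire curvature perturbation on $Z$ can be forced below $\scal(M)/4$ by a single choice of neighborhood of $0\in\Q^k$.
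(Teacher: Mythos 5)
Your proof is correct and follows essentially the same route as the paper: pull $\mathcal V^{\s^s}$ back to the cylinder $Z=[0,1]\times M$ with the interpolating connection $\nabla^Z=dt\,\partial_t+\nabla^{\s^s}+i\chi(t)\phi_s$, compute its curvature, observe invertibility of the cylinder operator from the positive scalar curvature on $Z$ (so the APS index vanishes), and read off the transgression formula from the APS index theorem. The two supporting points you spell out — the Lichnerowicz/Lemma~\ref{s-picc} argument for invertibility on $Z$, and the observation that the Chern character is a scalar form times $\de_e$ so that delocalized traces kill the right-hand side — are exactly the (unstated) justifications the paper's proof relies on.
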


\begin{proof}
As in the proof of Lemma \ref{choices-conn}, we set $\eta'_s=\eta_s+p^*\phi_s$ and $(\psi^s_{\g})'=\psi^s_{\g}$. Then $\nabla^{\sigma^{s'}}=\nabla^{\sigma^{s}}+i\phi_s$ and on the covering $\nabla'=\nabla+ip^* \phi_s=d+ i(\eta_s+p^*\phi_s)$.
We pull the bundle $\N^{\sigma^{s}}$ back to the cylinder $Z=[0,1] \times M$ and consider on it the connection $\nabla^Z=dt\frac{\partial }{\partial t}+\nabla^{\sigma^{s}}+i \chi(t)\phi_s$. 
We compute the transgressed Chern character (also called Chern--Simons form) 
$$\cs_{\tau_s}(\nabla^{\sigma^{s}},\nabla^{\sigma^{s'}})=\int_0^1\ch_{\tau_s}(\nabla^Z)=\int_0^1\tau_s (\tralg e^{-(\nabla^Z)^2})\ .$$ 
The curvature of $\nabla^Z$ can be computed using the corresponding connection $\tilde \nabla^Z$ on the covering. We get
\begin{eqnarray*}
(\widetilde \nabla^Z)^2&=&(dt\frac{\partial }{\partial t} +d_{\widetilde M}+i\eta_s+i\chi p^*\phi_s)^2\\
&=&i p^*\omega_s+i\chi(d_{\widetilde M} p^*\phi_s)+i\chi' dt \wedge p^*\phi_s \ .
\end{eqnarray*}
Thus $(\nabla^Z)^2=i(\omega_s+\chi d_M\phi_s+\chi'dt \wedge \phi_s) \in \O^2(Z)$ and 
$$\cs(\nabla^{\sigma^{s}},\nabla^{\sigma^{s'}})=\int_0^1 \tau_s \exp(-i(\omega_s+\chi d_M\phi_s +\chi'dt \wedge \phi_s)) \in \O^{odd}(M) \ .$$
 
Since $Z$ has positive scalar curvature, the Dirac operator $\Di_Z$ associated to $\nabla^Z$ is invertible for $s$ small enough. The $C^*$-algebraic Atiyah--Patodi--Singer index theorem applied to $\Di_Z$ implies that (as a germ at $s=0$)
$$\etast(\Di_{\mathcal V^{\s^{s'}}})-\etast(\Di_{\mathcal V^{ \s^s}})=(2\pi i)^{\dim Z/2}\int_Z \hat A(M)\ch_{\tau_s}(\nabla^Z)\ .$$
Now the assertion follows.
\end{proof}

\begin{rem} The condition of positive scalar curvature is crucial for this formula. Otherwise there might be an additional contribution from the index on the cylinder.
\end{rem}

\subsection{Examples of invariant traces}

For notational simplicity we deal with a fixed multiplier $\s$ in the following and construct examples of traces on $C^*(\G,\s)$ which are invariant under projective automorphisms. Analogously, one obtains invariant parametrized versions.

\subsubsection{Perfect groups}

If $\G$ is perfect, then any group homomorphism $\chi\colon \G \to U(1)$ is trivial and thus any trace on $C^*(\G,\s)$ is invariant. 

If one considers the parametrized version as constructed in \S \ref{para-constr}, then it is enough to assume that $\G$ has finite abelization since then any group homomorphism $a\colon \G \to \R$ is trivial. Thus, when one considers parametrized versions of the following examples, one may relax the condition of perfectness by assuming only finite abelization.

\subsubsection{Left regular trace}\label{lrt}
We denote by $\tr_{(2)}^{\s}$ the positive faithful trace on $C^*(\G,\s)$ defined by $\tr_{(2)}^{\s}(\sum_\g a_\g\de_\g)=a_e$.
If $b_z \colon C^*(\G,\s') \to \cgs$ is a projective isomorphism, then $\tr_{(2)}^{\s'}(a)=\tr_{(2)}^{\s}(b_z(a))$, because $z(e)=1$.

In this case we write $\eta^{\s}_{(2)}$ for the twisted $\eta$-invariants and, similarly, $\ind^{\s}_{(2)}$ for the twisted index.

\subsubsection{Traces on products}\label{tr-prod}

Let $\G$ be a perfect group.
Consider a product $\G\times G$ and denote by $\pi_2\colon \G\times G\rightarrow G$ the projection onto the second factor. Let $\s$ be a multiplier on $G$ and 
$\pi_2^*\s$ the induced multiplier on the product. 
There is a canonical map $C^*(\G\times G,\pi_2^*\s ) \rightarrow C^*(\G) {\otimes}C^*(G,\s)$.
(We always use the spatial tensor product.)

Given a trace $\tau$ on $C^*(\G)$, we get a trace  $\tau\otimes \tr^{\s}_{(2)}$ on  $C^*(\G)\ten C^*(G,\pi_2^*\s)$ and by pull back on $C^*(\G\times G,\pi_2^*\s)$. It is explicitely given by 
$$(\tau\otimes \tr^{\s}_{(2)})( \sum_{(\g,g)}a_{\g,g}\delta_{(\g,g)})=\sum_{\g \in \G} a_{\g,e} \tau(\de_\g) \ .$$

Let  $\chi \colon  \G\times G\rightarrow U(1)$ be a homomorphism. Then $\G\rightarrow U(1),~\g\mapsto \chi(\g, e)$ is a homomorphism. Since $\G$ is perfect, it is trivial. 
The trace $\tau\otimes \tr^{\s}_{(2)}$ is well defined because

$$(\tau\otimes \tr^{\s}_{(2)})( \sum_{(\g,g)}a_{\g,g}\chi(\g,g)\delta_{(\g,g)})=\sum_{\g \in \G}a_{\g,e}\chi(\g,e) \tau(\de_\g)=\sum_{\g \in \G} a_{\g,e} \tau(\de_\g)
 \ .$$

Now we are going to apply this construction to prove a product formula for twisted $\eta$-invariants. Compare with the product formula for higher $\eta$-invariants \cite[\S 2]{lpetapos}. 

Let $M=L\times N$ be the product of two closed connected spin manifolds with $\dim N=2n$ , $\dim L=2l+1$, with fundamental groups $\pi_1(N)=G$ and  $\pi_1(L)=\G$ and maps $f_N\colon N \to BG$, $f_L \colon L \to B\G$ inducing universal coverings. We assume that $L$ has positive scalar curvature and the metric on $N$ is such that the induced metric on $M$ has positive scalar curvature as well. Set $f_M=f_L \times f_N$. 

Let $(c_1,c_2 \dots c_k)\in H^2(G, \Q)^k$ and choose $\psi_g^s, \eta_s, \omega_s$ for $N$ as in \S \ref{para-constr}. The induced $C^*(G,\s^s)$-bundle is denoted by $\N^{\s^s}$ and the induced twisted spin Dirac operator is denoted by $\Di^N_{\N^{\s^s}}$. Furthermore we write $\Di^L_{\N}$ for the spin Dirac operator on $L$ twisted by the flat Mishenko--Fomenko $C^*\G$-bundle $\N_L$.

By pulling back $\eta_s, \omega_s$ to $M$ via the projection $p_2 \colon M \to N$ and by setting $\psi_{(\g,g)}^s:=\psi_\g^s\circ p_2 \in \Cinf(M)$ we define the twisted spin Dirac operator 
$\Di^M_{\N^{\pi_2^*\s^s}}$ on $M$. 

\begin{prop}[Product formula for twisted eta invariants]\
\label{prod}
Assume that $\G$ has finite abelization.

Let $\tau$ be a trace on $C^*(\G)$ and $\tau_s=\t\otimes \tr_{(2)}^{\s^s}$ the induced trace on $C^*(\G\times G, \pi_2^{*}\s^s)$.

Then, with the definitions as above,
$$
\eta_{\t_s}(\Di^M_{\N^{\pi_2^*\s^s}})=\eta_{\tau}(\Di^L_{\N})\cdot \ind^{\s^s}_{(2)}(\Di^N_{\N^{\s^s}}) \ .
$$
\end{prop}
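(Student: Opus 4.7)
The plan is to exploit the tensor-product structure of $M=L\times N$ to factor the heat integrand defining $\eta_{\tau_s}$, reduce the $N$-factor to an index via McKean--Singer, and recognize the $L$-factor as $\eta_\tau(\Di^L_{\N})$.

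First I would unpack the algebraic and geometric setup. Since $\pi_2^*\s^s$ depends only on the $G$-variable, one checks directly that on $\C[\G\times G]$ the twisted convolution agrees with the product of untwisted convolution on $\C\G$ and $\s^s$-twisted convolution on $\C G$; the corresponding isomorphism $C^*(\G\times G,\pi_2^*\s^s)\cong C^*\G\otimes C^*(G,\s^s)$ intertwines $\tau_s$ with $\tau\otimes\tr^{\s^s}_{(2)}$. Under this identification the bundle $\N^{\pi_2^*\s^s}\to M$, constructed with $\eta_s\circ p_2$ and $\psi^s_{(\g,g)}=\psi^s_g\circ p_2$, is the external tensor product of the Mishchenko--Fomenko bundle $\N_L\to L$ (whose $\eta$-data vanishes, making the connection flat) and $\N^{\s^s}\to N$; the tensor-product connection has curvature equal to $p_2^*(i\omega_s)$.

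Next I would decompose the twisted Dirac operator. Writing the spinor bundle on $M$ as $S_L\otimes S_N$ with grading $\gamma_N$ coming from the even-dimensional factor, the standard product formula for Dirac operators on Riemannian products reads
\begin{equation*}
\Di^M_{\N^{\pi_2^*\s^s}}=\Di^L_{\N_L}\otimes\gamma_N\;+\;1\otimes\Di^N_{\N^{\s^s}}\ ,
\end{equation*}
so by anticommutativity $(\Di^M_{\N^{\pi_2^*\s^s}})^2=(\Di^L_{\N_L})^2\otimes 1+1\otimes(\Di^N_{\N^{\s^s}})^2$. Consequently the heat operator factorizes, and
\begin{equation*}
\Di^M_{\N^{\pi_2^*\s^s}}e^{-t(\Di^M_{\N^{\pi_2^*\s^s}})^2}=\bigl(\Di^L_{\N_L}e^{-t(\Di^L_{\N_L})^2}\bigr)\otimes\bigl(\gamma_N e^{-t(\Di^N_{\N^{\s^s}})^2}\bigr)+\bigl(e^{-t(\Di^L_{\N_L})^2}\bigr)\otimes\bigl(\Di^N_{\N^{\s^s}}e^{-t(\Di^N_{\N^{\s^s}})^2}\bigr)\ .
\end{equation*}

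Applying $\Tr_{\tau_s}$ and using the tensor factorization of the trace, the second summand vanishes because $\Di^N_{\N^{\s^s}}$ is odd for the $\Z/2$-grading on $S_N$, hence has vanishing (twisted) trace against the even operator $e^{-t(\Di^N_{\N^{\s^s}})^2}$. In the first summand the factor $\Str_{\tr^{\s^s}_{(2)}}(e^{-t(\Di^N_{\N^{\s^s}})^2})$ equals $\ind^{\s^s}_{(2)}(\Di^N_{\N^{\s^s}})$ for every $t>0$ by the $C^*$-algebraic McKean--Singer formula applied to the closed even-dimensional manifold $N$. Pulling this $t$-independent constant out of the integral defining $\eta_{\tau_s}$ yields
\begin{equation*}
\eta_{\tau_s}(\Di^M_{\N^{\pi_2^*\s^s}})=\ind^{\s^s}_{(2)}(\Di^N_{\N^{\s^s}})\cdot\frac{1}{2\sqrt{\pi}}\int_0^\infty\Tr_\tau\bigl(\Di^L_{\N_L}e^{-t(\Di^L_{\N_L})^2}\bigr)\frac{dt}{\sqrt t}=\ind^{\s^s}_{(2)}(\Di^N_{\N^{\s^s}})\cdot\eta_\tau(\Di^L_{\N})\ .
\end{equation*}

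The main obstacle is the bookkeeping in the first paragraph: verifying that the geometric construction pulled back from $N$ via $p_2$ really produces an external tensor product of bundles-with-connection, so that the Dirac operator genuinely splits additively. Convergence of the $L$-integral at infinity is not an issue because the positive scalar curvature of $L$ together with flatness of $\N_L$ makes $\Di^L_{\N_L}$ invertible via Lichnerowicz; convergence at $t=0$ is the standard local heat asymptotic.
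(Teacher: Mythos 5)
Your argument reproduces the paper's proof: decompose $\Di^M_{\N^{\pi_2^*\s^s}}$ as the graded tensor product $\grad_N\Di^L_{\N}+\Di^N_{\N^{\s^s}}$, factor the heat operator, kill the cross term because the ungraded trace of an odd operator vanishes, and convert the $N$-factor to $\ind^{\s^s}_{(2)}(\Di^N_{\N^{\s^s}})$ by the McKean--Singer formula. One minor imprecision: the canonical map $C^*(\G\times G,\pi_2^*\s^s)\to C^*\G\ten C^*(G,\s^s)$ that you call an isomorphism is in general only a surjection for the maximal $C^*$-algebra against the spatial tensor product, but this is harmless since $\tau_s$ is by definition the pullback of $\tau\ten\tr^{\s^s}_{(2)}$ through that map; the paper sidesteps the issue by tensoring the bundle up to $\mathcal U=\mathcal V_M^{\pi_2^*\s^s}\ten_{C^*(\G\times G,\pi_2^*\s^s)}\bigl(C^*\G\ten C^*(G,\s^s)\bigr)$ rather than invoking any isomorphism.
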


\begin{proof}
Consider the bundle 
$$\mathcal U=\mathcal V_M^{\pi_2^*{\s^s}}\otimes_{C^*(\G\times G, \pi_2^*{\s^s})}\left(C^*(\G)\otimes C^*(G,\s^s)\right)$$
and write $\Di_{\mathcal U}$ for the spin Dirac operator twisted by $\mathcal U$. 

Observe that $\eta_{\tau_s}(\Di^M_{\mathcal V^{\pi_2^*{\s^s}}})=\eta_{\tau_s}(\Di_{\mathcal U})$. 
The spinor bundle on $L\times N$ twisted by $\mathcal U$ satisfies
$$
S_{L\times N}\otimes \mathcal U=\left(S_L\otimes \mathcal V_L\right)\boxtimes \left(S_N\otimes \mathcal V^{\s_s}_N\right) \ .
$$ 

If $\grad_N$ denotes the chirality grading on $S_N$,  then $\Di_{\mathcal U}=\grad_N \Di^L_{\N}+ \Di^N_{\N^{\s^s}}$.

Thus 
\begin{eqnarray*}
\eta_{\t_s}(\Di^M_{\N^{\pi_2^*\s^s}})&=&\eta_{\t_s}(\Di_\mathcal U)= \frac{1}{\sqrt{\pi}}\int_0^\infty \tau_s \tralg \left(\Di_{\mathcal U}e^{-t (\Di{}_{\mathcal U})^2}\right)
\frac{dt}{\sqrt t}\\
&=&\frac{1}{\sqrt{\pi}}\int_0^\infty \tau \tralg \left(\Di^L_{\N} e^{-t (\Di^L_{\N})^2}\right) \tr^{\s^s}_{(2)} \tralg \grad_N e^{-t (\Di^N_{\N^{\s^s}})^2}\frac{dt}{\sqrt t}\ .
\end{eqnarray*}

Now the assertion follows because the term $\tr^{\s^s}_{(2)} \tralg \grad_N e^{-t (\Di^N_{\N^{\s^s}})^2}$ is independent of $t$ and equals $\ind_{(2)}^{\s^s}(\Di^N_{\N^{\s^s}})$, by the McKean--Singer formula.
\end{proof}

One may obtain similar product formulas by applying the twist only to the group $\G$ or by using twists for both groups. We leave their consideration to the interested reader.

\subsubsection{Traces associated to exact sequences}
This construction is a generalization of the previous one. 

Let $0 \rightarrow  G \rightarrow  \G \stackrel{\pi}{\rightarrow} H \to 0$ be an exact sequence. We assume that any group homomorphism $\chi\colon \G \to U(1)$ restricts to the trivial homomorphism on $G$. This is the case, for example, if $G$ or $\G$ are perfect. 
Let $\s$ be a multiplier on $H$ and 
$\pi^*\s$ the induced multiplier on $\G$. 
We can pull back the trace $\tau_{(2)}^{\s}$ on $C^*(H,\s)$ to a trace $\tau^{\pi^*\s}_H$ on  $C^*(\G,\pi^*\s )$.

Let $\chi\colon  \G \rightarrow U(1)$ be a homomorphism.  
The trace $\tau^{\pi^*\s}_H$ is well defined because

$$\tau^{\pi^*\s}_H( \sum_{\g}a_\g\chi(\g)\delta_{\g})=\sum_{h \in H}\tau_{(2)}^{\s}(\delta_h)\sum_{(\g:\pi(\g)=h)} a_{\g}\chi(\g)=\sum_{\g\in G} a_{\g}\chi(\g)=\sum_{\g\in G} a_{\g} \ .$$

\subsubsection{Traces associated to unitary representations}
\label{tr-unitary}
We further generalize the previous example. The notation is as before.
Let $u\colon \G \to U(n)$ be a representation. There is an induced homomorphism
 $$u\ten \pi\colon C^*(\G,\pi^*\s) \to M_n(\C) \ten C^*(H,\s),\;~\delta_\g \mapsto u(\g) \ten \delta_{\pi(\g)}\ .$$
Using the standard trace $\tr$ on $M_n(\C)$ we get a trace
$$\tau^\s_{u,H}:=(\tr \ten \tau^\s_{(2)})\circ (u \ten \pi)$$ on $C^*(\G,\pi^*\s)$. Then for a  homomorphism  $\chi\colon  \G \rightarrow U(1)$ we have
$$\tau^\s_{u,H}( \sum_{\g\in \G}a_\g\chi(\g)\delta_{\g})=\sum_{h \in H}\tau_{(2)}^{\s}(\delta_h)\sum_{(\g:\pi(\g)=h)} a_{\g}\chi(\g)\tr(u(\g)) 
=\sum_{\g\in G} a_{\g}\tr(u(\g)) \ .$$
We write $\tr_1$ for this trace if $H$ is trivial and $u\colon \G \to U(1)$ is the trivial representation.
\medskip

We can use this construction to define $\rho$-invariants as follows.

Let $u_1, u_2\colon \G \to U(n)$ be two unitary representations.

Fix $\mathbf c=(c_1, c_2 \dots c_k) \in H^2(BH,\Q)^k$ and let $M$ be closed connected odd-dimensional manifold and $f_M\colon M \to B\G$ a map classifying the universal covering. We can define twisted $\rho$-invariants as in Def. \ref{deftwistedeta} using as delocalized trace $\tau_s:=\tau^{\s^s}_{u_1, H}-\tau^{\s^s}_{u_2, H}$. The induced $\rho$-invariant generalizes the Atiyah--Patodi--Singer $\rho$-invariant.

If $H$ is the trivial group, then there is no twist and this is the ordinary Atiyah--Patodi--Singer $\rho$-invariant. 

If $H=\G$, then one checks that $\tau^{\s^s}_{u_1,H}=\tau^{\s^s}_{u_2,H}$. Thus the induced $\rho$-invariant vanishes. This reproves and generalizes \cite[Lemma 6.2]{hu}.

\section{Bordism invariance of $\rho^{\mathbf c}_\t$}
\label{bordism_inv}

We introduce a new equivalence relation for positive scalar curvature metrics on closed manifolds. 

\begin{definition}
Let $(M,g_M, f_M)$ be a triple consisting of a closed spin manifold $M$ with $\pi_1(M)=\G$ endowed with a metric $g_M$ of positive scalar curvature and a reference map $f_M \colon M \to B\G$ inducing the universal covering. Two such triples $(M,g_M,f_M),~ (N,g_N,f_N)$ are called \emph{strongly $\G$-bordant} if there exists a triple $(W, g, f_W)$ where $W$ is a compact spin manifold with positive scalar curvature metric $g$ such that $(\partial W,g|_{\partial W})=(M,g_M)\cup (-N, g_N)$, and $f_W\colon W \to B\G$ is a reference map classifying the universal covering and extending the reference maps $f_M$, $f_N$. 
\end{definition}

Recall that the definition of $\G$-bordism is analogous, without requiring that the reference maps induce the universal covering.

Strong $\G$-bordism might not be an equivalence relation (in contrast to $\G$-bordism). 

For a fixed closed connected spin manifold $M$ with fixed classifying map $f_M\colon M\to B\G$, we say that two positive scalar curvature metrics $g, g'$ on $M$ are strongly $\G$-bordant if the triples $(M,g,f_M)$ and $(M,g',f_M)$ are \emph{strongly $\G$-bordant}. 
This is indeed an equivalence relation. It is weaker and therefore more flexible than concordance. Furthermore it is stronger than $\G$-bordism. 

In high dimensions strong $\G$-bordism and $\G$-bordism of positive scalar curvature metrics agree:

\begin{lem}
\label{strong_bordism}
Assume that $(M,g_M,f_M)$ and $(N,g_N,f_N)$ with $\dim M=\dim N \ge 4$ are $\G$-bordant and $f_M, f_N$ induce the universal covering. Then $(M,g_M,f_M)$ and $(N,g_N,f_N)$ are strongly $\G$-bordant.
\end{lem}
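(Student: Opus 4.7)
The plan is to start with a $\Gamma$-bordism $(W,g,f_W)$ between $(M,g_M,f_M)$ and $(N,g_N,f_N)$ and modify $W$ by surgeries in the interior so that the resulting manifold $W'$ still carries positive scalar curvature, is still spin, has the same boundary and boundary data, and has reference map $f_{W'}\colon W'\to B\Gamma$ inducing an isomorphism on $\pi_1$. The tool is the Gromov--Lawson surgery theorem combined with standard surgery below the middle dimension to kill $\pi_1$.

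First I would set up the algebra. Since $(f_M)_*\colon \pi_1(M)\to \Gamma$ is surjective and factors through $(f_W)_*\colon \pi_1(W)\to \Gamma$, the map $(f_W)_*$ is surjective. Because $W$ is compact, $\pi_1(W)$ is finitely presented, and $\Gamma=\pi_1(M)$ is finitely presented as well, so $K:=\ker(f_W)_*$ is finitely generated as a normal subgroup of $\pi_1(W)$. Choose normal generators $\gamma_1,\dots,\gamma_\ell$ of $K$.

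Next I would perform surgeries one at a time on disjoint embedded circles representing the $\gamma_i$. Since $\dim W\ge 5$, each $\gamma_i$ can be represented by a smoothly embedded loop in the interior of $W$ (general position), with trivial normal bundle (any oriented bundle of rank $\ge 2$ over $S^1$ is trivial). Choose a framing compatible with the spin structure on $W$. The surgery replaces a tubular neighborhood $S^1\times D^{\dim W-1}$ with $D^2\times S^{\dim W-2}$; since $\dim W-1\ge 4\ge 3$, the Gromov--Lawson surgery theorem lets us modify $g$ in a neighborhood of the surgered loop so as to preserve positive scalar curvature, while leaving the metric unchanged near $\partial W$. The resulting manifold is still spin by the chosen framing. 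Because $(f_W)_*[\gamma_i]=0$, the restriction $f_W|_{\gamma_i}$ is null-homotopic in $B\Gamma$, so $f_W$ extends over the attached $D^2\times S^{\dim W-2}$ (extend across $D^2$ using a null-homotopy, then constantly in the $S^{\dim W-2}$ direction, up to small homotopy). By van Kampen, each such surgery kills the class $[\gamma_i]$ (and its conjugates). After $\ell$ surgeries we obtain $(W',g',f_{W'})$ with the same boundary, positive scalar curvature, spin structure extending those of $M\sqcup(-N)$, and $(f_{W'})_*\colon \pi_1(W')\cong \pi_1(W)/K\xrightarrow{\cong}\Gamma$, proving the strong $\Gamma$-bordism.

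The main obstacle is juggling all constraints in a single surgery: positivity of scalar curvature (ensured by the codimension $\ge 3$ assumption, which is exactly why $\dim M\ge 4$ is needed), preservation of the spin structure (ensured by choosing a spin framing of the normal $S^1$), extension of the classifying map (ensured by the kernel condition), and disjointness of the embedded circles (handled by doing the surgeries sequentially, using general position each time to avoid previously-surgered regions). No new calculation is required; the result is a straightforward but careful application of surgery theory in the spin and positive scalar curvature category.
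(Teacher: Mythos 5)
Your proof is correct and follows essentially the same route as the paper's: perform spin surgeries on embedded circles (automatically of codimension $\ge 3$ since $\dim W \ge 5$) normally generating $\ker (f_W)_*$ in the interior of $W$, propagating positive scalar curvature via the Gromov--Lawson/Schoen--Yau surgery technique while keeping the metric and reference map fixed near $\partial W$. The paper's own proof is a one-line appeal to exactly this, citing Walsh \cite[\S 2.2]{walsh} for the details that you have spelled out (finite normal generation of the kernel, spin framing, extension of the classifying map, van Kampen).
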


\begin{proof}
Let $(W,g_W,f_W)$ be a $\G$-bordism between $(M,g_M,f_M)$ and $(N,g_N,f_N)$. Since $\dim W \ge 5$ we may obtain by spin surgery of codimension $\ge 3$ in the interior of $W$ that $f_W$ induces the universal covering. By the surgery technics of Schoen--Yau and Gromov--Lawson there is a metric of positive scalar curvature on the new bordism which restricts to the original metric on the boundary (see \cite[\S 2.2]{walsh} for a detailed treatment).
\end{proof}

\begin{prop}
\label{bord-inv}
If $(M,g_M,f_M)$ and $(N,g_N,f_N)$ are strongly $\G$-bordant, then $\rhoct(M,g_M,f_M) =\rhoct(N,g_N,f_N)$.
\end{prop}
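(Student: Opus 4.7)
The plan is to apply the $C^*$-algebraic Atiyah--Patodi--Singer index theorem of \S\ref{neumann_Cstar} to a twisted spin Dirac operator on a strong $\Gamma$-bordism $(W,g,f_W)$ between $(M,g_M,f_M)$ and $(N,g_N,f_N)$, exploiting the fact that for a delocalized family $\tau_s$ the interior contribution vanishes identically.

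First I would produce compatible geometric data on $W$. Fixing hermitian line bundles with unitary connections on $B\Gamma$ representing (integer multiples of) the classes $c_1,\dots,c_k$, one pulls them back along $f_W$ to obtain $\omega_j^W\in\Omega^2(W,\mathbb R)$, lifts along $p_W\colon\widetilde W\to W$ to $\eta_j^W\in\Omega^1(\widetilde W,\mathbb R)$, and chooses $\psi_\gamma^{j,W}$ with $d\psi_\gamma^{j,W}=\gamma^*\eta_j^W-\eta_j^W$. Since $f_W$ restricts to $f_M$ and $f_N$, these restrict on $\partial W=M\sqcup(-N)$ to admissible choices of the analogous data on the boundary components. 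Assembling the parametrized family $\omega_s^W,\eta_s^W,\psi_\gamma^{s,W}$ as in \S\ref{para-constr} yields a bundle $\mathcal V^{\sigma^s}_W\to W$ with connection $\nabla^{\sigma^s}$ of product type near $\partial W$ (Lemma \ref{eta}). By Lemma \ref{choices} and its parametrized analogue, the restriction of $\mathcal V^{\sigma^s}_W$ to each boundary component agrees with the originally chosen bundle $\mathcal V^{\sigma^s}_M$, respectively $\mathcal V^{\sigma^s}_N$, up to a parametrized projective automorphism of $C^*(\Gamma,\sigma^s)$; the standing invariance hypothesis on $\tau_s$ ensures that the resulting boundary $\eta$-invariants are identical to the ones entering $\rhoct(M,g_M,f_M)$ and $\rhoct(N,g_N,f_N)$.

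Next, by Lemma \ref{s-picc} combined with the Lichnerowicz argument of Lemma \ref{lem-invert}, the twisted spin Dirac operator $\Di_{\mathcal V^{\sigma^s}_W}$ on $W$ and its boundary operator are invertible for $s$ in a neighborhood of $0$, so the Atiyah--Patodi--Singer boundary value problem requires no smoothing perturbation $A$ and its index vanishes in $K_0(C^*(\Gamma,\sigma^s))$. Pairing with $\tau_s\,\tralg$ and invoking the $C^*$-algebraic APS theorem of \S\ref{neumann_Cstar} gives, as germs at $s=0$,
\begin{equation*}
0\;=\;(2\pi i)^{\dim W/2}\,\tau_s\!\left(\int_W \hat A(W)\,\ch(\mathcal V^{\sigma^s}_W)\right)\;-\;\eta_{\tau_s}\bigl(\Di_{\mathcal V^{\sigma^s}_{\partial W}}\bigr)\ .
\end{equation*}
The curvature of $\nabla^{\sigma^s}$ is $i\omega_s^W$ times the identity of the fiber $C^*(\Gamma,\sigma^s)$, so $\ch(\mathcal V^{\sigma^s}_W)=e^{-i\omega_s^W}\,\delta_e$ as a differential form with coefficients in $C^*(\Gamma,\sigma^s)/\overline{[C^*(\Gamma,\sigma^s),C^*(\Gamma,\sigma^s)]}$. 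Since $\tau_s(\delta_e)=0$ by delocalization, the interior term vanishes. Decomposing $\partial W=M\sqcup(-N)$ yields $\eta_{\tau_s}(\Di_{\mathcal V^{\sigma^s}_{\partial W}})=\eta_{\tau_s}(\Di_{\mathcal V^{\sigma^s}_M})-\eta_{\tau_s}(\Di_{\mathcal V^{\sigma^s}_N})$, and the conclusion $\rhoct(M,g_M,f_M)=\rhoct(N,g_N,f_N)$ follows.

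The main obstacle is bookkeeping the choices: one must verify that the geometric data on $W$ can be arranged so that their boundary restrictions differ from the fixed data on $M,N$ only by a parametrized projective automorphism of $C^*(\Gamma,\sigma^s)$, so that the invariance hypothesis on $\tau_s$ can legitimately identify the two boundary $\eta$-invariants. This is precisely the reason for introducing parametrized projective automorphisms in Definition \ref{para_projiso} and for restricting to traces invariant under them; everything else reduces to the vanishing of $\tau_s(\delta_e)$ and the positivity of the scalar curvature on $W$.
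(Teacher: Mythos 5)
Your proposal takes essentially the same route as the paper's proof: apply the $C^*$-algebraic Atiyah--Patodi--Singer index theorem to the twisted spin Dirac operator on the bordism $W$, note that positive scalar curvature plus Lichnerowicz makes the operator invertible for small $s$ so the index vanishes, and observe that the interior term $\int_W \hat A(W) e^{-i\omega_s}$ is killed by a delocalized $\tau_s$ since the coefficient of the Chern character is $\delta_e$. Your extra paragraph on arranging compatible choices on $W$ and absorbing the discrepancies via parametrized projective automorphisms (together with the independence of the $\rho$-invariant on the $\omega_j$ established in \S\ref{depchoic}) is bookkeeping the paper leaves implicit, but it is correct and not a different argument.
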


\begin{proof} Let $(W, g_W, f_W)$ be a triple realizing the strong--$\G$-bordism.

Without loss of generality we may assume that $W$ is connected.

Now for $s$ sufficiently small the operator $\Di^W_{\mathcal V^{\s^s}}$ constructed in
 section \ref{bcgs-bundle} is invertible by Lichnerowicz formula. 
 
Hence by the $C^*$-algebraic Atiyah--Patodi--Singer index theorem we have that
\begin{equation}
\label{APS}
0=(2\pi i)^{\dim W} \tau_s\int_W\hat{A}(W) e^{-i\omega_s}-\etast(\Di^M_{\mathcal V^{\s^s}})+\etast(\Di^N_{\mathcal V^{\s^s}})
\end{equation}

Since $\tau_s$ is delocalized and therefore $\tau_s\left(\displaystyle\int_W\hat{A}(W)e^{-i\omega_s}\right)=0$,  we obtain the result.

\end{proof}

\section{Applications}
\label{app}

Crucial for our applications is the following well-known result which, in addition to the cited sources, is based on ideas of Gromov--Lawson:

\begin{lem}[Bordism theorem]\label{bordlemma}
Assume that $M$ is a closed spin manifold with $\dim M \ge 5$ endowed with a map $f_M\colon M \to B\G$ classifying the universal covering. Let $N$ be a closed spin manifold with a map $f_N \colon N \to B\G$ such that $(M,f_M)$ and $(N,f_N)$ are bordant in $\O^{spin}(B\G)$.

Then there is a spin bordism $(W, f_W\colon W \to B\G)$ between $(M,f_M)$ and $(N,f_N)$ such that $f_W$ classifies the universal covering of $W$.
 
Furthermore, if $M$ is connected and $N$ is endowed with a metric of positive scalar curvature, then there is a metric of positive scalar curvature on $W$ (which is of product structure near the boundary) extending the metric on $N$.
\end{lem}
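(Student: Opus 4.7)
The plan is to begin with any bordism representing the given class in $\Omega^{spin}(B\G)$ and modify it by interior surgery in two stages: first to arrange that $f_W$ classifies the universal covering of $W$, and then, in the psc setting, to arrange that $W$ admits a handle decomposition relative to $N$ with handles of sufficiently small index so that the Gromov--Lawson surgery construction extends the metric from $N$ to all of $W$.

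For the first stage, let $(W_0,f_{W_0})$ be any spin bordism representing the given class. Since $M\subset W_0$ and $f_{M,*}$ is an isomorphism onto $\G$, the map $f_{W_0,*}\colon\pi_1(W_0)\to\G$ is surjective, so it suffices to kill $K=\ker f_{W_0,*}$. I would pick finitely many generators of $K$ and represent each by a smoothly embedded circle in the interior of $W_0$. Since $\dim W_0\ge 6$, these circles have codimension at least $5$, can be made pairwise disjoint, and surgery along them preserves connectedness and does not touch $\ra W_0$. Each such circle has trivial normal bundle, and by replacing a representative, if necessary, by its connected sum with a null-homotopic loop one may choose a spin-compatible framing; the spin structure then extends across the trace of the surgery. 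The classifying map $f_{W_0}$ likewise extends because the surgered classes are null in $\G$. This produces $(W_1,f_{W_1})$ with $f_{W_1,*}$ an isomorphism onto $\G$.

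For the second stage, assume $M$ is connected and $N$ carries a psc metric, product-like in a collar of $N\subset\ra W_1$. I would further modify $W_1$ by interior surgeries of codimension $\ge 3$, which preserve the $\pi_1$-isomorphism and do not affect the boundary, in order to produce a bordism $W$ admitting a relative handle decomposition built from $N\times[0,1]$ using only handles of index $k\le\dim W-3$. To do this, one trades the $0$- and $1$-handles on the $N$-side against handles of higher index using Smale--Cerf cancellation: this is possible because $N$ is connected and $\pi_1(N)\to\pi_1(W)$ is already an isomorphism, so every $1$-handle is cancellable against a $2$-handle or redundant; dually, the top two handle indices are eliminated using $M$ connected and $\pi_1(M)\to\pi_1(W)$ an isomorphism. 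The hypothesis $\dim W\ge 6$ provides the room for both cancellations to be carried out simultaneously. Starting from the prescribed psc metric on $N$ in product form, one extends it inductively across each added handle via the Gromov--Lawson--Schoen--Yau surgery lemma, which applies because the attaching sphere of each handle has codimension $\ge 3$ in the bounding hypersurface. The result is a psc metric on $W$ of product type near $\ra W$ which restricts to the given metric on $N$.

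The hardest step will be the handle-cancellation in stage two: one must carry out the Cerf--Smale trades so as to simultaneously respect the spin structure, the reference map to $B\G$, and the psc metric fixed on $N$. This combination of constraints is the technical heart of the argument and is where the ideas of Gromov--Lawson, Gajer and Rosenberg--Stolz enter; a careful exposition compatible with the present formulation can be found in Walsh's treatment of the positive scalar curvature bordism category.
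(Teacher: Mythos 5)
Your overall strategy matches the paper's: first improve the reference map by interior spin surgery, then trade handles so that $W$ is built from $N$ by surgeries of codimension $\ge 3$, and finally extend the metric by the Gromov--Lawson--Schoen--Yau surgery lemma (Gajer/Walsh). The gap is in Stage~2.

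A $k$-handle attached on the $N$-side corresponds to a surgery on an embedded $S^{k-1}$ of codimension $(\dim W)-k$ in the level hypersurface, so the surgery lemma requires $k\le \dim W-3$. You state this bound correctly, but your cancellation list does not achieve it: you eliminate $0$- and $1$-handles from the $N$-side, and handles of index $\dim W$ and $\dim W-1$ (i.e.\ $0$- and $1$-handles from the $M$-side), but you never address index $\dim W-2$. These are precisely the $2$-handles relative to $M$, equivalently codimension-$2$ surgeries on the level set as seen from $N$, and they are exactly what the surgery lemma cannot pass through. To trade away the $2$-handles relative to $M$ one needs $\pi_2(W,M)=0$, not merely the $\pi_1$-isomorphism produced by Stage~1. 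Since $\pi_1(M)\to\pi_1(W)$ is an isomorphism, $\pi_2(W,M)=0$ is equivalent to $\pi_2(W)=0$; but killing $\ker(\pi_1(W_0)\to\G)$ by surgery on circles leaves $\pi_2(W)$ completely uncontrolled.

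The paper handles this by a second round of interior spin surgery, now on embedded $2$-spheres, arranging that $f_W$ is a $3$-equivalence (so $\pi_2(W)=0$) and hence that $M\hookrightarrow W$ is a $2$-equivalence; this follows Miyazaki and Rosenberg--Stolz. You should insert this $\pi_2$-killing step between your two stages. Note that since $\dim W\ge 6$, embedded $2$-spheres have codimension $\ge 4$, so the normal bundle is trivial and the framing unique up to homotopy (as $\pi_2(\SO(\dim W-2))=0$), whence the spin structure and the reference map extend across the surgery and the boundary is untouched. With this added, the handle-trading producing only indices $k\le\dim W-3$, and then Gajer's metric extension, go through as you describe.
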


\begin{proof}
We follow the arguments from the proof of the \emph{Bordism Theorem} in \cite{rs}. Let $(W,f_W)$ be a  spin-bordism between $(M,f_M)$ and $(N,f_N)$. By changing $W$ by spin surgeries in the interior one may obtain that $f_W$ is a 3-equivalence and thus the inclusion $M \to W$ is a 2-equivalence. (The arguments have been detailed in \cite{miy}.)

Thus one can find a Morse function $h \colon W \to [0,1]$ with $h(N)=0, h(M)=1$ whose critical points have index $\ge 3$. It follows by \cite[Theorem 3.13]{mi} that $W$ is the trace of a surgery on $N$ of codimension $\ge 3$. 

If $N$ is endowed with a metric of positive scalar curvature, then, by \cite{ga}, there is a metric of positive scalar curvature on $W$ extending the metric on $N$. (A detailed proof of this fact can be found in \cite[Theorem 2.2]{walsh}.)
\end{proof}

For the following applications we adapt the methods from \cite{lpetapos} where similar results were proven using higher $\rho$-invariants. 

\begin{prop}
\label{infbordism}
Let $M$ be a closed connected spin manifold with a metric $g_M$ of positive scalar curvature and with fundamental group $\Gamma=\Gamma_1 \times \Gamma_2$ such that $\Gamma_1$ has torsion and its abelization is finite. 

Let $f_M\colon M \to B\Gamma$ be a map classifying the universal covering.

Assume that $\dim M=4k-1+4m-2$, for some $k,m\in \bn$ with $k \ge 2$ and that there is a closed connected manifold $N$ of dimension $4m-2$ and a map $f_N\colon N \to B\Gamma_2$ classifying the universal covering of $N$ with $\int_N \hat A(N)\wedge f_N^*c \neq 0$ for some $c$ in the subring of $H^*(B\Gamma_2, \Q)$ generated by $H^{2}(B\Gamma_2, \Q)$.

Then there are infinitely many $\G$-bordism classes of metrics of positive scalar curvature on $M$.
\end{prop}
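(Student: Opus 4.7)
The strategy, following \cite{BG,lpetapos}, is to manufacture a sequence of psc metrics $g_i$ on $M$ via the Bordism Theorem and distinguish their $\G$-bordism classes using $\rhoct$. Exploiting the torsion in $\Gamma_1$, fix a finite cyclic subgroup $H\subseteq \Gamma_1$ and, following the classical lens-space computations underlying \cite{BG}, produce a sequence of $(4k-1)$-dimensional closed spin manifolds of psc $L_i$ ($i\in\bn$) with classifying maps $f_{L_i}\colon L_i\to BH\hookrightarrow B\Gamma_1$, all representing a common class in $\Omega^{spin}_{4k-1}(B\Gamma_1)$, while the APS $\rho$-invariants $\rho_\tau(L_i)$ with respect to a suitable delocalized trace $\tau$ on $C^*\Gamma_1$ of the type in \S\ref{tr-unitary} take infinitely many distinct values. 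The finite abelianization of $\Gamma_1$ guarantees that any group homomorphism $\Gamma\to U(1)$ is trivial on the $\Gamma_1$-factor, so that $\tau$ and the combined trace $\tau_s:=\tau\otimes \tr^{\sigma^s}_{(2)}$ on $C^*(\Gamma,\pi_2^*\sigma^s)$ (\S\ref{tr-prod}) are automatically invariant under all parametrized projective automorphisms; hence $\rhoct$ is well-defined below.

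Form $\Sigma_i:=L_i\times N$ with the product psc metric (scaling $L_i$ small) and the classifying map $f_{L_i}\times f_N\colon \Sigma_i\to BH\times B\Gamma_2\subseteq B\Gamma$. Since each $[L_i,f_{L_i}]=[L_0,f_{L_0}]$ in $\Omega^{spin}_{4k-1}(B\Gamma_1)$, the disjoint union $(M\sqcup\Sigma_i\sqcup\overline{\Sigma_0},\, f_M\sqcup f_{\Sigma_i}\sqcup f_{\Sigma_0})$ is spin-bordant to $(M,f_M)$ in $\O^{spin}_{\dim M}(B\Gamma)$. Lemma \ref{bordlemma} then produces a connected psc spin bordism $V_i$ with $\pi_1(V_i)=\Gamma$ whose incoming boundary is $M\sqcup\Sigma_i\sqcup\overline{\Sigma_0}$ carrying the given psc metrics and whose outgoing boundary is a copy of $M$ endowed with a new psc metric $g_i$.

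Applying the $C^*$-algebraic APS index theorem of \S\ref{neumann_Cstar} to $\Di^{V_i}_{\N^{\sigma^s}}$, Lemma \ref{lem-invert} makes the operator invertible for $s$ near $0$ (so the index vanishes), and the delocalization of $\tau_s$ annihilates the bulk term; what remains is the germ-identity
\begin{equation*}
\rhoct(M,g_i,f_M)-\rhoct(M,g_M,f_M)=\eta_{\tau_s}(\Di^{\Sigma_i}_{\N^{\sigma^s}})-\eta_{\tau_s}(\Di^{\Sigma_0}_{\N^{\sigma^s}})
\end{equation*}
at $s=0$. Proposition \ref{prod}, adapted to the subgroup situation $\pi_1(L_i)=H\subsetneq\Gamma_1$ by observing that the restricted $C^*(\Gamma,\pi_2^*\sigma^s)$-bundle on $\Sigma_i$ is induced from the intrinsic $C^*(H\times\Gamma_2,\pi_2^*\sigma^s)$-bundle and that $\tau_s$ descends under induction to $(\tau|_{C^*H})\otimes\tr^{\sigma^s}_{(2)}$, evaluates each $\eta$-term as a product, giving
\begin{equation*}
\rhoct(M,g_i,f_M)-\rhoct(M,g_M,f_M)=\bigl(\rho_\tau(L_i)-\rho_\tau(L_0)\bigr)\cdot\ind^{\sigma^s}_{(2)}(\Di^N_{\N^{\sigma^s}}).
\end{equation*}
By Atiyah--Singer the second factor equals $(2\pi i)^{2m-1}\int_N\hat A(N)\,e^{-i\omega_s}$; expanding in $s_1,\dots,s_k$ one recovers among its coefficients the integrals $\int_N\hat A(N)\wedge f_N^*c$ for polynomial classes $c$ in $H^2$-generators, so the hypothesis guarantees this is a nontrivial germ. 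Combined with the infinitely many distinct values of $\rho_\tau(L_i)$, the invariants $\rhoct(M,g_i,f_M)$ assume infinitely many distinct germ values.

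By Proposition \ref{bord-inv} the metrics $g_i$ then lie in infinitely many strong $\Gamma$-bordism classes, and Lemma \ref{strong_bordism} (applicable since $\dim M\ge 5$) upgrades this to infinitely many $\Gamma$-bordism classes, completing the argument. The principal technical obstacle I expect is the adaptation of Proposition \ref{prod} to the subgroup case $\pi_1(L_i)=H\subsetneq\Gamma_1$: verifying that the induction-of-bundles formalism commutes cleanly with the trace, the Chern character and the twisted $\eta$-integral should be formal but requires care. A secondary concern is the bordism-level realization in Lemma \ref{bordlemma} when the source has components with smaller fundamental groups; making $V_i$ connected with $\pi_1=\Gamma$ likely demands additional interior surgery killing the extra generators, which is routine in dimensions $\ge 5$.
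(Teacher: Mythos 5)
Your proof proposal is a genuinely different route from the paper's. The paper constructs a \emph{single} $(4k-1)$-dimensional manifold $L = K\times S^2$ with $\pi_1(L)=\Gamma_1$ (full) whose $L^2$-$\rho$-invariant is nonzero, citing \cite[Theorem 1.3]{pstorsion} for a psc metric with $\rho_{(2)}\neq 0$; it then takes $Q_j := M \cup \bigcup_{i=1}^{j}(L\times N)$ and the simple delocalized trace $\tau = \tr_{(2)}-\tr_1$ on $C^*\Gamma_1$, so that $\rhoct(M,g_M^j) - \rhoct(M,g_M) = j\,\rhoct(L\times N)$. Your route instead builds a sequence $L_i$ à la Botvinnik--Gilkey with $\pi_1(L_i)=H$ a finite cyclic subgroup of $\Gamma_1$, all spin-bordant to each other, and distinguishes them by APS $\rho$-invariants associated to unitary representations of $H$ as in \S\ref{tr-unitary}. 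Both ideas are sound in spirit, but yours has a real gap that you dismiss too lightly.

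The gap is not ``secondary'': your auxiliary pieces $\Sigma_i = L_i\times N$ have $\pi_1(\Sigma_i) = H\times\Gamma_2 \subsetneq \Gamma$. This clashes with the framework at several points. Definition~\ref{deftwistedeta} defines $\rhoct$ only for manifolds all of whose components have fundamental group $\Gamma$; the Mathai bundle $\N^{\sigma^s}$ and all of \S\ref{para-constr} require a reference map \emph{classifying the universal covering}, which $f_{\Sigma_i}$ does not; and the bordism invariance Proposition~\ref{bord-inv} is stated for \emph{strong} $\Gamma$-bordism, whose definition mandates classifying maps on every boundary component. Consequently $\rhoct(M\sqcup\Sigma_i\sqcup\overline{\Sigma_0})$ is not a defined quantity in the paper's sense, and the APS index computation you rely on would require a new induction-of-bundles formalism from $C^*(H\times\Gamma_2,\pi_2^*\sigma^s)$ to $C^*(\Gamma,\pi_2^*\sigma^s)$, a new product formula for that setting, and a re-proved bordism invariance. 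You flag the adaptation of Proposition~\ref{prod} but treat the $\pi_1$ mismatch in the bordism step as a routine surgery issue; in fact it is the structural obstruction, and the paper avoids it entirely by constructing $L$ with $\pi_1(L)=\Gamma_1$ from the start so that every component of $Q_j$ carries $\pi_1=\Gamma$. If you want to keep your sequence-of-$\rho$-values strategy, the fix is to apply \cite[Theorem 1.3]{pstorsion} on $L=K\times S^2$ (as the paper does) to get a sequence of psc metrics on a \emph{fixed} $L$ with $\pi_1(L)=\Gamma_1$ and mutually distinct $\rho_{(2)}$, which makes all the components have the right fundamental group; at that point your argument and the paper's coalesce.
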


\begin{proof} 
First we note that we can find a $(4k-1)$-dimensional closed connected manifold $L$ and map $f_L\colon L \to B\G_1$ with the following properties:
\begin{itemize}
\item $f_L$ induces the universal covering on $L$,
\item
 $[(L,f_L)]=0 \in \O^{spin}(B\G_1)$,
\item $L$ is endowed with a metric $g_L$ of positive scalar curvature,
\item the $L^2$-$\rho$-invariant $\rho_{(2)}(L,g_L)$ is nontrivial. 
\end{itemize}

Such a manifold can be constructed as follows. Take a $(4k-3)$-dimensional closed connected spin manifold $K$ with $\pi_1(K)=\G_1$ and a map $f_K\colon K \to B\G_1$ inducing the universal covering on $K$. Let $L=K\times S^2$ and $f_L=f_K \circ p_1$ where $p_1:K\times S^2 \to K$ is the projection. Since $S^2$ has a metric of positive scalar curvature, $L$ has one as well. By \cite[Theorem 1.3]{pstorsion} the manifold $L$ has infinitely many metrics of positive scalar curvature with mutually different $\rho_{(2)}$, in particular one with nonvanishing $L^2$-$\rho$-invariant.

The product $[(L \times N,f_L \times f_N)]$ vanishes in $\O^{spin}(B(\G_1\times \Gamma_2))$. Here we identified $B(\G_1\times \G_2)$ with $B(\G_1)\times B(\G_2)$. 

For $j \in\bn$ we consider the manifold $Q_j:=M\cup  \bigcup_{i=1}^j (L\times N)$. Using the metrics $g_M, g_L$ and an appropriate metric $g_N$ on $N$, one may define a metric of positive scalar curvature $h_j$ on $Q_j$.  The maps $f_N, f_M, f_L$ induce a map $f_k\colon Q_k \to  B(\G_1\times \G_2)$.

Then, by Lemma \ref{bordlemma}, for any $j\in\bn$ there is spin bordism $(W,f_W\colon W \to B\G)$ between $(M,f_M)$ and $(Q_j, f_j)$ such that $f_W$ induces the universal covering on $W$.  
 
Furthermore $W$ is endowed with a metric of positive scalar curvature extending the metric on $Q_j$ and inducing a (new) metric of positive scalar curvature $g^j_M$ on $M$.
 
Without loss of generality we may assume that there is $(c_1, c_2,\dots, c_l) \in H^2(B\G_2,\Q)^l$ such that $c=c_1 \cup c_2 \dots \cup c_l$. Let $\s^s$ be as arising from the construction in \S \ref{para-constr} when applied to $(N,f_N)$.  We take the delocalized trace $\tau=\tr_{(2)}-\tr_1$ on $C^*\G_1$, and set $\tau_s=\tau\otimes \tr_{(2)}^{\s_s}$ as in \S \ref{tr-prod}. We denote by ${\mathbf c}$ the pullback of $(c_1, c_2,\dots, c_l)$ to $H^2(B\G,\Q)^l$. By Prop. \ref{bord-inv} we have that $$\rhoct(M, g_M^j,f_M)=\rhoct(Q_j, h_j,f_j)=\rhoct(M,g_M,f_M)+j\rhoct(L\times N,g_L+g_N,f_L \times f_N) \ .$$ 

The product formula in Theorem \ref{prod} implies that $\rhoct(L\times N)$ is the germ of a polynomial in $s_1, s_2 \dots s_l$. Furthermore, the polynomial is not identically zero since a nontrivial multiple of $\rho_{(2)}(L,g_L) \int_N \hat A(N)\wedge f_N^*c$ is one of its coefficients. Thus $\rhoct(L\times N)\neq 0$. The germs $\rhoct(M, g_M^j, f_M)$ are then all distinct and the metrics $g_M^j$ are mutually not strongly $\G$-bordant. Now the result follows from Lemma \ref{strong_bordism}.
\end{proof}

Examples are easy to construct: Let $\Gamma_2$ be the fundamental group of a closed Riemannian surface $N \neq S^2$. Then $N$ is its classifying space. Since $H^2(N,\Q)=\Q$, the condition on $N$ is fulfilled with $m=1$.

Furthermore, for higher dimensions we get the following result from \cite{lpetapos}.

\begin{lem}
\label{ahat}
Assume that $\G$ is a finitely presented group and that there is $c\neq 0$ in $H^k(B\G,\Q)$. Then for $m \ge 5$ with $m-k=0 \mod 4$ there is an $m$-dimensional closed connected spin manifold $N$ and a map $f_N\colon N \to B\G$ classifying the universal covering such that $\int_N\hat A(N)\cup f^*c \neq 0$.
\end{lem}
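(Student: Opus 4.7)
The plan is to construct $N$ in two stages: first produce a closed spin manifold $N_0$ of dimension $m$ with some map $f_0\colon N_0\to B\G$ for which $\int_{N_0}\hat A(N_0)\cup f_0^*c\neq 0$; then modify $(N_0,f_0)$ by spin surgery to achieve connectedness and to make the map classify the universal covering, exploiting that $\int_N \hat A(N)\cup f_N^*c$ is an invariant of the class $[(N,f_N)]\in \O^{spin}_m(B\G)$.

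For the first stage, since $c\neq 0$ there is a class $\alpha\in H_k(B\G,\Q)$ with $\langle c,\alpha\rangle\neq 0$. Because $\G$ is finitely presented, $B\G$ may be chosen as a CW complex, and the rational Atiyah--Hirzebruch spectral sequence
\begin{equation*}
E^2_{p,q}=H_p(B\G,\O_q^{spin}(\mathrm{pt})\otimes\Q)\Longrightarrow \O^{spin}_{p+q}(B\G)\otimes\Q
\end{equation*}
collapses, so after multiplying $\alpha$ by a suitable nonzero integer it is realized as $g_*[P]$ for some closed spin manifold $P$ of dimension $k$ with a map $g\colon P\to B\G$. Choose a closed spin manifold $Q$ of dimension $m-k$ with $\hat A(Q)\neq 0$: if $m=k$ take $Q$ to be a point, otherwise take a product of $(m-k)/4$ copies of a $K3$ surface (each contributing $\hat A(K3)=2$). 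Set $N_0=P\times Q$ and $f_0=g\circ\mathrm{pr}_P$. Since $\dim P=k$, the only contribution of $\hat A(P)\cup g^*c$ in top degree on $P$ is $g^*c$ itself, and the multiplicativity of $\hat A$ under products together with the K\"unneth formula yields
\begin{equation*}
\int_{N_0}\hat A(N_0)\cup f_0^*c \;=\; \Bigl(\int_P g^*c\Bigr)\cdot\Bigl(\int_Q \hat A(Q)\Bigr)\;\neq\; 0.
\end{equation*}

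For the second stage, since $m\ge 5$ one may modify $(N_0,f_0)$ by spin surgeries in dimensions $0$ and $1$ to obtain a connected manifold $N$ with a map $f_N\colon N\to B\G$ inducing an isomorphism on $\pi_1$. First, connected sum with finitely many copies of $S^1\times S^{m-1}$, sending the $S^1$-generators to generators of $\G$, makes the induced map on $\pi_1$ surjective; then finite presentedness of $\G$, combined with the automatic finite generation of $\pi_1(N_0)$, ensures that the kernel of $\pi_1(N_0)\to\G$ is normally generated by finitely many elements, each of which can be killed by $1$-surgery along an embedded circle nullhomotopic in $B\G$ (the codimension $m-1\ge 4$ guaranteeing that the required embeddings exist). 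Each such surgery is realized by a spin bordism in $\O^{spin}_m(B\G)$ along which the map to $B\G$ extends, so $[(N,f_N)]=[(N_0,f_0)]$ and hence $\int_N\hat A(N)\cup f_N^*c = \int_{N_0}\hat A(N_0)\cup f_0^*c \neq 0$.

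The delicate point I expect is the second stage: one must verify that $1$-handles attached along nullhomotopic circles admit compatible spin structures (standard, via the vanishing of $w_2$ on an attaching $S^1\times D^{m-1}$), and that the process terminates in finitely many steps. Both are classical consequences of the surgery technology of Kreck and Stolz combined with the finite presentedness of $\G$, so the technical work amounts to recording these facts carefully; the conceptual input lies entirely in the bordism-theoretic construction of the first stage.
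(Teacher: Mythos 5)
Your two-stage plan --- first construct some closed spin manifold $(N_0,f_0)$ with $\int_{N_0}\hat A(N_0)\cup f_0^*c\neq 0$, then modify it by spin surgeries in low dimensions to make the reference map classify the universal covering, using that $[(N,f_N)]\mapsto\int_N\hat A(N)\cup f_N^*c$ is a well-defined spin bordism invariant --- is exactly the paper's strategy. The only difference is that the paper delegates the existence of $(N_0,f_0)$ to \cite[Lemma 5.2]{lpetapos}, whereas you spell it out via the rational collapse of the Atiyah--Hirzebruch spectral sequence for spin bordism and multiplication by copies of $K3$; both the construction and the surgery step are correct.
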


\begin{proof}
Without the condition that $f_N$ classifies the universal covering the result has been proven as the crucial step in the proof of \cite[Lemma 5.2]{lpetapos}. Now the assertion follows by applying spin surgery to $(N,f_N)$. This works because spin surgery does not change the class in $\O_*^{spin}(B\G)$ and since the map $\O_*^{spin}(B\G) \to \C,~ [(N,f_N)] \mapsto \int_N\hat A(N)\cup f^*c$ is well-defined.
\end{proof} 



A similar result as Prop. \ref{infbordism} may be obtained for even-dimensional $M$ as we show now. Note that this is a crucial difference to the Atiyah--Patodi--Singer $\rho$-invariant and the Cheeger--Gromov $\rho$-invariant, which yield no information for even-dimensional manifolds. Thus, our twisted $\rho$-invariants behave more like higher $\rho$-invariants. So far, the precise relation is unclear, as we remarked in Question \ref{conhigh}. In some respect, higher $\rho$-invariants are better behaved (namely $\G$-bordant) and more general (they do not require finite abelization or similar conditions) than our twisted ones. However, at the moment they can only be defined for groups fulfilling certain growth conditions.

\begin{prop}
Let $M$ be a closed connected spin manifold with a metric $g_M$ of positive scalar curvature and with fundamental group $\Gamma=\Gamma_1 \times \Gamma_2$ such that $\Gamma_1$ has torsion and its abelization is finite. 

Let $f_M \colon M \to B\Gamma$ be a map classifying the universal covering.

Assume that $\dim M=4k-1+2m-1$, for some $k,m\in \bn$ with $k \ge 2$ and that there is a closed connected spin manifold $N$ of dimension $2m-1$ and a map $f_N\colon N \to B\Gamma_2$ classifying the universal covering with $\int_N \hat A(N)\cup f_N^*c \neq 0$ for some $c$ in the subring of $H^*(B\Gamma_2, \Q)$ generated by $H^{2}(B\Gamma_2, \Q)$ and $H^1(B\Gamma_2,\Q)$.

Then there are infinitely many $\G$-bordism classes of metrics of positive scalar curvature on $M$.
\end{prop}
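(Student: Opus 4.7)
The plan is to mimic the proof of Proposition \ref{infbordism}, the one new ingredient being an additional flat-line-bundle twist accounting for the degree-$1$ factors in $c$. First, exactly as in the proof of Proposition \ref{infbordism}, I would construct a closed connected $(4k-1)$-dimensional spin manifold $(L,g_L,f_L)$ with $\pi_1(L)=\G_1$, a metric $g_L$ of positive scalar curvature, $[(L,f_L)]=0$ in $\O^{spin}(B\G_1)$, and $\rho_{(2)}(L,g_L)\neq 0$, by taking $L=K\times S^2$ for a suitable $(4k-3)$-dimensional $K$ with $\pi_1(K)=\G_1$ (using \cite[Theorem 1.3]{pstorsion}). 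Since $[(L,f_L)]=0$, the product class $[(L\times N,f_L\times f_N)]$ vanishes in $\O^{spin}(B\G)$, so $Q_j:=M\sqcup\bigsqcup_{i=1}^j(L\times N)$ is $\G$-bordant to $M$; Lemma \ref{bordlemma} then yields a spin bordism $(W,f_W)$ between $M$ and $Q_j$ whose classifying map induces the universal covering and which carries a positive scalar curvature metric extending the product metric on $Q_j$, producing a new metric $g_M^j$ of positive scalar curvature on $M$.

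By linearity we may assume $c=a_1\cup\cdots\cup a_p\cup c_1\cup\cdots\cup c_l$ with $a_i\in H^1(B\G_2,\Q)$, $c_j\in H^2(B\G_2,\Q)$, and (since $\dim N$ is odd) $p$ odd. After clearing denominators each $a_i$ defines a homomorphism $\alpha_i\colon\G_2\to\Z$, giving one-parameter families of characters $\chi_{s_0^i}=\exp(is_0^i\alpha_i)\colon\G_2\to U(1)$; set $\chi_{\underline{s_0}}:=\prod_i\chi_{s_0^i}$ and let $\mathcal L_{\underline{s_0}}$ denote the associated parametrized flat line bundle on $M$, on $L\times N$, and on $W$ (pulled back via $p_2$). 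The classes $c_1,\ldots,c_l$ produce a parametrized multiplier $\sigma^{s'}$ on $\G_2$ via \S \ref{para-constr}. I would work with the enlarged twisting bundle $\mathcal V^{\sigma^{s'}}\otimes \mathcal L_{\underline{s_0}}$: since $\mathcal L_{\underline{s_0}}$ is flat, the curvature of the resulting connection depends only on $s'$, so Lemma \ref{lem-invert} continues to apply and the twisted spin Dirac operator is invertible for small $s=(\underline{s_0},s')$. As invariant delocalized trace take $\tau_s:=(\tr_{(2)}-\tr_1)\otimes \tr_{(2)}^{\sigma^{s'}}$ on $C^*\G_1\otimes C^*(\G_2,\sigma^{s'})$, which is invariant under parametrized projective automorphisms by the finite-abelianization hypothesis on $\G_1$.

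Applying (the appropriate extension of) Proposition \ref{bord-inv} to $(W,f_W)$ with the trace $\tau_s$ yields
\[
\rhoct(M,g_M^j,f_M)-\rhoct(M,g_M,f_M)=j\cdot \rhoct(L\times N,g_L+g_N,f_L\times f_N),
\]
and the statement reduces to showing that the germ on the right is nontrivial. For this I would prove a product formula in the spirit of Proposition \ref{prod} adapted to the enlarged twisted bundle: the flatness of $\mathcal L_{\underline{s_0}}$ preserves the external-tensor-product structure of the twisted spinor bundle on $L\times N$, so the heat kernel factorizes and one obtains
\[
\rhoct(L\times N)=\rho_{(2)}(L,g_L)\cdot I_N(s),
\]
where $I_N(s)$ is the twisted spectral-geometric invariant on $N$ corresponding to the enlarged bundle. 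Expanding $I_N(s)$ in $s$, the local-index calculation on $N$ identifies the coefficient of $s_0^1\cdots s_0^p s_1\cdots s_l$ as a nonzero rational multiple of $\int_N\hat A(N)\cup f_N^*c$: the $c_j$-contributions enter through $\exp(-i\omega_{s'})$, while the $\chi_{\underline{s_0}}$-contributions enter through the Taylor expansion $\prod_i(1+is_0^i f_N^*a_i+O((s_0^i)^2))$. The hypothesis on $c$ thus forces the leading coefficient to be nonzero; hence $\rhoct(L\times N)$ is a nontrivial germ, the metrics $g_M^j$ lie in pairwise distinct strong $\G$-bordism classes, and by Lemma \ref{strong_bordism} in pairwise distinct $\G$-bordism classes.

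The main obstacle is setting up this enlarged machinery: the bundle-and-connection framework of \S \ref{bcgs-bundle}--\S \ref{para-constr} handles $H^2$-multipliers only, so the $H^1$-character part has to be added by hand, as does the extension of the Atiyah--Patodi--Singer argument underlying Proposition \ref{bord-inv} to the present even-dimensional boundary/odd-dimensional bordism case. Both modifications are essentially formal: tensoring with flat line bundles is compatible with all the constructions, the parametrized projective-automorphism invariance of $\tau_s$ is preserved by the finite-abelianization hypothesis, and the APS argument on $W$ is a parity-swapped variant of the arguments in \S \ref{neumann_Cstar}. The remaining task is the local-index identification of the leading coefficient of $I_N(s)$, which is routine modulo keeping careful track of signs, $(2\pi i)$-factors, and the combinatorics of mixed $H^1$--$H^2$ cup products; this step is where one finally uses that $p$ is odd (so that $\int_N$ does not kill the relevant top-degree form on the odd-dimensional $N$).
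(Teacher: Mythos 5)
Your proposal does not work, and the reason is built into the very framework you are relying on. The family of flat line bundles $\mathcal L_{\underline{s_0}}$ associated to the characters $\chi_{s_0^i}=\exp(is_0^i\alpha_i)\colon\G_2\to U(1)$ corresponds exactly to a \emph{parametrized projective automorphism} in the sense of Definition~\ref{para_projiso}: tensoring $\mathcal V^{\s^{s'}}$ with $\mathcal L_{\chi_{s_0}}$ amounts to shifting the constants $\psi_\g^s\mapsto\psi_\g^s-s_0\alpha(\g)$, which by Lemmas~\ref{scelte-repr-s} and \ref{choices-conn-s} replaces $\mathcal V^{\s^{s'}}$ by $\mathcal V^{\s^{s'}}\ten_{b_{z_{s_0}}}C^*(\G,\s^{s'})$ with the \emph{same} connection, where $z_{s_0}=\exp(-is_0\alpha)$ and $\partial z_{s_0}=1$ because $\alpha$ is a homomorphism. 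You yourself observe that $\tau_s=(\tr_{(2)}-\tr_1)\ten\tr_{(2)}^{\s^{s'}}$ is invariant under all such automorphisms. But this invariance is precisely what makes $\rhoct$ well-defined (\S\ref{depchoic}), and it equally forces $\rho^{\mathbf c}_{\tau_s}$ to be \emph{independent of $\underline{s_0}$}: the twisted $\eta$-invariant of $\Di_{\mathcal V^{\s^{s'}}\ten\mathcal L_{\underline{s_0}}}$ computed against $\tau_{s'}$ coincides with that of $\Di_{\mathcal V^{\s^{s'}}}$. Concretely, the coefficient of $s_0^1\cdots s_0^p s_1\cdots s_l$ that you claim is a nonzero multiple of $\int_N\hat A(N)\cup f_N^*c$ is in fact identically zero. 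The alleged Taylor expansion $\prod_i(1+is_0^if_N^*a_i+\cdots)$ is fictitious: the Chern character of a genuinely flat line bundle is $1$, so the local index density on $N$ is $\hat A(N)e^{-i\omega_{s'}}$, an even-degree form which integrates to zero over the odd-dimensional $N$ — the degree-one classes $a_i$ simply never appear.

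The paper resolves this parity obstruction by a different device: it crosses everything with $S^1$. After setting up the $\G$-bordism $(W,g_W,f_W)$ between $(Q_j,h_j,f_j)$ and $(M,g_M^j,f_M)$ exactly as in Prop.~\ref{infbordism}, one works with $M\times S^1$, $Q_j\times S^1$, $W\times S^1$ over $\G\times\Z$, and replaces the degree-one class $c_1\in H^1(B\G_2,\Q)$ by the degree-two class $c_1\cup\vol_{S^1}\in H^2(B(\G_2\times\Z),\Q)$. This is a genuine $2$-cocycle on $\G_2\times\Z$ (a Heisenberg-type cocycle), not a coboundary, so it contributes nontrivially to the twisted group $C^*$-algebra and to the $\rho$-invariant. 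With the trace $\tau_s=(\tr_{(2)}-\tr_1)\ten\tr_{(2)}^{\s^s}$ on $C^*((\G\times\Z),\pi_2^*\s^s)$, one obtains a strong $(\G\times\Z)$-bordism and the identity $\rhoct(M\times S^1,g_M^j+g_{S^1},f_M\times\id)=\rhoct(M\times S^1,g_M+g_{S^1},f_M\times\id)+j\,\rhoct(L\times N\times S^1,\dots)$, together with $\int_{N\times S^1}\hat A(N\times S^1)\cup f_N^*(\vol_{S^1}\cup c)\neq 0$, which makes $\rhoct(L\times N\times S^1)$ a nonvanishing germ. Distinguishing the strong $(\G\times\Z)$-bordism classes of the $(M\times S^1,g_M^j+g_{S^1})$ then distinguishes the strong $\G$-bordism classes of the $(M,g_M^j)$. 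If you want to salvage the character/flat-bundle idea, you would have to abandon the invariance requirement on the trace, but then the twisted $\rho$-invariant is no longer well-defined as a germ — so the cross-with-$S^1$ trick really is needed.
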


\begin{proof} 
Without loss of generality we may assume that there are $c_1 \in H^1(B\G_1,\Q)$ and $c_2,\dots, c_l\in H^2(B\G_2,\Q)$ such that $c=c_1 \cup c_2 \dots \cup c_l$. 

Let $L$ be the $(4k-1)$-dimensional manifold from the previous proof. Then the product $[L\times N,f_L \times f_N]$ vanishes in $\O_*^{spin}(B\G)$. 

As in the previous proof we get a metric of positive scalar curvature $h_j$ on $Q_j:=M\cup  \bigcup_{i=1}^j (L\times N)$ and a strong $\G$-bordism $(W,g_W,f_W)$ between $(Q_j,h_j,f_j)$ and $(M,g_M^j,f_M)$.

Let $S^1$ be endowed with the standard metric $g_{S^1}$. Then the metric $g^j_M + g_{S^1}$ is of positive scalar curvature on $M \times S^1$ and we also get a metric of positive scalar curvature $h_j+ g_{S^1}$ on $Q_j \times S^1$.  We use the pullback ${\mathbf c}\in H^2(B(\G \times \Z),\Q)^l=H^2(B\G\times S^1,\Q)^l$ of $(c_1 \cup \vol_{S^1}, c_2, \dots, c_l) \in H^2(B\G_2\times S^1,\Q)^l$ and the delocalized trace $\tau_s=(\tr_{(2)}-\tr_1)\otimes \tr_{(2)}^{\s^s}$, where $\tr_{(2)}^{\s^s}$ acts on $C^*(\G_2 \times \Z,\s^s)$,
for the definition of the twisted $\rho$-invariants. 

We have a strong $\G \times \Z$-bordism $(W \times S^1, g_W + g_{S^1},f_W \times \id \colon W \times S^1 \to B(\G \times \Z))$ between $(Q_j \times S^1,h_j+ g_{S^1},f_j \times \id)$ and  $(M \times S^1, g_M^j+ g_{S^1},f_M\times \id)$. Thus 
\begin{eqnarray*}
\lefteqn{\rhoct(M\times S^1,g^j_M + g_{S^1}, f_M \times \id) }\\
&=& \rhoct(Q_j \times S^1,h_j + g_{S^1},f_j \times \id) \\
&=&\rhoct(M\times S^1,g_M + g_{S^1},f_M \times \id)\\ 
&& +j\rhoct(L\times N\times S^1,g_L +g_M+g_{S^1},f_L \times f_M \times \id)  \ .
\end{eqnarray*}

Now we argue as above using that $\int_{N \times S^1} \hat A(N \times S^1)\cup f_N^*(\vol_{S^1} \cup c)\neq 0$ if $\int_N \hat A(N) \cup f_N^* c\neq 0$. Thus $\rhoct(L\times N\times S^1)\neq 0$. This implies that the triples  $(M \times S^1, g_M^j+ g_{S^1},f_M\times \id)$ are mutually not strongly $\G \times \Z$-bordant. Clearly, then the triples $(M, g_M^j,f_M)$ define mutually different strong $\G$-bordism classes.
\end{proof}

Low dimensional examples for such manifolds $N$ come from the 3-dimensional solvmanifolds studied in \cite{marc} in the context of twisted index theory. The existence of examples in higher dimensions is guaranteed by Lemma \ref{ahat}.

\medskip
The following two propositions show that, as in the classical case, the twisted $\rho$-invariants are less interesting for torsion-free groups. 

\begin{prop}
Let $M, N$ be odd-dimensional closed spin manifolds with metrics $g_M, g_N$ of positive scalar curvature and with fundamental group $\G$. Let $f_M \colon M \to B\Gamma$, $f_N \colon N \to B\Gamma$ be maps inducing the universal covering. 

Let ${\mathbf c}=(c_1, c_2 \dots c_k) \in H^2(B\G, \Q)^k$ and for each $s \in \Q^k$ in neighbourhood of $0$ let $\tau_s$ be a delocalized trace on $C^*(\G,\s^s)$ which is invariant under parametrized projective automorphisms. We assume that these traces are uniformly bounded.

Furthermore we assume that one of the following two conditions holds:

\begin{enumerate}
\item
The induced map $\tau_s \colon K_0(C^*(\G,\s^s)) \to \C$ vanishes for $s$ near $0$.

\item
The maximal Baum--Connes assembly map $K_0(B\G)\ten \Q \to K_0(C^*\G)\ten \Q$ is surjective. 
\end{enumerate}

If $(M,f_M)$ and $(N,f_N)$ are bordant in $\Omega_*^{spin}(B\G)$, then $$\rhoct(M,g_M,f_M)=\rhoct(N,g_N,f_N) \ .$$
\end{prop}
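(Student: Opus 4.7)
The plan is to use the Bordism Theorem to reduce the statement to a question of metric independence, and then to handle the metric change with an Atiyah--Patodi--Singer argument on a cylinder. First, Lemma \ref{bordlemma} allows us to replace the given bordism in $\Omega_*^{spin}(B\Gamma)$ by a spin bordism $(W, f_W)$ such that $f_W$ classifies the universal covering of $W$; applied with the psc manifold $(N, g_N)$ as the ``extendable'' boundary, the same lemma produces a psc metric $g_W$ on $W$ extending $g_N$ and restricting to some new psc metric $g_M'$ on $M$. The triple $(W, g_W, f_W)$ is then a strong $\Gamma$-bordism between $(M, g_M', f_M)$ and $(N, g_N, f_N)$, so Proposition \ref{bord-inv} gives
\begin{equation*}
\rhoct(M, g_M', f_M) = \rhoct(N, g_N, f_N),
\end{equation*}
reducing the problem to showing $\rhoct(M, g_M, f_M) = \rhoct(M, g_M', f_M)$.

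Next, I would put on $Z := M \times [0,1]$ a metric of product form near each boundary component, equal to $g_M$ at $\{0\} \times M$ and to $g_M'$ at $\{1\} \times M$, and pull back $f_M$ together with the Mathai data $(\eta_s, \omega_s, \psi_\gamma^s)$ via the first projection. By Lemma \ref{lem-invert}, for $s$ near $0$ the boundary Dirac operators remain invertible, so the $C^*$-algebraic Atiyah--Patodi--Singer index theorem of \cite{ps1} applies to $\Di^Z_{\N^{\s^s}}$ and yields
\begin{equation*}
\tau_s \tralg \ind \Di^Z_{\N^{\s^s}} = (2\pi i)^{\dim Z/2} \tau_s\!\left(\int_Z \hat A(Z)\, e^{-i\omega_s}\right) - \bigl(\rhoct(M, g_M', f_M) - \rhoct(M, g_M, f_M)\bigr).
\end{equation*}
Because $\ch(\N^{\s^s}) = e^{-i\omega_s}\cdot [\delta_e]$ is a scalar form times the algebra unit, the delocalization of $\tau_s$ forces the interior term to vanish, giving (as germs at $s = 0$)
\begin{equation*}
\rhoct(M, g_M, f_M) - \rhoct(M, g_M', f_M) = \tau_s\, \ind \Di^Z_{\N^{\s^s}}.
\end{equation*}

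It remains to show this right-hand side vanishes. Under hypothesis (1) this is immediate. Under hypothesis (2) I would adapt the strategy of Piazza--Schick \cite{ps1} and Hanke--Schick \cite{HaS}: at $s = 0$, maximal Baum--Connes surjectivity together with the closed-manifold $L^2$-index formula and the delocalization of $\tau_0$ force the composition $\tau_0 \circ \mu$ on $K_0(B\Gamma) \otimes \Q$ to vanish, and the boundary index $\ind \Di^Z_{\N}$ lies in this image, whence $\tau_0(\ind \Di^Z_{\N}) = 0$. The main obstacle is to transfer this vanishing from $s = 0$ to a whole neighbourhood of $0$: one must show, using the uniform boundedness of $\{\tau_s\}$ and the analytic dependence $s \mapsto \s^s$, that the delocalized traces $\tau_s$ continue to annihilate the twisted boundary index class $\ind \Di^Z_{\N^{\s^s}}$ for all sufficiently small $s$. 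This is the step where the Hanke--Schick deformation/approximation trick must be married with the Piazza--Schick use of untwisted maximal Baum--Connes, and it is where the bulk of the technical work lies.
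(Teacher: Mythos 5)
Your reduction is sound: Lemma \ref{bordlemma} plus Prop.\ \ref{bord-inv} yield $\rhoct(M,g_M',f_M)=\rhoct(N,g_N,f_N)$, and the cylinder argument converts the remaining metric independence into the vanishing of $\tau_s\ind\Di^Z_{\N^{\s^s}}$, which under hypothesis (1) is immediate. (The paper takes a shorter route under (1): it applies the twisted Atiyah--Patodi--Singer theorem directly to $W$ from Lemma \ref{bordlemma} without first producing a psc metric on $W$ or passing through the cylinder --- the APS formula needs only invertibility of the boundary operators, not psc in the interior --- so your extra deformation step is correct but superfluous.) For completeness you also need to handle $\dim M<5$, which the paper dispatches by crossing with a simply-connected $8$-manifold with $\int\hat A\neq 0$ and invoking a product formula.

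For hypothesis (2), however, the part you defer to a final sentence is in fact the heart of the proof, and your sketch of the $s=0$ step contains a logical gap that illustrates why. It is true that surjectivity of the rational assembly map lets you write $\ind\Di^Z_{\N}$ as a rational multiple of a closed-manifold class, and that a delocalized $\tau_0$ kills it; but that reasoning lives entirely in $C^*\G$ and gives no information about $\tau_s\ind\Di^Z_{\N^{\s^s}}$ for $s\neq 0$, where the trace and the $C^*$-algebra are both different and the bundle $\N^{\s^s}$ is no longer flat. Continuity in $s$ alone does not close the gap, because ``$\tau_{s_n}(\cdot)\to 0$'' is not the same as ``$\tau_{s_n}(\cdot)=0$ for small $s_n$,'' and the eta-invariants are not a priori continuous in a form one can exploit. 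The paper resolves this by a genuine Hanke--Schick assembly argument: pick $s_n\to 0$ realizing a hypothetical failure, form $\cA^{\pi}=\prod_n C^*(\G,\s^{s_n})$ and its ideal $\cA^{\oplus}$, assemble the twisted bundles into a $\cA^{\pi}$-bundle and the traces into $\tau^{\pi}$, pass to the quotient $\cA^{\pi}/\cA^{\oplus}$ where the family of connections becomes flat and factors through $C^*\G$, use Baum--Douglas plus surjectivity to replace $\ind\Di^W_{\N}$ by a rational multiple of a Dirac index on a \emph{closed} manifold $L$, re-run the twisted construction on $L$ to get $\Di_L^{\pi}$, and invoke the twisted Atiyah--\emph{Singer} (closed-manifold) theorem to make $\overline{\tau}^{\pi}q_*\ind\Di_L^{\pi}$ vanish because the traces are delocalized. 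None of this machinery is present in your proposal; ``marrying the Hanke--Schick trick with Piazza--Schick'' is precisely the construction just described, and asserting it remains to be done leaves the proof of case (2) essentially unstarted.
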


\begin{proof}
First we assume that $\dim M=\dim N\ge 5$.

We denote the spin bordism between $(M,f_M)$ and $(N,f_N)$ by $(W,f_W)$. By Lemma \ref{bordlemma} we may assume that $f_W\colon W \to B\G$ classifies the universal covering. 

Taking condition (1) for granted, the assertion follows now directly from the twisted Atiyah--Patodi--Singer index theorem.

We go on to prove the assertion under condition (2).

Our proof relies on a modification of the method used in \cite{HaS}. The strategy is to adapt the approach of \cite{ps1} as far as possible.

We assume that the assertion is wrong. Thus for each $n \in \bn$ there is $s_n \in \{s \in \Q~|~|s| < \frac 1n\}$ such that
$$\eta_{\tau_{s_n}}(\Di^M_{\mathcal V^{ \s^{s_n}}}) \neq \eta_{\tau_{s_n}}(\Di^N_{\mathcal V^{ \s^{s_n}}}) \ .$$ 

The following constructions are carried out on $W$.

For $n_0 \in \bn$ large we consider the $C^*$-algebra $$\cA^{\pi}:=\prod_{n>n_0} C^*(\G,\s^{s_n})$$ and the ideal in $\cA^{\pi}$
$$\cA^{\oplus}:=\overline{\bigoplus_{n>n_0} C^*(\G,\s^{s_n})} \ .$$ 
Since they are uniformly bounded, the traces $\tau_{s_n}$ assemble to a continuous trace 
$$
\tau^{\pi}\colon \cA^{\pi} \to \prod_{n>n_0} \C \ .
$$

The $\G$-actions $A^{s_n} \ten T^{s_n}$ on the trivial bundle $L \ten C^*(\G,\s^{s_n})$, which is defined as in \S \ref{bcgs-bundle} on the universal covering $\widetilde W$, assemble to a $\G$-action $A^{\pi} \ten T^{\pi}$ on $L\ten \cA^{\pi}$. Furthermore the 1-forms $\eta_{s_n}$ assemble to an $\cA^{\pi}$-valued 1-form $\eta^{\pi}$. Thus on the $\G$-invariant trivial bundle $L \ten \cA^{\pi}$ we have a $\G$-invariant connection $d+\eta^{\pi}$. Therefore, on the quotient bundle $\N^{\pi}$ we get an induced connection $\nabla^{\pi}$. We denote by $\Di^{\pi}$ the spin Dirac operator twisted by $\N^{\pi}$. 

If $n_0$ is large enough, the induced Dirac operator $\Di_\ra^{\pi}$ on $\ra W$ is invertible. As explained in \S \ref{twistdir} the Atiyah--Patodi--Singer index $\ind(\Di^{\pi}) \in K_0(\cA^{\pi})$ is independent of the choices since parametrized automorphisms on the family of $C^*$-algebras $C^*(\G,\s^s)$ induce a trivial action on $K_0(\cA^{\pi})$.

Using the above trace $\tau^{\pi}$ we can define an $\eta$-invariant $\eta^{\pi}(\Di_\ra^{\pi})\in \prod_{n>n_0}\C$, which equals $(\eta_{\tau_{s_n}}(\Di^N_{\mathcal V^{\s^{s_n}}})-\eta_{\tau_{s_n}}(\Di^M_{\mathcal V^{\s^{s_n}}}))_{n>n_0}$. Since the traces are delocalized, the $C^*$-algebraic Atiyah--Patodi--Singer index theorem implies that
$$\tau^{\pi}(\ind(\Di^{\pi})) =-\eta^{\pi}(\Di_\ra^{\pi})) \ .$$

In order to get a contradiction, it remains to show that $\tau^{\pi}(\ind(\Di^{\pi}))=0$.

We have a quotient map $q\colon \cA^{\pi} \to \cA^{\pi}/\cA^{\oplus}$ and a canonical $C^*$-homomorphism $h\colon C^*\G \to \cA^{\pi}/\cA^{\oplus}$. The bundle $\N^{\pi} \ten_q \cA^{\pi}/\cA^{\oplus}$ is flat and isomorphic to $\N \ten_h  \cA^{\pi}/\cA^{\oplus}$, where $\N$ is the Mishenko--Fomenko bundle on $W$ with the standard flat connection. Furthermore the isomorphism intertwines the connections. In particular, if $\Di^W_{\N}$ denotes, as usual, the spin Dirac operator on $W$ twisted by $\N$, then 
$$h_*\ind (\Di^W_{\N})=q_* \ind(\Di^{\pi}) \in K_0(\cA^{\pi}/\cA^{\oplus}) \ .$$

As explained in \cite[p. 124]{HaS}, the trace $\tau^{\pi}$ decends to a homomorphism
$$
\overline{\tau}^{\pi}\colon \Im (q_*:K_0(\cA^{\pi}) \to K_0(\cA^{\pi}/\cA^{\oplus})) \to \left(\prod_{n>n_0} \C\right)/\left(\bigoplus_{n>n_0} \C\right) \ .$$
Note that here the direct sum $\bigoplus \C$ is taken in the algebraic sense.

By the Baum--Douglas description of $K$-homology (see \cite[p. 121]{HaS} for details as needed here) our assumption implies that there is a closed spin manifold $L$ with a reference map $f_L \colon L \to B\G$ inducing the universal covering and a Dirac operator $D$ on $L$ such that the index $\ind(D_{\N_L}) \in K_0(C^*\G)\ten \Q$ of $D$ twisted by the Mishenko--Fomenko bundle $\N_L$ equals $x\ind(\Di^W_{\N})$ for some $x \in \Q$. By applying the above construction to $(L,f_L)$ we get a Dirac operator $\Di_L^{\pi}$ whose index $\ind(\Di_L^{\pi}) \in K_0(\cA^{\pi})$ is independent of the choices in the sense of \S \ref{twistdir}. In $K_0(\cA^{\pi}/\cA^{\oplus})\ten \Q$ it holds that
$$q_*\ind(\Di_L^{\pi})=h_*\ind(D_{\N_L}) =xh_*\ind(\Di^W_{\N})=x q_*(\ind(\Di^{\pi})) \ .$$
Since the traces $\tau_s$ are delocalized, the twisted Atiyah--Singer index theorem implies that $\overline{\tau}^{\pi}q_*\ind(\Di_L^{\pi})=0$.
 
It follows that $\overline{\tau}^{\pi}q_*(\ind(\Di^{\pi}))$ vanishes as well. Thus for $n_0$ large enough $\tau^{\pi}\ind(\Di^{\pi})$ vanishes and we obtain the desired contradiction.

If $\dim M=\dim N<5$, we choose an 8-dimensional manifold $C$ with $\int_C\hat A(C) \neq 0$ and $\pi_1(C)=1$, and consider the twisted $\rho$-invariants of $M\times C$ and $N \times C$, where the metric on $C$ is chosen such that the products have positive scalar curvature. Then the assertion follows from an analogue of the product formula, Prop. \ref{prod}, since the index of the spin Dirac operator on $C$ is nontrivial.
\end{proof}

The following example illustrates the first condition. It shows that the assumption in Prop. \ref{infbordism} that $\G_1$ has torsion is indeed necessary for the proof to work. 

\medskip

Assume that $\G=\G_1\times \G_2$ with $\G_1$ a torsion-free and $\G_2$ a finite group. Let ${\mathbf c} \in H^2(B\G_2,\Q)^k$ and let $\s^s$ be an associated family of multipliers on $\G_2$. We can take any delocalized trace $\tau_1$ on $C^*\G_1$ and consider $\tau_s:=\tau_1 \ten \tr^{\s^s}_{(2)}$ on $C^*(\G,\pi_2^*\s^s)$ as in \S \ref{tr-prod}. 

The twisted $C^*$-algebra $C^*(\G_2,\s^s)$ is finite-dimensional. In particular, by the K\"unneth formula, $K_0(C^*(\G,\pi_2^*\s^s))\ten \Q=K_0(C^*\G_1)\ten K_0(C^*(\G_2,\s^s))\ten \Q \oplus K_1(C^*\G_1)\ten K_1(C^*(\G_2,\s^s))\ten \Q $. We also assume that the maximal Baum--Connes assembly map $K_0(B\G_1) \ten \Q \to K_0(C^*\G_1)\ten \Q$ is surjective. It follows that $\tau_1$ vanishes on $K_0(C^*\G_1)$ (see the argument in the proof of \cite[Lemma 4.5]{ps1}). Thus $\tau_s$ vanishes on $K_0(C^*(\G,\pi_2^*\s^s))\ten \Q$.

\begin{prop}
Let $M$ be an odd-dimensional closed connected spin manifold with metrics $g_M$ of positive scalar curvature and with fundamental group $\G$. Let $f_M \colon M \to B\Gamma$ be a map inducing the universal covering. 

Let ${\mathbf c}=(c_1, c_2 \dots c_k) \in H^2(B\G, \Q)^k$ and for all $s \in \Q^k$ in a neighbourhood of $0$ let $\tau_s$ be a delocalized trace on $C^*(\G,\s^s)$ which is invariant under parametrised projective automorphisms.

We assume that one of the two conditions of the previous proposition holds and that the maximal Baum--Connes assembly map $K_0(B\G)\ten \Q \to K_0(C^*\G)\ten \Q$ is injective. Then $\rhoct(M,g_M,f_M)=0$.
\end{prop}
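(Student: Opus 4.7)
The strategy is to adapt the product $C^*$-algebra argument from the proof of the previous proposition to a null-bordism setting. Suppose for contradiction that $\rhoct(M,g_M,f_M)\neq 0$ as a germ at $0$; extract a sequence $s_n\to 0$ in $B_{\ep_0}\cap\Q^k$ with $\eta_{\tau_{s_n}}(\Di^M_{\N^{\s^{s_n}}})\neq 0$ for every $n$. As in the proof of the previous proposition, form $\cA^\pi = \prod_{n>n_0}C^*(\G,\s^{s_n})$, $\cA^\oplus = \overline{\bigoplus_{n>n_0}C^*(\G,\s^{s_n})}$, the assembled trace $\tau^\pi\colon\cA^\pi\to\prod\C$ (assuming, as is implicit, uniform boundedness of the $\tau_{s}$), and its quotient $\bar\tau^\pi\colon\cA^\pi/\cA^\oplus\to(\prod\C)/(\bigoplus\C)$. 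Assemble the bundles $\N^{\s^{s_n}}$ into an $\cA^\pi$-bundle $\N^\pi$ on $M$ with twisted Dirac operator $\Di^\pi_M$; by Lichnerowicz $\Di^\pi_M$ is invertible, so $\eta^\pi(\Di^\pi_M)\in\prod\C$ is well defined and by construction $\bar\tau^\pi(\eta^\pi(\Di^\pi_M))\neq 0$.

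The contradiction will come from producing an appropriate spin null-bordism of an integer multiple of $(M,f_M)$. By the Lichnerowicz--Mishchenko theorem, the Mishchenko--Fomenko Dirac index $\ind(\Di^M_{\N})$ vanishes in $K_*(C^*\G)$; hence the assembly class $\mu([D_M,f_M])\in K_*(C^*\G)\ten\Q$ vanishes. Injectivity of $\mu\ten\Q$ then gives $[D_M,f_M]=0$ in $K_*(B\G)\ten\Q$ (the odd/even dimension parity being bridged by crossing $M$ with $S^1$ and invoking the product formula of Prop.~\ref{prod}). Via the Baum--Douglas geometric picture of $K$-homology, some positive integer multiple $N\cdot[M,f_M]$ is spin-bordant over $B\G$ to a disjoint union of closed psc spin manifolds each of which already bounds a psc spin manifold; the kernel of the spin orientation $\Omega^{spin}_*(B\G)\ten\Q\to K_*(B\G)\ten\Q$ is absorbed into such psc corrections, in the spirit of Rosenberg--Stolz. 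Lemma~\ref{bordlemma} then lets us choose the resulting bordism $W$ so that its reference map classifies its universal covering.

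Applying the $C^*$-algebraic APS index theorem to the Dirac operator $\Di^\pi_W$ on $W$ twisted by the natural extension $\N^\pi_W$ of $\N^\pi$, the interior integrand $\tau^\pi\int_W\hat A(W)\ch(\N^\pi_W)$ vanishes because the curvature of each $\N^{\s^s}$ is the scalar-valued form $i\omega_s$, so $\ch(\N^{\s^s}_W)$ is a scalar form times $\delta_e$, and the $\tau_{s_n}$ are delocalized. Exactly as in the previous proposition, $q_*\ind(\Di^\pi_W)=h_*\ind(\Di^W_\N)\in K_0(\cA^\pi/\cA^\oplus)$, where $h\colon C^*\G\to\cA^\pi/\cA^\oplus$ is the canonical map, and hypothesis (1) or (2) of the previous proposition forces $\bar\tau^\pi q_*\ind(\Di^\pi_W)=0$. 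The psc correction boundary components contribute nothing to the APS identity by Prop.~\ref{bord-inv}. We conclude $N\bar\tau^\pi(\eta^\pi(\Di^\pi_M))=0$, contradicting the assumption. The principal obstacle is the geometric realization step in paragraph two: passing from the rational vanishing of the $K$-homology class to an honest integer-multiple spin boundary modulo psc corrections requires the Rosenberg--Stolz analysis of the spin orientation's kernel, together with product tricks with $\hat A$-trivial spin manifolds to align the characteristic numbers.
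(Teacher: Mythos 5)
Your approach diverges from the paper's and has a genuine gap at its heart. The paper's proof invokes \cite[Prop.~5.3]{ps1}: injectivity of $\mu\otimes\Q$ together with the vanishing of the Mishchenko--Fomenko index implies that $d\cdot(M,f_M)$ is spin-bordant over $B\G$ to a finite disjoint union $\bigcup_j(A_j\times B_j,\,f_{A_j}\circ p_1)$ with $B_j$ simply connected, $\dim B_j\equiv 0\bmod 4$ and $\int_{B_j}\hat A(B_j)=0$. The reason the $\rhoct$-invariants of these correction pieces vanish is \emph{not} that they bound psc spin manifolds; it is that the McKean--Singer argument in the product formula (Prop.~\ref{prod}) produces a factor $\tralg\,\grad_{B_j}e^{-t\Di_{B_j}^2}=\ind\Di_{B_j}=\int_{B_j}\hat A(B_j)=0$. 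By contrast, you assert that "some positive integer multiple $N\cdot[M,f_M]$ is spin-bordant over $B\G$ to a disjoint union of closed psc spin manifolds each of which already bounds a psc spin manifold," attributing this to a Rosenberg--Stolz analysis of the kernel of the spin orientation. That is a substantially stronger statement than what Rosenberg--Stolz or Piazza--Schick provide, and it does not hold: the kernel classes are realized by products $A_j\times B_j$ with $\hat A(B_j)=0$, which admit psc metrics (by Stolz's theorem for the simply connected factor) but are in general not psc null-bordant. If such a psc null-bordism were always available, the vanishing of $\rho$-type invariants would be nearly tautological, and the whole subject would collapse. The correct route really does pass through the $\hat A(B_j)=0$ factor inside the product formula, which your argument never uses.

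Two secondary points. First, you re-run the $\cA^\pi=\prod_n C^*(\G,\s^{s_n})$ construction from scratch; this duplicates work that the previous proposition already packages for you. The paper simply applies the previous proposition to the bordism from $M\times N$ to $\bigcup_j C_j\times B_j$ to conclude $\rhoct(M\times N,g_{M\times N},f_M\circ p_1)=0$, and then divides out the factor $\int_N\hat A(N)\ne 0$ coming from a Bott manifold via the product formula; there is no need to set up $\cA^\pi$, $\cA^\oplus$, $\tau^\pi$, $\bar\tau^\pi$ again. Second, the degree-parity bridge (odd-dimensional $M$ versus the injectivity assumption on $K_0$) is not resolved by "crossing with $S^1$ and invoking Prop.~\ref{prod}," which is a product formula for $\eta$-invariants, not for comparing $K$-homology classes in different degrees. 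The parity issue is handled inside \cite[Prop.~5.3]{ps1}, and the Bott manifold $N$ in the paper serves a different purpose: it raises dimension to $\ge 5$ so that the reference maps on $C_j:=A_j\times N$ can be arranged by spin surgery to classify the universal covering, as required for Lemma~\ref{bordlemma} and the bordism invariance proposition.
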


\begin{proof}
We again adapt the strategy of \cite{ps1}. By \cite[Prop. 5.3]{ps1} the injectivity of the Baum--Connes assembly map implies that there is $d \in \bn$ such that the union of $d$ copies of $(M,f_M)$ is bordant in $\O_*^{spin}(B\G)$ to a finite union $\cup_j (A_j \times B_j, f_{A_j} \circ p_1)$ with $\dim B_j=0 \mod 4$, $\pi_1(B_j)=1$, $\int_{B_j} \hat A(B_j) =0$. Here and in the following $p_1$ denotes the projection onto the first factor. 

We may choose an 8-dimensional manifold $N$ with $\pi_1(N)=1$ and $\int_N \hat A(N) \neq 0$, see Lemma \ref{ahat}. By applying spin surgery we alter $f_{A_j} \circ p_1 \colon A_j \times N \to B\G$ such that the resulting map $f_{C_j} \colon C_j \to B\G$ classifies the universal covering. Here we use that $\dim (A_j\times N_j) \ge 5$.
Then $(M \times N,f_M\circ p_1)$ is bordant in $\O_*^{spin}(B\G)$ to $\cup_j (C_j \times B_j, f_{C_j} \circ p_1)$.
 
Now we follow \cite[Theorem 6.5]{ps1}. On $B_j$ there is a metric of positive scalar curvature by \cite{St1}. We may assume that the product metric on $C_j\times B_j$ has positive scalar curvature as well. Let $\Di_{B_j}$ denote the spin Dirac operator on $B_j$ and $\grad_{B_j}$ the grading operator. By the arguments from the proof of Prop. \ref{prod} the twisted $\rho$-invariant $\rhoct(C_j\times B_j,g_{C_j\times B_j},f_{C_j} \circ p_1)$ vanishes since $\tralg \grad_{B_j} e^{-t \Di_{B_j}^2}$ vanishes by the McKean--Singer formula. 

By the previous proposition it follows that $\rhoct(M \times N, g_{M\times N}, f_M \circ p_1)=0$. Since $\int_N\hat A(N)\neq 0$, a product formula proven as Prop. \ref{prod} implies that $\rhoct(M,g_M,f_M)=0$.
\end{proof}

\section{$2$-cocycle twists and the signature operator}
\label{sign}

Now we turn our attention to the signature operator.

Fix as usual $(c_1, c_2, \dots, c_k)\in H^2(B\G,\Q)^k$, let $M$ be a closed manifold with fundamental group $\G$ and a reference map $f_M\colon M \to B\G$ inducing the universal covering.  

Let $\grad$ be the chirality grading induced by the Hodge star operator on $\Lambda^*T^*M$, and let $c$ be the Clifford action. On the Clifford module $\Lambda^*T^*M \ten \N^{\s^s}$ we have a connection $\nabla^{sign}$ obtained as the tensor product of the Levi-Civita connection with the connection on $\N^{\s^s}$. For $\dim M$ even, the signature operator  is defined as $D^{\s^s}=c \circ \nabla^{sign}$, while for $\dim M$ odd $D^{\s^s}=\grad (c \circ \nabla^{sign})$.

Let $M$ be even-dimensional. For all $s$ near $0 \in \Q^k$ let $\t_s$ be a trace on $C^*(\G,\s^s)$ invariant under projective automorphisms. We write $$\sign^{\s_s}_{\t_s} (M)=\t_s \ind(D^{\s^s}) \ .$$

If $\t_s=\tr_{(2)}^{\s^s}$ is the trace given in \S \ref{lrt}, then we write $\sign_{(2)}^{\s^s}(M)$.

In the following we assume that $\t_s$ is positive for each $s$.

\begin{definition}\label{twBetti}  We define the \emph{twisted Betti numbers} to be 
$$
b_{\tau_s}^{\s^s, even}(M):= \tau_s(P^{even})\;\;;\;\; b_{\tau_s}^{\s^s, odd}(M):=\tau_s(P^{odd})
$$
where $P^{even}$ and $P^{odd}$ are the projections onto the kernel of $D^{ \s^s}$ restricted to differential forms of even and odd degree, respectively. As usual, these expressions are understood as a germ at $s=0$.
\end{definition}

The difference $b_{\tau_s}^{\s^s, even}(M)-b_{\tau_s}^{\s^s, odd}(M)$ equals the twisted Euler characteristic, which is defined as the index of the operator $D^{\s^s}$ with respect to the even/odd grading.
\medskip

If $\tau_s=\tr_{(2)}^{\s_s}$, then we write $b_{(2)}^{\s^s, even}(M)$.

From the results in \S \ref{para-constr} it is clear that these Betti numbers do not depend on the choices of $\eta_i, \psi_\g^i$ for fixed $\o_i$, but may depend on the choice of $\o_i$.

Furthermore it is natural to pose the following question.

\begin{que} 
\begin{enumerate}
\item Does $b_{\tau_s}^{\s^s,*}(M)$ depend on $\o_i$?
\item Does $b_{\tau_s}^{\s^s,*}(M)$ depend on the metric on $M$?
\item Is $b_{\tau^s}^{\s^s,*}(M)$ a homotopy invariant of $M$?
\end{enumerate}
\end{que}

\begin{rem} In \cite[Question 57.2]{Mis} Sauer and Schick constructed Betti numbers  $b^k_E(M)$ for the signature operator twisted by a vector bundle $E$ with given connection $\nabla^E$: they are defined as the dimension of the kernel of the signature operator twisted by $E$ in degree $k$. Since the twisted Laplacian does not preserve the degree of differential forms, the alternating sum of these Betti numbers may not be equal to the twisted Euler characteristic, \emph{i.\,e.} the index of the twisted de Rham operator with respect to the even/odd grading. 

One may also define Betti numbers using the operator $(\nabla^{sign})^*\nabla^{sign}+ \nabla^{sign}(\nabla^{sign})^*$, which preserves the degree of differential forms and agrees with the Laplacian in the flat case. However, in the general case the topological significance of this operator remains unclear.
\end{rem}

Now let $\dim M$ be odd. Def. \ref{deftwistedeta} gives us von Neumann twisted $\eta$- and $\rho$-invariants associated to the signature operator. The twisted $\eta$-invariant depends on the metric and on the choice of the $\o_i$.

\begin{que}
\label{questions}
\begin{enumerate}
\item Do the twisted von Neumann $\rho$-invariants for the signature operator depend on the $\o_i$ and the metric?
\item The classical von Neumann $\rho$-invariants of the signature operator are homotopy invariant when $\G$ is torsion free and fulfills the Baum--Connes conjecture for the maximal group $C^*$-algebra \cite{kes}; are there analogous results for the twisted von Neumann $\rho$-invariants?
\end{enumerate}
\end{que}

In view of the results in \cite{marc} it would be interesting to calculate the twisted $\eta$-invariant for 3-dimensional solvmanifolds and $\t_s=\tr_{(2)}^{\s^s}$. For the ordinary $\eta$-invariant this has been done in \cite{ads}.

\subsection{Spectral flow and mapping torus}

In \cite{aw} it was proven that the $L^2$-spectral flow of a path of signature operators with varying metrics vanishes. This was used to reprove the metric independence of the $L^2$-$\rho$-invariants and of $L^2$-signatures for manifolds with boundary. Here we show by an example that this method does not generalize easily to our twisted signatures.
For that aim we first prove a formula relating the twisted signatures of a mapping torus to von Neumann spectral flow.
\medskip

Let $(F, h,r, H)$ be a quadruple consisting of an oriented manifold $F$, an orientation preserving diffeomorphism $h\colon F\to F$, a reference map $r\colon F\to B\G$ inducing the universal covering, and a homotopy 
$H\colon [0,1] \times F\to B\G$ such that $F(0,\cdot)=r$ and $F(1,\cdot)=r\circ h$. Note that this implies that $h$ induces the identity on the fundamental group. Choose a path of metrics $(g_t)_{t\in [0,1]}$ on $F$ which is constant near $0,1$ and with $g_1=h^*g_0$. We  endow $[0,1]\times F$ with the metric $dt^2+g_t$. 

We denote by $p_F:\widetilde F \to F$ the projection.

Let $(c_1, c_2, \dots, c_k)\in H^2(B\G,\Q)^k$. 

For each $i=1,\dots,k$ the forms $\psi_\g^i, \eta_i, \o_i$ on $\widetilde F$ and $F$, respectively, are constructed as usual. 

For $i=1,\dots, k$ let $\phi_i \in \O^1(F,\R)$ be such that $d\phi_i=h^*\o_i-\o_i$. 

Our aim the following is to study a path connecting the twisted signature operator associated to the choices $\psi_\g^i, \eta_i, \o_i$ and the metric $g_0$ with the operator associated to the choices $\psi_\g^i, \eta_i+\phi, h^*\o_i$ and the metric $h^*g_0$.

\medskip
Let $T_h$ be the mapping torus. The metric $dt^2 + g_t$ is well-defined on $T_h$.
Let $r_G\colon T_h \to BG$ a map inducing the universal covering. Furthermore let $r_{T_h}\colon T_h \to B\G$ be the reference map induced by $H$. Since the covering space induced by $r_{T_h}$ is connected, there is a map $\pi\colon G\to \G$ and (up to homotopy) $r_{T_h}$ factorizes as $T_h \stackrel{r_G}{\to} BG \stackrel{Bp}{\to} B\G$.


Let $\chi\colon [0,1] \to [0,1]$ be a smooth cut-off function with $\chi(t)=0$ for $t\in [0, \frac{1}{4}]$, and  $\chi(t)=1$ for $t\in [\frac{3}{4}, 1]$.

On $\{t\} \times \widetilde F$ with $t\in [0,\frac 12]$ we set 
$$\eta^t_i=\eta_i + \chi(2t)p_F^*\phi_i \ ,\quad \psi_\g^{t,i}=\psi_\g^{i} \ .$$ Then $\o^t_i=\o_i+\chi(2t)d\phi_i$. There is $\alpha_i\colon \widetilde F \to \R$ such that $h^*\eta_i-(\eta_i+p_F^*\phi_i)=d\alpha_i$. 

For $t \in (\frac 12,1]$ we set 
$$\eta_i^t=\eta_i+p_F^*\phi_i + \chi(2t-1)d \alpha_i\ , \quad \psi_\g^{t,i}=\psi_\g^i+ \chi(2t-1)(\g^*\alpha_i-\alpha_i) \ .$$ For each $t\in [0,1]$ we get an associated twisted signature operator $D^{\s^s}_{\{t\} \times F}$. Note that $\s^s$ does not depend on $t$. For $t \in [0,\frac 12]$ these operators act on the same $C^*(\G,\s^s)$-Hilbert module. For $t> \frac 12$ there is a unitary $U_{(s,t)}\colon \N_t^{\s^s} \to \N_{\frac 12}^{\s^s}$ defined as in Lemma \ref{choices-conn-s} with $f_{s,t}=\exp(i\chi(2t-1)(s_1\alpha_1 + s_2\alpha_2 + \dots +s_k\alpha_k))$ and $z_s=1$. By conjugation with this unitary we get a path of operators acting on the sections of a common bundle $\N^{\s^s}$ on $F$. 

This defines the right hand side of the equation in the following proposition.

To $(\pi^*c_1, \pi^*c_2, \dots, \pi^*c_k)\in H^2(BG,\Q)^k$ we associate the left hand side of the proposition as in the beginning of \S \ref{sign}. Here $\s_G^s$ is a multiplier on $G$.

\begin{prop}
\label{mappling_torus}
\begin{equation*}
\sign^{\s_G^s}_{(2)} (T_h)= \spfl_{\tr_{(2)}^{\s^s}} (D^{\s^s}_{\{t\}\times F})_{t\in [0,1]} \ .
\end{equation*}
\end{prop}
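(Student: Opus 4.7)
The plan is to cut the mapping torus $T_h$ along $\{0\}\times F$ and realize $\sign^{\s_G^s}_{(2)}(T_h)$ as the $L^2$-index of a cylindrical (suspension-type) operator whose fiberwise restriction at time $t$ is, up to unitary conjugation and the chirality grading, the operator $D^{\s^s}_{\{t\}\times F}$. Once this is done, the formula reduces to the classical identity relating the index of a mapping-torus suspension to the spectral flow of its fiberwise path in the semifinite von Neumann setting of \S \ref{neumann_Cstar}.

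First, I would verify that the bundle $\N^{\s_G^s}\to T_h$ is isomorphic to the quotient of the bundle on $[0,1]\times F$ built from the data $\eta_i^t$ and $\psi_\g^{t,i}$, glued at $t=0,1$ via the unitary $U_{(s,1)}$ composed with the pullback by $h$. The interpolations chosen for $\eta_i^t$ and $\psi_\g^{t,i}$ are designed so that the cohomological conditions $d\phi_i=h^*\o_i-\o_i$ and $d\alpha_i=h^*\eta_i-\eta_i-p_F^*\phi_i$ hold; this is exactly what is needed for the connections $\nabla^{\s^s}_t$ to descend to a global connection on the mapping torus realizing the bundle classified by $r_{T_h}^*c_i=(Bp)^*\pi^*c_i$.

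Second, under this identification I would write the twisted signature operator in the cylindrical form
\[
D^{\s_G^s}_{T_h} \,\cong\, \grad\bigl(c(dt)(\partial_t + \widetilde D_t)\bigr),
\qquad
\widetilde D_t := U_{(s,t)}^{-1}\,\grad\, D^{\s^s}_{\{t\}\times F}\, U_{(s,t)},
\]
so that, after taking the $L^2$-trace with respect to $\tr_{(2)}^{\s^s}$, the left-hand side of the formula becomes $\ind_{\tr_{(2)}^{\s^s}}(\partial_t + \widetilde D_t)$ interpreted as a Breuer--Fredholm index in the appropriate semifinite von Neumann algebra, as in \S \ref{neumann_Cstar}.

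Finally, I would invoke the semifinite spectral-flow identity: the $L^2$-index of such a suspension-type operator on the mapping torus of a path of self-adjoint Breuer--Fredholm operators equals the spectral flow of the path. This can be proved by cutting $T_h$ along $\{0\}\times F$ and applying the von Neumann APS index theorem of \S \ref{neumann_Cstar} to the cylinder $[0,1]\times F$: the interior integrand $\hat L(M)\ch(\N^{\s^s})$ contributes nothing, since on a product cylinder the only local-index form of top degree has an obvious $dt$-factor that is paired against a transgression integrating to the spectral flow, while the APS boundary $\eta$-terms combine, via \cite[Theorem 2.11]{aw} and \cite[Corollary 8.11]{cpJLO}, into exactly $\spfl_{\tr_{(2)}^{\s^s}}(\widetilde D_t)_{t\in[0,1]}$; unitary conjugation by $U_{(s,t)}$ preserves spectral flow, so $\widetilde D_t$ can be replaced by $D^{\s^s}_{\{t\}\times F}$ on the right-hand side. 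The main obstacle will be Step 1: carefully reconciling the explicit time-dependent bundle data on $[0,1]\times F$ with the intrinsic bundle $\N^{\s_G^s}$ on $T_h$, since $T_h$ has a larger fundamental group $G$ mapping onto $\G$ via $\pi$.
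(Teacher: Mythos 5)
Your overall strategy---cut along a fiber, get an index $=$ spectral flow identity via the von Neumann APS theorem, and use the unitary equivalence $U_{(s,t)}$ to replace $\widetilde D_t$ by $D^{\s^s}_{\{t\}\times F}$---is in the right spirit, and your final step (APS on the cylinder, cancellation of the $\eta$- and kernel-terms at the two ends) is essentially how the paper handles the right-hand side. But there is a genuine gap in Step 1, which you flag yourself without resolving: the bundle $\N^{\s_G^s}\to T_h$ has fibers $C^*(G,\s_G^s)$ where $G=\pi_1(T_h)$, whereas the bundles $\N^{\s^s}_t$ on $\{t\}\times F$ have fibers $C^*(\G,\s^s)$ with $\G=\pi_1(F)$. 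These are different $C^*$-algebras ($G$ is an extension $1\to\G\to G\to\Z\to 1$), so one cannot realize $\N^{\s_G^s}$ as a quotient of a bundle of $C^*(\G,\s^s)$-modules on $[0,1]\times F$ glued at the ends, nor directly identify the Breuer--Fredholm algebras and their traces on the two sides. Consequently the proposed cylindrical identity $\sign^{\s_G^s}_{(2)}(T_h)=\ind_{\tr_{(2)}^{\s^s}}(\partial_t+\widetilde D_t)$ is not available as stated.

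The paper avoids this entirely by applying Mathai's twisted Atiyah--Singer index theorem to the closed manifold $T_h$, which rewrites the left-hand side as the purely topological pairing $(2\pi i)^{\dim T_h/2}\langle L(T_h)\cup e^{2\pi i\,r_{T_h}^*c_s},[T_h]\rangle$---a quantity that depends only on $r_{T_h}^*c_s$ and is indifferent to whether one views $T_h$ through $C^*(G,\s_G^s)$ or through its $\G$-structure. The spectral flow on the right-hand side is then computed independently via the $C^*$-algebraic APS theorem on the cylinder $[0,1]\times F$ (a $\G$-object throughout), the boundary $\eta$-terms and Betti contributions cancel by the $h$-unitary equivalence exactly as you anticipate, and the two sides are matched by identifying a concrete de Rham representative of $e^{2\pi i\,r_{T_h}^*c_s}$ with $e^{-i\omega_s^Z}$ built from the connections $dt+\nabla^{\eta^t_i}$. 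To make your plan work you would need an extra argument (which amounts to reproving Mathai's theorem in this special case, or a separate Fubini/Morita-type lemma) showing that the $\tr_{(2)}^{\s_G^s}$-signature on $T_h$ agrees with a $\tr_{(2)}^{\s^s}$-index of a cylindrical operator over $F$.
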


\begin{proof}
Let $c_s=s_1c_1 +s_2c_2 +\dots s_kc_k \in H^2(B\G,\Q)$.
  
By the twisted index theorem of Mathai \cite[p.\,14]{Ma2} 
$$
\sign^{\s^s_G}_{(2)}(T_h)=(2 \pi i)^{\frac{\dim T_h}{2}}(L(T_h)\cup e^{2 \pi i (r_{T_h}^*c_s)}, [T_h]) \ .
$$  
(Note that we use a different normalisation of the Chern character as Mathai.)

An explicit representative of the cohomology class $e^{2 \pi i (r_{T_h}^*c_s)}$ can be constructed as follows. For notational simplicity we assume that $c_i \in H^2(B\G,\Z)$. Thus there is a line bundle $\cL_i \to B\G$ whose first Chern class equals $c_i$. The line bundle $r_{T_h}^*\cL_i$ is related to the bundle $H^*\cL_i$ on $Z:=[0,1]\times F$ by identifying the fibers at $\{0\} \times F$ and $\{1\}\times F$ using $h^*$. For each $i$ we get the connection $\nabla^{Z,i}:=dt + \nabla^{\eta^t_i}$ on $H^*\cL_i$ with $\eta^t_i$ as above. Since $\eta^1_i=h^*\eta^0_i$, it holds that $\nabla^{\eta^1_i}=h^*\nabla^{\eta^0_i}$. Thus $\nabla^{Z,i}$ defines a connection on the line bundle $r_{T_h}^*\cL$. We denote its curvature by $i\omega_i^Z=i d_Z \eta^t_i$. As usual, we set $\omega_s^Z=s_1\o_1^Z+ s_2 \o_2^Z + \dots + s_k \o_k^Z$. Since $[-\frac{\o_s^Z}{2\pi}]=r_{T_h}^*c_s \in H^2(T_h,\Q)$, it holds that $\sign^{\s^s_G}_{(2)}(T_h)=\int_{T_h} L(T_h) e^{-i\omega_s^Z}$. 

On the other hand,
\begin{multline*}
\spfl_{\tr_{(2)}^{\s^s}} (D^{\s^s}_{\{t\}\times F})_{t\in [0,1]}\\
\begin{aligned}
 &= (2\pi i)^{\frac{\dim T_h}{2}}\int_0^1 \int_F L([0,1] \times F)e^{-i\o_s} -\eta(D^{\s^s}_{\{1\}\times F})+\eta(D^{\s^s}_{\{0\}\times F})\\
 &-\frac 12 \left(\beta_{(2)}^{\s^s,even}(\{1\}\times F) + \beta_{(2)}^{\s^s,odd}(\{1\}\times F)
- \beta_{(2)}^{\s^s,even}(\{0\}\times F) - \beta_{(2)}^{\s^s,odd}(\{0\}\times F) \right) \ .
\end{aligned}
\end{multline*}

Since $D^{\s^s}_{\{0\}\times F}$ and $D^{\s^s}_{\{1\}\times F}$ are unitarily equivalent via $h$, the $\eta$-invariants and Betti numbers cancel out. The assertion follows now by identifying $L(T_h)$ and $L([0,1]\times F)$.
\end{proof}

Based on \cite[Example 1.10]{llk} we construct a mapping torus with nonvanishing twisted signature. 

Let $r\colon \C P^2\times S^1\to S^1=B\Z$ be the projection onto the second factor and consider the element $\theta=[\C P^2\times S^1, r]$ in the oriented bordism group $\O_5(B\Z)$. 

Let $b_1$ be the generator of $H^1(S^1)$. The higher signature $\sign (S^1, b_1)=\int_{S^1} L(S^1)b_1$ does not vanish 
and therefore also $\sign(\C P^2\times S^1, b_1)\neq 0$.
By the Appendix in \cite{llk} there exists a quadruple $(F_1, h_1,r_1, H_1)$ as above with $\dim F_1=4$ such that $[T_{h_1}, r_{T_{h_1}}]=\theta$. The cobordism invariance of higher signatures implies that $\sign (T_{h_1}, b_1)\neq 0$. 

Construct an odd dimensional quadruple $(F, h, r, H)$ by multiplying by $S^1$, \emph{i.\,e.} $F=F_1\times S^1$, $h=h_1\times \id$, $r=r_1 \times \id \colon F\to S^1\times S^1$, $H=H_1\times \id$.

Let $c=b_1\cup b_2 \in H^2(S^1 \times S^1)$ where $b_2$ is the generator of $H^1(S^1)$ for the second factor.  

By the multiplicativity of higher signatures we have that $\sign (T_h, c)\neq 0$. It follows that $\sign^{\s_s}_{\t_s}(T_h)$ (considered, as usual, as a germ at $s=0$) does not vanish. 

Thus, by the previous proposition, we get a path of twisted signature operators with nonvanishing spectral flow. Note that in the example we vary the metric $g$ and the forms $\o_i$. It is not clear what happens if the forms are kept constant and only the metric is varied or vice versa.

\begin{rem}
In particular one could not prove the metric independence of the twisted $\rho$-invariants for the signature operator by means of the Cheeger--Gromov argument (\cite[pp. 24-27]{CG2}, see also \cite[\S 4]{aw} where this is recast using spectral flow).

Furthermore the example shows that a naive extension of the definition of the twisted signatures to manifolds with boundaries (using the Atiyah--Patodi--Singer index) does not have the invariance properties one would like. Namely, $\spfl_{\tr_{(2)}^{\s^s}} (D^{\s^s}_{\{t\}\times F})_{t\in [0,1]}$ would then be equal to the twisted signature on the cylinder, which is zero if all choices are taken of product type but nonzero in the above example. 
\end{rem}

\section{The higher twisted Atiyah--Patodi--Singer index theorem}
\label{high_twist_APS}

Higher $\rho$-invariants, introduced by Lott in \cite{lohigheta}, were used by Leichtnam and Piazza to distinguish $\Gamma$-bordism classes of metrics of positive scalar curvature in \cite{lpetapos}. 

In the following we apply the results of \cite{waAPS} in order to study higher twisted $\eta$- and $\rho$-invariants. Note that for their definition, the methods of \cite{lodiffeo} are sufficient.  

We restrict ourselves to the odd-dimensional case, the even-dimensional case can be treated analogously.

In order to define a higher $\eta$-invariant of the Dirac operator $\Di_{\mathcal V^{\s^s}}$ we need a ``smooth structure'' on $C^*(\G, \s^s)$.

Therefore we assume that $(\cA_i,\iota_{i+1,i}\colon \cA_{i+1} \to \cA_i)_{i \in \bn_0}$ is a projective system of
involutive Banach algebras with unit satisfying the following conditions:
\begin{itemize} 
\item $\cA_0=C^*(\G, \s^s)$.
\item For any $i \in \bn_0$ the map $\iota_{i+1,i}\colon \cA_{i+1} \to \cA_i$ is injective.
\item For any $i \in \bn_0$ the map $\iota_i\colon \Ai:= \varprojlim\limits_{j}\cA_j \to \cA_i$ has dense
range.
\item For any $i \in \bn_0$ the algebra $\cA_i$ is stable with respect to the holomorphic functional calculus in
$\cA$.  
\item The algebra $\C(\Gamma, \s^s)$ is densely contained in $\Ai$ (which we can identify with a subalgebra of $\cA_0$). 
\end{itemize}

We call $\Ai$ a smooth subalgebra of $C^*(\G,\s^s)$.

Examples of such projective systems, which are relevant for our applications, are constructed in the Appendix \S \ref{Appendix}.

Recall that the universal differential algebra associated to $\Ai$ is defined as $\Oi\Ai= \prod_{k=0}^{\infty} \Ok \Ai$ with $\Ok \Ai := \Ai (\hat\ten_{\pi} \Ai/\C)^k, ~k \in \bn_0$. The algebra is $\Z$-graded in a natural way; in the following all relevant constructions (tensor products, commutators etc.) are taken in a graded sense. 

The differential $\di$ is given by 
$\di(b_0 \ten b_1 \dots \ten b_k)=1 \ten b_0 \ten b_1 \dots \ten b_k$ and linear extension. The product fulfills the graded Leibniz rule. Furthermore the involution extends to $\Oi\Ai$ such that $(\alpha\beta)^*=\beta^*\alpha^*$. 

In general, we will not consider the universal algebra itself but quotients thereof. If $\cI \subset \Oi\Ai$ is a closed homogeneous involutive ideal which is closed under $\di$, then one may define the differential algebra $\Oi^{\cI}\Ai:=\Oi\Ai/\cI$. 

The de Rham homology $H^{dR}_{\cI}(\Ai)$ is defined as the homology of the complex $(\Oi^{\cI}\Ai/\overline{[\Oi^{\cI}\Ai,\Oi^{\cI}\Ai]},\di)$. Here (as always in this context) we divide out the closure of the image of $\di$ in order to get a Hausdorff space. 

For simplicity, we will omit the ideal $\cI$ from the notation in the following.

For a projection $P\in \Cinf(M,M_m(\Ai))$ there is a Chern character form $$\ch(P)=\sum_{j=0}^{\infty} \frac{(-1)^j}{j!}\tralg (P(d_M+\di)P(d_M+\di)P)^j \in \Omega^*(M) \hat\ten_{\pi} \Oi\Ai/\overline{[\Oi\Ai,\Oi\Ai]} \ ,$$ 
which induces a homomorphism $K_0(C(M,C^*(\G,\s^s))) \to H^*(M)\ten H^{dR}_*(\Ai)$. Note that $P(d_M+\di)P$ plays the role of a connection here with a component $Pd_MP$ in the direction of $M$ and a component $P\di P$ in the direction of the algebra. Up to exact forms, the Chern character is independent of it. If we want to emphasize the connection, we write $\ch(Pd_MP+P\di P)$. As in the classical case, we may (and will) use more general connections, see \cite[\S 1.2]{waAPS}. 

In order to define this type of connection for $\mathcal V^{\s^s}$ we will associate an explicit projection $P \in \Cinf(M,M_m(\C(\Gamma, \sigma^s)))$ to the bundle $\mathcal V^{\s^s}$ such that $P(M\times \C(\Gamma, \sigma^s)^m)\cong\mathcal V^{\s^s}$.

Let $(U_{i})_{i\in I}$ be a cover of $M$ consisting of trivializing open sets for
the universal covering $p\colon\Mt\rightarrow M$, and 
let $\phi_{i}\colon p^{-1}(U_{i})\stackrel{\sim}{\rightarrow} U_{i}\times
\Gamma$. Denote $m:=|I|$.

For $x \in U_i$ we write $\tilde{x}_{i}:=\phi_{i}^{-1}(x,e)$. We define the locally constant function $g_{ij}\colon U_i\cap U_j \to \G$ by $\tilde x_i=g_{ij} \tilde x_j$.
Choose a smooth partition of unity $(\chi_i^2)_{i \in I}$ subordinate to $(U_{i})_{i\in I}$.

Define $P \in \Cinf(M,M_m(\C(\Gamma, \sigma^s))$ by setting $P_{ij}=e^{-i\psi^s_{g_{ij}}(\tilde x_j)}\chi_i \chi_j \delta_{g_{ij}}$.
We have  
\begin{align*}
\sum_{j\in  I} P_{ij}P_{jk}&=\sum_{j\in  I} \chi_i e^{-i\psi^s_{g_{ij}}(\tilde x_j)}\chi_j^2 e^{-i\psi^s_{g_{jk}}(\tilde x_k)}\chi_k \delta_{g_{ij}}*\delta_{g_{jk}}\\
&=\sum_{j\in  I} \chi_i e^{-i\psi^s_{g_{ij}}(g_{jk}\tilde x_k)}\chi_j^2 e^{-i\psi^s_{g_{jk}}(\tilde x_k)}\chi_k \sigma^s(g_{ij},g_{jk}) \delta_{g_{ik}}\\
&=\sum_{j\in  I} \chi_i e^{-i\psi^s_{g_{ik}}(\tilde x_k)}\chi_j^2 \ov \sigma^s(g_{ij},g_{jk})\sigma^s(g_{ij},g_{jk}) \chi_k \delta_{g_{ik}}\\
&=\chi_i e^{-i\psi^s_{g_{ik}}(\tilde x_k)}\chi_k \delta_{g_{ik}}=P_{ik} \ .
\end{align*}

Thus $P$ is a projection. Using that $\sigma^s(\g,\g^{-1})= e^{i(\psi^s_{\g^{-1}}+(\g^{-1})^*\psi^s_\g)}$, one checks that $P$ is selfadjoint: 
\begin{multline*}
(P^*)_{ij}=e^{i\psi^s_{g_{ij}}(\tilde x_i)}\chi_j\chi_i \delta_{g_{ji}}^*=e^{i\psi^s_{g_{ij}}(\tilde x_i)}\chi_i\chi_j \bar{\s}^s(g_{ji}, g_{ji}^{-1})\de_{g_{ij}}\\
=e^{i\psi^s_{g_{ij}}(\tilde x_i)}\chi_i\chi_je^{-i\psi^s_{g_{ij}}(\tilde x_j)-i\psi^s_{g_{ji}}(g_{ij}\tilde x_j)}\de_{g_{ij}}= e^{-i\psi^s_{g_{ij}}(\tilde x_j)}\chi_i \chi_j \delta_{g_{ij}}\ .
\end{multline*}

\begin{lem}
The map
$$U_P\colon \Cinf(\Mt,L \ten C^*(\Gamma,\sigma^s))^{\Gamma} \to \Cinf(M, C^*(\Gamma,\sigma^s)^m)$$
$$s\mapsto (\chi_1 s(\tilde x_1), \chi_2 s(\tilde x_2), \dots, \chi_m s(\tilde x_m)) \ .$$
defines an isometric embedding $\N^{\s^s} \to M \times C^*(\Gamma,\sigma^s)^m$.

The image of the embedding equals $P(\Cinf(M,C^*(\Gamma,\sigma^s))^m)$. Moreover, $U_P$ intertwines $\nabla^{\s^s}$ and 
$$\nabla^P:=Pd_MP+i\sum_{j=1}^m \chi_j^2  \eta(\tilde x_j) \ .$$
\end{lem}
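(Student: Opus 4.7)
The plan is to verify the three assertions---inclusion of the image in $P(\Cinf(M, C^*(\G,\s^s)^m))$, isometry, and intertwining of connections---by a direct computation built on a single transformation rule for $\G$-invariant sections.

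First, I would translate $\G$-invariance under the action $A\otimes T$ into a local transition rule. Combining the formula $(A_\g\tau)(x) = e^{-i\psi^s_\g(\g^{-1}x)}\tau(\g^{-1}x)$ with $T_\g v = \delta_\g * v$, and using $\tilde x_i = g_{ij}\tilde x_j$ on $U_i\cap U_j$, a $\G$-invariant section $s$ of $L\otimes C^*(\G,\s^s)$ satisfies
$$s(\tilde x_i) = e^{-i\psi^s_{g_{ij}}(\tilde x_j)}\,\delta_{g_{ij}} * s(\tilde x_j).$$
This identity will drive every subsequent verification.

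Given it, the isometry is immediate: unitarity of $\delta_{g_{ij}}$ in $C^*(\G,\s^s)$ (which follows from $\delta_\g^*\delta_\g = \delta_e$) and the modulus-one factor make $\langle s(\tilde x_j), s'(\tilde x_j)\rangle$ independent of $j$ on $U_i\cap U_j$, and then $\sum_j\chi_j^2 = 1$ gives $\langle U_P s, U_P s'\rangle(x) = \langle s, s'\rangle_{\N^{\s^s}}(x)$. The inclusion of the image of $U_P$ in $P(\Cinf(M, C^*(\G,\s^s)^m))$ follows from a componentwise calculation of $P\cdot U_P s$: the transition rule collapses each term $P_{ij}\chi_j s(\tilde x_j)$ to $\chi_i\chi_j^2 s(\tilde x_i)$, and $\sum_j\chi_j^2 = 1$ yields $PU_P s = U_P s$. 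For surjectivity, given $t$ with $Pt = t$, the identity $t_i/\chi_i = \sum_k e^{-i\psi^s_{g_{ik}}(\tilde x_k)}\chi_k\delta_{g_{ik}} * t_k$ (obtained by dividing $Pt = t$ by $\chi_i$) is exactly the cocycle condition needed to define a consistent $\G$-invariant section $s$ on $\Mt$; I would construct the inverse of $U_P$ this way and verify $\G$-invariance using the $2$-cocycle identity for $\s^s$.

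The intertwining of connections is the most delicate step. I would differentiate $(U_P s)_j = \chi_j s(\tilde x_j)$ on $M$, apply $P$, and rewrite via the transition rule. The $d\chi_j$-contributions reassemble (after invoking invariance) into $\tfrac{1}{2}\chi_i s(\tilde x_i)\, d(\sum_j \chi_j^2)$ and therefore vanish. The main computational step is to differentiate the transition rule itself: using $d\psi^s_\g = \g^*\eta_s - \eta_s$ one finds
$$e^{-i\psi^s_{g_{ij}}(\tilde x_j)}\,\delta_{g_{ij}} * d_M\bigl(s(\tilde x_j)\bigr) = d_M\bigl(s(\tilde x_i)\bigr) + i\bigl(\eta_s(\tilde x_i) - \eta_s(\tilde x_j)\bigr) s(\tilde x_i),$$
with $\eta_s(\tilde x_k)$ the pullback of $\eta_s$ via $x\mapsto \tilde x_k$. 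Summing against $\chi_i\chi_j^2$ and adding the $i\sum_j\chi_j^2\eta_s(\tilde x_j)$-term of $\nabla^P$ makes the $-i\sum_j\chi_j^2\eta_s(\tilde x_j)s(\tilde x_i)$ contribution cancel, leaving exactly $\chi_i(d_M s + i\eta_s s)(\tilde x_i) = (U_P\nabla^{\s^s} s)_i$. The main obstacle is really the bookkeeping between forms on $\Mt$ and their pullbacks via the different local sections $\tilde x_k$, and deploying $\sum_j\chi_j^2 = 1$ at the right places.
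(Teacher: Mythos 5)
Your proposal is correct and follows essentially the same route as the paper's proof: derive the transition rule $s(\tilde x_i)=e^{-i\psi^s_{g_{ij}}(\tilde x_j)}\delta_{g_{ij}}*s(\tilde x_j)$ from $(A\otimes T)$-invariance, use unitarity of $\delta_\g$ and $\sum_j\chi_j^2=1$ for the isometry and range inclusion, recover preimages from the columns of $P$ for surjectivity, and then differentiate the transition rule via $d\psi^s_\g=\g^*\eta_s-\eta_s$ to get the intertwining after the $\sum_j\chi_j\,d_M\chi_j=0$ cancellation. The only cosmetic difference is that you isolate the differentiated transition rule as a standalone identity whereas the paper applies the Leibniz rule inline when expanding $(Pd_MU_Ps)_i$.
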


\begin{proof}
For $s=0$ the proof can be found in \cite[Prop. 3.2]{wu}. We leave some of the details to the reader. We only show that we get the right image and the last assertion.

From \eqref{AtensT} we can write $s(\tilde x_i)=(A \ten T)_{g_{ij}}s(\tilde x_j)=e^{-i\psi_{g_{ij}}(\tilde x_j)}(\delta_{g_{ij}}*s(\tilde x_j))$, so one checks that the image of the above map is in the range of $P$. For surjectivity we check that the column vectors of $P$ are in the image. Thus consider the vector $$\chi_i(\chi_1 e^{-i\psi^s_{g_{1i}}(\tilde x_i)} \delta_{g_{1i}},\chi_2 e^{-i\psi^s_{g_{2i}}(\tilde x_i)} \delta_{g_{2i}}, \dots ,\chi_m e^{-i\psi^s_{g_{mi}}(\tilde x_i)}\delta_{g_{mi}}) \ .$$ 

For $x \in p^{-1}(U_i)$ there is $\g \in \G$ such that $x=\g \tilde x_i$.  We set $s(x)=e^{-i\psi_{\g}(\tilde x_i)}\delta_{\g} (\chi_i \circ p)(x)$. (Here we write $\tilde x_i$ for $(\tilde x_i \circ p)(x)$). Furthermore let $s(x)=0$ for $x \notin p^{-1}(U_i)$. Then $s$ is the preimage of the above vector.

Now we consider the action of the above map on the connections.
Using that $\sum_{j=1}^m \chi_j d_M(\chi_j)=0$, we calculate
\begin{eqnarray*}
(Pd_MU_Ps)_i&=& \sum_{j=1}^m P_{ij}d_M \chi_j s(\tilde x_j)\\
&=&\sum_{j=1}^m e^{-i\psi^s_{g_{ij}}(\tilde x_j)}\chi_i \chi_j \delta_{g_{ij}}*((d_M\chi_j) s(\tilde x_j)+\chi_j d_M s(\tilde x_j))\\
&=&\sum_{j=1}^m \chi_i \chi_j(d_M\chi_j) s(\tilde x_i) + \chi_i \chi_j^2 d_M\bigl(e^{-i\psi^s_{g_{ij}}(\tilde x_j)} \delta_{g_{ij}}* s(\tilde x_j)\bigr)\\
&& \quad + \chi_i \chi_j^2 (i\eta(\tilde x_i) -i\eta(\tilde x_j))e^{-i\psi^s_{g_{ij}}(\tilde x_j)}\delta_{g_{ij}}* s(\tilde x_j)\\
&=&\sum_{j=1}^m \chi_i \chi_j^2 d_M s(\tilde x_i) + (i\eta(\tilde x_i) -i\eta(\tilde x_j))\chi_i \chi_j^2 s(\tilde x_i)\\
&=& \chi_i \bigl(d_M  +i \eta(\tilde x_i)-\sum_{j=1}^m i \eta(\tilde x_j) \chi_j^2 \bigr)s(\tilde x_i) \ .
\end{eqnarray*}

Thus the connection $Pd_MP+i\sum_{j=1}^m \chi_j^2  \eta(\tilde x_j)$ intertwines with the connection $\nabla=d_M  +i \eta$ on the covering.
\end{proof}

Now we assume that $M$ is an even-dimensional spin manifold with boundary $N$ such that $N$ is endowed with a metric of positive scalar curvature. We assume all structures to be of product type near the boundary. In particular the projection $P$ is assumed to be independent on the orthogonal variable. Then we can apply the noncommutative Atiyah--Patodi--Singer index theorem proven in \cite{waAPS} to the operator $\Di_{\mathcal V^{\s^s}}^M$ for $|s|$ small enough such that the induced operator on the boundary $\Di^N_{\mathcal V^{\s^s}}$ is invertible.

We first recall the definition of the $\eta$-form. 

Let $C_1$ be the Clifford algebra of $\R$ with odd generator $\theta$ with $\theta^2=1$. We define the superconnection $$\spc=P\di P + \theta \Di^N_{\N^{\s^s}} \ .$$ 
Then the rescaled superconnection is given by $$\spc_t=P \di P + \sqrt t \theta \Di^N_{\N^{\s^s}} \ .$$ 

We set $\tralg_{\theta}(a+\theta b):= \tralg(a)$ and define analogously $\Tr_{\theta}$.

Then, in $\Oi\Ai/\ov{[\Oi\Ai,\Oi\Ai]+\di \Oi\Ai}$,
$$\eta(\Di^N_{\N^{\s^s}}):= \frac{1}{2\sqrt{\pi}} \int_0^{\infty}\Tr_{\theta} \Di^N_{\N^{\s^s}} e^{-\spc_t^2}~\frac{dt}{\sqrt t}  \ .$$

Thus, for the $K$-theoretic index $\ind(\Di^M_{\mathcal V^{\s^s}}) \in K_*(C^*(\Gamma,\sigma^s))$ it holds in $H^{dR}_*(\Ai)$ 

$$\ch(\ind \Di^M_{\mathcal V^{\s^s}})=(2\pi i)^{\dim M/2} \int_M \hat A(M) \ch(\nabla^P+ P\di P) - \eta(\Di^N_{\mathcal V^{\s^s}}) \ .$$

In order to win numerical information from this formula, one pairs both sides with continuous reduced cyclic cocycles on $\Ai$.

\subsection{Pairing with cyclic cocycles localized at the identity}
First, we will consider the pairing with reduced cyclic cocycles localized at the identity in order to show that the above Atiyah--Patodi--Singer index theorem yields a generalization of the higher Atiyah--Patodi--Singer index theorem of Leichtnam and Piazza \cite{lphighAPS} on the one hand, and of the higher twisted Atiyah--Singer index theorem of Marcolli and Mathai \cite{mm} on the other (to be precise, of a restricted version: for manifolds instead of orbifolds and using $K$-theory of $C^*$-algebras instead of algebraic $K$-theory).
For the definition of $\rho$-invariants it is the pairing with delocalized cocycles which is interesting: this will be studied in \S \ref{deloc-sec}.

\medskip 

Recall that group cocycles on $\Gamma$ define reduced cyclic cocycles on $\C(\Gamma,\sigma^s)$. (See the Appendix \ref{Appendix}, to which we also refer for terminology and notation.)

Let $\fc \in C^n_{al}(\Gamma), ~n \ge 1,$ with $d_{\Gamma}\fc=0$. 

We abbreviate
$$\fc(i_0,i_1,\dots, i_n):=\fc(e,g_{i_1i_2},g_{i_1i_2} g_{i_2i_3}, \dots, g_{i_1i_2} \dots  g_{i_{n-1}i_n}g_{i_ni_0})$$
and define, as in \cite[Prop. 12]{losupcon}\cite[Lemma 3.4]{wu}, the differential form \begin{eqnarray}\label{lottform} \olott_{\fc}=\sum_{i_0, \dots, i_n \in I}\chi_{i_0}^2d_M(\chi_{i_1}^2)d_M(\chi_{i_2}^2) \dots d_M(\chi_{i_{n}}^2)\fc(i_0,i_1,\dots, i_n)\ .\end{eqnarray}

The form is closed \cite[Lemma 3]{losupcon}.
The cocycle $\fc$ defines a class $[\fc]\in H^{n}(B\Gamma)$ via the canonical isomorphism $H^{n}(\Gamma)\cong H^{n}(B\Gamma)$.
By \cite[Prop. 14]{losupcon}\cite[Lemma 3.6]{wu} it holds that $[\olott_{\fc}]=f^*[\fc]$, where $f\colon M \to B\Gamma$ is the classifying map from \S \ref{mathai}.

Furthermore $\fc$ induces a reduced cyclic $n$-cocycle $\tau^{\sigma^s}_{\fc}$ on $\C(\Gamma,\sigma^s)$, which is invariant under projective automorphisms.

\begin{prop}
\label{chloc}
Let $\fc \in C^n_{al}(\Gamma)$ with $d_{\Gamma}\fc=0$. Then on the level of differential forms $$\tau^{\sigma^s}_\fc(\ch(\nabla^P+P\di P))= (- 1)^{\frac{(n+1)n}{2}}~ e^{-i\omega_s} \olott_\fc \ .$$
Hence, if $M$ is closed, in $H^n(M)$
$$[\tau^{\sigma^s}_\fc(\ch(\nabla^P+P\di P))]=(- 1)^{\frac{(n+1)n}{2}}~ [e^{-i\omega_s}] \cup f^*[\fc] \ .$$
\end{prop}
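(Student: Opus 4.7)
The plan is to compute the curvature of $\nabla := \nabla^P + P\di P$ explicitly, factor out the scalar $M$-direction contribution, and then invoke the Lott--Wu local formula for the remaining pairing.

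First I would rewrite $\nabla$ as $P(d_M+\di)P + i\phi$ with $\phi := \sum_{j\in I} \chi_j^2\,\eta(\tilde x_j)$, a scalar $M$-one-form satisfying $\phi^2=0$ and $\di\phi = 0$. A graded Leibniz computation, using that $\phi$ is central and $\di\phi=0$, gives $\{P(d_M+\di)P,\,\phi\} = d_M\phi$ on $\mathrm{range}(P)$, whence
\[
\nabla^2 \;=\; (P(d_M+\di)P)^2 + i\,d_M\phi.
\]
The preceding lemma yields $(\nabla^P)^2 = i\omega_s$ on $\mathrm{range}(P)$, which after subtracting $id_M\phi$ gives $(Pd_MP)^2 = i(\omega_s - d_M\phi)$. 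Expanding $(P(d_M+\di)P)^2$ into bi-degrees then produces the clean decomposition
\[
\nabla^2 \;=\; i\omega_s + R, \qquad R := \{Pd_MP,\,P\di P\} + (P\di P)^2,
\]
in which $R$ has no component of pure $M$-degree $2$. Since $i\omega_s$ is a scalar $2$-form commuting with everything on $\mathrm{range}(P)$, the exponential factorises as $\exp(-\nabla^2) = e^{-i\omega_s}\exp(-R)$, and since $\tau^{\sigma^s}_\fc$ acts only in the algebra direction, the scalar prefactor passes through:
\[
\tau^{\sigma^s}_\fc(\ch(\nabla)) \;=\; e^{-i\omega_s}\cdot \tau^{\sigma^s}_\fc\bigl(\tralg \exp(-R)\bigr).
\]

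The delicate step is the evaluation $\tau^{\sigma^s}_\fc(\tralg\exp(-R)) = (-1)^{n(n+1)/2}\,\olott_\fc$. Here I would follow the local computation of \cite[Prop.\ 12]{losupcon} and \cite[Lemma 3.4]{wu}, adapted to the twist. One expands $R$ in the partition of unity $\{\chi_i^2\}$ using the explicit formula $P_{ij} = e^{-i\psi^s_{g_{ij}}(\tilde x_j)}\chi_i\chi_j\delta_{g_{ij}}$, observes that the phases $e^{-i\psi^s_{g_{ij}}}$ collected at each vertex of the cyclic pairing combine by means of the cocycle identity $\sigma^s(g_{ij},g_{jk}) = \exp\bigl(i(\psi^s_{g_{jk}}(\tilde x_k) + \psi^s_{g_{ij}}(g_{jk}\tilde x_k) - \psi^s_{g_{ij}g_{jk}}(\tilde x_k))\bigr)$ with the twists coming from the iterated product in $\tau^{\sigma^s}_\fc$, and verifies that phases and twists cancel exactly. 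After this cancellation the calculation reduces to Lott's untwisted formula and produces $\olott_\fc$, with the sign $(-1)^{n(n+1)/2}$ arising from the cyclic reordering of the $n$ algebra one-forms required by the reduced-cocycle convention. This combinatorial bookkeeping of phases against multipliers is the main obstacle.

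Finally, the cohomology-class statement on closed $M$ is a formal consequence: $e^{-i\omega_s}$ is closed as a polynomial in the closed form $\omega_s$, $\olott_\fc$ is closed by \cite[Lemma 3]{losupcon}, and their product represents the cup product of the respective classes; together with $[\olott_\fc] = f^*[\fc]$, which is \cite[Prop.\ 14]{losupcon} and \cite[Lemma 3.6]{wu}, this yields $[\tau^{\sigma^s}_\fc(\ch(\nabla^P+P\di P))] = (-1)^{n(n+1)/2}\,[e^{-i\omega_s}]\cup f^*[\fc]$.
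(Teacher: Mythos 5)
Your proposal follows the same route as the paper's proof: the same decomposition $\nabla^2 = i\omega_s + R$ with $R = \{Pd_MP, P\di P\} + (P\di P)^2$, the same factorisation $e^{-\nabla^2} = e^{-i\omega_s}e^{-R}$ pulled through $\tau^{\sigma^s}_\fc$, and the same reduction to a partition-of-unity computation in which the multipliers $\sigma^s(g_{ij},g_{jk})$ produced by the twisted product cancel the phases $e^{-i\psi^s}$ appearing in $P_{ij}$. The one place where the paper does substantive work you defer is exactly what you flag as the ``delicate step'': the paper carries out explicitly both the vanishing of all terms containing a factor $P(\di P)(\di P)$ (via antisymmetry in the pair $(i_0,i_n)$ after pairing with $\fc$ and cyclicity) and the inductive computation of $\tralg\bigl((P(d_MP)(\di P)+P(\di P)(d_MP))^n\bigr)$, whereas you cite the untwisted Lott--Wu calculation and assert that the phase/multiplier bookkeeping reduces to it --- so your outline is correct and honest about where the content lies, but it is an outline rather than a complete argument.
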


\begin{proof} We define $d_{tot}=\di+d_M+\beta$, with $\beta:=i\sum_{j=1}^m \chi_j^2  \eta(\tilde x_j) \in \Omega^1(M,\C)$. We have to calculate $\tau_\fc(\tr e^{-P(d_{tot}P)(d_{tot}P)})$. We have
\begin{multline*}
P(d_{tot}P)(d_{tot}P)=(\nabla^P)^2 + P(\di P)(\di P) + P(d_MP)(\di P)+\\+P(\di P)(d_M P)+\beta P(\di P) + P(\di P)\beta \ .
\end{multline*}
Since $(\nabla^P)^2 =i\omega_s$ and $\beta$ anticommutes with $P(\di P)$, it follows that
$$e^{-P(d_{tot}P)(d_{tot}P)}=e^{-i\omega_s}\exp(-P(\di P)(\di P) - P(d_MP)(\di P)-P(\di P)(d_MP)) \ .
$$

We apply $\tau^{\sigma^s}_\fc\circ \tralg$ to the second factor of the right hand side, which we expand into a sum. We begin by considering summands of the form $\tau^{\sigma^s}_\fc(\tralg \alpha P(\di P) (\di P))$. Using Leibniz rule for $\di$ we deduce that $\tralg \alpha P(\di P) (\di P)$ can be written as 
\begin{multline*}
\sum_{i_0,i_1, \dots, i_n \in I}\chi_{i_0}e^{-i\psi^s_{g_{i_0i_1}}(\tilde x_{i_1})}\chi_{i_1}f_{i_1 \dots i_{n-1}} \chi_{i_{n-1}}e^{-i\psi^s_{g_{i_{n-1}i_n}}(\tilde x_{i_n})}\chi_{i_n}^2 \cdot \\
\cdot e^{-i\psi^s_{g_{i_ni_0}}(\tilde x_{i_0})}\chi_{i_0} g_{i_0i_1}\di g_{i_1i_2} \dots \di g_{i_{n-1}i_n} \di g_{i_ni_0}
\end{multline*} 
with $f_{i_1 \dots i_{n-1}} \in \Omega^*(M)$. 
It holds that
\begin{align*}
\lefteqn{e^{-i\psi^s_{g_{i_{n-1}i_n}}(\tilde x_{i_n})}e^{-i\psi^s_{g_{i_ni_0}}(\tilde x_{i_0})}e^{-i\psi^s_{g_{i_0i_1}}(\tilde x_{i_1})}}\\
&=e^{-i\psi^s_{g_{i_{n-1}i_n}}(\tilde x_{i_n})}e^{-i\psi^s_{g_{i_ni_1}}(\tilde x_{i_1})}\ov \sigma^s(g_{i_ni_0},g_{i_0i_1}) \\
&=e^{-i\psi^s_{g_{i_{n-1}i_1}}(\tilde x_{i_1})}\ov \sigma^s(g_{i_{n-1}i_n},g_{i_ni_1}) \ov \sigma^s(g_{i_ni_0},g_{i_0i_1}) \ .
\end{align*}
Furthermore
$$\tau^{\s^s}_\fc(g_{i_0i_1}\di g_{i_1i_2}, \dots, \di g_{i_{n-1}i_n} \di g_{i_ni_0})$$
$$=
\tr_{\langle e \rangle}(\delta_{g_{i_0i_1}}*\delta_{g_{i_1i_2}}* \dots \delta_{g_{i_ni_0}})\fc(i_0,i_1,\dots, i_n)$$
$$=\sigma^s(g_{i_{n-1}i_n},g_{i_ni_1})\sigma^s(g_{i_ni_0},g_{i_0i_1})\tr_{\langle e \rangle}(\delta_{g_{i_1i_2}}* \dots *\delta_{g_{i_{n-1}i_1}}) \fc(i_0,i_1,\dots, i_n) \ .$$

Thus $\tau^{\sigma^s}_\fc(\tralg \alpha P(\di P) (\di P))$ equals the sum over
$$e^{-i\psi^s_{g_{i_{n-1}i_1}}(\tilde x_{i_1})}\chi_{i_1} f_{i_1 \dots i_{n-1}} \chi_{i_{n-1}}\chi_{i_n}^2 \chi_{i_0}^2 \tr_{\langle e \rangle}(\delta_{g_{i_1i_2}} \dots \delta_{g_{i_{n-1}i_1}})\fc(i_0,\dots, i_n) \ .$$
Since the summand is antisymmetric in the pair $(i_0,i_n)$, the sum vanishes.

By the cyclicity of the trace any term with a factor $P(\di P)(\di P)$ vanishes.

We calculate
\begin{eqnarray*}
\lefteqn{\bigl(P(d_MP)(\di P) +P(\di P)(d_M P)\bigr)_{il}}\\
&=& \sum_{j,k \in I}\chi_i e^{-i\psi^s_{g_{ij}}(\tilde x_j)}\chi_j \delta_{g_{ij}} d_M(\chi_je^{-i\psi^s_{g_{jk}}(\tilde x_k)}\chi_k)\delta_{g_{jk}} \chi_k e^{-i\psi^s_{g_{kl}}(\tilde x_l)}\chi_l (\di \delta_{g_{kl}})  \\
&&\quad + \chi_i e^{-i\psi^s_{g_{ij}}(\tilde x_j)} \delta_{g_{ij}}\chi_j^2  e^{-i\psi^s_{g_{jk}}(\tilde x_k)}\chi_k (\di \delta_{g_{jk}}) d_M (\chi_k e^{-i\psi^s_{g_{kl}}(\tilde x_l)}\chi_l) \delta_{g_{kl}}  \\
&=&\sum_{j,k \in I}\sigma^s(g_{ij},g_{jk})\chi_i e^{-i\psi^s_{g_{ij}}(\tilde x_j)}\chi_j d_M(\chi_je^{-i\psi^s_{g_{jk}}(\tilde x_k)}\chi_k)\chi_k e^{-i\psi^s_{g_{kl}}(\tilde x_l)}\chi_l \delta_{g_{ik}}(\di \delta_{g_{kl}})  \\
&&\quad - \sigma^s(g_{jk},g_{kl})\chi_i e^{-i\psi^s_{g_{ij}}(\tilde x_j)} \chi_j^2  e^{-i\psi^s_{g_{jk}}(\tilde x_k)}\chi_k  d_M (\chi_k e^{-i\psi^s_{g_{kl}}(\tilde x_l)}\chi_l) \delta_{g_{ij}}(\di \delta_{g_{jl}})  \\
&& \quad +  \sigma^s(g_{ij},g_{jk})\chi_i e^{-i\psi^s_{g_{ij}}(\tilde x_j)} \chi_j^2  e^{-i\psi^s_{g_{jk}}(\tilde x_k)}\chi_k  d_M (\chi_k e^{-i\psi^s_{g_{kl}}(\tilde x_l)}\chi_l)\delta_{g_{ik}}(\di \delta_{g_{kl}}) \ .  
\end{eqnarray*}

Here we used that $\delta_{g_{ij}}(\di \delta_{g_{jk}})\delta_{g_{kl}}=\sigma^s(g_{jk},g_{kl})\delta_{g_{ij}}(\di \delta_{g_{jl}})-\sigma^s(g_{ij},g_{jk})\delta_{g_{ik}}(\di \delta_{g_{kl}})$.

By adding up the first and the last line and interchanging $k$ and $j$ in the second line we get
\begin{eqnarray*}
\lefteqn{\bigl(P(d_MP)(\di P)+P(\di P)(d_M P)\bigr)_{il}}\\
&=&\sum_{j,k \in I}\sigma^s(g_{ij},g_{jk})\bar{\s}^s(g_{jk},g_{kl})\chi_i e^{-i\psi^s_{g_{ij}}(\tilde x_j)}\chi_j d_M(\chi_je^{-i\psi^s_{g_{jl}}(\tilde x_l)}\chi_k^2 \chi_l) \delta_{g_{ik}}(\di \delta_{g_{kl}}) \\
&&\quad - \sigma^s(g_{kj},g_{jl})\bar{\s}^s(g_{ik},g_{kj})\chi_i e^{-i\psi^s_{g_{ij}}(\tilde x_j)} \chi_k^2 \chi_j  d_M (\chi_j e^{-i\psi^s_{g_{jl}}(\tilde x_l)}\chi_l) \delta_{g_{ik}}(\di \delta_{g_{kl}}) \ . 
\end{eqnarray*} 
Since
$$\sigma^s(g_{ij},g_{jk})\bar{\s}^s(g_{jk},g_{kl})=\sigma^s(g_{ij},g_{jl})\bar{\s}^s(g_{ik},g_{kl})=\sigma^s(g_{kj},g_{jl})\bar{\s}^s(g_{ik},g_{kj})$$
we get 
\begin{eqnarray*}
\lefteqn{\bigl(P(d_MP)(\di P) +P(\di P)(d_M P)\bigr)_{il}} \\
&=&\sum_{j,k \in I}\sigma^s(g_{ij},g_{jl})\bar{\s}^s(g_{ik},g_{kl})\chi_i e^{-i\psi^s_{g_{ij}}(\tilde x_j)}\chi_j^2 e^{-i\psi^s_{g_{jl}}(\tilde x_l)}d_M(\chi_k^2) \chi_l \delta_{g_{ik}}(\di \delta_{g_{kl}}) \\
&=&\chi_i e^{-i\psi^s_{g_{il}}(\tilde x_l)}\chi_l \sum_{k \in I}\bar{\s}^s(g_{ik},g_{kl})d_M(\chi_k^2)\delta_{g_{ik}}(\di \delta_{g_{kl}}) \ .
\end{eqnarray*}
Now consider $\bigl((P(d_MP)(\di P)+ P(\di P)(d_M P))^2\bigr)_{ip}$. For that aim we calculate
\begin{eqnarray*}
\lefteqn{\delta_{g_{ik}}(\di \delta_{g_{kl}})\delta_{g_{lm}}(\di \delta_{g_{mp}})}\\
&=&\delta_{g_{ik}}(\di \delta_{g_{kl}}\delta_{g_{lm}})(\di \delta_{g_{mp}})-\delta_{g_{ik}}\delta_{g_{kl}}(\di\delta_{g_{lm}})(\di \delta_{g_{mp}})\\
&=&\sigma^s(g_{kl},g_{lm})\delta_{g_{ik}}(\di \delta_{g_{km}})(\di \delta_{g_{mp}})-\sigma^s(g_{ik},g_{kl})\delta_{g_{il}}(\di\delta_{g_{lm}})(\di \delta_{g_{mp}}) \ .
\end{eqnarray*}
Thus one of the two contributing terms is
$$\sum_{k,l,m}\chi_i e^{-i\psi^s_{g_{il}}(\tilde x_l)}d_M(\chi_k^2)\chi_l^2 e^{-i\psi^s_{g_{lp}}(\tilde x_p)}d_M(\chi_m^2)\chi_n \bar{\s}^s(g_{lm},g_{mp})\delta_{g_{il}}(\di\delta_{g_{lm}})(\di \delta_{g_{mp}}) \ .$$
This term vanishes since $\sum_k d_M(\chi_k^2)=0$.
In order to evaluate the second term we calculate
\begin{eqnarray*}
\bar{\s}^s(g_{ik},g_{kl})\sigma^s(g_{kl},g_{lm})\bar{\s}^s(g_{lm},g_{mp})
&=&\bar{\s}^s(g_{ik},g_{kl})\bar{\s}^s(g_{km},g_{mp})\sigma^s(g_{kl},g_{lp})\\
&=&\bar{\s}^s(g_{km},g_{mp}) \sigma^s(g_{il},g_{lp})\bar{\s}^s(g_{ik},g_{kp}) \ .
\end{eqnarray*}
Thus
\begin{eqnarray*}
\lefteqn{\bigl((P(d_MP)(\di P)+ P(\di P)(d_M P))^2\bigr)_{ip}}\\
&=&-\sum_{k,m}e^{-i\psi^s_{g_{ip}}(\tilde x_p)}\bar{\s}^s(g_{ik},g_{kp})\bar{\s}^s(g_{km},g_{mp})\chi_i d_M(\chi_k^2) d_M(\chi_m^2)\chi_p \delta_{g_{ik}}(\di \delta_{g_{km}})(\di \delta_{g_{mp}}) \ .
\end{eqnarray*}
By induction we get
\begin{eqnarray*}
\lefteqn{\bigl((P(d_MP)(\di P)+ P(\di P)(d_M P))^n \bigr)_{i_0i_{n+1}}}\\
&=&(-1)^{\frac{(n+1)(n+2)}{2}}\sum_{i_1, \dots, i_n \in I}e^{-i\psi^s_{g_{i_0i_{n+1}}}(\tilde x_{i_{n+1}})}\bar{\s}^s(g_{i_0i_1},g_{i_1i_{n+1}})\bar{\s}^s(g_{i_1i_2},g_{i_2i_{n+1}})\dots \\
& \dots& \bar{\s}^s(g_{i_{n-1}i_{n}},g_{i_{n}i_{n+1}})  \chi_{i_0}d_M(\chi_{i_1}^2)d_M(\chi_{i_2}^2) \dots d_M(\chi_{i_{n}}^2)\chi_{i_{n+1}} \delta_{g_{i_0i_1}}(\di \delta_{g_{i_1i_2}}) \dots (\di \delta_{g_{i_ni_{n+1}}})
\end{eqnarray*}

By using that
\begin{multline*}
\tr_{\langle e \rangle}(\delta_{g_{i_0i_1}}*\delta_{g_{i_1i_2}}* \dots *\delta_{g_{i_{n-1}i_n}}*\delta_{g_{i_ni_{n+1}}})\\
=\s^s(g_{i_0i_1},g_{i_1i_{n+1}})\s^s(g_{i_1i_2},g_{i_2i_{n+1}})\dots \s^s(g_{i_{n-1}i_{n}},g_{i_{n}i_{n+1}})\tr_{\langle e\rangle}(\delta_{g_{i_0i_{n+1}}}) 
\end{multline*}
it follows that
\begin{eqnarray*}
\lefteqn{\tau^{\sigma^s}_\fc \circ \tralg \bigl((P(d_MP)(\di P)+ P(\di P)(d_M P))^n \bigr)}\\
&=&(-1)^{\frac{(n+1)(n+2)}{2}}\sum_{i_0, \dots, i_n \in I}\chi_{i_0}^2d_M(\chi_{i_1}^2)d_M(\chi_{i_2}^2) \dots d_M(\chi_{i_{n}}^2)\fc(i_0,i_1,\dots, i_n)\\
&=&(- 1)^{\frac{(n+1)(n+2)}{2}}\olott_{\fc} \ .
\end{eqnarray*}

\end{proof}

If $\tau_\fc^{\s^s}$ extends to a continuous cyclic cocycle on $\Ai$, then the noncommutative Atiyah--Patodi--Singer index theorem implies that
$$\tau_\fc^{\s^s}(\ch(\ind \Di^M_{\mathcal V^{\s^s}}))=C\int_M \hat A(M) e^{-i\omega_s}\olott_\fc - \tau_\fc^{\s^s}(\eta(\Di^N_{\mathcal V^{\s^s}})) \ .$$

Here $C$ only depends on $\dim M$ and $n$.

If $\G$  is Gromov hyperbolic, then each class $[\fc] \in H^{n}(\Gamma)$ has a representative $\fc$ of polynomial growth \cite[p. 384 ff.]{cm}. Thus $\tau_\fc$ is extendable by Prop. \ref{rd_ext}.

\subsection{Delocalized cocycles and twisted higher $\rho$-invariants}
\label{deloc-sec}

Now we assume that $M$ is a closed odd-dimensional spin manifold with positive scalar curvature and $\Di_{\N^{\s^s}}$ is the twisted Dirac operator derived from the spin Dirac operator. 

With the notation as in the beginning of \S \ref{depchoic} we have the following variation formula.

\begin{prop}
For each $s$ near $0$ let $\tau_s$ be a reduced cyclic cocycle on $\C(\G,\s^s)$ which is invariant under parametrized projective automorphisms and which extends to a continuous cyclic cocycle on $\Ai$.

Then (as a germ near $s=0$)
\begin{align*}
\lefteqn{
\tau_s\eta(\Di_{\mathcal V^{\s^{s'}}})-\tau_s\eta(\Di_{\mathcal V^{\s^s}})}\\
&=(2\pi i)^{\frac{\dim M+1}{2}}\int_0^1 \hat A(M)\exp(-i(\omega_s+\chi d\phi_s+\chi dt\wedge \phi_s)) \tau_s \ch(\nabla^P+P\di P)  \ .
\end{align*}
\end{prop}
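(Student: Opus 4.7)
The plan is to mimic the argument of the variation formula in Section~\ref{depchoic}, replacing the $C^*$-algebraic Atiyah--Patodi--Singer index theorem by the noncommutative version of \cite{waAPS}, so that the output lives at the level of differential forms and can be paired with $\tau_s$ directly.

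First, by (the proof of) Lemma~\ref{choices-conn-s} we may arrange that the two sets of choices share the same family $\psi^s_\g$ but use $\eta_s$ and $\eta_s + p^*\phi_s$ respectively. The associated projection $P \in C^\infty(M, M_m(\mathbb{C}(\Gamma,\sigma^s)))$ is then the same in both cases, the bundle $\mathcal V^{\sigma^s}$ is unchanged, and the two connections on it are $\nabla^P + P\di P$ and $\nabla^P + P\di P + i\phi_s$. On the cylinder $Z = [0,1] \times M$, equipped with a product metric of positive scalar curvature on $\partial Z = M \sqcup (-M)$ (using that $M$ has positive scalar curvature), pulled-back bundle, and pulled-back projection, I would use the interpolating superconnection
$$\nabla^Z \;=\; P d_Z P + P\di P + i\bigl(\eta_s + \chi(t)\,p^*\phi_s\bigr),$$
whose commutative curvature component equals $i\bigl(\omega_s + \chi(t)\,d\phi_s + \chi'(t)\,dt\wedge\phi_s\bigr)$ by exactly the computation carried out in Section~\ref{depchoic}, while the algebra component is the $t$-independent $P\di P$-contribution which produced the higher part of $\ch$ in Proposition~\ref{chloc}.

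For $s$ sufficiently close to $0$, the twisted spin Dirac operator $\Di^Z_{\mathcal V^{\sigma^s}}$ is invertible by Lemma~\ref{lem-invert}, so the $K$-theoretic index in $K_*(C^*(\Gamma,\sigma^s))$ vanishes. Applying the noncommutative APS theorem of \cite{waAPS} with respect to the smooth subalgebra $\Ai$, and pairing both sides with $\tau_s$, we obtain
$$0 \;=\; (2\pi i)^{(\dim M+1)/2}\!\int_Z \hat A(M)\,\tau_s\bigl(\ch(\nabla^Z)\bigr) \;-\; \bigl[\tau_s\eta(\Di_{\mathcal V^{\sigma^{s'}}}) - \tau_s\eta(\Di_{\mathcal V^{\sigma^s}})\bigr].$$
Since the scalar and $P\di P$-components of $(\nabla^Z)^2$ sit in different degrees with respect to the bidegree (base forms vs.\ algebra forms) and the scalar piece commutes with everything in the algebraic tensor factor, multiplying out $\exp(-(\nabla^Z)^2)$ factorises the interior integrand as
$$\exp\bigl(-i(\omega_s + \chi d\phi_s + \chi\,dt\wedge\phi_s)\bigr)\,\tau_s\ch(\nabla^P + P\di P),$$
so that integrating out the $t$-variable turns $\int_Z$ into $\int_0^1$ and yields precisely the formula in the statement; both sides are then germs at $s=0$ as required.

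The main obstacle is bookkeeping rather than conceptual. One must verify that the hypotheses of \cite{waAPS} apply to the cylinder superconnection $\nabla^Z$ --- in particular that $\tau_s$ extends continuously to $\Ai$ (this is part of the hypothesis and, by the Appendix, typical for the projective systems we care about) and that the $P\di P$-piece of $\nabla^Z$ is a genuine superconnection in the sense of \cite{waAPS}, so that the boundary $\eta$-forms produced by that theorem really coincide with $\tau_s\eta(\Di_{\mathcal V^{\sigma^{\cdot}}})$ computed from the respective sets of geometric data on $\{0\}\times M$ and $\{1\}\times M$. The invariance of $\tau_s$ under parametrised projective automorphisms is exactly what ensures that these boundary terms are well-defined functions of the geometric data and do not pick up spurious phases when transported back along the identification of the bundles.
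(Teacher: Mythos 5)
Your proposal reproduces the paper's argument almost exactly: you first reduce (via Lemma~\ref{choices-conn-s}) to a shared projection $P$ with $\nabla^{P'}=\nabla^P+i\phi_s$, then form the interpolating connection on the cylinder $Z=[0,1]\times M$, compute the curvature, invoke the noncommutative APS theorem of \cite{waAPS} together with the invertibility coming from positive scalar curvature (Lemma~\ref{lem-invert}) to kill the interior index term, and finally factorize the exponential using that the scalar curvature piece is central. This is precisely the route the paper takes in its proof (which phrases the factorization as a transgression/Chern--Simons computation and then closes with ``this implies the assertion''), so there is nothing substantive to add; one small note is that, as in the paper's own statement, the $dt\wedge\phi_s$ term should carry $\chi'$ (as in your curvature computation) rather than $\chi$.
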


\begin{proof}
The strategy of proof is as in \S \ref{depchoic}. We skip some details.

We may set $\eta_s'=\eta_s+p^*\phi_s$ and $(\psi^s_{\g})'=\psi^s_{\g}$. Then the induced projections $P', P$ agree and $\nabla^{P'}=\nabla^{P}+ i\phi_s$.

On the bundle $P(M \times C^*(\Gamma,\sigma^s)^m)$ pulled back to the cylinder $Z=[0,1]\times M$ we have the connection $\nabla^Z=dt\frac{\partial }{\partial t}+\nabla^P+i \chi(t)\phi_s$. 
The associated Chern--Simons form is now defined as 
$$\cs(\nabla^P,\nabla^{P'})=\int_0^1\ch(P\di P+ \nabla^Z)=\int_0^1 \exp(-P\di P \di P - [P\di P,\nabla^Z]-(\nabla^Z)^2)\ .$$
 
Recall that $(\nabla^Z)^2=i(\omega_s+\chi d_M\phi_s+\chi'dt \wedge \phi_s)$. Furthermore $[P\di P, \nabla^Z]=[P\di P,\nabla^P]$. Thus we get
 
$$\cs(\nabla^P,\nabla^{P'})=\int_0^1 \tralg \exp(-i(\omega_s+\chi d_M\phi_s +\chi'dt \wedge \phi_s))\ch(\nabla^P+ P\di P)  \ .$$
 
This implies the assertion.
\end{proof}

One easily checks that the pairing of $\ch(\nabla^P+ P\di P)$ with delocalized cyclic cocycles vanishes.

Thus, if $\tau_s$ is delocalized for $s$ near $0$, then $\tau_s\eta(\Di_{\mathcal V^{ \s^s}})$ does not depend on the choices of $\psi_{\g}^s,~ \eta_s, ~\omega_s$ involved in its construction. 

\begin{lem}
If $\tau_s$ is delocalized for $s$ near $0$, then $\tau_s\eta(\Di_{\mathcal V^{\s^s}})$ is independent of the choice of the open cover and the partition of unity.
\end{lem}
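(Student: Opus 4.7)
The plan is to apply the noncommutative Atiyah--Patodi--Singer index theorem of \cite{waAPS} on the cylinder $W=[0,1]\times M$ with the product metric. Since $M$ has positive scalar curvature, so does $W$, and the whole variation between the two choices will be encoded in a local Chern character term which already is known to die under delocalized cocycles.

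Let $(U_i,\chi_i^2)_{i\in I}$ and $(U'_j,{\chi'}_j^2)_{j\in J}$ be two choices of cover and partition of unity producing projections $P\in M_{|I|}(\Cinf(M,\C(\G,\s^s)))$ and $P'\in M_{|J|}(\Cinf(M,\C(\G,\s^s)))$. First I would stabilize to $P\oplus 0_{|J|}$ and $0_{|I|}\oplus P'$, so that both projections live in $M_{|I|+|J|}(\Cinf(M,\C(\G,\s^s)))$; the added null blocks contribute trivially to $P\di P$ and are annihilated by the twisted Dirac operator, so the associated $\eta$-invariants are unchanged. Next I construct an interpolating datum on $W$: picking a smooth $\phi\colon [0,1]\to [0,1]$ equal to $1$ near $0$ and to $0$ near $1$, the open sets $(0,1]\times U_i$ and $[0,1)\times U'_j$, equipped with the weights $\phi(t)\chi_i(x)$ and $\sqrt{1-\phi(t)^2}\,\chi'_j(x)$, form a cover and partition of unity on $W$. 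Applying the construction preceding the lemma on $W$ with these data yields a single projection $Q\in M_{|I|+|J|}(\Cinf(W,\C(\G,\s^s)))$ that restricts to $P\oplus 0$ near $\{0\}\times M$ and to $0\oplus P'$ near $\{1\}\times M$, with its natural connection $\nabla^Q$ of product type near the boundary.

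By the Lichnerowicz formula combined with Lemma \ref{s-picc}, for $|s|$ small enough the twisted Dirac operator $\Di^W_{\mathcal V^{\s^s}}$ with APS boundary conditions and its boundary operator on $\ra W=M\sqcup (-M)$ are both invertible, so $\ind(\Di^W_{\mathcal V^{\s^s}})=0$ in $K_0(C^*(\G,\s^s))$. The noncommutative APS index theorem, after pairing with $\tau_s$ and identifying the two boundary components, then gives
\[
\tau_s\eta_P-\tau_s\eta_{P'}\;=\;(2\pi i)^{\dim W/2}\int_W \hat A(W)\,\tau_s\ch(\nabla^Q+Q\di Q)
\]
(up to the conventional sign for $\ra W$). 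The right-hand side vanishes by the remark immediately preceding the lemma, which records that any delocalized $\tau_s$ annihilates $\ch(\nabla^Q+Q\di Q)$. This yields $\tau_s\eta_P=\tau_s\eta_{P'}$.

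The main technical obstacle I expect is the bookkeeping needed to verify that the projection $Q$ built from the interpolated partition of unity genuinely restricts to the stabilized versions of $P$ and $P'$ at the two ends of $W$, and that the stabilization $P\mapsto P\oplus 0$ leaves $\eta_P$ unchanged; both reduce to direct algebraic computations in the spirit of the proof of Proposition \ref{chloc}.
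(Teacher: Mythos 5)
Your plan is correct in its structure, and it takes a genuinely different route from the paper's. The paper does not build a new projection on the cylinder; instead it constructs an explicit partial isometry $U$ with $UU^*=P'$ and $U^*U=P$, checks directly (by a long computation) that $U$ intertwines the connections $\nabla^{P'}$ and $\nabla^P$, and then applies the noncommutative APS theorem on the cylinder using the \emph{fixed} projection $P$ together with the \emph{path of noncommutative connections} $P\di P + \chi(t)\,PU^*(\di U)P + dt\,\partial_t + \nabla^P$; the morphing is a gauge transformation at the connection level, so no stabilization is needed even when $|I|\neq|J|$. Your alternative instead interpolates the covers and partitions of unity on $W=[0,1]\times M$ to produce a single parametrized projection $Q$, at the cost of a preliminary stabilization $P\mapsto P\oplus 0$, $P'\mapsto 0\oplus P'$. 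This buys you a more ``automatic'' verification: $Q$ is a projection for free, by the general construction; and the restriction of $Q$ to the two ends equals the stabilized $P$, $P'$ by inspection of the $\phi$-weights. In exchange you owe the reader a check that stabilization leaves the $\eta$-form unchanged (which is immediate, since $\Di$ and all relevant traces vanish on the added zero block), while the paper owes the connection-intertwining computation. Both routes then finish identically: the operator on $W$ has positive scalar curvature, hence trivial APS index for $|s|$ small, and the transgression term is killed because delocalized cocycles annihilate $\ch(\nabla^Q+Q\di Q)$ (since $g_{\alpha_0\alpha_1}\cdots g_{\alpha_n\alpha_0}=e$ for any closed loop of indices).

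Two small defects to repair. First, the cover sets you chose do not contain the supports of the corresponding weights: $\phi(t)\chi_i$ is nonzero at $t=0$ but $(0,1]\times U_i$ excludes $\{0\}\times U_i$, and symmetrically for $\sqrt{1-\phi^2}\,\chi'_j$ at $t=1$; simply use $[0,1]\times U_i$ and $[0,1]\times U'_j$ (or $[0,\tfrac34)\times U_i$ and $(\tfrac14,1]\times U'_j$ if $\phi$ is chosen to be $1$ on $[0,\tfrac14]$ and $0$ on $[\tfrac34,1]$). Second, in the displayed equation the normalization constant depends on the degree of the cyclic cocycle and on $\dim W$, as in the paper's $C\int_Z\cdots$; this does not affect the conclusion but the displayed formula as written is only correct up to that constant.
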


\begin{proof}
The argument is as in \cite[Lemma 9.4]{waAPS}. We only point out the changes.
Let $(U_j')_{j \in J}$ be a second open cover. Assume furthermore given a subordinate partition of unity $((\chi_j')^2)_{j \in J}$ and trivialisations $\phi_j'\colon p^{-1}(U_j')\stackrel{\sim}{\rightarrow} U_j'\times
\Gamma$. Let $\tilde x_j'$ and $g_{ij}'$ be as above. We get a projection $P_{ij}'=\exp(-i\psi^s_{g_{ij}'}(\tilde x'_j))\chi'_i \chi'_j \delta_{g_{ij}'}$. For $i\in I, j \in J$ let $h_{ji}\colon U_j' \cap U_i \to \Gamma$ such that $h_{ji}\tilde x_i=\tilde x_j'$. Then 
$$U_{ji}=\chi'_j \exp(-i\psi^s_{h_{ji}}(\tilde x_i))\delta_{h_{ji}} \chi_i$$
is a partial isometry with $UU^*=P'$ and $U^*U=P$. We only show the last equation. 

We calculate
\begin{eqnarray*}
\lefteqn{(U^*U)_{kj}=\sum_{i\in J} (U_{ik})^*U_{ij}}\\
&=& \sum_{i\in J} \chi_k\exp(i\psi^s_{h_{ik}}(\tilde x_k)) (\chi'_i)^2 \exp(-i\psi^s_{h_{ij}}(\tilde x_j)) \chi_j \bar \s^s(h_{ik},h_{ik}^{-1})\delta_{h_{ik}^{-1}}*\delta_{h_{ij}}\\
&=&\sum_{i\in J} \chi_k\exp(i\psi^s_{h_{ik}}(\tilde x_k)) (\chi'_i)^2 \exp(-i\psi^s_{h_{ij}}(\tilde x_j))\chi_j \bar\s^s(h_{ik},h_{ik}^{-1})\sigma^s(h_{ik}^{-1},h_{ij})\delta_{g_{kj}} \ .
\end{eqnarray*}

With $\sigma^s(\g,\g^{-1})= e^{i(\psi^s_{\g^{-1}}+(\g^{-1})^*\psi^s_\g)} $ and $h_{ik}\tilde x_k=\tilde x_i'$,
 \begin{eqnarray*}
(U^*U)_{kj}
&=&\sum_{i\in J} \chi_k \exp(-i\psi^s_{h_{ik}^{-1}}(\tilde x_i')) (\chi'_i)^2 \exp(-i\psi^s_{h_{ij}}(\tilde x_j))\chi_j \sigma^s(h_{ik}^{-1},h_{ij})\delta_{g_{kj}} \\
&=&\chi_k \exp(-i\psi^s_{g_{kj}}(\tilde x_j))\chi_j\delta_{g_{kj}} \ .
\end{eqnarray*}

Here we also used that $\exp(-i(\psi^s_{h_{ik}^{-1}}(h_{ij}\tilde x_j)+\psi^s_{h_{ij}}(\tilde x_j)-\psi^s_{g_{kj}}(\tilde x_j)))=\bar\sigma^s(h_{ik}^{-1},h_{ij})$.

Thus $U^* P'\di P'U=P\di P+P U^*(\di U)P$ and $U^*P' d_M P'U=P d_M P + PU^*d_M(U)P$. In the following we show that $U$ intertwines the connections $\nabla^{P'}$ and $\nabla^P$.

Using the expression from the first line of the previous equation we calculate 
\begin{eqnarray*}
\lefteqn{(U^*d_M U)_{kj}}\\
&=&\sum_{i\in J} \chi_k \exp(-i\psi^s_{h_{ik}^{-1}}(\tilde x_i')) \chi'_i d_M\bigl(\chi_i' \exp(-i\psi^s_{h_{ij}}(\tilde x_j))\chi_j\bigr)
 \sigma^s(h_{ik}^{-1},h_{ij})\delta_{g_{kj}} \\
&=&\sum_{i\in J} \chi_k \exp(-i\psi^s_{h_{ik}^{-1}}(\tilde x_i')) \chi'_i d_M(\chi_i') \exp(-i\psi^s_{h_{ij}}(\tilde x_j))\chi_j
 \sigma^s(h_{ik}^{-1},h_{ij})\delta_{g_{kj}}\\
 &&+i\sum_{i\in J} \chi_k \exp(-i\psi^s_{h_{ik}^{-1}}(\tilde x_i')) \chi'{}_i^2 (\eta(\tilde x_j)-h_{ij}^*\eta(\tilde x_j))\exp(-i\psi^s_{h_{ij}}(\tilde x_j))\chi_j
 \sigma^s(h_{ik}^{-1},h_{ij})\delta_{g_{kj}}\\
&&+\sum_{i\in J} \chi_k \exp(-i\psi^s_{h_{ik}^{-1}}(\tilde x_i')) \chi'{}_i^2  \exp(-i\psi^s_{h_{ij}}(\tilde x_j))d_M\chi_j
 \sigma^s(h_{ik}^{-1},h_{ij})\delta_{g_{kj}}\\
&=&i\chi_k \exp(-i\psi^s_{g_{kj}}(\tilde x_j))\chi_j\delta_{g_{kj}} \sum_{i\in J}\chi'{}_i^2  (\eta(\tilde x_j)-\eta(\tilde x_i'))\\
 &&+\chi_k \exp(-i\psi^s_{g_{kj}}(\tilde x_j))d_M\chi_j\delta_{g_{kj}} \\
 &=&i P_{kj} \bigl(\eta(\tilde x_j) - \sum_{i\in J}\chi'{}_i^2\eta(\tilde x_i')\bigr)+\chi_k \exp(-i\psi^s_{g_{kj}}(\tilde x_j))d_M\chi_j\delta_{g_{kj}} \ .
\end{eqnarray*}

Now 
\begin{eqnarray*}
\lefteqn{(U^*(d_M U) P)_{kl}}\\
&=&- i P_{kl} \sum_{i\in J}\chi'{}_i^2\eta(\tilde x_i') +i\sum_{j \in I}P_{kj}\eta(\tilde x_j)P_{jl}\\
&& +\sum_{j \in I}\chi_k \exp(-i\psi^s_{g_{kj}}(\tilde x_j))(d_M\chi_j)\chi_j \exp(-i\psi^s_{g_{jl}}(\tilde x_l))\chi_l \delta_{g_{kj}}*\delta_{g_{jl}} \ .
\end{eqnarray*}

By arguments, which are by now standard, the last line vanishes and 
\begin{eqnarray*}
\lefteqn{\sum_{j \in I}P_{kj}\eta(\tilde x_j)P_{jl}}\\
&=&\sum_{j \in I}\chi_k \exp(-i\psi^s_{g_{kj}}(\tilde x_j))\chi_j^2\eta(\tilde x_j) \exp(-i\psi^s_{g_{jl}}(\tilde x_l))\chi_l \delta_{g_{kj}}*\delta_{g_{jl}}\\
&=&P_{kl}  \sum_{j \in I} \chi_j^2 \eta(\tilde x_j) \sum_{j \in I} \chi_j^2 \eta(\tilde x_j) \ .
\end{eqnarray*}

Thus
$$PU^*d_M(U)P=  i\sum_{i \in I} \chi_i^2 \eta(\tilde x_i)P -i\sum_{j\in J} (\chi_j')^2 \eta(\tilde x_j')P$$
and therefore $U^* \nabla^{P'} U=\nabla^P$.

Let $\chi\colon [0,1] \to [0,1]$ be a smooth cut-off function that vanishes for  $x < 1/4$ and equals $1$ for $x>3/4$. Set $Z=[0,1] \times N$. Define $\gamma(t,x)=\chi(t)PU^*\di (U)P(x)$ for $(t,x) \in Z$. On the bundle $P(Z\times C^*(\Gamma,\sigma^s)^m)$ on $Z$ we have the connection $P\di P + \gamma + dt \frac{\partial}{\partial t} + \nabla^P$. Note that this is a connection in the sense of \cite[\S 1.2]{waAPS}. By the noncommutative Atiyah--Patodi--Singer index theorem applied to the associated Dirac operator the difference of the $\eta$-invariants defined using $P$ and $P'$ equals 
$C\int_Z \hat A(Z)\tau_s(\ch(P\di P + \gamma + dt \frac{\partial}{\partial t} + \nabla^P))$. One checks that the pairing of $\ch(P\di P + \gamma + dt \frac{\partial}{\partial t} + \nabla^P)$ with delocalized cocycles vanishes.
\end{proof}

The previous lemmata motivate the following definition:

\begin{definition}
For $s$ near $0$ let $\tau_s$ be a delocalized cyclic cocycle on $\C(\G,\s^s)$ which is invariant under parametrized projective automorphisms. We also assume that the cocycle extends to a continuous cyclic cocycle on $\Ai$.

The twisted higher $\rho$-invariant is defined as
$$\rhoct(M,g,f_M)=\tau_s(\eta(\Di_{\mathcal V^{\s^s}})) \ .$$

As usual, we understand it as a germ at $s=0$.
\end{definition}

It follows as in Prop. \ref{bord-inv} that the twisted higher $\rho$-invariants are invariant under strong $\G$-bordism.

We conclude by studying product formulas which are helpful for calculations and applications. We only consider $\rho$-invariants here, but there are analogous formulas for $\eta$-invariants. 

The situation is similar to the one before Prop. \ref{prod}.

Thus, let $M=L\times N$ be the product of two closed connected spin manifolds with metrics $g_M=g_L+g_N$ and let  $f_L \colon L \to B\G$, $f_N\colon N \to BG$ be maps inducing universal coverings. We have $f_M=f_L \times f_N$. We assume that $L$ and $M$ are odd-dimensional and have positive scalar curvature. Furthermore assume that $\G$ has finite abelization. 

Let ${\mathbf c} \in H^2(\G, \Q)^k$. The above constructions applied to $L$ yield a twisted group $\C(\G,\s^s)$ for any $s$ near $0 \in \Q^k$. Let $\tau_s$ be a delocalized trace on it. We get a twisted $\rho$-invariant $\rho_{\tau_s}^{\mathbf c}(L,g_L,f_L)$.  

Let $\fc \in C^m_{al}(G)$ be a cocycle and $\tau_\fc$ the induced reduced cyclic cocycle on $\C G$. 

Then $\tau_s \# \tau_\fc$ is a reduced cyclic cocycle on $\C(\G \times G, \pi_1^*\s^s)$. Here $\pi_1\colon \G \times G \to \G$ is the projection. It is easy to check that the cocycle is invariant under projective automorphisms.

The product formula we want to obtain now looks as follows:

$$\rho_{\tau_s \# \tau_{\fc}}^{\pi_1^*\mathbf c}(M,g_M,f_M)=C\rho_{\tau_s}^{\mathbf c}(L,g_L,f_L)\int_N \hat A(N) \olott^N_\fc \ .$$

Here $\olott_\fc^N \in \Omega^*(N)$ is the form defined before Prop. \ref{chloc}. The constant $C$ only depends on $m$ and $\dim N$. (One may prove a still more general product formula by taking an additional multiplier on $G$ into account.)
 
The main difficulty is to make the $\rho$-invariants on both sides of the product formula well-defined, \emph{i.\,e.} to find appropriate smooth subalgebras which are well-behaved with respect to products and to which $\tau_s \# \tau_\fc$ extends in a continuous way. In general, also in the untwisted situation the interplay between products and the extension property for cyclic cocycles has not yet received much attention. 
   
Using the results from Appendix \ref{Appendix} we get the desired properties in some situations.

\begin{prop}
Assume that the cocycle $\fc\in C^m_{al}(G)$ has polynomial growth.

The above product formula is well-defined and holds in the following situations:

\begin{enumerate}

\item The group $G$ polynomial growth and the trace $\tau_s$ is defined on $C^*(\G, \s^s)$.

\item The group $G$ has polynomial growth and it holds that $\tau_s=\tau_{\langle g\rangle}$ for some conjugacy class $\langle g\rangle \subset \G$ of polynomial growth.

\item The group $\G\times G$ has rapid decay and it holds that $\G=\G_1 \times \G_2$ and $\G_1$ has finite abelization. The twist is induced by an element ${\mathbf c} \in H^2(B\G_2,\Q)^k$, and $\tau_s=(\tr^{\G_1}_1-\tr^{\G_1}_{(2)}) \ten \tr^{\s^s}_{(2)}$.
\end{enumerate}
\end{prop}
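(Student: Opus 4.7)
The strategy is to combine the heat-kernel product computation of Prop. \ref{prod} with the pairing calculation of Prop. \ref{chloc}, and then to invoke the Appendix to justify the underlying smooth-algebra framework in each of the three cases.

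\textbf{Step 1: product decomposition.} Since $L$ is odd-dimensional, $N$ is even-dimensional and the twist $\pi_1^*\s^s$ is trivial along the $G$-factor, there is a canonical splitting $C^*(\G \times G, \pi_1^*\s^s) \cong C^*(\G,\s^s) \hat\ten C^*G$, and correspondingly the twisted Mishchenko-type bundle on $M$ decomposes as $\mathcal V^{\pi_1^*\s^s}_M \cong p_L^*\mathcal V^{\s^s}_L \ten p_N^*\mathcal V^{G}_N$, where $\mathcal V^{G}_N$ is the ordinary flat Mishchenko $C^*G$-bundle on $N$ and $p_L, p_N$ are the two projections. Building projections $P^L, P^N$ as in \S \ref{high_twist_APS}, the projection representing $\mathcal V^{\pi_1^*\s^s}_M$ is $P^M = P^L \boxtimes P^N$, and the twisted spin Dirac operator splits as $\Di^M_{\mathcal V^{\pi_1^*\s^s}} = \grad_N \Di^L_{\mathcal V^{\s^s}} + \Di^N_{\mathcal V^G_N}$, exactly as in the proof of Prop. \ref{prod}.

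\textbf{Step 2: factorisation of the higher eta form.} The JLO-type superconnection inherits the product structure: absorbing $\grad_N$ into the Clifford variable $\theta$, the exponential factors as $e^{-(\spc^M_t)^2} = e^{-(\spc^L_t)^2} \,\hat\ten\, e^{-t(\Di^N_{\mathcal V^G_N})^2}$. Since $\tau_s\#\tau_\fc$ evaluates $\tau_s$ on the $\G$-component of the differential algebra and $\tau_\fc$ on the $G$-component, substitution into the defining JLO integral for $\eta(\Di^M_{\mathcal V^{\pi_1^*\s^s}})$ yields
\begin{equation*}
\tau_s\#\tau_\fc\bigl(\eta(\Di^M_{\mathcal V^{\pi_1^*\s^s}})\bigr) \;=\; \tau_s\bigl(\eta(\Di^L_{\mathcal V^{\s^s}})\bigr)\cdot \tau_\fc\bigl(\tralg \grad_N e^{-t(\Di^N_{\mathcal V^G_N})^2}\bigr),
\end{equation*}
because the second factor is $t$-independent by McKean--Singer. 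The first factor equals $\rhoct(L,g_L,f_L)$ by definition, and the second equals $C\int_N \hat A(N)\olott^N_\fc$ by Prop. \ref{chloc} specialised to $s=0$ together with the local Atiyah--Singer theorem on $N$, which identifies the pairing of $\tau_\fc$ with the short-time heat-kernel limit on $N$.

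\textbf{Step 3: extensibility case by case.} The calculation above is formal unless all the cocycles involved extend continuously to a common smooth, holomorphically closed subalgebra of $C^*(\G\times G, \pi_1^*\s^s)$ that contains $P^M$. In case (1) one takes the Jolissaint algebra of $C^*G$ (polynomial growth) tensored with a fixed smooth subalgebra of $C^*(\G,\s^s)$ carrying $\tau_s$; extension of $\tau_s\#\tau_\fc$ follows from the polynomial growth of $\fc$ together with Prop. \ref{rd_ext}. In case (2) one takes the Jolissaint algebra of $C^*G$ tensored with the polynomial-growth completion of $\C(\G,\s^s)$ tailored to the conjugacy-class trace $\tau_{\langle g\rangle}$. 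In case (3) one uses the rapid-decay smooth subalgebra of $C^*(\G\times G, \pi_1^*\s^s)$ directly; the finite-abelization assumption on $\G_1$ ensures invariance of $\tau_s = (\tr^{\G_1}_1 - \tr^{\G_1}_{(2)})\ten \tr^{\s^s}_{(2)}$ under parametrised projective automorphisms, and factorisation through the rapid-decay algebra is supplied by the Appendix.

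\textbf{Main obstacle.} The algebraic Steps 1--2 are a routine extension of the proof of Prop. \ref{prod} once the smooth-algebra framework has been fixed; the real difficulty is Step 3. One must exhibit a smooth holomorphically stable subalgebra that is simultaneously compatible with the product decomposition $C^*(\G,\s^s) \hat\ten C^*G$ and with the twist $\pi_1^*\s^s$, verify that $\tau_s\#\tau_\fc$ extends continuously to it, and check that the projection $P^M$ representing $\mathcal V^{\pi_1^*\s^s}_M$ actually has entries in that subalgebra. Each of the three cases requires a different combination of polynomial-growth, rapid-decay and invariance estimates from the Appendix.
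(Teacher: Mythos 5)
Your Steps 1--2 attempt to rederive the product factorisation of the $\eta$-form by hand, whereas the paper delegates that entirely to the general product formula for noncommutative $\eta$-forms \cite[Prop. 10.1]{waAPS}. That is harmless, although your claimed factorisation $e^{-(\spc^M_t)^2}=e^{-(\spc^L_t)^2}\hat\ten e^{-t(\Di^N)^2}$ drops the $P^N\di P^N$ contribution in the $N$-direction; the second tensor factor should be $e^{-(\spc^N_t)^2}$ and it is only its class in $\hat\Omega_*\Ai/\overline{[\cdot,\cdot]+\di\cdot}$ that is $t$-independent. You yourself flag that the computation is formal until a smooth subalgebra is fixed, and that is where the real content lies.

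In Step 3, however, you do not actually construct those subalgebras, and what you sketch does not match what is required. In case (1), the hypothesis is that $\tau_s$ is a bounded trace on all of $C^*(\G,\s^s)$, so no smooth subalgebra of $C^*(\G,\s^s)$ is needed at all; one takes the $\cA$-coefficient rapid-decay algebra $C_\infty(G,\cB)$ with $\cB=C^*(\G,\s^s)$, and extensibility of $\tau_s\#\tau_\fc$ is Prop.~\ref{productext}(1), not Prop.~\ref{rd_ext} (which is the single-group rapid-decay statement and says nothing about this $\cA$-valued product situation). In case (2), the concrete ingredients are Prop.~\ref{conj_class} (showing $\tau_{\langle g\rangle}$ is continuous on $H^\infty_l(\G)$) together with Prop.~\ref{productext}(2) applied to the product group, followed by pulling the resulting smooth subalgebra back to the maximal $C^*$-algebra; your description ("polynomial-growth completion tailored to the conjugacy-class trace") is too vague to be checked. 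In case (3) you miss the key idea entirely. The finite-abelization hypothesis has nothing to do with extensibility; it is only there to guarantee invariance of $\tau_s$ under parametrized projective automorphisms. The actual continuity argument decomposes $\tau_s\#\tau_\fc$ as a difference of two cyclic cocycles that are each induced by a polynomial-growth \emph{group cocycle}: $\tr^{\G_1}_1\ten\tr^{\s^s}_{(2)}\#\tau_\fc$ is the pullback under $\G\times G\to\G_2\times G$ of $\tau_{p_{22}^*\fc}$, and $\tr^{\G_1}_{(2)}\ten\tr^{\s^s}_{(2)}\#\tau_\fc=\tau_{(p_{22}\circ p_2)^*\fc}$ on $\G\times G$ itself. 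Only after this rewriting can Prop.~\ref{rd_ext} (rapid decay of $\G_2\times G$ and of $\G\times G$) be applied to each piece, after which one intersects the two smooth subalgebras and pulls back to the maximal algebra. Without this decomposition your proposed application of rapid decay does not go through, because $\tau_s\#\tau_\fc$ is not itself induced by a single group cocycle.
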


\begin{proof}
We choose word length functions on $G$ and $\G$.

Once appropriate smooth subalgebras are defined to which the cocycle $\tau_s \# \tau_\fc$ extend continuously, the formula follows directly from the product formula of noncommutative $\eta$-forms \cite[Prop. 10.1]{waAPS}, which generalizes the formula proven in \cite[\S 2]{lpetapos} in the higher case.

We use that smooth subalgebras behave well under intersection and pullback. (If a norm is pulled back, one gets a seminorm, which is then turned into a norm by adding the norm of the preimage.)

\medskip

(1) Set $\cB=C^*(\G,\s^s)$. By Prop. \ref{productext} (1) the cocycle extends to a smooth subalgebra $C_{\infty}(G,\cB) \subset C_r^*(\G,\s) \ten C_r^*(G)$.  

\medskip

(2) By the proof of Prop. \ref{conj_class} the trace $\tau^s\colon H_l^{\infty}(\G) \to \C$ is continuous. Prop. \ref{productext} (2) implies that $\tau_s \# \tau_c \colon C_{\infty}(\G \times G, \pi_1^*\s^s) \to \C$ is continuous. Now we pull $C_{\infty}(\G \times G, \pi_1^*\s^s) \subset C_r^*(\G \times G, \pi_1^*\s)\cong C_r^*(\G,\s^s) \ten C_r^*(G)$ back.

\medskip

(3) Let $\G \times G \stackrel{p_2}{\longrightarrow} \G_2 \times G \stackrel{p_{22}}{\longrightarrow} G$ and $\G_2 \times G \stackrel{p_{21}}{\longrightarrow} \G_2$ be the projections. With $p_{22}^*\fc \in C^{al}(\G_2 \times G)$ it holds that $\tr^{\s^s}_{(2)} \# \tau_\fc=\tau_{p_{22}^*\fc}$ as a reduced cyclic cocycle on $\C(\G_2 \times G, p_{21}^*\s)$. Since $\tau_{p_{22}^*\fc}$ is of polynomial growth and $\G_2 \times G$ is of rapid decay, the cocycle $\tau_{p_{22}^*\fc}$ extends to $C_{\infty}(\G_2 \times G, p_{21}^*\s^s)$. Since $(\tr^{\G_1}_1 \ten \tr^{\s^s}_{(2)})\# \tau_\fc$ is the pullback of $\tau_{p_{22}^*\fc}$ via $p_2$ to $\C(\G \times G,(p_{21}\circ p_2)^*\s^s)$, it also extends to the pullback of $C_{\infty}(\G_2 \times G, p_{21}^*\s^s)$.

Analogously, $(\tr^{\G_1}_{(2)} \ten \tr^{\s^s}_{(2)})\# \tau_\fc=\tau_{(p_{22} \circ p_2)^*\fc}$ extends to $C_{\infty}(\G \times G,(p_{21}\circ p_2)^*\s^s)$. 
Now we take the intersection of these two smooth subalgebras in the reduced twisted group $C^*$-algebra and the pullback to the maximal twisted group $C^*$-algebra.

\end{proof}

In general, conjugacy classes only define linear functionals on twisted group algebras. However, for example in the product situation, it is easy to find conjugacy classes whose associated functional is a trace.

The third rather special case was discussed here since it may directly be combined with the results in \S \ref{app} to get applications.

One easily obtains more results and applications along these lines.

\section{Relation between twisted $\eta$-invariants and higher $\eta$-invariants}
\label{sec10}
If $c\in H^2(B\G,\Q)$ then, by the index theorem of Gromov \cite[Thm. 2.3.B]{Gr1}\cite[Thm. 3.6]{Ma1}, the twisted $L^2$-index of a Dirac operator is equal to its higher index associated to group cocycle $\mathfrak{e}(s):=e^{-2 \pi i s c}$.

In the following we establish an identity between the twisted $L^2$-$\eta$-invariants and the higher $\eta$-invariants defined by Lott for a manifold $M$ with positive scalar curvature. Let $f\colon M \to B\Gamma$ be a classifying map for the universal covering. We start with a group cocycle $\fc \in C^2_{al}(\Gamma)$. Let  $s\in [0,\ep_0]$ and  assume that the cyclic cocycle $\tau_\mathfrak{e(s)}$ associated to the group cocycle $\mathfrak{e}(s)$ is extendable with respect to a suitable subalgebra $\Ai \subset C_r^*\Gamma$. 

\medskip
For the definition of the higher $\eta$-invariant we start with an open cover $(U_i)_{i\in I}$ of $M$ of trivializing sets for the universal covering and a subordinate partition of unity $(\chi_i^2)_{i \in I}$. As in the previous section (but now without twist) this gives a projection $P \in \Cinf(M,M_{|I|}(\C(\Gamma))$, which is used to define the higher $\eta$-invariant $\eta(\Di_{\mathcal V})$ as in the previous section. Here we are interested in the numerical invariant $\tau_{\mathfrak{e}(s)}(\eta(\Di_{\mathcal V}))$.

\medskip
For the twisted $\eta$-invariant we need to specify the choice of $\omega$. 

By (\ref{lottform}) the previous data induce a closed form $\olott_{\fc} \in \Omega^2(M)$. We take $\omega=-2 \pi \olott_{\fc}$ for the construction of the twisted $L^2$-$\eta$-invariant. Thus we also get a group multiplier $\sigma^s$. By \S \ref{depchoic} the form $\omega$ induces a well-defined twisted $L^2$-$\eta$-invariant $\eta_{(2)}^{\sigma_s}(\Di_{\mathcal V^{\s^s}})$.

\medskip

Recall that there is a reduced crossed product $C([0,\ep_0])\rtimes_{\sigma^s} \Gamma$ coming with an evaluation map $\ev_s$ to $C^*_r(\Gamma,\sigma^s)$. This map induces an isomorphism in $K$-theory if $\Gamma$ satisfies the Baum--Connes conjecture with coefficients \cite[Cor. 1.11]{elpw}.

\begin{theorem} 
\label{twist-vs-high}
Under the following assumptions it holds that 
\begin{equation}
\label{tw-h}
\eta_{(2)}^{\sigma_s}(\Di_{\mathcal V^{\s^s}})=\tau_{\mathfrak{e}(s)}(\eta(\Di_{\mathcal V})) \ .
\end{equation}

\begin{enumerate}
\item
We assume that $\Gamma$ satisfies the Baum--Connes conjecture with coefficients;
\item
We assume that the following diagram commutes:
\begin{equation}
\label{diag}\xymatrix{ K_0(C_r^* (\Gamma,\sigma^s)) \ar[rdd]_{\tau_{(2)}}\ar[rr]^{\ev_0 \circ \ev_s^{-1}} && K_0(C_r^*\Gamma) \ar[ddl]^{\tau_{\mathfrak{e}(s)}} \\
\\
& \C  &}
\end{equation}
\end{enumerate}
\end{theorem}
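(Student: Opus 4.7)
The plan is to realize both sides of \eqref{tw-h} as evaluations of a single parametrized Atiyah--Patodi--Singer index class in $K_0(C([0,\ep_0])\rtimes_{\sigma^\bullet}\Gamma)$, and then to use the commutativity of \eqref{diag} to identify them.

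First, I would choose a compact even-dimensional spin manifold $W$ with $\partial W=M$ and a reference map $F\colon W\to B\Gamma$ extending $f$, endowed with a metric of product type near $\partial W$. If $[M,f]$ does not vanish in $\Omega_*^{spin}(B\Gamma)$, I would instead work with a suitable disjoint union of copies of $M$ (both sides of \eqref{tw-h} are additive under disjoint union), or compare two PSC metrics in the same bordism class, as is standard in the literature on $\rho$-invariants. Next I would extend $\eta_s$, $\omega_s$ and $\psi_\gamma^s$ over $W$, along with the open cover, the partition of unity and the projection $P\in\Cinf(W,M_m(\C\Gamma))$ built in \S\ref{high_twist_APS}. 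Since $\Di^M_{\mathcal V^{\sigma^s}}$ is invertible for $s\in[0,\ep_0]$ by Lemma~\ref{lem-invert}, both the twisted APS index $\ind(\Di^+_{W,\mathcal V^{\sigma^s}})\in K_0(C^*_r(\Gamma,\sigma^s))$ and the higher APS index $\ind(\Di^+_{W,\mathcal V})\in K_0(C^*_r\Gamma)$ are well defined.

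I would then assemble these two indices into a single class $\mathbf{Ind}\in K_0(C([0,\ep_0])\rtimes_{\sigma^\bullet}\Gamma)$ by twisting the Dirac operator on $W$ with a parametrized Mishchenko--Fomenko bundle of fibre $C([0,\ep_0])\rtimes_{\sigma^\bullet}\Gamma$ and imposing APS boundary conditions uniformly in $s$. By construction one has $\ev_s(\mathbf{Ind})=\ind(\Di^+_{W,\mathcal V^{\sigma^s}})$ and $\ev_0(\mathbf{Ind})=\ind(\Di^+_{W,\mathcal V})$; hypothesis~(1) implies that $\ev_s$ is a $K$-theory isomorphism by \cite[Cor.~1.11]{elpw}, so that $\ev_0\circ\ev_s^{-1}$ sends the twisted APS index to the untwisted one.

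Next I would apply two Atiyah--Patodi--Singer theorems to $W$: the twisted $C^*$-algebraic APS of \S\ref{neumann_Cstar} paired with $\tr^{\sigma^s}_{(2)}$, and the higher APS of \cite{waAPS} paired with the cyclic cocycle $\tau_{\mathfrak e(s)}$ (extendable by assumption). Both have the form ``index $=$ topological integral $-$ boundary $\eta$-invariant''. With the choice $\omega=-2\pi\olott_\fc$, a degree-by-degree application of Proposition~\ref{chloc} to the expansion $\mathfrak e(s)=\sum_k\tfrac{(-2\pi is)^k}{k!}\fc^{\cup k}$ identifies $\tau_{\mathfrak e(s)}(\ch(\nabla^P+P\di P))$ with $e^{-i\omega_s}$ in $H^*(W,\C)$, so the two topological integrals coincide. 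Combined with the commutativity of \eqref{diag} and the previous paragraph, this gives $\tr^{\sigma^s}_{(2)}(\ind(\Di^+_{W,\mathcal V^{\sigma^s}}))=\tau_{\mathfrak e(s)}(\ind(\Di^+_{W,\mathcal V}))$, and subtracting the two APS identities cancels the integrals to yield \eqref{tw-h}.

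The main technical obstacle will be the construction of $\mathbf{Ind}$ and the verification that its evaluations produce the expected APS indices on the nose: this requires APS theory for the Dirac operator twisted by a bundle of $C^*$-algebras and uniform control in $s$ of the functional calculus of the boundary family. A secondary issue is the choice of the bordism $W$; if $(M,f)$ has a nontrivial class in $\Omega_*^{spin}(B\Gamma)$, one reduces to a manifold that bounds by passing to a multiple or to a relative configuration, following the standard bordism manoeuvres used in \cite{ps1}.
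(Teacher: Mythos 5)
Your overall skeleton — reduce both $\eta$-invariants to index contributions via APS on a bounding manifold, use the parametrized bundle over $C([0,\ep_0])\rtimes_{\sigma^\bullet}\Gamma$ so that the two indices are related by $\ev_0\circ\ev_s^{-1}$, and then invoke the commutativity of \eqref{diag} — is the same one the paper uses, and the match-up of the topological integrals via the choice $\omega=-2\pi\olott_\fc$ is also correct. But the proposal has a genuine gap in the production of the bordism $W$, which you acknowledge only as a "secondary issue" and do not resolve.

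The difficulty is not merely that $(M,f)$ may fail to bound in $\Omega^{spin}_*(B\Gamma)$; even if a multiple did bound, you would still have no control over the other boundary components, nor a way to make $f_W$ classify the universal covering and guarantee that the boundary operator stays invertible. The paper handles this by using assumption~(1) a \emph{second} time, in a place you do not: since $\Gamma$ satisfies Baum--Connes (hence the real Baum--Connes conjecture, by \cite{BK}), the rational assembly map $KO_*(B\Gamma)\ten\Q\to KO_*(C^*_r(\Gamma,\R))\ten\Q$ is injective, and because $M$ carries a PSC metric its class dies in $KO_*(C^*_r(\Gamma,\R))$, hence some multiple $\sqcup^l(M,f)$ dies in $KO_*(B\Gamma)$. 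Then \cite[Lemma~4.2]{pstorsion} produces, after multiplying by Bott manifolds $B^n$ and performing spin surgeries, a bordism $W$ whose other boundary is a union $\sqcup_i(A_i\times C_i)$ with $C_i$ simply connected, of positive scalar curvature, and with $\int_{C_i}\hat A(C_i)=0$. This last point is exactly what makes the boundary $\eta$-terms from the other end vanish (via the product formula for both the higher and the twisted $\eta$-invariants), so that the APS identities on $W$ reduce to ``$\eta(\partial)=\text{index}+\text{integral}$'' with no extraneous boundary contributions. Your ``standard bordism manoeuvres'' do not produce such a $W$: the $C_i\times A_i$ structure, the PSC metric and the vanishing $\hat A$-genus on the $C_i$ are all load-bearing, and obtaining them is precisely where the real Baum--Connes injectivity enters. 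Without this step the argument does not close, because nothing forces the $\eta$-contributions from the extra boundary pieces to agree (or to vanish) on the two sides of the equation.
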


We conjecture that the equality \eqref{tw-h} holds in general.

\begin{proof}
We use methods of \cite[\S 4]{pstorsion}. We denote by $B$ a Bott manifold, \emph{i.e} an $8$-dimensional simply connected manifold such that $\int_B \hat A(B)=1$.

Since $\Gamma$ fulfills the Baum--Connes conjecture, it also fulfills the real Baum--Connes conjecture \cite{BK}. Therefore the rational Baum--Connes map $KO_*(B\Gamma)\ten \Q \to KO_*(C_r^*(\Gamma,\R))\ten \Q$ is injective. Here $C^*_r(\G,\R)$ is the real reduced group $C^*$-algebra. 

Since $M$ has positive scalar curvature, the image of $(M,f)$ vanishes in $KO_*(C_r^*(\Gamma,\R))$. Therefore there is $l$ such that $\sqcup_{j=1}^l (M,f)$ vanishes in $KO_*(B\Gamma)$.

This implies, by \cite[Lemma 4.2]{pstorsion}, that there exist spin manifolds $A_i$, $C_i$, $i=1,\dots, n$, with $\int_{C_i}\hat A (C_i)=0$ and maps $f_i \colon A_i \to B\Gamma$ such that there is a spin bordism $(W,f_W)$ between $\sqcup_{j=1}^l(M \times B^n, f \circ \pi_1)$ and $\sqcup_i (A_i \times C_i, \sqcup_i f_i \circ \pi_1)$ for $n$ high enough. Here $\pi_1$ denotes the projection onto the first factor. By taking the Cartesian product with $B$ if necessary and using spin surgery on $A_i$, $C_i$, $W$ we may assume that $C_i$ is simply connected and that $f_i$, $f_W$ induce universal coverings. Note that $C_i$ can be endowed by a metric of positive scalar curvature \cite{St1}, which induces a metric of positive scalar curvature on $A_i \times C_i$. 

We can define a cover of $W$ and a subordinate partition of unity which restricts to the given one on $M$ and is of product type on $A_i \times C_i$. This yields an extension of the projection $P$ to $W$ and a 2-form $\olott_{\fc}^W$ on $W$. We write $\Di_{\mathcal V}^W$ for the spin Dirac operator on $W$ twisted by this extended projection and apply the higher Atiyah--Patodi--Singer theorem to it \cite{lphighAPS}. By the product formula \cite{lpetapos} the higher $\eta$-invariants of $A_i \times C_i$ vanish. In the same way we apply the product formula to $M \times B^n$ and get
$$\tau_{\mathfrak{e}}(s)(\eta(\Di_{\mathcal V}))=\tau_{\mathfrak{e}(s)}(\ind (\Di_{\mathcal V}^W)) + C\int_W \hat A(W)e^{2\pi i \olott^W_\fc} \ .$$

Now we consider the twisted situation. Using the product formula and the twisted Atiyah--Patodi--Singer index theorem we get 
$$\eta_{(2)}^{\sigma_s}(\Di_{\mathcal V^{\s^s}})=\ind_{(2)}(\Di_{\mathcal V^{\sigma^s}}^W) + C\int_W \hat A(W) e^{2\pi i\olott^W_{\fc}} \ .$$

It remains to show that the contributions from the indices agree.

For this we note that the bundles $\mathcal V^{\sigma^s}$ assemble to a bundle with connection $\mathcal V^{\sigma}$ over $C([0,\ep_0])\rtimes_{\sigma^s} \Gamma$ and that the Dirac operator twisted by this bundle $\Di^W_{\mathcal V^{\sigma}}$ is a Fredholm operator. It holds that $$\ev_{s}(\ind \Di^W_{\mathcal V^{\sigma}})=\ind(\Di_{\mathcal V^{\sigma^s}}^W) \in K_0(C^*(\Gamma,\sigma^s)) \ .$$
This, in connection with the second assumption, implies the assertion.
\end{proof}

We finally give some examples in which the assumptions of Theorem \ref{twist-vs-high} are satisfied:

\begin{enumerate}
\item Let $\Gamma$ be a torsion free group that satisfies the Baum--Connes conjecture. Then any element of $K_*(C_r^*\Gamma)$ can be represented as the index class of a Dirac operator $D_{\mathcal V}$ over a closed manifold $M$ with $\pi_1(M)=\Gamma$, and where $\mathcal V$ is the Mishchenko bundle. 
 By the higher index theorem and Mathai's twisted index theorem, $ \tau_{\mathfrak{e}(s)} (\ind D_\mathcal V)=\tau_{(2)}(\ind D_{{\mathcal V}^\sigma})$ so that the diagram \eqref{diag} commutes.
\item Let $\Gamma$ be a cocompact Fuchsian group, \emph{i.e.} a 
 discrete, cocompact subgroup of ${\rm PSL}(2, \R)$. By \cite[Prop. 2.1]{mm1}, every class in $K_*(C^*_r\Gamma)$ is the index class of a Dirac operator on a good orbifold. The higher index formula of Mathai--Marcolli \cite[Thm 2.2]{mm} shows as above that the diagram \eqref{diag} commutes.
\end{enumerate}

\appendix
\section{Rapid decay, polynomial growth and smooth subalgebras}
\label{Appendix}

In the following we adapt results from \cite[p. 383ff.]{cm} to the present situation.

Let $\G$ be a finitely generated group and $l$ be a word length function on $\Gamma$. For $s \ge 0$ the Hilbert space $H^s_l(\Gamma)$ is defined as the completion of $\C(\Gamma)$ with respect to the norm induced by the scalar product
$$\langle\sum_{\g \in \Gamma} a_\g \delta_\g,\sum_{\g \in \Gamma} b_\g \delta_\g\rangle_s:=\sum_{\g\in \Gamma} a_\g^*b_\g(1+l(\g))^{2s} \ .$$
The projective limit of these spaces is denoted by $H^{\infty}_l(\Gamma)$. 

By definition, the group $\Gamma$ has rapid decay if $H^{\infty}_l(\Gamma) \subset C^*_r(\Gamma)$. For the properties of these groups and spaces see \cite{jo1,jo}. Note that in the Appendix by Chatterji to \cite{marange} the notion of twisted rapid decay has been defined and studied. However, we will not need this notion in the following. 

\medskip
Let $\sigma$ be a multiplier on $\G$.
In the following the norm on $C^*_r(\Gamma,\sigma)$ is denoted by $\| \cdot \|$ and the product, as usual, by $*$. 

We define an unbounded operator $D_l$ on $l^2(\Gamma)$ with domain $\C\Gamma$ by $D_l (a_\g \delta_\g)=l(\g)a_\g \delta_\g$. Note that $D_l$ is closable. It holds that 
$$D_l(\delta_\g *\delta_{\g'})-\delta_\g *D_l \delta_{\g'}=(l(\g\g')-l(\g')) \delta_{\g}*\delta_{\g'}\ .$$
Since $l(\g \g') \le l(\g)+l(\g')$, it follows that $l(\g \g')-l(\g') \le l(\g)$. Analogously, $l(\g')-l(\g\g
')\le l(\g^{-1})=l(\g)$. Hence $|l(\g\g')-l(\g')|\le l(\g)$ and therefore $(\g'\mapsto l(\g\g')-l(\g')) \in l^{\infty}(\Gamma)$.

\medskip

We denote by $B$ the smallest subalgebra of $\cB(l^2(\Gamma))$ containing $\C(\Gamma,\sigma)$ and multiplication by elements of $l^{\infty}(\Gamma)$. By the previous calculation $[\delta_g,D_l] \in B$. 

Let $\cB$ be the $C^*$-algebraic completion of $B$. Then 
$$\der\colon B\to \cB,~\der(T)=[D_l,T]$$
is a densely defined closable derivation whose image is in $B$. We set $\cB_0=\cB$ and define inductively for $j \in \bn$ the Banach algebra $\cB_j$ as the domain of the closure $\der_j$ of $\der\colon B\to \cB_{j-1}$ and endow it with the norm $\|b\|_j:=\|b\|_{j-1} + \|\der_jb\|_{j-1}$. Furthermore we set $C_j(\G,\sigma):=\cB_j\cap C^*_r(\Gamma,\sigma)$. Each $C_j(\G,\sigma)$ is closed under holomorphic functional calculus in $C^*_r(\Gamma,\sigma)$. We write $C_{\infty}(\G,\s)$ for the projective limit.

\begin{prop}
\label{coninj}
For $j\in \bn$ the identification of vector spaces $\C(\Gamma,\sigma) \to \C\G$ extends to a continuous Banach space homomorphism $C_j(\G,\s) \to H^j_l(\G)$. 
\end{prop}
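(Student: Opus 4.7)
The plan is to identify each $f \in C_j(\G,\s) \subset C^*_r(\G,\s)$ with a function on $\G$ via the vector $f\delta_e = \sum_\g a_\g\,\delta_\g \in l^2(\G)$; this extends the natural map on $\C(\G,\s)$ because right multiplication by $\delta_\g$ commutes with left multiplication by $C^*_r(\G,\s)$, so an element $f \in C^*_r(\G,\s)$ with $f\delta_e = 0$ satisfies $f\delta_\g = 0$ for all $\g$ and hence vanishes. The estimate will follow from the key identity
\[
\der^j(f)\,\delta_e = \sum_{\g \in \G} l(\g)^j\, a_\g\, \delta_\g \quad \text{in } l^2(\G),
\]
combined with the elementary comparison $(1+l(\g))^{2j} \le 2^{2j}(1 + l(\g)^{2j})$, which yields
\[
\|f\|_{H^j_l}^2 \le 2^{2j}\bigl(\|f\delta_e\|_{l^2}^2 + \|\der^j(f)\delta_e\|_{l^2}^2\bigr) \le 2^{2j}\bigl(\|f\|_0^2 + \|\der^j f\|_0^2\bigr) \le C\|f\|_j^2.
\]

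To verify the identity I first compute on generators. Let $M_h$ denote multiplication by $h \in l^\infty(\G)$ on $l^2(\G)$. Since $D_l$ is diagonal, $\der(M_h)=0$, while the formula recorded above reads $\der(\delta_\g) = \delta_\g M_{\phi_\g}$ with $\phi_\g(x):=l(\g x)-l(x) \in l^\infty(\G)$. Iterating the Leibniz rule yields $\der^k(\delta_\g) = \delta_\g M_{\phi_\g^k}$, and since $\phi_\g(e)=l(\g)$ we obtain $\der^k(\delta_\g)\delta_e = l(\g)^k\delta_\g$. The commutation rule $\delta_\g M_h = M_{L_\g h}\delta_\g$ with $(L_\g h)(x):=h(\g^{-1}x)$ shows that every $T \in B$ is a finite sum $T = \sum_\g M_{h_\g}\delta_\g$, so $T\delta_e = \sum_\g h_\g(\g)\delta_\g$ and $\der^k(T)\delta_e = \sum_\g l(\g)^k h_\g(\g)\delta_\g$ for each $k$, which is the identity on $B$.

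To pass from $B$ to $C_j(\G,\s)$ I use that $\der$ sends $B$ into itself. Unwinding the inductive definition of $\cB_j$ as the domain of the closure of $\der\colon B \to \cB_{j-1}$ shows by induction on $j$ that for every $f \in \cB_j$ there is a \emph{single} sequence $f_n \in B$ with $\der^k f_n \to \der^k f$ in $\cB$-norm for all $k=0,\dots,j$ simultaneously. Writing $f_n\delta_e = \sum_\g a_\g^n\delta_\g$, convergence in $\cB$ implies convergence in $l^2(\G)$ after applying $\delta_e$, so the coefficients $a_\g^n$ converge pointwise to the coefficients $a_\g$ of $f\delta_e$; the $B$-formula gives $\der^j(f_n)\delta_e = \sum_\g l(\g)^j a_\g^n\delta_\g$, and taking the $l^2$-limit identifies $\der^j(f)\delta_e$ with $\sum_\g l(\g)^j a_\g\delta_\g$, completing the proof.

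The hardest part will be this last approximation step: one must carefully unwind the nested closures of the $\cB_j$'s to produce a single sequence in $B$ whose first $j$ derivatives converge to $\der^0 f,\dots,\der^j f$ in $\cB$-norm at once. This relies crucially on the stability $\der(B) \subset B$, which guarantees that the derivatives of the approximating sequence stay inside the domain of $\der$ at every stage of the induction. Once this is set up, the algebraic computation of $\der^k(\delta_\g)\delta_e$ and the final norm comparison are entirely routine.
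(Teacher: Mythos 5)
Your proposal is correct and hinges on the same key identity as the paper's proof, namely that $\der^j(\cdot)\delta_e$ multiplies the $\g$-coefficient by $l(\g)^j$, combined with the chain of norm inequalities $\|T\delta_e\|_{l^2}\le\|T\|_{\cB}$ and $\|\der^i f\|_\cB\le\|f\|_j$. The paper, however, simply verifies the identity for $x\in\C(\G,\s)$ via $\der(x)\delta_e=D_l(x)$ and then invokes continuity to extend from $\C(\G,\s)$ to $C_j(\G,\s)$; this tacitly requires $\C(\G,\s)$ to be $\|\cdot\|_j$-dense in $C_j(\G,\s)=\cB_j\cap C^*_r(\G,\s)$, which is not stated. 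You instead verify the identity on the whole of $B$ by writing every $T\in B$ as a finite sum $\sum_\g M_{h_\g}\delta_\g$ and computing $\der^k(\delta_\g)=\delta_\g M_{\phi_\g^k}$, and then pass to $\cB_j$ by an explicit approximating sequence in $B$. Since $B$ is dense in $\cB_j$ by the very definition of the closure $\der_j$, this sidesteps the density question entirely and yields the formula $\der^j(f)\delta_e=\sum_\g l(\g)^j a_\g\delta_\g$ directly for every $f\in C_j(\G,\s)$, not merely on a dense subspace. Your version is thus a more careful implementation of the same strategy; what it buys is an honest treatment of the approximation step that the paper leaves implicit. One small remark: the statement "there is a single sequence $f_n\in B$ with $\der^k f_n\to\der^k f$ for all $k\le j$" already follows with no induction from the definition of $\|\cdot\|_{j-1}$ as the graph norm (a single sequence with $f_n\to f$ and $\der f_n\to\der_j f$ in $\|\cdot\|_{j-1}$ automatically has $\der^k f_n$ converging in $\|\cdot\|_\cB$ for $k\le j$, since $\|\cdot\|_{j-1}$ dominates $\|\der^m(\cdot)\|_\cB$ for $m\le j-1$), so this part is less delicate than you suggest.
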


\begin{proof}
For $x \in \C(\Gamma,\sigma)$ it holds that $\der(x)\delta_e=D_l(x)$. Thus, by induction, for any $j \in \bn$, 
$$\der^j(x)\delta_e=D_l(\der^{j-1}(x)\delta_e)=D_l^j(x) \in l^2(\Gamma) \ .$$ 
If $x =\sum_{\g \in \Gamma} x_\g \delta_{\g}$, then $D_l^j(x)=\sum_{\g \in \Gamma} l(\g)^jx_\g \delta_\g$. Taking the norm we get that $\|\der^j(x)\delta_e\|_{l^2}^2=\sum_{\g \in \Gamma} l(\g)^{2j}|x_\g|^2$. It follows that there is $C>0$ such that for all $x \in \C(\G,\sigma)$  
$$\|x\|_{H^j} \le C\sum_{i=0}^j \|\der^i(x)\delta_e\|_{l^2} \ .$$ 
Since $\|\der^i(x)\delta_e\| \le \|\der^i(x)\|$ and $\sum_{i=0}^j \|\der^i(x)\| \le \|x\|_j$, this implies the assertion.
\end{proof}

In the following we study the relation between group cohomology and reduced cyclic cohomology of the twisted group algebra by elaborating on arguments from \cite[\S 2.3]{mm}.

The definition of group cohomology we are dealing with is the following:

Let $C^n(\Gamma)$ be the vector space of $\G$-invariant maps from $\Gamma^{n+1}$ to $\C$. There is a differential 
$$d_{\Gamma}\colon C^n(\Gamma)\to C^{n+1}(\Gamma) \ ,$$
$$(d_{\Gamma}\tau)(\g_0,\g_1, \dots,  \g_{n+1})=\sum_{j=0}^{n+1} (-1)^j\tau(\g_0,\g_1, \dots  , \widehat\g_j, \dots, \g_{n+1})\ .$$ 
Denote by $C_{al}^n(\Gamma)\subset C^n(\Gamma)$ the subspace of alternating elements. The homology of $(C_{al}^*(\Gamma),d_{\Gamma})$ is the group cohomology $H^*(\Gamma)$.

Reduced cyclic cohomology of our twisted algebra can be defined as follows:

Let $\ov{C}_n^{\lambda}(\C(\Gamma,\sigma))$ be the quotient of $(\C(\Gamma,\sigma)/\C)^{\ten n+1}$ by the action of $\Z/(n+1)\Z$ given by permutation.
Here the boundary map is given by $$b\colon \ov{C}_n^{\lambda}(\C(\Gamma,\sigma)) \to \ov{C}_{n-1}^{\lambda}(\C(\Gamma,\sigma)) \ ,$$
\begin{eqnarray*}
b(a_0 \ten a_1 \ten \dots \ten a_n)&=&(-1)^na_n*a_0 \ten a_1 \ten \dots \ten a_{n-1}\\
&& + \sum_{i=0}^{n-1}(-1)^i
a_0 \ten \dots  \ten a_i*a_{i+1}\ten \dots \ten a_n \ .
\end{eqnarray*} 
The homology of the complex $(\ov{C}_*^{\lambda}(\C(\Gamma,\sigma)),b)$ is by definition the \emph{reduced cyclic homology} $\ov{HC}_*(\C(\Gamma,\sigma))$.

\medskip
The dual complex $(\ov{C}^*_{\lambda}(\C(\Gamma,\sigma)),b^t)$ yields the \emph{reduced cyclic cohomology} of $\C(\Gamma,\sigma)$.

\medskip

Set $$\ov{C}_{\lambda}^n(\C(\Gamma,\sigma))_{\langle e\rangle}=\{\tau \in \ov{C}_{\lambda}^n(\C(\Gamma,\sigma))~|~ \tau(\delta_{g_0},\delta_{\g_1}, \dots, \delta_{\g_n})=0 \mbox{ for } \g_0\g_1 \dots \g_n \neq e \} \ .$$

We call elements of $\ov{C}_{\lambda}^n(\C(\Gamma,\sigma))_{\langle e\rangle}$ \emph{localized at the identity}. In contrast, we call $\tau \in \ov{C}_{\lambda}^n(\C(\Gamma,\sigma))$ \emph{delocalized} if $\tau(\delta_{\g_0},\delta_{\g_1}, \dots, \delta_{\g_n})=0$ for $\g_0\g_1 \dots \g_n = e$. Note that reduced cyclic cocycles which are localized at the identity are invariant under the action of projective automorphisms on $\C(\Gamma,\sigma)$.

The homology of the complex $(\ov{C}_{\lambda}^n(\C(\Gamma,\sigma))_{\langle e\rangle},b^t)$ will be denoted by $\ov{HC}^*(\C(\Gamma,\sigma))_{\langle e\rangle}$.

\begin{prop}
There is an isomorphism of complexes 
$$(C_{al}^*(\Gamma),d_{\Gamma}) \to (\ov{C}^*_{\lambda}(\C(\Gamma,\sigma))_{\langle e\rangle},b^t)\;\;\;\; c \mapsto \tau_{c} \ ,$$ where $$\tau_c(\delta_{\g_0},\delta_{\g_1}, \dots , \delta_{\g_n})=\tr_{(2)}^\s(\delta_{\g_0}*\delta_{\g_1}* \dots *\delta_{\g_n})c(e,\g_1,\g_1\g_2, \dots, \g_1\g_2\dots \g_n) \ .$$
\end{prop}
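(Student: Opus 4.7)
The plan is to verify in turn that $c\mapsto \tau_c$ is well-defined (lands in the reduced cyclic localized subcomplex), intertwines the two differentials, and is bijective.

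For well-definedness I will first observe that $\tr_{(2)}^{\s}(\delta_{g_0}*\cdots*\delta_{g_n})$ equals a $U(1)$-valued product of $\s$-factors if $g_0 g_1\cdots g_n=e$ and vanishes otherwise; in particular $\tau_c$ is supported on tuples summing to $e$, which is the localization at the identity. For the reducedness, if $g_i=e$ for some $1\le i\le n$ then the $(i-1)$-th and $i$-th entries of the tuple $(e,g_1,g_1g_2,\ldots,g_1\cdots g_n)$ coincide, while if $g_0=e$ the localization constraint forces $g_1\cdots g_n=e$, making the first and last entries coincide; in either case the alternating property of $c$ annihilates the expression. For cyclicity I will apply the trace property of $\tr_{(2)}^{\s}$ to move $\delta_{g_n}$ to the end and then use $\G$-invariance of $c$ (translating by $g_0^{-1}=g_1\cdots g_n$) to rewrite $c(e,g_0,g_0g_1,\ldots,g_0g_1\cdots g_{n-1})$ as a single cyclic permutation of $c(e,g_1,g_1 g_2,\ldots,g_1\cdots g_n)$, which supplies the required sign $(-1)^n$ because $c$ is alternating.

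For the chain-map property I will compute both $b^t\tau_c$ and $\tau_{d_{\G}c}$ pointwise on an $(n+2)$-tuple $(\delta_{g_0},\ldots,\delta_{g_{n+1}})$ and match them term by term. The $i$-th summand of $b^t\tau_c$ involves the twisted convolution $\delta_{g_i}*\delta_{g_{i+1}}=\s(g_i,g_{i+1})\delta_{g_ig_{i+1}}$, so after substitution the $c$-argument becomes the $(n+1)$-tuple obtained from $(e,g_1,g_1g_2,\ldots,g_1\cdots g_{n+1})$ by omitting the $(i+1)$-th entry; this is exactly the corresponding summand in the expansion of $\tau_{d_{\G}c}$. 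The wrap-around term from $b$ (with the factor $\delta_{g_{n+1}}*\delta_{g_0}$) is matched with the $j=0$ omission in $d_{\G}c$ after translating the base point via $\G$-invariance. The calculation reduces to the identity that the $\s$-factors arising from twisted convolution in $b$ agree, via the $2$-cocycle identity for $\s$, with those implicit in $\tr_{(2)}^{\s}(\delta_{g_0}*\cdots*\delta_{g_{n+1}})$ once one entry is fused; this bookkeeping is essentially the same one used for the cyclicity check.

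For bijectivity I will exhibit an inverse. Given $\tau$ reduced cyclic localized at $e$, set $g_i=\g_{i-1}^{-1}\g_i$ for $1\le i\le n$ (with $\g_0=e$) and $g_0=\g_n^{-1}$, so that $g_0g_1\cdots g_n=e$; define $(\phi\tau)(e,\g_1,\ldots,\g_n)=S(g_0,\ldots,g_n)^{-1}\tau(\delta_{g_0},\ldots,\delta_{g_n})$, where $S\in U(1)$ is the nonvanishing $\s$-factor from the trace, and extend by $\G$-invariance. Cyclic invariance of $\tau$ together with the $\G$-translation identity used above shows that $\phi\tau$ is invariant under the cyclic group $\Z/(n+1)\Z$ up to sign $(-1)^n$; passing to the alternating average over $S_{n+1}$ yields an element of $C^n_{al}(\G)$, and this averaging leaves the associated cocycle unchanged because the formula for $\tau_c$ depends only on cyclic information in the arguments of $c$. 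Injectivity is immediate from the nonvanishing of $S$ on the support of $\tau_c$, which recovers $c$ from $\tau_c$ when $c$ is already alternating. The hardest step will be the $\s$-bookkeeping in the chain-map calculation, where the cocycle identity for $\s$ must be invoked repeatedly to reconcile the $\s$-factors appearing in $b$, in the trace, and in the reindexing coming from $\G$-invariance of $c$.
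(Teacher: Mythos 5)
Your proposal takes a genuinely different route from the paper. The paper does not re-verify anything from scratch on $\C(\Gamma,\sigma)$: it treats the $\sigma=1$ case as known and then defines a \emph{transfer map}
$$T \colon \ov{C}^n_{\lambda}(\C\Gamma)_{\langle e\rangle} \longrightarrow \ov{C}^n_{\lambda}(\C(\Gamma,\sigma))_{\langle e\rangle}\, , \qquad (T\tau)(\delta_{g_0},\dots,\delta_{g_n}) = \tr^{\s}_{(2)}(\delta_{g_0}*_{\s}\cdots*_{\s}\delta_{g_n})\,\tau(\delta_{g_0},\dots,\delta_{g_n})\, ,$$
which is automatically invertible because on the localization support $g_0\cdots g_n = e$ the $U(1)$-valued $\sigma$-factor is nonzero. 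The entire content of the paper's proof is then the single check that $T$ commutes with $b^t$, which amounts to one application of the cocycle identity for $\sigma$ per summand of $b^t$ (e.g.\ $\sigma(\g_{n+1},\g_0)\tr^\s_{(2)}(\delta_{\g_{n+1}\g_0}*\cdots*\delta_{\g_n}) = \tr^\s_{(2)}(\delta_{\g_0}*\cdots*\delta_{\g_{n+1}})$). This is substantially more economical than the direct verification you propose. Your well-definedness and chain-map steps, if carried through, would involve essentially the same $\sigma$-bookkeeping rearranged, and look plausible.

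The genuine gap is in your surjectivity argument. The functional $\phi\tau$ you build from a cyclic cochain $\tau$ is, as you note yourself, only invariant under $\Z/(n+1)\Z$ with sign, not under all of $S_{n+1}$, so it need not be alternating. You then propose to replace $\phi\tau$ by its alternating average and assert this leaves $\tau_c$ unchanged ``because the formula for $\tau_c$ depends only on cyclic information.'' This does not hold: although $\tau_c$ only reads $c$ at tuples $(e,\g_1,\dots,\g_n)$, alternating averaging modifies the value of $c$ at exactly such a tuple by mixing in the values at its $S_{n+1}$-permutations, which after $\G$-translation are again tuples of this form. So $\tau_{\mathrm{Alt}(\phi\tau)}$ will generically differ from $\tau_{\phi\tau}=\tau$, and the constructed ``inverse'' is not an inverse. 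Concretely, take $\sigma=1$, $\G=\Z/3\Z$, $n=2$: the only support triples with $g_0g_1g_2=e$ and $g_i\neq e$ are $(x,x,x)$ and $(x^2,x^2,x^2)$, each fixed by the cyclic action, so $\ov{C}^2_{\lambda}(\C\G)_{\langle e\rangle}$ is $2$-dimensional; but $C^2_{al}(\Z/3\Z)$ is $1$-dimensional (one $S_3$-orbit of a triple of distinct elements), and the image of $c\mapsto\tau_c$ satisfies the extra relation $\tau_c(\delta_{x^2},\delta_{x^2},\delta_{x^2})=-\tau_c(\delta_{x},\delta_{x},\delta_{x})$. So the map $c\mapsto\tau_c$ is an injective chain map but not onto the full localized cyclic complex, and your attempt at constructing an inverse cannot be repaired in the form you give. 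If you want to go the from-scratch route, the honest statement to prove is that $c\mapsto\tau_c$ is an injective chain map (this is all the paper uses downstream), and then separately that $T$ gives an isomorphism onto the twisted complex; any claim of surjectivity onto the full localized reduced cyclic complex needs either a proof of the $\sigma=1$ case or a clarification of the target.
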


\begin{proof}
The result is well-known for $\sigma=1$. Since we deal with the algebras $\C(\G,\s)$ and $\C\G$ in the following, we write $*_{\s}$ for the product on $\C(\G,\s)$.

We only need to prove that the map
$$T \colon\ov{C}^n_{\lambda}(\C\Gamma)_{\langle e\rangle } \to \ov{C}^n_{\lambda}(\C(\Gamma,\sigma))_{\langle e\rangle},~\tau \mapsto \tau^{\sigma}$$
with 
$$\tau^{\sigma}(\delta_{\g_0},\delta_{\g_1}, \dots, \delta_{\g_n}):=\tr_{(2)}^{\s}(\delta_{\g_0}*_{\s}\delta_{\g_1}*_{\s} \dots *_{\s}\delta_{\g_n})\tau(\delta_{\g_0},\delta_{\g_1}, \dots,\delta_{\g_n})$$
defines an isomorphism of complexes. Since for $\g_0\g_1 \dots \g_n = e$ we have that $\tr_{(2)}^\s(\delta_{\g_0}*_{\s}\delta_{\g_1}*_{\s} \dots *_{\s}\delta_{\g_n})\neq 0$, it is clear that $T$ has an inverse. It remains to show that $T$ is compatible with the differential.

It holds that 
\begin{align*}
(b^t\tau^{\sigma})(\delta_{\g_0},\delta_{\g_1}, \dots, \delta_{\g_{n+1}})&=(-1)^n \tau^{\sigma}(\delta_{\g_{n+1}}*_{\s}\delta_{\g_0},\delta_{\g_1}, \dots, \delta_{\g_n})\\
&\quad + \sum_{i=0}^{n-1}(-1)^i\tau^{\sigma}(\delta_{\g_0},\delta_{\g_1}, \dots, \delta_{\g_i}*_{\s}\delta_{\g_{i+1}}, \dots, \delta_{\g_n})\\
&=(-1)^n \sigma(\g_{n+1},\g_0)\tau^{\sigma}(\delta_{\g_{n+1}\g_0},\delta_{\g_1}, \dots, \delta_{\g_n})\\
&\quad + \sum_{i=0}^{n-1}(-1)^i\sigma(\g_i,\g_{i+1}) \tau^{\sigma}(\delta_{\g_0},\delta_{\g_1}, \dots, \delta_{\g_i\g_{i+1}}, \dots, \delta_{\g_n}) \ .
\end{align*}
Now one uses that, for example, 
\begin{align*}
\lefteqn{\sigma(\g_{n+1},\g_0)\tau^{\sigma}(\delta_{\g_{n+1}\g_0},\delta_{\g_1}, \dots, \delta_{\g_n})}\\
&=\sigma(\g_{n+1},\g_0)\tr^\s_{(2)}(\delta_{\g_{n+1}\g_0}*_{\s}\delta_{\g_1}*_{\s} \dots *_{\s}\delta_{\g_n})\tau(\delta_{\g_{n+1}\g_0},\delta_{\g_1}, \dots, \delta_{\g_n})\\
&=\tr_{(2)}^\s(\delta_{\g_0}*_{\s}\delta_{\g_1} \dots \delta_{\g_n}*_{\s}\delta_{\g_{n+1}})\tau(\delta_{\g_{n+1}\g_0},\delta_{\g_1}, \dots, \delta_{\g_n}) \ .
 \end{align*}
One concludes that $b^t\tau^{\sigma}=Tb^t \tau$.
\end{proof}

\begin{prop}
\label{rd_ext}
Assume that $\Gamma$ has rapid decay. Let $c \in C_{al}^n(\Gamma)$ be a group cocycle of polynomial growth. Then the associated reduced cyclic cocycle $\tau_c$ on $\C(\G,\s)$ extends to a continuous cyclic cocycle on $C_{\infty}(\G,\s)$.
\end{prop}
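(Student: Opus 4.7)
The plan is to reduce the continuity of $\tau_c$ on $C_\infty(\G,\s)$ to a standard Connes--Moscovici--type estimate on Sobolev norms, using the inclusion $C_j(\G,\s) \hookrightarrow H^j_l(\G)$ of Proposition \ref{coninj} to translate back from the Sobolev side to the smooth subalgebra side. First I would unfold the definition: for $a_i=\sum_{\g} a_i(\g)\delta_\g \in \C(\G,\s)$, one checks that
\[
\tau_c(a_0,\dots,a_n)=\sum_{\g_0\g_1\cdots\g_n=e} a_0(\g_0)\cdots a_n(\g_n)\,\Sigma(\g_1,\dots,\g_n)\,c(e,\g_1,\g_1\g_2,\dots,\g_1\cdots\g_n),
\]
where $\Sigma$ is a product of values of $\sigma$, so $|\Sigma|=1$. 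Thus the multiplier $\s$ disappears from all absolute value estimates, and the twisted case is no harder than the untwisted one.

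Next I would use the polynomial growth hypothesis on $c$: there exist constants $C,k$ with $|c(e,\mu_1,\dots,\mu_n)|\le C\prod_{i=1}^n(1+l(\mu_i))^k$. Combined with the subadditivity $l(\g_1\cdots\g_i)\le l(\g_1)+\cdots+l(\g_i)$ this yields a bound of the form $|c(e,\g_1,\g_1\g_2,\dots)|\le C'\prod_{i=1}^n(1+l(\g_i))^{nk}$. On the constraint set $\g_0\g_1\cdots\g_n=e$ we also have $l(\g_0)\le l(\g_1)+\cdots+l(\g_n)$, so up to changing the constant we may also include the factor $(1+l(\g_0))^{nk}$. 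Thus
\[
|\tau_c(a_0,\dots,a_n)|\le C''\sum_{\g_0,\dots,\g_n}|a_0(\g_0)|\cdots|a_n(\g_n)|\prod_{i=0}^n(1+l(\g_i))^{nk}.
\]

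Now I invoke rapid decay of $\Gamma$: there exist $s\ge 0$ and $C_{\rd}$ such that the convolution inequality $\|f\|_{C^*_r\G}\le C_{\rd}\|f\|_{H^s_l(\G)}$ holds. A standard iteration (writing the sum above as $\|b_0 * b_1*\cdots *b_n\|_{\ell^1}$ with $b_i(\g):=|a_i(\g)|(1+l(\g))^{nk}$, then dominating by operator norms on $\ell^2(\G)$) gives
\[
|\tau_c(a_0,\dots,a_n)|\le C'''\,\|a_0\|_{H^{s+nk}_l}\cdots\|a_n\|_{H^{s+nk}_l}.
\]
Finally, by Proposition \ref{coninj}, the identity extends to a continuous map $C_j(\G,\s)\to H^j_l(\G)$ for every $j$; choosing $j=s+nk$ shows that $\tau_c$ extends continuously to $C_j(\G,\s)^{\otimes(n+1)}$, hence to $C_\infty(\G,\s)^{\otimes(n+1)}$.

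The main obstacle I expect is verifying the $\ell^1$--$H^s$ convolution estimate in the form needed here, since one must carry the weights $(1+l(\g_i))^{nk}$ across the convolutions and apply rapid decay to each factor without losing control of the constants. This is the standard argument of Jolissaint/Connes--Moscovici adapted to an $(n+1)$-fold sum with a polynomial weight, and the only care required is that the weights can be absorbed symmetrically using $l(\g_1\cdots\g_n)=l(\g_0^{-1})=l(\g_0)$ on the support set $\g_0\cdots\g_n=e$.
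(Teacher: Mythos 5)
Your argument is correct and follows essentially the same route as the paper's proof: reduce to the untwisted estimate by noting the multiplier factor $\Sigma$ has modulus one, invoke the Connes--Moscovici/Jolissaint rapid-decay estimate for group cocycles of polynomial growth, and then transfer back from $H^\infty_l(\G)$ to $C_\infty(\G,\s)$ via Proposition~\ref{coninj}. The paper simply cites Connes--Moscovici for the middle step while you unfold the argument; also, a small notational slip: the weighted sum over $\g_0\cdots\g_n=e$ is $(b_0*\cdots*b_n)(e)$, not $\|b_0*\cdots*b_n\|_{\ell^1}$, though the subsequent $\ell^2$ domination you sketch is the right way to estimate it.
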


\begin{proof}
For $\sigma=1$ it has been proven in \cite[p. 383ff.]{cm} (based on ideas of Jolissaint) that $\tau_c$ is continuous as a multilinear map from $H_l^{\infty}(\Gamma)$ to $\C$. Since $C_{\infty}(\G,\s) \to H_l^{\infty}(\G)$ is continuous by Prop. \ref{coninj} and
$$C_{\infty}(\G,\s)^{n+1} \to \C, ~(a_0,a_1 \dots, a_n)\mapsto \tr_{(2)}^\s(a_0*a_1* \dots *a_n)$$
is continuous, the map $T$ from the previous proof maps reduced cyclic cocycles on $\C\G$ which extend to continuous maps from $H_l^{\infty}(\Gamma)^{n+1}$ to $\C$ to reduced cyclic cocycles on $\C(\G,\s)$ which extend to continuous maps from $C_{\infty}(\G,\s)^{n+1}$ to $\C$.
\end{proof}

For applications it is also interesting to have continuity for a linear functional $\tau_{\langle g \rangle}$ associated to a conjugacy class $\langle g \rangle$ of $\G$. Note that, contrary to the untwisted case, this functional is not necessarily a trace. The following is an adaption of \cite[Prop. 13.5]{ps1}.

\begin{prop}
\label{conj_class}
If $\langle g \rangle$ is of polynomial growth, then the linear functional\\ $\tau_{\langle g \rangle}\colon C_{\infty}(\G,\s) \to \C$ is continuous.
\end{prop}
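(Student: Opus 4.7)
The plan is to follow the model of the untwisted argument in \cite[Prop. 13.5]{ps1}, using the Sobolev continuity already established in Proposition \ref{coninj}. First I would write, for $a=\sum_{\g\in\G}a_\g\delta_\g \in \C(\G,\s)$, the value $\tau_{\langle g\rangle}(a)$ as a weighted sum over the conjugacy class (with weights of modulus $\le 1$ coming from the twist, which is irrelevant for the estimate). The goal is therefore to bound
\[
|\tau_{\langle g\rangle}(a)| \;\le\; \sum_{\g\in\langle g\rangle} |a_\g|
\]
by a continuous seminorm of $a$ in $C_{\infty}(\G,\s)$.

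The key step is a Cauchy--Schwarz estimate: choose $s>0$ and write
\[
\sum_{\g\in\langle g\rangle}|a_\g| \;\le\; \Bigl(\sum_{\g\in\langle g\rangle}(1+l(\g))^{-2s}\Bigr)^{1/2}\Bigl(\sum_{\g\in\langle g\rangle}|a_\g|^2(1+l(\g))^{2s}\Bigr)^{1/2}\;\le\; C_s \,\|a\|_{H^s_l(\G)} \ ,
\]
where $C_s=\bigl(\sum_{\g\in\langle g\rangle}(1+l(\g))^{-2s}\bigr)^{1/2}$. Since $\langle g\rangle$ has polynomial growth, the number of elements $\g\in\langle g\rangle$ with $l(\g)\le R$ grows at most polynomially in $R$, so the series defining $C_s$ converges for $s$ large enough. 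Thus $\tau_{\langle g\rangle}$ extends to a continuous linear functional on $H^s_l(\G)$ for such $s$.

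Finally I would invoke Proposition \ref{coninj} to conclude: the identity on $\C(\G,\s)$ extends to a continuous map $C_j(\G,\s)\to H^j_l(\G)$ for every $j$, hence in particular, picking $j\ge s$, the composition
\[
C_{\infty}(\G,\s) \;\longrightarrow\; C_j(\G,\s) \;\longrightarrow\; H^j_l(\G) \;\stackrel{\tau_{\langle g\rangle}}{\longrightarrow}\; \C
\]
is continuous. I do not anticipate a real obstacle: the twist only affects $\tau_{\langle g\rangle}$ by modulus-one factors, which do not enter the absolute-value estimate, and the polynomial-growth hypothesis is used exactly to ensure convergence of $C_s$. The mildly subtle point is just to be explicit about the passage from the $H^s_l$-norm to the $C_j(\G,\s)$-norm via Proposition \ref{coninj}, but this is already packaged in that statement.
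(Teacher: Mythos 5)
Your proof is correct and follows essentially the same approach as the paper's: the paper cites Jolissaint's \cite[Thm.\ 3.1.7]{jo} for the continuity of the restriction map $H_l^{\infty}(\Gamma) \to l^1(\langle g\rangle)$, which is exactly the Cauchy--Schwarz plus polynomial-growth estimate you spell out, and then both arguments compose with Proposition~\ref{coninj} to pass from $H_l^{\infty}(\G)$ to $C_{\infty}(\G,\s)$.
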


\begin{proof}
As in the proof of \cite[Thm. 3.1.7]{jo} one shows that the map $H_l^{\infty}(\Gamma) \to l^1(\langle g\rangle)$ is well-defined and continuous. It follows that $\tau_{\langle g \rangle} \colon H_l^{\infty}(\Gamma) \to \C$ is continuous and thus also $\tau_{\langle g \rangle}\colon C_{\infty}(\G,\s) \to \C$.  
\end{proof}

Aiming at the product situation we generalize some of the above considerations to the case of coefficients in a unital $C^*$-algebra $\cA$. In applications this will be the (full or reduced) twisted $C^*$-algebra of a second group. 
\medskip

For $s>0$ we define the Hilbert $\cA$-module $H^s_l(\Gamma,\cA)$ as the completion of the algebraic tensor product $\C(\Gamma)\odot \cA$ with respect to the norm induced by the $\cA$-valued scalar product
$$\langle\sum_{\g \in \Gamma} a_\g \delta_\g,\sum_{\g \in \Gamma} b_\g \delta_\g\rangle_s:=\sum_{\g\in \Gamma} a_\g^*b_\g(1+l(\g))^{2s} \ .$$
The projective limit of these spaces is denoted by $H^{\infty}_l(\Gamma,\cA)$.

As Hilbert $\cA$-modules $H^s_l(\Gamma,\cA)\cong H^s_l(\Gamma)\ten \cA$.

\medskip

We have a closable operator $D_l$ on the Hilbert $\cA$-module $l^2(\Gamma,\cA)$ with domain $\C(\Gamma)\ten \cA$ given by $D_l (a_\g\g)=l(\g)a_\g \delta_\g$. We denote by $B$ the smallest subalgebra of $\cB(l^2(\Gamma,\cA))$ containing $\C(\Gamma,\sigma)$ and multiplication by elements of $l^{\infty}(\Gamma,\cA)$. Let $\cB$ be its $C^*$-algebraic completion. Then 
$$\der \colon B\to \cB,~\der(T)=[D_l,T]$$
is a densely defined closable derivation. We define $\cB_j$ as above and set $C_j(\G,\s,\cA)=\cB_j\cap C^*_r(\Gamma,\sigma)\ten \cA$. 

The following proposition generalizes Prop. \ref{coninj}.

\begin{prop}
For $j\in \bn$ the identification of $\cA$-modules $\C(\Gamma,\sigma) \odot \cA \to \C(\G) \odot \cA$ extends to a continuous map $C_j(\G,\s,\cA) \to H^j_l(\Gamma,\cA)$. 
\end{prop}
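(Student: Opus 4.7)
The plan is to mimic the proof of Proposition \ref{coninj} verbatim, checking that each step goes through when the scalars $\C$ are replaced by the $C^*$-algebra $\cA$ and the $l^2$-norm by the Hilbert $\cA$-module norm.

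First, I would observe that for $x \in \C(\Gamma,\sigma) \odot \cA$, written as $x = \sum_{\g} x_\g\,\delta_\g$ with $x_\g \in \cA$ (viewed as the operator of left twisted convolution on $l^2(\G,\cA)$), one has $\der(x)\delta_e = [D_l,x]\delta_e = D_l(x\delta_e) = D_l(x)$, since $D_l\delta_e = 0$. By induction, for every $j\in \bn$,
\[
\der^j(x)\delta_e = D_l^j(x) = \sum_{\g \in \G} l(\g)^j\,x_\g\,\delta_\g \;\in\; l^2(\G,\cA).
\]

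Next, I would compute the Hilbert module norm of this vector. By definition of the $\cA$-valued inner product on $H^j_l(\G,\cA)$,
\[
\|x\|_{H^j_l(\G,\cA)}^2 \;=\; \Bigl\| \sum_{\g \in \G} (1+l(\g))^{2j} x_\g^* x_\g \Bigr\|_{\cA},
\]
and since $(1+l(\g))^{2j} \le 2^{2j}\bigl(1 + l(\g)^{2j}\bigr)$ (using $(1+t)^{2j} \le 2^{2j}(1+t^{2j})$ for $t\ge 0$), we get a constant $C_j$ such that
\[
\|x\|_{H^j_l(\G,\cA)} \;\le\; C_j \sum_{i=0}^{j} \bigl\|D_l^i(x)\bigr\|_{l^2(\G,\cA)}
\;=\; C_j \sum_{i=0}^{j} \bigl\|\der^i(x)\delta_e\bigr\|_{l^2(\G,\cA)} .
\]

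The final step is to dominate $\|\der^i(x)\delta_e\|_{l^2(\G,\cA)}$ by $\|\der^i(x)\|_{\cB}$, which is immediate since $\delta_e \in l^2(\G,\cA)$ has norm $1$ and any adjointable operator $T$ on a Hilbert $\cA$-module satisfies $\|T v\| \le \|T\|\,\|v\|$. Summing over $i = 0, \dots, j$ yields
\[
\|x\|_{H^j_l(\G,\cA)} \;\le\; C_j \sum_{i=0}^{j} \|\der^i(x)\|_{\cB} \;\le\; C_j\,\|x\|_j,
\]
and continuity of the identification on $\C(\G,\sigma) \odot \cA$ gives the extension to $C_j(\G,\s,\cA)$ by density. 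The only point that requires care (and is the nearest thing to an obstacle) is to make sure that in the Hilbert module setting the inequality $\|Tv\|\le \|T\|\,\|v\|$ and the identification of $\|\cdot\|_{H^j_l(\G,\cA)}$ with a norm controlled by $\|D_l^i\cdot\|$ really hold with the $C^*$-norm on $\cA$ rather than just a pointwise scalar product; this is where one uses that taking the operator norm on $\cA$ dominates any pointwise expression and that $\ten \cA$ respects the relevant completions.
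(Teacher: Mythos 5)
Your proof is correct and follows exactly the approach the paper intends: the paper's own proof simply says ``The proof is essentially as above,'' referring to Proposition \ref{coninj}, and you carry out precisely that adaptation, correctly replacing the $l^2$-scalar product by the $\cA$-valued one (using the order structure on $\cA$ to pass the elementary inequality through the $C^*$-norm) and using $\|Tv\|\le\|T\|\,\|v\|$ for adjointable operators on Hilbert modules.
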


\begin{proof}
The proof is essentially as above.
\end{proof}

For results on the completed $\ve$- and $\pi$-tensor products $\hat \ten_{\ve, \pi}$ used in the following, see \cite[Ch. 43]{tr}.

\begin{lem}
The identity on $\C\G \odot \cA$ extends to a continuous map $H_l^j(\Gamma,\cA) \to H_l^j(\Gamma)\hat\ten_{\ve} \cA$.
\end{lem}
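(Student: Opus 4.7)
The plan is to reduce the statement to the case $j=0$ and then derive the required bound from the Cauchy--Schwarz inequality for Hilbert $\cA$-modules. First, I would observe that the map $U\colon H_l^j(\Gamma)\to \ell^2(\Gamma)$ given by $\delta_\g \mapsto (1+l(\g))^j \delta_\g$ is an isometric isomorphism of Hilbert spaces, and that $U\otimes \id_\cA$ extends to an isometric isomorphism of Hilbert $\cA$-modules $H_l^j(\Gamma,\cA)\to \ell^2(\Gamma,\cA)$, since the $\cA$-valued inner product of $\sum_\g a_\g(1+l(\g))^j\delta_\g$ and $\sum_\g b_\g(1+l(\g))^j\delta_\g$ in $\ell^2(\Gamma,\cA)$ is exactly $\sum_\g a_\g^*b_\g(1+l(\g))^{2j}$. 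Since isometries of Banach spaces induce isometries of their injective tensor products with $\cA$, this reduces the statement to proving that the identity extends to a continuous map $\ell^2(\Gamma,\cA)\to \ell^2(\Gamma)\hat\ten_\ve \cA$.

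Next, I would use the standard identification of the injective tensor product with operators on duals: for a Hilbert space $X$ and a Banach space $Y$, the space $X\hat\ten_\ve Y$ sits isometrically in $\mathcal L(X^*,Y)$, with $u=\sum_\g a_\g \delta_\g$ corresponding to the operator $\phi\mapsto \sum_\g \phi(\delta_\g)a_\g$, and $\|u\|_\ve$ equal to the operator norm. Via the Riesz isomorphism $\ell^2(\Gamma)^*\cong \ell^2(\Gamma)$, every unit functional $\phi$ has the form $\phi(x)=\langle x,c\rangle$ for some $c=\sum_\g c_\g\delta_\g$ with $\sum_\g |c_\g|^2 \le 1$, so that
\[
\|u\|_\ve \;=\; \sup_{\|c\|_{\ell^2}\le 1}\Bigl\|\sum_\g \overline{c_\g}\, a_\g\Bigr\|_\cA.
\]

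The final step would be to view the scalar sequence $c$ as an element $\sum_\g c_\g\cdot 1_\cA \delta_\g$ of the Hilbert $\cA$-module $\ell^2(\Gamma,\cA)$: its $\cA$-module norm squared equals $\|\sum_\g \overline{c_\g}c_\g\cdot 1_\cA\|_\cA=\sum_\g|c_\g|^2$, and the $\cA$-valued inner product reads $\langle c,u\rangle=\sum_\g \overline{c_\g}a_\g$. Applying Cauchy--Schwarz in the Hilbert module yields
\[
\Bigl\|\sum_\g \overline{c_\g}a_\g\Bigr\|_\cA = \|\langle c,u\rangle\|_\cA \;\le\; \|c\|_{\ell^2(\Gamma,\cA)}\,\|u\|_{\ell^2(\Gamma,\cA)}\;\le\; \|u\|_{\ell^2(\Gamma,\cA)},
\]
and taking the supremum over $c$ gives the desired bound $\|u\|_\ve\le \|u\|_{\ell^2(\Gamma,\cA)}$.

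I do not expect a serious obstacle here: once the $\ve$-tensor norm is concretely identified as an operator norm via Riesz duality, everything is one application of the Hilbert module Cauchy--Schwarz inequality. The only points needing an explicit verification are that $U\otimes \id_\cA$ is an isometry of Hilbert $\cA$-modules (immediate at the level of inner products) and that it functorially induces an isometry on $\hat\ten_\ve \cA$ (a standard property of the injective tensor product, see \cite[Ch. 43]{tr} as already cited above).
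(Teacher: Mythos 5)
Your proof is correct and follows essentially the same strategy as the paper's: both identify $H_l^j(\Gamma)\hat\ten_\ve \cA$ with a subspace of bounded operators from the dual Hilbert space to $\cA$ (via \cite[p.~494]{tr}), and then exhibit the desired map as bounded. The paper asserts the boundedness of $H_l^j(\Gamma,\cA)\cong H_l^j(\Gamma)\ten\cA \to \cB(H_l^j(\Gamma)',\cA)$ in a single sentence; you make it explicit by unwinding the operator norm via Riesz duality and applying the Cauchy--Schwarz inequality for Hilbert $\cA$-modules, which is precisely what makes the paper's assertion true. Your preliminary reduction to $j=0$ through the isometry $\delta_\gamma\mapsto (1+l(\gamma))^j\delta_\gamma$ is a harmless notational simplification not present in the paper; the Cauchy--Schwarz step would work equally well directly for general $j$.
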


\begin{proof}
The arguments from \cite[p. 494]{tr} show that $H_l^j(\Gamma)\hat\ten_{\ve} \cA$ is isomorphic to the closure of $\C\G \odot \cA$ in the Banach space $\cB(H_l^j(\Gamma)',\cA)$ of bounded operators from $H_l^j(\Gamma)'$ to $\cA$. Here $H_l^j(\Gamma)'$ denotes the dual Hilbert space. On the other hand there is a bounded map 
$$H_l^j(\Gamma,\cA) \cong H_l^j(\Gamma) \ten \cA \to \cB(H_l^j(\Gamma)',\cA) \ ,$$ which on $\C \G \odot\cA$ agrees with that isomorphism.
\end{proof}

\begin{cor}
If $\Gamma$ is of polynomial growth, then $H_l^{\infty}(\Gamma,\cA) \cong H_l^{\infty}(\Gamma)\hat\ten_{\pi} \cA$.
\end{cor}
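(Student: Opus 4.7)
The strategy is to reduce the statement to the nuclearity of the Fréchet space $H_l^{\infty}(\Gamma)$. More precisely, I would construct continuous maps in both directions between $H_l^{\infty}(\Gamma,\cA)$ and $H_l^{\infty}(\Gamma)\hat\ten_{\pi}\cA$, each restricting to the identity on the common dense subspace $\C\Gamma \odot \cA$, and then show they are mutually inverse. The previous lemma already provides a continuous map $H_l^{j}(\Gamma,\cA)\to H_l^j(\Gamma)\hat\ten_{\ve}\cA$ for each $j$, and hence in the projective limit a continuous map $H_l^{\infty}(\Gamma,\cA)\to H_l^{\infty}(\Gamma)\hat\ten_{\ve}\cA$. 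In the opposite direction, the bilinear map $(\sum_\g a_\g \de_\g, a)\mapsto \sum_\g (a_\g a)\de_\g$ from $H_l^{\infty}(\Gamma)\times \cA$ to $H_l^{\infty}(\Gamma,\cA)$ is continuous in each $H_l^j$-seminorm (Cauchy--Schwarz applied to the defining scalar product); by the universal property of the projective tensor product, it extends to a continuous linear map $H_l^{\infty}(\Gamma)\hat\ten_{\pi}\cA \to H_l^{\infty}(\Gamma,\cA)$.

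The main ingredient is now the nuclearity of $H_l^{\infty}(\Gamma)$, which forces the canonical map $H_l^{\infty}(\Gamma)\hat\ten_{\pi}\cA \to H_l^{\infty}(\Gamma)\hat\ten_{\ve}\cA$ to be an isomorphism. To establish nuclearity, I would use the standard criterion for weighted $\ell^2$-sequence spaces: $H_l^{\infty}(\Gamma)$ is nuclear if and only if for every $k\in \bn$ there exists $k'>k$ such that the inclusion $H_l^{k'}(\Gamma)\hookrightarrow H_l^{k}(\Gamma)$ is Hilbert--Schmidt, which amounts to requiring that $\sum_{\g\in\Gamma}(1+l(\g))^{2(k-k')}<\infty$ for some $k'$. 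Since $\Gamma$ has polynomial growth, $|\{\g\in\Gamma ~|~ l(\g)\le n\}|$ is bounded above by a polynomial in $n$, so taking $k'$ sufficiently large makes the sum converge. This realizes $H_l^{\infty}(\Gamma)$ as a nuclear Fréchet space.

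Combining these ingredients, the composition
\[
H_l^{\infty}(\Gamma)\hat\ten_{\pi}\cA \longrightarrow H_l^{\infty}(\Gamma,\cA)\longrightarrow H_l^{\infty}(\Gamma)\hat\ten_{\ve}\cA\cong H_l^{\infty}(\Gamma)\hat\ten_{\pi}\cA
\]
is the identity on $\C\Gamma\odot\cA$ and hence, by continuity and density, equals the identity on $H_l^{\infty}(\Gamma)\hat\ten_{\pi}\cA$. Symmetrically, the other composition restricts to the identity on $\C\Gamma\odot\cA$, and since this subspace is dense in $H_l^{\infty}(\Gamma,\cA)$, the two maps are mutually inverse. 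The main obstacle is of course the nuclearity step, but once the Hilbert--Schmidt criterion is invoked, polynomial growth delivers it immediately; all remaining verifications are routine.
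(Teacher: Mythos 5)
Your proof is correct and follows essentially the same strategy as the paper: build continuous maps $H_l^{\infty}(\Gamma)\hat\ten_{\pi}\cA \to H_l^{\infty}(\Gamma,\cA)\to H_l^{\infty}(\Gamma)\hat\ten_{\ve}\cA$, invoke nuclearity of $H_l^{\infty}(\Gamma)$ to identify $\hat\ten_{\pi}$ with $\hat\ten_{\ve}$, and conclude by density. The only difference is that you derive the nuclearity of $H_l^{\infty}(\Gamma)$ directly from polynomial growth via the Hilbert--Schmidt criterion, whereas the paper simply cites \cite[Theorem 3.1.7]{jo} for this fact.
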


\begin{proof}
By the previous lemma and the fact that the $\ve$-tensor product commutes with projective limits (see \cite[p. 282]{ko}) there are canonical maps $$H_l^{\infty}(\Gamma)\hat\ten_{\pi} \cA \to H_l^{\infty}(\Gamma,\cA) \to H_l^{\infty}(\Gamma)\hat\ten_{\ve} \cA \ .$$	
Since $H^{\infty}(\Gamma)$ is nuclear by \cite[Theorem 3.1.7]{jo}, the left and right hand side are isomorphic. The assertion follows now from the fact that the composition equals the identity on $\C\G \odot \cA$.
\end{proof}

For the following we choose a set of generators of $\G$ and $G$, respectively. This induced a word length function on $\G$, $G$ and $\G\times G$.

\begin{prop}
Let $\Gamma, G$ be finitely generated groups with $\Gamma$ of polynomial growth. Then $H_l^{\infty}(\Gamma) \hat\ten_{\pi} H_l^{\infty}(G) \cong H_l^{\infty}(\Gamma \times G)$.
\end{prop}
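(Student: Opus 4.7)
The plan is to construct continuous linear maps
$\alpha\colon H^{\infty}_l(\Gamma) \hat\ten_\pi H^{\infty}_l(G) \to H^{\infty}_l(\Gamma \times G)$
and
$\beta\colon H^{\infty}_l(\Gamma \times G) \to H^{\infty}_l(\Gamma) \hat\ten_\pi H^{\infty}_l(G)$
that both restrict to the tautological identification $\delta_{(\gamma,g)} \leftrightarrow \delta_\gamma \ten \delta_g$ on $\C\Gamma \odot \C G$. Since this subspace is dense on both sides, it will follow by continuity that $\alpha$ and $\beta$ are mutually inverse topological isomorphisms.

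For $\alpha$ I rely on the elementary pointwise inequality $(1+a+b)^s \le (1+a)^s(1+b)^s$, which implies that for every $s\ge 0$ the bilinear map $H^s_l(\Gamma) \times H^s_l(G) \to H^s_l(\Gamma \times G)$, $(f,g) \mapsto f \ten g$, is jointly continuous with constant one. Universality of the projective tensor product followed by passage to the projective limit in $s$ yields $\alpha$.

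The construction of $\beta$ is the delicate step. I will first observe that the preceding Lemma and Corollary remain valid when $\cA$ is replaced by any Hilbert space: the Hilbert module $H^s_l(\Gamma,\cA)$ is then canonically isomorphic to the Hilbert space tensor product $H^s_l(\Gamma)\hat\ten_2 \cA$, and the Treves-type identification of $H^s_l(\Gamma)\hat\ten_\ve \cA$ with the closure of $\C\Gamma \odot \cA$ in $\cB(H^s_l(\Gamma)',\cA)$ uses only the Banach space structure of $\cA$. Applying this with $\cA = H^t_l(G)$ yields a continuous map $H^s_l(\Gamma, H^t_l(G)) \to H^s_l(\Gamma)\hat\ten_\ve H^t_l(G)$. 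The companion inequality $(1+a)(1+b) \le (1+a+b)^2$ gives, for $u := 2\max(s,t)$, a continuous inclusion $H^u_l(\Gamma\times G) \hookrightarrow H^s_l(\Gamma, H^t_l(G))$; composing produces a continuous map $H^u_l(\Gamma\times G) \to H^s_l(\Gamma)\hat\ten_\ve H^t_l(G)$ for every $s,t$. Passing to the projective limit and using that $\hat\ten_\ve$ commutes with projective limits together with nuclearity of $H^{\infty}_l(\Gamma)$ (which forces $\hat\ten_\pi = \hat\ten_\ve$ in the first factor) yields
$$
\beta\colon H^{\infty}_l(\Gamma\times G) \longrightarrow \varprojlim_{s,t}\, H^s_l(\Gamma)\hat\ten_\ve H^t_l(G) = H^{\infty}_l(\Gamma)\hat\ten_\ve H^{\infty}_l(G) = H^{\infty}_l(\Gamma)\hat\ten_\pi H^{\infty}_l(G).
$$

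The main obstacle is precisely this backward direction: one must simultaneously track the two Sobolev parameters $s,t$ under the multiplicativity properties of the word length, adapt the preceding Lemma from $C^*$-algebra coefficients to Hilbert space coefficients, and invoke nuclearity to convert the Hilbert tensor products that arise naturally from the coefficient description into the projective tensor product required by the statement.
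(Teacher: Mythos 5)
Your argument is correct and follows the same strategy as the paper: the forward map comes from the universal property of $\hat\ten_\pi$ together with the submultiplicativity $(1+l(\g_1)+l(\g_2))\le(1+l(\g_1))(1+l(\g_2))$, and the backward map is obtained by estimating $H^{2k}_l(\Gamma\times G)$ into $H^k_l(\Gamma)\hat\ten_\ve H^k_l(G)=\cK(H_l^k(\Gamma)',H_l^k(G))$, then passing to projective limits and invoking nuclearity of $H^\infty_l(\Gamma)$. The only difference is organizational: the paper proves the key bound $\|av\|^2_{H_l^k(G)}\le\|a\|^2_{H_l^{2k}(\Gamma\times G)}$ directly for the operator $v\mapsto av$, whereas you reach the same estimate by first re-deriving the preceding Lemma for Hilbert-space coefficients and composing with the inclusion $H^{2\max(s,t)}_l(\Gamma\times G)\hookrightarrow H^s_l(\Gamma,H^t_l(G))$ — a legitimate but slightly more roundabout packaging of the same inequalities.
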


\begin{proof}
By the universal property of the projective tensor product there is a continuous map $H_l^{\infty}(\Gamma) \hat\ten_{\pi} H_l^{\infty}(G) \to H_l^{\infty}(\Gamma \times G)$.

We show that there is a continuous map $H_l^{\infty}(\Gamma \times G) \to \cK(H_l^k(\Gamma)',H_l^k(G))=H_l^k(\Gamma)\hat \ten_{\ve} H_l^k(G)$. Here $\cK$ denotes the Banach space of compact operators. Then the assertion follows from taking the projective limit and from the fact that $H_l^{\infty}(\Gamma)$ is nuclear.

Let $v \in H_l^k(\Gamma)'$ be a unit vector and $a=\sum_{(\gamma_1,\gamma_2)} a_{(\gamma_1,\gamma_2)} \delta_{(\gamma_1,\gamma_2)} \in \C(\Gamma \times G)$. 
We write $a_{\gamma_2}:=\sum_{\gamma_1 \in \Gamma} a_{(\gamma_1,\gamma_2)} \delta_{\gamma_1} \in \C\Gamma$.

Then $av:=\sum_{\gamma_2} v(a_{\gamma_2}) \delta_{\gamma_2} \in \C G$ fulfills

\begin{align*}
\|av\|^2_{H_l^k(G)} &= \sum_{\gamma_2} |v(a_{\gamma_2})|^2 (1+l(\gamma_2))^{2k} \\
&\le \sum_{\gamma_2}  \|a_{\gamma_2}\|^2_{H_l^k(\Gamma)}(1+l(\gamma_2))^{2k} \\
&\le \sum_{(\gamma_1,\gamma_2)}  |a_{(\gamma_1,\gamma_2)}|^2(1+l(\gamma_1))^{2k}(1+l(\gamma_2))^{2k} \\
&\le \sum_{(\gamma_1,\gamma_2)} |a_{(\gamma_1,\gamma_2)}|^2(1+l(\gamma_1)+l(\gamma_2))^{4k}\\
&=\|a\|^2_{H_l^{2k}(\Gamma \times G)} \ .
\end{align*}
\end{proof}

\begin{prop}
\label{productext}
Assume that $\Gamma$ has polynomial growth and let $c\in C^n_{al}(\G)$ be a group cocycle of polynomial growth. Let $\s_1$ be a multiplier on $\G$.
\begin{enumerate}
\item Let $\tr$ be a trace on $\cA$.
Then $\tau_c\#\tr$ extends to a continuous cyclic cocycle on $C_{\infty}(\G,\s_1,\cA)$.
\item Let $G$ be a finitely generated group and $\s_2$ a multiplier on $G$. Let $\tr$ be a trace on $\C(G,\s_2)$ which extends to a continuous map $H_l^{\infty}(G)\to \C$. We denote by $\pi_1 \colon \Gamma \times G \to \G$ and $\pi_2 \colon \Gamma \times G \to G$ the projections. 

Then $\tau_c\#\tr$ extends from $\C(\Gamma \times G, \pi_1^*\s_1\pi_2^*\s_2)$ to a continuous cyclic cocycle on $C_{\infty}(\G \times G,\pi_1^*\s_1 \pi_2^*\s_2)$. 
\end{enumerate}
\end{prop}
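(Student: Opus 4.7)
The plan, in both parts, is to extend $\tau_c \# \tr$ by factoring through a projective tensor product decomposition of the relevant $H^\infty_l$-space, so that $\tau_c$ and $\tr$ can be controlled on separate factors.

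For part (1), polynomial growth of $\G$ implies rapid decay, so Proposition \ref{rd_ext} (applied with the multiplier $\s_1$) yields a continuous extension of $\tau_c$ to an $(n+1)$-multilinear form on $H^\infty_l(\G)^{n+1}$. The norm continuity of $\tr$ on $\cA$, together with joint continuity of the product in the $C^*$-algebra $\cA$, gives a continuous $(n+1)$-multilinear form $\tilde\tr(x_0,\dots,x_n):=\tr(x_0\cdots x_n)$ on $\cA^{n+1}$. By the Corollary above, $H^\infty_l(\G,\cA)\cong H^\infty_l(\G)\hat\ten_{\pi}\cA$, and composing with the generalization of Proposition \ref{coninj} to coefficient algebras produces a continuous embedding
$$C_\infty(\G,\s_1,\cA)\hookrightarrow H^\infty_l(\G)\hat\ten_{\pi}\cA.$$
The cup product $\tau_c\#\tr$, originally defined on the algebraic tensor product, then extends to a continuous $(n+1)$-multilinear form on $(H^\infty_l(\G)\hat\ten_{\pi}\cA)^{n+1}$ by the universal property of the projective tensor product: such a form linearizes to a continuous linear functional on $(X\hat\ten_{\pi}Y)^{\hat\ten_{\pi}(n+1)}\cong X^{\hat\ten_{\pi}(n+1)}\hat\ten_{\pi}Y^{\hat\ten_{\pi}(n+1)}$, on which the product form $\tau_c\ten\tilde\tr$ is trivially continuous. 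The cyclic cocycle condition is inherited from the algebraic level.

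For part (2), proceed in an analogous fashion with $H^\infty_l(G)$ playing the role of $\cA$. Use the Proposition just preceding the statement (whose proof uses polynomial growth of $\G$) to identify $H^\infty_l(\G\times G)\cong H^\infty_l(\G)\hat\ten_{\pi}H^\infty_l(G)$, and apply Proposition \ref{coninj} (with multiplier $\pi_1^*\s_1\pi_2^*\s_2$) to obtain a continuous embedding
$$C_\infty(\G\times G,\pi_1^*\s_1\pi_2^*\s_2)\hookrightarrow H^\infty_l(\G\times G),$$
noting that $H^\infty_l$ depends only on the weighted $l^2$-structure and is independent of the multiplier. Proposition \ref{rd_ext} again provides the continuous extension of $\tau_c$ to $H^\infty_l(\G)^{n+1}$. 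The remaining key step is to verify joint continuity of the multilinear trace form $(y_0,\dots,y_n)\mapsto\tr(y_0\bar{*}_{\s_2}\cdots\bar{*}_{\s_2}y_n)$ on $H^\infty_l(G)^{n+1}$. Continuity of $\tr$ places it in $H^{-s}_l(G)$ for some $s\ge 0$, so $|\tr(\delta_g)|\le C(1+l(g))^s$; combined with $|\s_2|=1$, subadditivity of $l$, and a Cauchy--Schwarz estimate, this yields a bound of the shape $|\tr(y_0\bar{*}\cdots\bar{*}y_n)|\le C'\prod_i\|y_i\|_{s'}$ for $s'$ sufficiently large. The projective tensor product argument of part (1) then completes the extension of $\tau_c\#\tr$ to a continuous cyclic cocycle on $C_\infty(\G\times G,\pi_1^*\s_1\pi_2^*\s_2)$.

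The main obstacle is the joint continuity of the trace form in part (2): since twisted convolution on $H^\infty_l(G)$ is not known to be jointly continuous in general, this continuity cannot be obtained by mere composition and has to be proved by a direct estimate. The Sobolev-type bound sketched above relies on the summability of $\sum_g(1+l(g))^{-2d}$ for some $d$, which is a mild growth condition to be carefully reconciled with the hypotheses on $G$ and $\tr$. Making this estimate rigorous in the full generality of the statement is the principal delicate point of the argument.
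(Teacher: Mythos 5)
Your plan reproduces the paper's argument in both parts: the paper too rests on the embedding from Proposition~\ref{coninj} (and its $\cA$-coefficient generalization), the projective tensor decompositions $H^\infty_l(\G,\cA)\cong H^\infty_l(\G)\hat\ten_{\pi}\cA$ and $H^\infty_l(\G\times G)\cong H^\infty_l(\G)\hat\ten_\pi H^\infty_l(G)$, and the universal property of $\hat\ten_\pi$ to extend the multilinear form. Part~(1) of your proposal is complete and matches the paper's one-line argument.

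You are right to single out the joint continuity of $(y_0,\dots,y_n)\mapsto\tr(y_0*\cdots*y_n)$ on $H^\infty_l(G)^{n+1}$ as the delicate point in part~(2). The paper's own proof is equally terse here: it asserts that ``$\tau_c\#\tr$ defines a multilinear map from $(H^\infty_l(\G)\times H^\infty_l(G))^{n+1}$ to $\C$'' without further argument, but the stated hypothesis on $\tr$ only gives continuity of a single linear functional $H^\infty_l(G)\to\C$, and $H^\infty_l(G)$ is not a convolution algebra for an arbitrary finitely generated $G$. Your proposed Cauchy--Schwarz bound does factor the estimate over the $y_i$, but as you note it needs $\sum_g(1+l(g))^{-2d}<\infty$, which forces $G$ to have polynomial growth---a hypothesis the proposition does not impose. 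So your closing caution is well placed: the step is not justified in the generality stated unless one strengthens the hypothesis on $G$ or on $\tr$ (for instance by requiring that the multilinear trace form itself extends continuously, which does hold in each application the paper actually makes: $C^*$-algebraic traces, or conjugacy-class functionals over polynomial-growth classes). This caveat applies verbatim to the paper's proof as well, so your proposal is not missing anything relative to it.
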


\begin{proof}
(1) The cocycle $\tau_c \#\tr$ defines a multilinear map from $(\C(\Gamma,\s) \times \cA)^{n+1}$ to $\C$. By taking the projective tensor product throughout we get that $\tau_c \#\tr: H^{\infty}_l(\Gamma,\cA)^{\ten_{\pi}^{n+1}} \to \C$ is a continuous linear map. 

(2) By Prop. \ref{coninj} and the previous proposition there is a continuous injection $C_{\infty}(\G \times G,\pi_1^*\s_1 \pi_2^*\s_2) \to H^{\infty}_l(\G) \hat \ten_{\pi} H^{\infty}_l(G)$. Now we get the conclusion as before from the fact that $\tau_c \#\tr$ defines a multilinear map from $(H^{\infty}_l(\G) \times H^{\infty}_l(G))^{n+1}$ to $\C$.
\end{proof}

\end{document}